
\documentclass[10pt]{article}
\usepackage{hyperref}
\usepackage{amsmath}
\usepackage{amsfonts}
\usepackage{amssymb}
\usepackage{amsthm}

\usepackage[shortlabels]{enumitem}
\usepackage{verbatim}
\usepackage{graphicx}
\usepackage{microtype}
\usepackage{fullpage}
\usepackage{xparse}
\usepackage{mathrsfs}
\usepackage[capitalise]{cleveref}
\usepackage{mleftright}

\crefname{thm}{Theorem}{Theorems}
\Crefname{thm}{Theorem}{Theorems}
\crefname{conj}{Conjecture}{Conjectures}
\Crefname{conj}{Conjecture}{Conjectures}
\crefname{prop}{Proposition}{Propositions}
\Crefname{prop}{Proposition}{Propositions}
\crefname{cor}{Corollary}{Corollaries}
\Crefname{cor}{Corollary}{Corollaries}
\crefname{defn}{Definition}{Definitions}
\Crefname{defn}{Definition}{Definitions}
\crefname{rmk}{Remark}{Remarks}
\Crefname{rmk}{Remark}{Remarks}
\crefname{prob}{Problem}{Problems}
\Crefname{prob}{Problem}{Problems}
\crefname{question}{Question}{Questions}
\Crefname{question}{Question}{Questions}

\crefname{enumi}{}{}
\Crefname{enumi}{}{}

\crefname{figure}{Figure}{Figures}
\Crefname{figure}{Figure}{Figures}

\crefformat{equation}{(#2#1#3)}
\crefrangeformat{equation}{(#3#1#4) to~(#5#2#6)}
\crefmultiformat{equation}{(#2#1#3)}%
{ and~(#2#1#3)}{, (#2#1#3)}{, and~(#2#1#3)}

\Crefformat{equation}{(#2#1#3)}
\Crefrangeformat{equation}{(#3#1#4) to~(#5#2#6)}
\Crefmultiformat{equation}{(#2#1#3)}%
{ and~(#2#1#3)}{, (#2#1#3)}{, and~(#2#1#3)}

\begin{document}

\NewDocumentCommand{\C}{}{{\mathbb{C}}}
\NewDocumentCommand{\R}{}{{\mathbb{R}}}
\NewDocumentCommand{\Q}{}{{\mathbb{Q}}}
\NewDocumentCommand{\Z}{}{{\mathbb{Z}}}
\NewDocumentCommand{\N}{}{{\mathbb{N}}}
\NewDocumentCommand{\M}{}{{\mathbb{M}}}
\NewDocumentCommand{\grad}{}{\nabla}
\NewDocumentCommand{\sA}{}{\mathcal{A}}
\NewDocumentCommand{\sF}{}{\mathcal{F}}
\NewDocumentCommand{\sH}{}{\mathcal{H}}
\NewDocumentCommand{\sD}{}{\mathcal{D}}
\NewDocumentCommand{\sB}{}{\mathcal{B}}
\NewDocumentCommand{\sC}{}{\mathcal{C}}
\NewDocumentCommand{\sE}{}{\mathcal{E}}
\NewDocumentCommand{\sL}{}{\mathcal{L}}
\NewDocumentCommand{\sT}{}{\mathcal{T}}
\NewDocumentCommand{\sO}{}{\mathcal{O}}
\NewDocumentCommand{\sP}{}{\mathcal{P}}
\NewDocumentCommand{\sQ}{}{\mathcal{Q}}
\NewDocumentCommand{\sR}{}{\mathcal{R}}
\NewDocumentCommand{\sM}{}{\mathcal{M}}
\NewDocumentCommand{\sI}{}{\mathcal{I}}
\NewDocumentCommand{\sK}{}{\mathcal{K}}
\NewDocumentCommand{\Span}{}{\mathrm{span}}
\NewDocumentCommand{\fM}{}{\mathfrak{M}}
\NewDocumentCommand{\fN}{}{\mathfrak{N}}
\NewDocumentCommand{\fX}{}{\mathfrak{X}}
\NewDocumentCommand{\fY}{}{\mathfrak{Y}}
\NewDocumentCommand{\gammat}{}{\tilde{\gamma}}
\NewDocumentCommand{\ct}{}{\tilde{c}}
\NewDocumentCommand{\bt}{}{\tilde{b}}
\NewDocumentCommand{\ch}{}{\hat{c}}
\NewDocumentCommand{\hd}{}{\hat{d}}
\NewDocumentCommand{\eh}{}{\hat{e}}
\NewDocumentCommand{\Ut}{}{\tilde{U}}
\NewDocumentCommand{\Gt}{}{\widetilde{G}}
\NewDocumentCommand{\Vt}{}{\tilde{V}}
\NewDocumentCommand{\ah}{}{\hat{a}}
\NewDocumentCommand{\at}{}{\tilde{a}}
\NewDocumentCommand{\Yh}{}{\widehat{Y}}
\NewDocumentCommand{\Yt}{}{\widetilde{Y}}
\NewDocumentCommand{\Ah}{}{\widehat{A}}
\NewDocumentCommand{\Ch}{}{\widehat{C}}
\NewDocumentCommand{\At}{}{\widetilde{A}}
\NewDocumentCommand{\Bt}{}{\widetilde{B}}
\NewDocumentCommand{\Bh}{}{\widehat{B}}
\NewDocumentCommand{\Mh}{}{\widehat{M}}
\NewDocumentCommand{\sLh}{}{\widehat{\sL}}
\NewDocumentCommand{\Sh}{}{\widehat{S}}
\NewDocumentCommand{\Xt}{}{\widetilde{X}}
\NewDocumentCommand{\Lt}{}{\widetilde{L}}
\NewDocumentCommand{\Xh}{}{\widehat{X}}
\NewDocumentCommand{\Lh}{}{\widehat{L}}
\NewDocumentCommand{\Eh}{}{\widehat{E}}
\NewDocumentCommand{\Fh}{}{\widehat{F}}
\NewDocumentCommand{\wh}{}{\hat{w}}
\NewDocumentCommand{\hh}{}{\hat{h}}
\NewDocumentCommand{\Phih}{}{\widehat{\Phi}}

\NewDocumentCommand{\AMatrix}{}{\mathcal{A}}
\NewDocumentCommand{\AMatrixh}{}{\widehat{\AMatrix}}

\NewDocumentCommand{\Extend}{}{\mathscr{E}}

\NewDocumentCommand{\Zh}{}{\widehat{Z}}

\NewDocumentCommand{\Compact}{}{\mathcal{K}}


\NewDocumentCommand{\fg}{}{\mathfrak{g}}
\NewDocumentCommand{\etai}{}{\hat{\eta}}
\NewDocumentCommand{\etat}{}{\tilde{\eta}}

\NewDocumentCommand{\Deriv}{}{\mathscr{D}}
\NewDocumentCommand{\BofA}{}{\mathscr{B}}
\NewDocumentCommand{\ADeriv}{}{\mathscr{A}}

\NewDocumentCommand{\transpose}{}{\top}

\NewDocumentCommand{\ICond}{}{\sC}

\NewDocumentCommand{\LebDensity}{}{\sigma_{\mathrm{Leb}}}


\NewDocumentCommand{\Lie}{m}{\sL_{#1}}

\NewDocumentCommand{\ZygSymb}{}{\mathscr{C}}

\NewDocumentCommand{\Zyg}{m o}{\IfNoValueTF{#2}{\ZygSymb^{#1}}{\ZygSymb^{#1}(#2) }}
\NewDocumentCommand{\ZygX}{m m o}{\IfNoValueTF{#3}{\ZygSymb^{#2}_{#1}}{\ZygSymb^{#2}_{#1}(#3) }}

\NewDocumentCommand{\CSpace}{m o}{\IfNoValueTF{#2}{C(#1)}{C(#1;#2)}}

\NewDocumentCommand{\CjSpace}{m o o}{\IfNoValueTF{#2}{C^{#1}}{ \IfNoValueTF{#3}{ C^{#1}(#2)}{C^{#1}(#2;#3) } }  }

\NewDocumentCommand{\CXjSpace}{m m o}{\IfNoValueTF{#3}{C^{#2}_{#1}}{ C^{#2}_{#1}(#3) } }

\NewDocumentCommand{\ASpace}{m m o}{\mathscr{A}^{#1,#2}\IfNoValueTF{#3}{}{(#3)}}

\NewDocumentCommand{\AXSpace}{m m m o}{\mathscr{A}_{#1}^{#2,#3}\IfNoValueTF{#4}{}{(#4)}}

\NewDocumentCommand{\OSpace}{m o}{\mathscr{O}^{#1}_{b}\IfNoValueTF{#2}{}{(#2)}}
\NewDocumentCommand{\ONorm}{m m o}{\Norm{#1}[\IfNoValueTF{#3}{\OSpace{#2}}{\OSpace{#2}[#3]}]}

\NewDocumentCommand{\sBSpace}{m m m}{\mathscr{B}^{#1,#2}_{#3}}
\NewDocumentCommand{\sBNorm}{m m m m}{\Norm{#1}[\sBSpace{#2}{#3}{#4}]}

\NewDocumentCommand{\DSpace}{m m m m m}{\mathscr{D}^{#1,#2}_{#3,#4,#5}}
\NewDocumentCommand{\DNorm}{m m m m m m}{\Norm{#1}[\DSpace{#2}{#3}{#4}{#5}{#6} ]}

\NewDocumentCommand{\ComegaSpace}{m o o}{\IfNoValueTF{#2}{\CjSpace{\omega,#1}}{
\IfNoValueTF{#3}
{\CjSpace{\omega,#1}[#2]}
{\CjSpace{\omega,#1}[#2][#3]}
}
}

\NewDocumentCommand{\CXomegaSpace}{m m o o}{\IfNoValueTF{#3}{\CXjSpace{#1}{\omega,#2}}{
\IfNoValueTF{#4}
{\CXjSpace{#1}{\omega,#2}[#3]}
{\CXjSpace{#1}{\omega,#2}[#3][#4]}
}
}

\NewDocumentCommand{\ANorm}{m m m o}{\IfNoValueTF{#4}{\Norm{#1}[ \ASpace{#2}{#3} ]}{ \Norm{#1}[ \ASpace{#2}{#3}[#4] ] }}
\NewDocumentCommand{\BANorm}{m m m o}{\IfNoValueTF{#4}{\BNorm{#1}[ \ASpace{#2}{#3} ]}{ \BNorm{#1}[ \ASpace{#2}{#3}[#4] ] }}

\NewDocumentCommand{\AXNorm}{m m m m o}{\IfNoValueTF{#5}{\Norm{#1}[ \AXSpace{#2}{#3}{#4} ]}{ \Norm{#1}[ \AXSpace{#2}{#3}{#4}[#5] ] }}
\NewDocumentCommand{\BAXNorm}{m m m m o}{\IfNoValueTF{#5}{\BNorm{#1}[ \AXSpace{#2}{#3}{#4} ]}{ \BNorm{#1}[ \AXSpace{#2}{#3}{#4}[#5] ] }}

\NewDocumentCommand{\ComegaNorm}{m m o o}{\IfNoValueTF{#3}{ \Norm{#1}[\ComegaSpace{#2}] }
{
\IfNoValueTF{#4}
{\Norm{#1}[\ComegaSpace{#2}[#3]] }
{\Norm{#1}[\ComegaSpace{#2}[#3][#4]] }
}
}

\NewDocumentCommand{\CXomegaNorm}{m m m o o}{\IfNoValueTF{#4}{ \Norm{#1}[\CXomegaSpace{#2}{#3}] }
{
\IfNoValueTF{#5}
{\Norm{#1}[\CXomegaSpace{#2}{#3}[#4]] }
{\Norm{#1}[\CXomegaSpace{#2}{#3}[#4][#5]] }
}
}

\NewDocumentCommand{\HSpace}{m m o o}{\IfNoValueTF{#3}{C^{#1,#2}}{ \IfNoValueTF{#4} {C^{#1,#2}(#3)} {C^{#1,#2}(#3;#4)} }}

\NewDocumentCommand{\HXSpace}{m m m o}{\IfNoValueTF{#4}{C_{#1}^{#2,#3}}{  {C_{#1}^{#2,#3}(#4)}  }}

\NewDocumentCommand{\ZygSpace}{m o o}{\IfNoValueTF{#2}{\ZygSymb^{#1}}{ \IfNoValueTF{#3} { \ZygSymb^{#1}(#2) }{\ZygSymb^{#1}(#2;#3) } } }

\NewDocumentCommand{\ZygSpacediff}{m o o}{\IfNoValueTF{#2}{\ZygSymb^{#1}}{ \IfNoValueTF{#3} { \ZygSymb^{#1}(#2) }{\ZygSymb^{#1}(#2;#3) } } }

\NewDocumentCommand{\ZygSpaceloc}{m o o}{\IfNoValueTF{#2}{\ZygSymb^{#1}_{\mathrm{loc}}}{ \IfNoValueTF{#3} { \ZygSymb^{#1}_{\mathrm{loc}}(#2) }{\ZygSymb^{#1}_{\mathrm{loc}}(#2;#3) } } }

\NewDocumentCommand{\ZygSpacemap}{m o o}{\IfNoValueTF{#2}{\ZygSymb^{#1}_{\mathrm{loc}}}{ \IfNoValueTF{#3} { \ZygSymb^{#1}_{\mathrm{loc}}(#2) }{\ZygSymb^{#1}_{\mathrm{loc}}(#2;#3) } } }

\NewDocumentCommand{\ZygXSpace}{m m o}{\IfNoValueTF{#3}{\ZygSymb^{#2}_{#1}}{\ZygSymb^{#2}_{#1}(#3) }}

\NewDocumentCommand{\Norm}{m o}{\IfNoValueTF{#2}{\| #1\|}{\|#1\|_{#2} }}
\NewDocumentCommand{\BNorm}{m o}{\IfNoValueTF{#2}{\mleft\| #1\mright\|}{\mleft\|#1\mright\|_{#2} }}

\NewDocumentCommand{\CjNorm}{m m o o}{ \IfNoValueTF{#3}{ \Norm{#1}[\CjSpace{#2}]} { \IfNoValueTF{#4}{\Norm{#1}[\CjSpace{#2}[#3]]} {\Norm{#1}[\CjSpace{#2}[#3][#4]]}  }  }

\NewDocumentCommand{\CNorm}{m m o}{\IfNoValueTF{#3}{\Norm{#1}[\CSpace{#2}]}{\Norm{#1}[\CSpace{#2}[#3]]}}

\NewDocumentCommand{\BCNorm}{m m}{\BNorm{#1}[\CSpace{#2}]}

\NewDocumentCommand{\CXjNorm}{m m m o}{\Norm{#1}[
\IfNoValueTF{#4}
{\CXjSpace{#2}{#3}}
{\CXjSpace{#2}{#3}[#4]}
]}

\NewDocumentCommand{\BCXjNorm}{m m m o}{\BNorm{#1}[
\IfNoValueTF{#4}
{\CXjSpace{#2}{#3}}
{\CXjSpace{#2}{#3}[#4]}
]}

\NewDocumentCommand{\LpNorm}{m m o o}{
\Norm{#2}[L^{#1}
\IfNoValueTF{#3}{}{
(#3
\IfNoValueTF{#4}{}{;#4}
)
}
]
}

\NewDocumentCommand{\BCjNorm}{m m o}{ \IfNoValueTF{#3}{ \BNorm{#1}[C^{#2}]} { \BNorm{#1}[C^{#2}(#3)]  }  }

\NewDocumentCommand{\HNorm}{m m m o o}{ \IfNoValueTF{#4}{ \Norm{#1}[\HSpace{#2}{#3}]} {
\IfNoValueTF{#5}
{\Norm{#1}[\HSpace{#2}{#3}[#4]]}
{\Norm{#1}[\HSpace{#2}{#3}[#4][#5]] }
}  }

\NewDocumentCommand{\HXNorm}{m m m m o}{ \IfNoValueTF{#5}{ \Norm{#1}[\HXSpace{#2}{#3}{#4}]} {
{\Norm{#1}[\HXSpace{#2}{#3}{#4}[#5]]}
}  }

\NewDocumentCommand{\BHXNorm}{m m m m o}{ \IfNoValueTF{#5}{ \BNorm{#1}[\HXSpace{#2}{#3}{#4}]} {
{\BNorm{#1}[\HXSpace{#2}{#3}{#4}[#5]]}
}  }

\NewDocumentCommand{\ZygNorm}{m m o o}{ \IfNoValueTF{#3}{ \Norm{#1}[\ZygSpace{#2}]} {
\IfNoValueTF{#4}
{\Norm{#1}[\ZygSpace{#2}[#3]]}
{\Norm{#1}[\ZygSpace{#2}[#3][#4]]}
}  }

\NewDocumentCommand{\BZygNorm}{m m o o}{ \IfNoValueTF{#3}{ \Norm{#1}[\ZygSpace{#2}]} {
\IfNoValueTF{#4}
{\BNorm{#1}[\ZygSpace{#2}[#3]]}
{\BNorm{#1}[\ZygSpace{#2}[#3][#4]]}
}  }

\NewDocumentCommand{\ZygXNorm}{m m m o}{\Norm{#1}[
\IfNoValueTF{#4}
{\ZygXSpace{#2}{#3}}
{\ZygXSpace{#2}{#3}[#4]}
]}

\NewDocumentCommand{\BZygXNorm}{m m m o}{\BNorm{#1}[
\IfNoValueTF{#4}
{\ZygXSpace{#2}{#3}}
{\ZygXSpace{#2}{#3}[#4]}
]}

\NewDocumentCommand{\diff}{o m}{\IfNoValueTF{#1}{\frac{\partial}{\partial #2}}{\frac{\partial^{#1}}{\partial #2^{#1}} }}

\NewDocumentCommand{\dt}{o}{\IfNoValueTF{#1}{\diff{t}}{\diff[#1]{t} }}

\NewDocumentCommand{\Zygad}{m}{\{ #1\}}

\NewDocumentCommand{\Zygsonu}{}{[s_0;\nu]}
\NewDocumentCommand{\Zygomeganu}{}{[\omega;\nu]}

\NewDocumentCommand{\Had}{m}{\langle #1\rangle}

\NewDocumentCommand{\BanachSpace}{}{\mathscr{V}}
\NewDocumentCommand{\BanachAlgebra}{}{\mathscr{Y}}

\NewDocumentCommand{\Field}{}{\mathbb{F}}

\NewDocumentCommand{\Real}{}{\mathrm{Re}}
\NewDocumentCommand{\Imag}{}{\mathrm{Im}}

\NewDocumentCommand{\VectorSpace}{m}{\mathscr{#1}}
\NewDocumentCommand{\VVS}{}{\VectorSpace{V}}
\NewDocumentCommand{\ZVS}{}{\VectorSpace{Z}}
\NewDocumentCommand{\WVS}{}{\VectorSpace{W}}
\NewDocumentCommand{\LVSh}{}{\widehat{\VectorSpace{L}}}
\NewDocumentCommand{\LVSb}{o}{\IfNoValueTF{#1}{\overline{\VectorSpace{L}}}{\overline{\VectorSpace{L}_{#1}}}}
\NewDocumentCommand{\LVS}{o}{\IfNoValueTF{#1}{\VectorSpace{L}}{\VectorSpace{L}_{#1}}}
\NewDocumentCommand{\XVS}{o}{\IfNoValueTF{#1}{\VectorSpace{X}}{\VectorSpace{X}_{#1}}}
\NewDocumentCommand{\XVSb}{o}{\IfNoValueTF{#1}{\overline{\VectorSpace{X}}}{\overline{\VectorSpace{X}_{#1}}}}

\NewDocumentCommand{\Lb}{o}{\IfNoValueTF{#1}{\overline{L}}{\overline{L_{#1}}}}
\NewDocumentCommand{\wb}{o}{\IfNoValueTF{#1}{\overline{w}}{\overline{w}_{#1}}}
\NewDocumentCommand{\zb}{o}{\overline{z}\IfNoValueTF{#1}{}{_{#1}}}
\NewDocumentCommand{\Hb}{o}{\overline{H\IfNoValueTF{#1}{}{_{#1}}}}
\NewDocumentCommand{\Rb}{o}{\overline{R\IfNoValueTF{#1}{}{_{#1}}}}
\NewDocumentCommand{\Zb}{o}{\IfNoValueTF{#1}{\overline{Z}}{\overline{Z_{#1}}}}
\NewDocumentCommand{\Zbdelta}{m o}{\IfNoValueTF{#1}{\overline{Z^{#1}}}{\overline{Z^{#1}_{#2}}}}
\NewDocumentCommand{\Zhb}{o}{\IfNoValueTF{#1}{\overline{\Zh}}{\overline{\Zh_{#1}}}}

\NewDocumentCommand{\dbar}{}{\overline{\partial}}
\NewDocumentCommand{\etah}{}{\hat{\eta}}


\NewDocumentCommand{\SSFunctionSpacesSection}{}{Section 2}
\NewDocumentCommand{\SSStrangeZygSpace}{}{Remark 2.1}
\NewDocumentCommand{\SSBeyondManifold}{}{Section 2.2.1}
\NewDocumentCommand{\SSNormsAreInv}{}{Proposition 2.3}
\NewDocumentCommand{\SSDefineVectDeriv}{}{Remark 2.4}

\NewDocumentCommand{\SSSectionMoreOnAssumptions}{}{Section 4.1}
\NewDocumentCommand{\SSMainResult}{}{Theorem 4.7}
\NewDocumentCommand{\SSLemmaMoreOnAssump}{}{Proposition 4.14}

\NewDocumentCommand{\SSDivideWedge}{}{Section 5}
\NewDocumentCommand{\SSDerivWedge}{}{Lemma 5.1}

\NewDocumentCommand{\SSDensities}{}{Section 6}
\NewDocumentCommand{\SSDensitiesTheorem}{}{Theorem 6.5}
\NewDocumentCommand{\SSDensityCor}{}{Corollary 6.6}

\NewDocumentCommand{\SSScaling}{}{Section 7}
\NewDocumentCommand{\SSNSW}{}{Section 7.1}
\NewDocumentCommand{\SSHormandersCondition}{}{Section 7.1.1}
\NewDocumentCommand{\SSGenSubR}{}{Section 7.3}
\NewDocumentCommand{\SSGenSubResult}{}{Theorem 7.6}

\NewDocumentCommand{\SSCompareFunctionSpaces}{}{Lemma 8.1}
\NewDocumentCommand{\SSZygIsAlgebra}{}{Proposition 8.3}
\NewDocumentCommand{\SSBiggerNormMap}{}{Proposition 8.6}
\NewDocumentCommand{\SSCompareEuclidNorms}{}{Proposition 8.12}

\NewDocumentCommand{\SSDeriveODE}{}{Proposition 9.1}
\NewDocumentCommand{\SSExistODE}{}{Proposition 9.4}
\NewDocumentCommand{\SSExistXiOne}{}{Lemma 9.23}
\NewDocumentCommand{\SSDifferentOneAdmis}{}{Proposition 9.26}
\NewDocumentCommand{\SSCXjNormWedgeQuotient}{}{Lemma 9.32}
\NewDocumentCommand{\SSExistXiTwo}{}{Lemma 9.35}
\NewDocumentCommand{\SSComputefjzero}{}{Lemma 9.38}
\NewDocumentCommand{\SSSectionDensities}{}{Section 9.4}

\NewDocumentCommand{\SSProofInjectiveImmersion}{}{Appendix A}
\NewDocumentCommand{\SSFinerTopology}{}{Lemma A.1}

\newtheorem{thm}{Theorem}[section]
\newtheorem{cor}[thm]{Corollary}
\newtheorem{prop}[thm]{Proposition}
\newtheorem{lemma}[thm]{Lemma}
\newtheorem{conj}[thm]{Conjecture}
\newtheorem{prob}[thm]{Problem}

\theoremstyle{remark}
\newtheorem{rmk}[thm]{Remark}

\theoremstyle{remark}
\newtheorem{question}[thm]{Question}

\theoremstyle{definition}
\newtheorem{defn}[thm]{Definition}

\theoremstyle{definition}
\newtheorem{assumption}[thm]{Assumption}

\theoremstyle{remark}
\newtheorem{example}[thm]{Example}

\numberwithin{equation}{section}

\title{Sharp Regularity for the Integrability of Elliptic Structures}
\author{Brian Street\footnote{The author was partially supported by National Science Foundation Grant Nos.\ 1401671 and 1764265.}}
\date{}

\maketitle

\begin{abstract}
As part of his celebrated Complex Frobenius Theorem, Nirenberg showed that given a smooth elliptic structure (on a smooth manifold), the manifold is locally diffeomorphic to an open subset of $\mathbb{R}^r\times \mathbb{C}^n$ (for some $r$ and $n$) in such a way that the structure is locally the span of $\frac{\partial}{\partial t_1},\ldots, \frac{\partial}{\partial t_r},\frac{\partial}{\partial \overline{z}_1},\ldots, \frac{\partial}{\partial \overline{z}_n}$; where $\mathbb{R}^r\times \mathbb{C}^n$ has coordinates $(t_1,\ldots, t_r, z_1,\ldots, z_n)$.  In this paper, we give optimal regularity for the coordinate charts which achieve this realization.  Namely, if the manifold has Zygmund regularity of order $s+2$ and the structure has Zygmund regularity of order $s+1$ (for some $s>0$), then the coordinate chart may be taken to have Zygmund regularity of order $s+2$.  We do this by generalizing Malgrange's proof of the Newlander-Nirenberg Theorem to this setting.
\end{abstract}

\section{Introduction}
Fix $s\in (0,\infty]\cup \{\omega\}$ and let $M$ be a $\ZygSpace{s+2}$ manifold, where $\ZygSpace{s}$ denotes the Zygmund space\footnote{For non-integer exponents, the Zygmund space agrees with the classical H\"older space.  More precisely,
 for $m\in \N$, $a\in (0,1)$, the Zygmund space $\ZygSpace{m+a}$ is locally the same as the H\"older space $\HSpace{m}{a}$--see \cref{Rmk::FuncEudlid::ZygAndHolderTheSame}.  However,
 for $a\in \{0,1\}$, these spaces differ:  $\HSpace{m+1}{0}\subsetneq\HSpace{m}{1}\subsetneq \ZygSpace{m+1}$.}
 of order $s$, $\ZygSpace{\infty}$ denotes $C^\infty$, $\ZygSpace{\omega}$ denotes
the space of real analytic functions, and we use the convention $\infty+2=\infty+1=\infty$ and $\omega+2=\omega+1=\omega$.
Let $\LVS$ be a $\ZygSpace{s+1}$ complex elliptic structure on $M$; in particular, $\LVS$ is a complex sub-bundle of $\C TM$, is formally integrable, and
$\LVS$ satisfies $\LVS[\zeta]+\LVSb[\zeta]=\C T_\zeta M$, $\forall \zeta\in M$.  See \cref{Section::FunctionSpaces,Section::Bundles} for the full definitions.
Set $n+r:=\dim \LVS[\zeta]$ and $r:=\dim \LVS[\zeta]\cap \LVSb[\zeta]$ (by hypothesis, $n$ and $r$ do not depend on $\zeta$; see \cref{Section::Bundles}).
For a Banach space $\BanachSpace$, let
 $B_{\BanachSpace}(\delta)$ denote the ball of radius $\delta>0$, centered at $0$, in $\BanachSpace$.
The main theorem of this paper is:

\begin{thm}\label{Thm::Intro::MainThm}
For all $\zeta\in M$, there exists an open neighborhood $V\subseteq M$ of $\zeta$ and a $\ZygSpacediff{s+2}$ diffeomorphism
$\Phi:B_{\R^r\times \C^n}(1)\rightarrow V$ such that $\forall (t,z)\in B_{\R^r\times \C^n}(1)$:
\begin{equation*}
\Span_{\C} \mleft\{
\mleft(\Phi_{*}\diff{t_k}\mright) (\Phi(t,z)), \mleft(\Phi_{*} \diff{\zb[j]}\mright)(\Phi(t,z))  : 1\leq k\leq r, 1\leq j\leq n
\mright\}
=\LVS[\Phi(t,z)].
\end{equation*}
Here, we have given $\R^r\times \C^n$ coordinates $(t_1,\ldots, t_r,z_1,\ldots, z_n)$.
\end{thm}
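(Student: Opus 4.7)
I follow Malgrange's approach to the Newlander--Nirenberg theorem, adapted to the real directions in $\LVS\cap\LVSb$ and tightened to give sharp Zygmund regularity. Working in a chart near $\zeta$, choose a local $\Zyg{s+1}$ frame $L_1,\dots,L_{r+n}$ of $\LVS$ so that $L_1,\dots,L_r$ are real and span $\LVS\cap\LVSb$, while $L_{r+1},\dots,L_{r+n}$ together with their complex conjugates complete to a frame of $\C TM$ (possible by ellipticity $\LVS+\LVSb=\C TM$). Producing $\Phi$ is equivalent to producing real functions $t_1,\dots,t_r$ and complex functions $z_1,\dots,z_n$ near $\zeta$ satisfying $L_k z_j=0$ for every $k,j$, $L_k t_\ell=0$ for $1\le k\le r$, and with invertible Jacobian at $\zeta$; the pushforward condition then translates exactly into these annihilation conditions.

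\emph{From PDE to ODE.} Formal integrability $[\LVS,\LVS]\subseteq\LVS$ is the compatibility under which the overdetermined system above is solvable. Following the program of \SSDeriveODE, recast the system as a Banach-space ODE: along the real Frobenius flow of $L_1,\dots,L_r$ the system becomes a true ODE (straightening $\LVS\cap\LVSb$ into the $t$-axis, with a clean $\Zyg{s+2}$ gain from integration), while in the transverse complex directions, introducing a parameter along a wedge-shaped region (\SSDivideWedge) recasts the $\dbar$-equation for $z$ as an ODE whose right-hand side is a fixed elliptic parametrix composed with a quadratic nonlinearity in the unknown. Solve this ODE by the iteration scheme of \SSExistODE{} on a wedge whose size is set by the rescaling analysis of \SSScaling, using the auxiliary fields of \SSExistXiOne{} and \SSExistXiTwo{} to supply the coefficients.

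\emph{Sharp regularity and conclusion.} At each step of the iteration the algebra and mapping properties of Zygmund spaces (\SSZygIsAlgebra, \SSBiggerNormMap) combined with the right inverse to $\dbar$ yield \emph{exactly} one derivative of gain, so the iterates stay bounded in $\Zyg{s+2}$ with no logarithmic or endpoint loss. The inverse function theorem in $\Zyg{s+2}$ promotes $(t,z)$ to a $\Zyg{s+2}$ diffeomorphism, and rescaling to $B_{\R^r\times\C^n}(1)$ yields the stated $\Phi$; by construction the pushforwards of $\diff{t_k}$ and $\diff{\zb[j]}$ lie in $\LVS$, and equal $\LVS$ by matching dimensions.

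\emph{Main obstacle.} The principal difficulty is realizing the \emph{sharp} Zygmund gain uniformly in $s>0$. In the H\"older scale one would be forced to avoid integer $s$ and would pick up a logarithmic loss at every nonlinear step; the Zygmund framework of Sections~2 and~8 is engineered precisely so that $\dbar$ and the real Frobenius flow each gain exactly one derivative in a single scale, and so that the nonlinearity is controlled with no endpoint loss. The delicate balancing of the elliptic gain in the transverse complex direction against the Frobenius gain in the real direction, propagated through the nonlinear iteration on a shrinking family of wedges, is the heart of the argument.
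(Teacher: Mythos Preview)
Your proposal contains a genuine gap, and it is precisely the one the paper was written to avoid. You propose to ``straighten $\LVS\cap\LVSb$ into the $t$-axis'' via the real Frobenius flow of $L_1,\dots,L_r$, claiming ``a clean $\Zyg{s+2}$ gain from integration.'' This is false: the real Frobenius theorem does \emph{not} in general produce coordinate charts one derivative better than the input vector fields (the paper itself points this out, citing an explicit one-vector-field counterexample). Your $L_1,\dots,L_r$ are only $\Zyg{s+1}$, so their flow-box chart is only $\Zyg{s+1}$; after pulling back, the transverse complex structure is still only $\Zyg{s}$ on each leaf, and even a sharp Newlander--Nirenberg on the leaves can then only return $\Zyg{s+1}$ coordinates. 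This two-step reduction (real Frobenius, then Newlander--Nirenberg leafwise) is exactly the ``standard'' route the introduction describes as losing a derivative.

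The paper's actual argument is structurally different and does not separate the real and complex directions. After normalizing to $X=\partial_t+E\,\partial_z$, $L=\partial_{\bar z}+F\,\partial_z$, one solves a second-order nonlinear elliptic equation (via the implicit function theorem in Zygmund spaces) for a further change of coordinates $H$ that forces the new coefficients to satisfy an extra divergence-type relation. Combined with the commutator relations, this yields a \emph{first-order constant-coefficient elliptic system} for all the coefficients simultaneously (the operator $\sE$ of the appendix, with $\sE^*\sE$ a Laplacian). Analytic hypoellipticity of this system makes the coefficients real analytic with quantitative bounds; one then applies the holomorphic Frobenius theorem (after complexification) to produce the chart. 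The $\Zyg{s+2}$ regularity of $\Phi$ comes from the $\Zyg{s+2}$ bound on $H$ (one gain from the second-order elliptic equation) composed with a real analytic map, not from any ODE/flow argument. Finally, the labels you cite (\SSDeriveODE, \SSDivideWedge, \SSExistXiOne, etc.) refer to sections of a companion paper in the series, not to this paper; none of the wedge/ODE machinery appears here.
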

See \cref{Thm::EMfld::MainThm} for a more abstract version of \cref{Thm::Intro::MainThm}.

When $s=\omega$, \cref{Thm::Intro::MainThm}  is classical.
When $s=\infty$, \cref{Thm::Intro::MainThm} is a result of Nirenberg \cite{NirenbergAComplexFrobeniusTheorem}; and the goal of this paper is to achieve the sharp regularity for
$\Phi$ in terms of the regularity of $M$ and $\LVS$.  When $r=0$, $\LVS$ is a complex structure, and \cref{Thm::Intro::MainThm} was proved by Malgrange \cite{MalgrangeSurLIntegbrabilite}--in this case,
the result gives the sharp regularity for the Newlander-Nirenberg Theorem \cite{NewlanderNirenbergComplexAnalyticCoordiantesInAlmostComplexManifolds}.\footnote{Another proof of the case $r=0$
was later given by Webster \cite{WebsterANewProofOfNN}.  Both \cite{MalgrangeSurLIntegbrabilite} and \cite{WebsterANewProofOfNN} state results for H\"older spaces and avoid integer exponents.  As is well-known, and described in 
the case $r=0$ of \cref{Thm::Intro::MainThm}, the results extend to integer exponents by using Zygmund spaces.}
One standard way to prove results like \cref{Thm::Intro::MainThm} for $r>0$ is to reduce the claim to the setting of $r=0$, and apply the Newlander-Nirenberg Theorem, where sharp regularity is
known due to Malgrange's result.  Unfortunately, this reduction loses a derivative (i.e., only proves \cref{Thm::Intro::MainThm} with $\Phi$ a $\ZygSpacediff{s+1}$ diffeomorphism).
Instead, we proceed by adapting Malgrange's proof to directly prove \cref{Thm::Intro::MainThm}.  

This paper is outlined as follows. In \cref{Section::FunctionSpaces} we introduce the basic function spaces we need. In \cref{Section::Bundles} we give all the relevant (standard) definitions for bundles and structures.
In \cref{Section::EMflds} we define a category of manifolds in which our results are naturally stated:  this is the category of manifolds endowed with an ``elliptic'' structure.  This category
contains both real and complex manifolds as \textit{full} sub-categories.
We use this  to state a more abstract version of our main result (\cref{Thm::EMfld::MainThm}).
In \cref{Section::Nirenbeg} we state and prove the main technical result of this paper.  As discussed in \cref{Section::Intro::MainMotivation},
with future applications in mind we keep careful track of what all the constants in \cref{Section::Nirenbeg} depend on.   This is the heart of this paper.  In \cref{Section::Proofs} we prove the main result;
i.e., \cref{Thm::Intro::MainThm} and more generally \cref{Thm::EMfld::MainThm}. 

	\subsection{Some Further Comments}
Results like \cref{Thm::Intro::MainThm} (in the smooth case, $s=\infty$) were introduced by Nirenberg to prove his more general Complex Frobenius Theorem \cite{NirenbergAComplexFrobeniusTheorem}.
There, one starts with a $C^\infty$ formally integrable  structure $\LVS$ on $M$ (see \cref{Section::Bundles}).  The classical (real) Frobenius Theorem applies to the essentially real sub-bundle $\LVS+\LVSb$ to foliate the ambient manifold into leaves, and $\LVS$ is an elliptic structure on each leaf.
Then one can apply a result like \cref{Thm::Intro::MainThm}\footnote{One needs a version of \cref{Thm::Intro::MainThm} with a parameter, which can be achieved with a similar proof in the smooth case.} to each leaf.
In this way, one can achieve a result which has the real Frobenius theorem, the Newlander-Nirenberg Theorem, and the integrability of ellipic structures as special cases (at least in the smooth setting).

In \cref{Thm::Intro::MainThm}, the coordinate chart $\Phi$ is one derivative better than the bundle $\LVS$ (i.e., $\Phi$ is $\ZygSpace{s+2}$, while $\LVS$ is $\ZygSpace{s+1}$).
This is the best one can hope for, 
since the hypotheses of \cref{Thm::Intro::MainThm} are invariant under $\ZygSpace{s+2}$ diffeomorphisms.
However, even in the classical real Frobenius theorem, one cannot obtain appropriate coordinate charts which are one derivative better than the underlying vector fields:  see \cite[Example 4.5]{GongForbeniusNirenbergTheorem}
for a very simple example involving only one vector field.  Thus, we restrict attention to the setting of \cref{Thm::Intro::MainThm} (which does not involve any kind of foliation) because this seems to be a natural generality in which we can achieve this level of regularity.

As mentioned above, one common way of proving results like \cref{Thm::Intro::MainThm} is to reduce them to the Newlander-Nirenberg theorem; though this reduction unnecessarily costs a derivative.
One can do this without losing a derivative by assuming the existence of some sufficiently regular vector fields which commute.  This is the approach taken in \cite{HillTaylorTheComplexFrobeniusTheoremForRough}
where results are proved for Lipschitz bundles.  With our approach, we do not need to assume the existence of such vector fields (and in fact, their existence is a consequence of our result).
It is possible that the methods of this paper combined with the methods of \cite{HillTaylorTheComplexFrobeniusTheoremForRough} could be used to prove results like the ones in that paper,
without assuming the existence of such commuting vector fields.

	\subsection{A Main Motivation}\label{Section::Intro::MainMotivation}
A simple consequence of \cref{Thm::Intro::MainThm} is the following:
\begin{cor}\label{Cor::IntroMotivation}
Fix $s\in (0,\infty]\cup\{\omega\}$ and
let $M$ be a $\ZygSpace{s+2}$ manifold.
Let $L_1,\ldots, L_m$ be  $\ZygSpace{s+1}$ complex vector fields on $M$ and $X_1,\ldots, X_q$ be $\ZygSpace{s+1}$ real vector fields on $M$.  Suppose:
\begin{itemize}
\item For all $\zeta\in M$, 
\begin{equation*}
	\Span_\C \mleft\{ L_1(\zeta),\ldots,L_m(\zeta), \Lb[1](\zeta),\ldots, \Lb[m](\zeta), X_1(\zeta),\ldots, X_q(\zeta) \mright\} = \C T_{\zeta}M.
\end{equation*}
\item For all $\zeta\in M$, $1\leq j,j_1,j_2\leq m$, $1\leq k, k_1,k_2\leq q$,
\begin{equation*}
	[L_{j_1}, L_{j_2}](\zeta), [L_j, X_k](\zeta), [X_{k_1},X_{k_2}](\zeta)\in \Span_{\C} \mleft\{ L_1(\zeta),\ldots, L_m(\zeta), X_1(\zeta),\ldots, X_q(\zeta) \mright\}.
\end{equation*}
\item For all $\zeta\in M$,
\begin{equation*}
\begin{split}
	&\Span_{\C}\mleft\{L_1(\zeta),\ldots, L_m(\zeta), X_1(\zeta),\ldots, X_q(\zeta)\mright\}\bigcap \Span_{\C}\mleft\{\Lb[1](\zeta),\ldots, \Lb[m](\zeta), X_1(\zeta),\ldots, X_q(\zeta)\mright\} 
	\\&=\Span_{\C} \mleft\{ X_1(\zeta),\ldots, X_q(\zeta)\mright\}.
\end{split}
\end{equation*}
\item The map $\zeta\mapsto \dim \Span_{\C} \mleft\{L_1(\zeta),\ldots, L_m(\zeta), X_1(\zeta),\ldots, X_q(\zeta)\mright\}$ is constant in $\zeta$.
\end{itemize}
Set $n+r:=\dim \Span_{\C} \mleft\{L_1(\zeta),\ldots, L_m(\zeta), X_1(\zeta),\ldots, X_q(\zeta)\mright\}$ (which does not depend on $\zeta$ by hypothesis)
and set $r:= \dim\Span_{\C}  \mleft\{ X_1(\zeta),\ldots, X_q(\zeta)\mright\}$ (which also does not depend on $\zeta$--see \cref{Lemma::Bundles::DimXConst}).
Then, $\forall \zeta \in M$, there exists a neighborhood $V$ of $\zeta$ and a $\ZygSpace{s+2}$ diffeomorphism $\Phi:B_{\R^r\times \C^n}(1)\rightarrow V$ such that
$\forall \xi\in B_{\R^r\times \C^n}(1)$
\begin{equation*}
	\Span_{\C} \mleft\{ \Phi^{*}L_1(\xi),\ldots, \Phi^{*} L_m(\xi), \Phi^{*} X_1(\xi),\ldots, \Phi^{*} X_q(\xi)  \mright\} = \Span_{\C} \mleft\{ \diff{t_1},\ldots, \diff{t_r},\diff{\zb[1]},\ldots, \diff{\zb[n]} \mright\},
\end{equation*}
where we have given $\R^r\times \C^n$ coordinates $(t_1,\ldots, t_r,z_1,\ldots, z_n)$.
\end{cor}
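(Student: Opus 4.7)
The plan is to build a single elliptic structure $\LVS$ out of the given vector fields and then apply \cref{Thm::Intro::MainThm}. Define, for each $\zeta\in M$,
\begin{equation*}
\LVS[\zeta]:=\Span_{\C}\mleft\{L_1(\zeta),\ldots,L_m(\zeta),X_1(\zeta),\ldots,X_q(\zeta)\mright\}.
\end{equation*}
I would first check that $\LVS$ is a $\ZygSpace{s+1}$ complex sub-bundle of $\C TM$: by the fourth hypothesis, $\dim_{\C}\LVS[\zeta]=n+r$ is constant, and the generators are $\ZygSpace{s+1}$, so near any point one can select $n+r$ of them that remain linearly independent and use the remaining $L_j,X_k$ as $\ZygSpace{s+1}$ coefficients to produce a $\ZygSpace{s+1}$ trivializing frame.

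Next I would verify the three axioms of an elliptic structure. Ellipticity, $\LVS[\zeta]+\LVSb[\zeta]=\C T_\zeta M$, is exactly the first hypothesis. For formal integrability, the second hypothesis says that $[L_{j_1},L_{j_2}]$, $[L_j,X_k]$, and $[X_{k_1},X_{k_2}]$ all lie in $\LVS$; since $\LVS$ is locally spanned by $L_j,X_k$ over $\ZygSpace{s+1}$ functions and brackets are $\ZygSpace{s+1}$-linear modulo terms in $\LVS$, this extends to arbitrary sections. For the constant rank of $\LVS\cap\LVSb$: the third hypothesis gives $\LVS[\zeta]\cap\LVSb[\zeta]=\Span_\C\{X_1(\zeta),\ldots,X_q(\zeta)\}$, which has dimension $r$ by the definition in the corollary (and by the referenced \cref{Lemma::Bundles::DimXConst} this dimension is well-defined and constant).

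Now I would invoke \cref{Thm::Intro::MainThm} applied to $(M,\LVS)$: for each $\zeta$ there is a neighborhood $V$ and a $\ZygSpace{s+2}$ diffeomorphism $\Phi:B_{\R^r\times \C^n}(1)\to V$ with
\begin{equation*}
\Span_{\C}\mleft\{\mleft(\Phi_{*}\diff{t_k}\mright)(\Phi(t,z)),\mleft(\Phi_{*}\diff{\zb[j]}\mright)(\Phi(t,z))\mright\}=\LVS[\Phi(t,z)].
\end{equation*}
Since by construction the right-hand side equals $\Span_{\C}\{L_1,\ldots,L_m,X_1,\ldots,X_q\}$ at $\Phi(t,z)$, and since $\Phi_{*}$ is a $\C$-linear isomorphism of tangent spaces whose inverse corresponds to pulling back sections by $\Phi$, applying $\Phi^{*}$ to both sides gives the desired equality
\begin{equation*}
\Span_{\C}\mleft\{\Phi^{*}L_1(\xi),\ldots,\Phi^{*}L_m(\xi),\Phi^{*}X_1(\xi),\ldots,\Phi^{*}X_q(\xi)\mright\}=\Span_{\C}\mleft\{\diff{t_1},\ldots,\diff{t_r},\diff{\zb[1]},\ldots,\diff{\zb[n]}\mright\}
\end{equation*}
for all $\xi\in B_{\R^r\times\C^n}(1)$.

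The only genuine obstacle is verifying that $\LVS$ is a $\ZygSpace{s+1}$ sub-bundle with $\ZygSpace{s+1}$ intersection $\LVS\cap\LVSb$ of the correct constant rank; everything else (ellipticity, formal integrability, and the final translation from $\Phi_{*}$ to $\Phi^{*}$) is a direct transcription of the hypotheses. Once the sub-bundle structure is in place, the corollary is a straightforward application of \cref{Thm::Intro::MainThm}.
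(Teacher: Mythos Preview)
Your proposal is correct and follows exactly the paper's approach: define $\LVS[\zeta]:=\Span_{\C}\{L_1(\zeta),\ldots,L_m(\zeta),X_1(\zeta),\ldots,X_q(\zeta)\}$, observe that the hypotheses make $\LVS$ a $\ZygSpace{s+1}$ elliptic structure, and apply \cref{Thm::Intro::MainThm}. You have simply spelled out in more detail what the paper summarizes as ``$\LVS$ is easily seen to be a $\ZygSpace{s+1}$ elliptic structure''; note also that the constant rank of $\LVS\cap\LVSb$ is not an axiom you need to verify but rather a consequence (\cref{Lemma::Bundles::DimXConst}) of the elliptic structure axioms.
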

\begin{proof}
Apply \cref{Thm::Intro::MainThm} (see, also, \cref{Thm::EMfld::MainThm}) to the bundle 
$$\LVS[\zeta]:=\Span_{\C} \mleft\{L_1(\zeta),\ldots, L_m(\zeta), X_1(\zeta),\ldots, X_q(\zeta)\mright\};$$
$\LVS$ is easily seen to be a $\ZygSpace{s+1}$ elliptic structure on $M$.  See \cref{Section::Bundles} for this terminology.
\end{proof}

We now consider a harder question.  Let $M$ be a $C^2$ manifold, and let $L_1,\ldots, L_m$ be $C^1$ complex vector fields on $M$ and $X_1,\ldots, X_q$ be $C^1$ real vector fields on $M$.

\begin{question}\label{Question::IntroMotivation}
Fix $\zeta\in M$ and $s\in (0,\infty]\cup\{\omega\}$.  When is there a neighborhood $V$ of $\zeta$ and a $C^2$ diffeomorphism $\Phi:B_{\R^r\times \C^n} (1)\rightarrow V$ such that
$\Phi^{*}L_1,\ldots, \Phi^{*}L_m, \Phi^{*}X_1,\ldots, \Phi^{*}X_q$ are $\ZygSpace{s+1}$ vector fields on $B_{\R^r\times \C^n} (1)$ and
\begin{equation*}
	\Span_{\C} \mleft\{ \Phi^{*}L_1(\xi),\ldots, \Phi^{*} L_m(\xi), \Phi^{*} X_1(\xi),\ldots, \Phi^{*} X_q(\xi)  \mright\} = \Span_{\C} \mleft\{ \diff{t_1},\ldots, \diff{t_r},\diff{\zb[1]},\ldots, \diff{\zb[n]} \mright\}.
\end{equation*}
\end{question}

When the vector fields are already known to be $\ZygSpace{s+1}$, \cref{Question::IntroMotivation} is answered by \cref{Cor::IntroMotivation}.  But \cref{Question::IntroMotivation} asks more:  it asks
when one can pick the coordinate system $\Phi$ so that the vector fields are \textit{more regular} than they were originally.  It is not always possible to do this, but in a companion paper \cite{StreetSubH}
we give necessary and sufficient conditions under which it is possible (for $s\in (1,\infty]\cup\{\omega\}$).  By answering this question in a quantitative way we provide scaling maps
adapted to sub-Riemannian geometries, which strengthen and generalize previous results in the case $m=0$ (i.e., all the vector fields are real) by Nagel, Stein, and Wainger \cite{NagelSteinWaingerBallsAndMetrics}, Tao and Wright \cite{TaoWrightLpImproving},
and the author \cite{StreetMultiparameterCCBalls}.  The case when $m=0$  was covered in the series \cite{StovallStreetI,StovallStreetII,StovallStreetIII}.

The case when $q=0$ of \cref{Question::IntroMotivation} is particularly interesting.  In this case, the coordinate chart $\Phi$ can be thought of as a holomorphic coordinate system.  When one turns to the quantitative theory
discussed above, this allows us to create holomorphic analogs of the sub-Riemannian scaling maps introduced by Nagel, Stein, and Wainger \cite{NagelSteinWaingerBallsAndMetrics}.  In this way we can study
sub-Riemannian geometries on complex manifolds, which are adapted to the complex structure.  We call these sub-Hermitian geometries.

The main technical result of this paper (\cref{Thm::Nirenberg::MainThm}) is a key step in developing the theory in  the companion work \cite{StreetSubH}.  Because of this, it is important for our future applications
that we keep track of the dependance various constants in \cref{Thm::Nirenberg::MainThm}.  For this purpose we introduce several function spaces and definitions that we would not otherwise have to.
This makes the statement of \cref{Thm::Nirenberg::MainThm} a bit more involved than it would have to be to prove the main results of this paper; though, other than some bookkeeping, the proof is no more difficult.
Because of its quantitative nature, it is possible \cref{Thm::Nirenberg::MainThm} will be more useful in future applications than the ``main results'' of this paper.

\section{Function Spaces}\label{Section::FunctionSpaces}
Let $\Omega\subset \R^n$ be a connected, open set (we will almost always be considering the case when $\Omega$ is a ball in $\R^n$).
We have the following classical spaces of functions on $\Omega$:
\begin{equation*}
\CSpace{\Omega}=\CjSpace{0}[\Omega]:=\{f:\Omega\rightarrow \C \:\big|\: f\text{ is continuous and bounded}\},\quad \CNorm{f}{\Omega}=\CjNorm{f}{0}[\Omega]:=\sup_{x\in \Omega}|f(x)|.
\end{equation*}
For $m\in \N$, (we use the convention $0\in \N$)
\begin{equation*}
\CjSpace{m}[\Omega]:=\{f\in \CSpace{\Omega}\: \big|\: \partial_x^{\alpha}f \in \CSpace{\Omega}, \forall |\alpha|\leq m\}, \quad \CjNorm{f}{m}[\Omega]:=\sum_{|\alpha|\leq m} \CNorm{\partial_x^{\alpha} f}{\Omega}.
\end{equation*}
Next we define the classical H\"older spaces.  For $s\in [0,1]$,
\begin{equation}\label{Eqn::FSEuclid::DefnHolder}
\HNorm{f}{0}{s}[\Omega]:=\CNorm{f}{\Omega} + \sup_{\substack{x,y\in \Omega \\ x\ne y}} |x-y|^{-s} |f(x)-f(y)|, \quad \HSpace{0}{s}[\Omega]:=\{f\in \CSpace{\Omega} : \HNorm{f}{0}{s}[\Omega]<\infty\}.
\end{equation}
For $m\in \N$, $s\in [0,1]$,
\begin{equation*}
\HNorm{f}{m}{s}:=\sum_{|\alpha|\leq m} \HNorm{\partial_x^{\alpha} f}{0}{s}, \quad \HSpace{m}{s}[\Omega]:=\{f\in \CjSpace{m}[\Omega] : \HNorm{f}{m}{s}[\Omega]<\infty\}.
\end{equation*}
Next, we turn to the classical Zygmund spaces.  Given $h\in \R^n$ define $\Omega_h:=\{x\in \R^n : x,x+h,x+2h\in \Omega\}$.
For $s\in (0,1]$ set
\begin{equation*}
\ZygNorm{f}{s}[\Omega]:=\HNorm{f}{0}{s/2}[\Omega]+ \sup_{\substack{0\ne h\in \R^n \\ x\in \Omega_h}} |h|^{-s} |f(x+2h)-2f(x+h)+f(x)|,
\end{equation*}
\begin{equation*}
\ZygSpace{s}[\Omega]:=\{f\in \CSpace{\Omega} : \ZygNorm{f}{s}[\Omega]<\infty\}.
\end{equation*}
For $m\in \N$, $s\in (0,1]$, set
\begin{equation*}
\ZygNorm{f}{m+s}[\Omega]:=\sum_{|\alpha|\leq m} \ZygNorm{\partial_x^{\alpha} f}{s}[\Omega], \quad \ZygSpace{m+s}[\Omega]:=\{ f\in \CjSpace{m}[\Omega] : \ZygNorm{f}{m+s}[\Omega]<\infty\}.
\end{equation*}
We set
\begin{equation*}
\ZygSpace{\infty}[\Omega] := \bigcap_{s>0}\ZygSpace{s}[\Omega],\quad  \CjSpace{\infty}[\Omega] := \bigcap_{m\in \N} \CjSpace{m}[\Omega].
\end{equation*}
It is straightforward to verify that for a ball $B$, $\ZygSpace{\infty}[B]=\CjSpace{\infty}[B]$.

Finally, we let $\ZygSpace{\omega}[\Omega]$ be the space of real analytic functions on $\Omega$.


If $\BanachSpace$ is a Banach space, we define the same spaces taking values in $\BanachSpace$ in the obvious way, and denote these spaces
by $\CSpace{\Omega}[\BanachSpace]$, $\CjSpace{m}[\Omega][\BanachSpace]$, $\HSpace{m}{s}[\Omega][\BanachSpace]$, $\ZygSpace{s}[\Omega][\BanachSpace]$,
and $\ZygSpace{\omega}[\Omega][\BanachSpace]$.
 Given a complex vector field $X$ on $\Omega$, we identify
 $X=\sum_{j=1}^n a_j(x) \frac{\partial}{\partial x_j}$ with the function $(a_1,\ldots, a_n):\Omega\rightarrow \C^n$.  It therefore makes sense to consider quantities
 like $\ZygNorm{X}{s}[\Omega][\C^n]$.
 When $\BanachSpace$ is clear from context, we sometimes suppress it and write, e.g., $\ZygNorm{f}{s}[\Omega]$ instead of $\ZygNorm{f}{s}[\Omega][\BanachSpace]$ for readability considerations.

 \begin{rmk}
 The term $\HNorm{f}{0}{s/2}$ in the definition of $\ZygNorm{f}{s}$ is somewhat unusual, and is usually replaced by $\CjNorm{f}{0}$.  However,
 if $\Omega$ is a bounded Lipschitz domain
 these two choices yield equivalent norms:
 this is a simple consequence of \cite[Theorem 1.118 (i)]{TriebelTheoryOfFunctionSpacesIII}.  The definition we have chosen is somewhat more convenient to work with.
 \end{rmk}

 \begin{defn}
 For $s\in (0,\infty]\cup \{\omega\}$, we say $f\in \ZygSpaceloc{s}[\Omega]$ if $\forall x\in \Omega$, there exists an open ball $B\subseteq \Omega$, centered at $x$, with $f\big|_{B}\in \ZygSpace{s}[B]$.
 \end{defn}

  \begin{rmk}\label{Rmk::FuncEudlid::ZygAndHolderTheSame}
 If $\Omega$ is a bounded Lipschitz domain, $m\in \N$, $s\in (0,1)$, the spaces $\HSpace{m}{s}[\Omega]$ and $\ZygSpace{m+s}[\Omega]$ are the same--see \cite[Theorem 1.118 (i)]{TriebelTheoryOfFunctionSpacesIII}.
 However, if $s\in \{0,1\}$, these spaces differ.
 As a consequence for \textit{any} open set $\Omega\subseteq \R^n$, for $m\in \N$, $s\in (0,1)$, we have $\ZygSpaceloc{m+s}[\Omega]$ equals the space of functions which are locally
 in $\HSpace{m}{s}$.
 \end{rmk}

 \begin{rmk}\label{Rmk::FuncEudlid::TalkAboutLoc}
 $\ZygSpace{\omega}$ and $\ZygSpaceloc{\omega}$ denote the same thing.  However, for $s\in (0,\infty]$, $\ZygSpace{s}$ and $\ZygSpaceloc{s}$ are not the same.
 Since for any ball $B$ we have $\ZygSpace{\infty}[B]=\CjSpace{\infty}[B]$, the space $\ZygSpacemap{\infty}[\Omega]$ corresponds with the usual space of smooth functions on $\Omega$.
 \end{rmk} 
	
	\subsection{Manifolds}
In this paper we use $\ZygSpace{s}$ manifolds; the definition is exactly what one would expect, though a little care is needed due to the subtleties of Zygmund spaces.\footnote{For example, one must define the Zygmund maps in the right way to ensure that the composition of two Zygmund maps is again a Zygmund map.}  We present the relevant (standard) definitions here.

\begin{defn}
Let $U_1\subseteq \R^{n_1}$ and $U_2\subseteq \R^{n_2}$ be open sets.  For $s\in (0,\infty]\cup \{\omega\}$, we say $f:U_1\rightarrow U_2$ is a $\ZygSpacemap{s}$ map if $f\in \ZygSpaceloc{s}[U_1][\R^{n_2}]$.
\end{defn}

\begin{lemma}\label{Lemma::FuncManifold::ComposeEuclid}
Let $U_1\subseteq \R^{n_1}$, $U_2\subseteq \R^{n_2}$, and $U_3\subseteq \R^{n_3}$ be open sets.
For $s_1\in (0,\infty]\cup\{\omega\}$, $s_2 \geq s_1$, $s_2\in (1,\infty]\cup \{\omega\}$, if $f_1:U_1\rightarrow U_2$ is a $\ZygSpacemap{s_1}$ map and $f_2:U_2\rightarrow U_3$ is a $\ZygSpacemap{s_2}$ map, then $f_2\circ f_1:U_1\rightarrow U_3$ is a $\ZygSpacemap{s_1}$ map.
\end{lemma}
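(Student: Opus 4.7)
The plan is to reduce everything to local estimates on balls, dispose of the analytic and $C^\infty$ cases first, and prove the main finite case by induction on $\lfloor s_1 \rfloor$. Being a $\ZygSpacemap{s_1}$ map is a local property, so it suffices to show that for each $x_0 \in U_1$ there exist balls $B \subseteq U_1$ centered at $x_0$ and $B' \subseteq U_2$ with $f_1(B) \subseteq B'$ such that $f_2 \circ f_1 \in \ZygSpace{s_1}[B][\R^{n_3}]$. The case $s_1 = \omega$ (which forces $s_2 = \omega$) is the classical fact that composition preserves real-analyticity; when $s_2 = \omega$ but $s_1$ is finite, I expand $f_2$ in a convergent power series centered at $f_1(x_0)$ and invoke the algebra property (\SSZygIsAlgebra) to show the partial sums converge in $\ZygSpace{s_1}[B]$. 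The smooth case $s_2 = \infty$ follows from a Fa\`a di Bruno calculation, since all derivatives of $f_2$ are bounded on compact subsets of $U_2$.

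For the main finite case $s_1, s_2 \in (0, \infty)$, I induct on $\lfloor s_1 \rfloor$. In the base case $s_1 \in (0, 1]$, the hypothesis $s_2 > 1$ yields $f_2 \in \CjSpace{1}$ with $Df_2 \in \ZygSpace{s_2 - 1}$. When $s_1 \in (0, 1)$, the identification of local Zygmund and H\"older spaces (\cref{Rmk::FuncEudlid::ZygAndHolderTheSame}) reduces the claim to the mean-value bound $|f_2(f_1(x)) - f_2(f_1(y))| \leq \|Df_2\|_\infty |f_1(x) - f_1(y)|$. At the delicate endpoint $s_1 = 1$, I estimate the second-order difference directly: writing $y_j := f_1(x + jh)$ and Taylor expanding $f_2$ around $y_1$,
\[
f_2(y_2) - 2 f_2(y_1) + f_2(y_0) = Df_2(y_1) \cdot (y_2 - 2 y_1 + y_0) + \text{remainder},
\]
where the leading term is bounded by $\|Df_2\|_\infty \cdot \ZygNorm{f_1}{1}[B] \cdot |h|$ and the remainder is controlled via the H\"older norm of $Df_2$ (finite since $s_2 - 1 > 0$).

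For the inductive step $s_1 \in (k, k+1]$ with $k \geq 1$, applying $\partial_j$ to the composition yields
\[
\partial_j(f_2 \circ f_1) = \sum_i \bigl[(\partial_i f_2) \circ f_1\bigr] \cdot \partial_j f_1^i,
\]
and I aim to place each factor in $\ZygSpace{s_1 - 1}$. The factor $\partial_j f_1^i$ lies in $\ZygSpace{s_1 - 1}$ by definition of $\ZygSpace{s_1}$. For $(\partial_i f_2) \circ f_1$: when $s_2 - 1 > 1$, I apply the inductive hypothesis to the pair $(s_1 - 1, s_2 - 1)$, whose conditions $s_2 - 1 \geq s_1 - 1$ and $s_2 - 1 > 1$ are satisfied; when instead $s_2 \in (1, 2]$ (which forces $s_1 \in (1, 2]$ and $k = 1$), I use that $\partial_i f_2 \in \HSpace{0}{s_2 - 1}$ and $f_1 \in \CjSpace{1}$ is Lipschitz on $B$, giving $(\partial_i f_2) \circ f_1 \in \HSpace{0}{s_2 - 1} \subseteq \ZygSpace{s_1 - 1}$ since $s_2 - 1 \geq s_1 - 1$. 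The algebra property of Zygmund spaces (\SSZygIsAlgebra) then combines these factors to yield $\partial_j(f_2 \circ f_1) \in \ZygSpace{s_1 - 1}$, hence $f_2 \circ f_1 \in \ZygSpace{s_1}$.

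The main obstacle is the endpoint $s_1 = 1$, where Zygmund and H\"older regularity no longer coincide and the second-order-difference estimate must exploit both the Taylor structure of $f_2$ and the H\"older bound on $Df_2$. A secondary bookkeeping point is the narrow window $s_2 \in (1, 2]$ in the inductive step, which has to be handled by a direct H\"older composition rather than by the inductive hypothesis itself.
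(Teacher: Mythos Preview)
Your approach differs from the paper's, which gives essentially no proof: it defers the finite case to \cref{Lemma::FuncSpaceRev::Composition} (itself cited out to \cite{StovallStreetII}) and calls the $\infty,\omega$ cases obvious. Your self-contained induction is the right shape, but there is a real gap at the integer endpoints.

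At $s_1 = 1$ with $1 < s_2 \le 2$, the remainder bound does not close as written. The Taylor remainder is $O(|y_j - y_1|^{s_2})$ (via the H\"older regularity of $Df_2$), while the only first-difference control built into $\ZygNorm{f_1}{1}$ is the $\HNorm{\cdot}{0}{1/2}$ term, giving $|y_j - y_1| = O(|h|^{1/2})$ and hence remainder $O(|h|^{s_2/2})$---not $O(|h|)$ when $s_2 < 2$. What is missing is the classical fact that $\ZygSpace{1}$ functions are log-Lipschitz, $|f_1(x+h)-f_1(x)| \le C|h|\log(e/|h|)$; with this the remainder becomes $O\bigl((|h|\log(e/|h|))^{s_2}\bigr) = o(|h|)$ and the estimate closes. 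Your phrase ``controlled via the H\"older norm of $Df_2$'' hides exactly this point. The same issue recurs in your inductive step at $s_1 = s_2 = 2$: there $\partial_i f_2 \in \ZygSpace{1}$, which is \emph{not} contained in $\HSpace{0}{1}$ (the Zygmund class strictly contains Lipschitz), so the assertion ``$\partial_i f_2 \in \HSpace{0}{s_2-1}$'' fails at that endpoint, and composing a merely-$\ZygSpace{1}$ outer function with a Lipschitz inner map does not directly yield $\ZygSpace{1}$. One again needs a direct second-difference argument using log-Lipschitz moduli rather than the H\"older shortcut.
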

\begin{proof}
When $s_1\in \{\infty,\omega\}$, the result is obvious.  For $s_1\in (0,\infty)$,
because the notion of being a $\ZygSpacemap{s}$ map is local, it suffices to check $f_1\circ f_2$ is in $\ZygSpace{s_1}$ on sufficiently small balls.  This is described in \cref{Lemma::FuncSpaceRev::Composition}, below.
\end{proof}

\begin{lemma}\label{Lemma::FuncManifold::InverseEuclid}
For $s\in (1,\infty]\cup\{\omega\}$ if $f:U_1\rightarrow U_2$ is a $\ZygSpacemap{s}$ map which is also a $C^1$ diffeomorphism, then $f^{-1}: U_2\rightarrow U_1$ is a $\ZygSpacemap{s}$ map.
\end{lemma}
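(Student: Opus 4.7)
The plan is to bootstrap the regularity of $g:=f^{-1}$ using the identity $Dg(y) = [Df(g(y))]^{-1}$ together with \cref{Lemma::FuncManifold::ComposeEuclid}. The cases $s = \omega$ and $s = \infty$ follow from the classical real-analytic and smooth inverse function theorems, so I focus on $s \in (1, \infty)$. Since the conclusion is local, I fix $y_0 \in U_2$ and work on a small neighborhood of $y_0$; after shrinking $U_1$ and $U_2$, I may assume $g$ is uniformly Lipschitz with image in a compact subset of $U_1$ and that $Df$ takes values in a fixed compact subset of $GL_n(\R)$.

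From $f \in \ZygSpaceloc{s}[U_1]$ with $s > 1$, we get $Df \in \ZygSpaceloc{s-1}[U_1]$ with values in $GL_n$. Since matrix inversion $\iota \colon GL_n \to GL_n$ is real-analytic, \cref{Lemma::FuncManifold::ComposeEuclid} applied to $\iota \circ Df$ yields $(Df)^{-1} \in \ZygSpaceloc{s-1}[U_1]$. The relation $Dg = (Df)^{-1} \circ g$ is then a composition suited to \cref{Lemma::FuncManifold::ComposeEuclid} once enough regularity on $g$ is available.

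For $s \in (1, 2)$, \cref{Rmk::FuncEudlid::ZygAndHolderTheSame} identifies $\ZygSpaceloc{s-1}$ locally with $C^{0,s-1}_{\mathrm{loc}}$, so the direct estimate $|(Df)^{-1}(g(y_1)) - (Df)^{-1}(g(y_2))| \lesssim |g(y_1) - g(y_2)|^{s-1} \lesssim |y_1-y_2|^{s-1}$ (using the Lipschitz bound on $g$) gives $Dg \in \ZygSpaceloc{s-1}[U_2]$, hence $g \in \ZygSpaceloc{s}[U_2]$. For $s > 2$, I iterate \cref{Lemma::FuncManifold::ComposeEuclid}: if $g \in \ZygSpaceloc{t}[U_2]$ with $1 \leq t \leq s-1$, then the lemma with outer $(Df)^{-1} \in \ZygSpaceloc{s-1}$ (of regularity $s-1 > 1$) and inner $g$ of regularity $t$ produces $Dg \in \ZygSpaceloc{t}[U_2]$, hence $g \in \ZygSpaceloc{t+1}[U_2]$. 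Starting from an appropriate $t_0 \in (1, 2]$ supplied by the $s \in (1,2)$ case (applied to $f \in \ZygSpaceloc{\tilde s}[U_1]$ for some $\tilde s \in (1, 2)$ chosen so that $s - \tilde s \in \N$) and iterating finitely many times reaches $t = s$.

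The main obstacle is the borderline case $s = 2$, where $(Df)^{-1}$ sits exactly in $\ZygSpaceloc{1}$ and \cref{Lemma::FuncManifold::ComposeEuclid} does not apply (its hypothesis $s_2 > 1$ is violated). I handle this separately by a direct second-difference estimate on $Dg = (Df)^{-1} \circ g$, using the Lipschitz bound on $g$ together with the improved regularity $g \in \ZygSpaceloc{s'}[U_2]$ for every $s' < 2$ (supplied by the $s \in (1, 2)$ case applied to $f \in \ZygSpaceloc{\tilde s}[U_1]$ for every $\tilde s < 2$) to verify the $\ZygSpace{1}$ bound on $Dg$ by hand.
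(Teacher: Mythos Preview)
Your bootstrap via $Dg=(Df)^{-1}\circ g$ and \cref{Lemma::FuncManifold::ComposeEuclid} is sound and genuinely different from the paper's route: for $s\in(1,\infty)$ the paper simply invokes \cref{Lemma::FuncSpaceRev::Inverse}, a quantitative inverse-regularity statement whose proof is deferred to a companion paper. Both arguments ultimately rest on black boxes from that companion work (yours on the composition estimate behind \cref{Lemma::FuncManifold::ComposeEuclid}, the paper's on the inverse estimate), but your approach has the conceptual merit of deriving inverse regularity from composition regularity, at the price of treating the endpoint $s=2$ by hand.

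One bookkeeping slip: your choice of $\tilde s\in(1,2)$ with $s-\tilde s\in\N$ is impossible when $s\geq 3$ is an integer. The repair is immediate: iterate from any $t_0\in(1,2)$ until the regularity of $g$ reaches or exceeds $s-1$ (each step is licensed since the outer function $(Df)^{-1}$ has regularity $s-1>1$ and $s-1\geq t$ throughout), then apply \cref{Lemma::FuncManifold::ComposeEuclid} once more with $s_1=s_2=s-1$ to obtain $Dg\in\ZygSpaceloc{s-1}$ and hence $g\in\ZygSpaceloc{s}$. Your sketch for $s=2$ is also correct in outline: writing $F=(Df)^{-1}\in\ZygSpaceloc{1}$ and splitting
\[
F(g(x+2t))-2F(g(x+t))+F(g(x))
=\bigl[F(g(x+2t))-F(2g(x+t)-g(x))\bigr]+\bigl[F(2g(x+t)-g(x))-2F(g(x+t))+F(g(x))\bigr],
\]
the second bracket is a genuine second difference of $F$ with step $g(x+t)-g(x)$, hence $\lesssim|t|$ by the $\ZygSpace{1}$ bound on $F$ and the Lipschitz bound on $g$; the first bracket is $\lesssim|\Delta_t^2 g(x)|^{\alpha}\lesssim|t|^{\alpha s'}$ for any $\alpha<1$, $s'<2$, and one may choose $\alpha s'>1$.
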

\begin{proof}
For $s\in \{\infty,\omega\}$ this is standard.  For $s\in (1,\infty)$ it suffices to check $f^{-1}$ is in $\ZygSpace{s}$ when restricted to sufficiently small balls, because the result is local.  This is described in \cref{Lemma::FuncSpaceRev::Inverse}, below.
\end{proof}

\begin{defn}\label{Defn::FuncMfld::Atlas}
Fix $s\in (1,\infty]\cup\{\omega\}$ and let $M$ be a topological space.  We say $\{(\phi_\alpha, V_\alpha) : \alpha\in \sI\}$ (where $\sI$ is some index set) is a $\ZygSpace{s}$ atlas of dimension $n$ 
if $\{V_\alpha:\alpha\in \sI\}$ is an open cover for $M$, $\phi_{\alpha}:V_\alpha\rightarrow U_\alpha$ is a homeomorphism where $U_\alpha\subseteq \R^n$ is open,
and $\phi_{\beta}\circ \phi_{\alpha}^{-1} : \phi_{\alpha} (V_\beta\cap V_\alpha)\rightarrow U_\beta$ is a $\ZygSpacemap{s}$ map.
\end{defn}

\begin{defn}
For $s\in (1,\infty]\cup\{\omega\}$ a $\ZygSpace{s}$ manifold of dimension $n$ is a paracompact\footnote{We do not use paracompactness in this paper, so the reader who wishes to define manifolds without requiring paracompactness is free to do this throughout this paper.}
topological space $M$ endowed with a $\ZygSpace{s}$ atlas of dimension $n$.
\end{defn}

\begin{rmk}\label{Rmk::FuncMfld::OpenSetsAreManifolds}
Let $U\subseteq \R^n$ be an open set.  $U$ is naturally a $\ZygSpace{\omega}$ manifold of dimension $n$; where we take the atlas consisting of a single coordinate chart (namely, the identity map $U\rightarrow U$).
We henceforth give open sets this manifold structure.
\end{rmk}

\begin{rmk}
In particular, a $\ZygSpace{s}$ manifold is a $C^m$ manifold, for any $m<s$.  In light of \cref{Rmk::FuncEudlid::TalkAboutLoc}, $\ZygSpace{\infty}$ and $\CjSpace{\infty}$ manifolds are the same.
\end{rmk}

\begin{defn}
For $s\in (0,\infty]\cup\{\omega\}$ and let $M$ and $N$ be $\ZygSpace{s+1}$ manifolds with $\ZygSpace{s+1}$ atlases $\{(\phi_{\alpha}, V_\alpha)\}$ and $\{(\psi_\beta, W_\beta)\}$, respectively.
We say $f:M\rightarrow N$ is a $\ZygSpacemap{s+1}$ map if $\psi_{\beta}\circ f\circ \phi_{\alpha}^{-1}$ is a $\ZygSpacemap{s+1}$ map, $\forall \alpha,\beta$.
\end{defn}

\begin{lemma}\label{Lemma::FuncMfld::CompositionOfMaps}
For $s\in (0,\infty]\cup \{\omega\}$, suppose $M_1$, $M_2$, and $M_3$ are $\ZygSpace{s+1}$ manifolds and $f_1:M_1\rightarrow M_2$ and $f_2:M_2\rightarrow M_3$ are $\ZygSpacemap{s+1}$ maps.
Then, $f_2\circ f_1:M_1\rightarrow M_3$ is a $\ZygSpacemap{s+1}$ map.
\end{lemma}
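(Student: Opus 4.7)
The plan is to unwind the definition and reduce to the Euclidean composition result \cref{Lemma::FuncManifold::ComposeEuclid}. Fix $\ZygSpace{s+1}$ atlases $\{(\phi_\alpha, V_\alpha)\}$ on $M_1$, $\{(\psi_\beta, W_\beta)\}$ on $M_2$, and $\{(\theta_\gamma, Z_\gamma)\}$ on $M_3$. By definition it suffices to prove that for every pair $\alpha,\gamma$, the composite
\begin{equation*}
\theta_\gamma \circ (f_2\circ f_1) \circ \phi_\alpha^{-1}
\end{equation*}
is a $\ZygSpacemap{s+1}$ map on its (open) domain of definition. Since $\ZygSpaceloc{s+1}$ is a local property, I would fix an arbitrary point $p\in V_\alpha$ with $f_2(f_1(p))\in Z_\gamma$ and verify the property on a small ball around $\phi_\alpha(p)$.

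Next, I would use continuity of $f_1$ and $f_2$ (they are at least $C^1$ since $s+1>1$) to pick an index $\beta$ with $f_1(p)\in W_\beta$ and then shrink to an open neighborhood $V'\subseteq V_\alpha$ of $p$ so that $f_1(V')\subseteq W_\beta$ and $f_2(f_1(V'))\subseteq Z_\gamma$. On $\phi_\alpha(V')$, one has the factorization
\begin{equation*}
\theta_\gamma \circ f_2 \circ f_1 \circ \phi_\alpha^{-1}
= \bigl(\theta_\gamma \circ f_2 \circ \psi_\beta^{-1}\bigr) \circ \bigl(\psi_\beta \circ f_1 \circ \phi_\alpha^{-1}\bigr),
\end{equation*}
where by hypothesis each of the two Euclidean compositions on the right is a $\ZygSpacemap{s+1}$ map on its domain.

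Finally, I would invoke \cref{Lemma::FuncManifold::ComposeEuclid} with $s_1=s_2=s+1$; the hypothesis $s>0$ forces $s+1\in(1,\infty]\cup\{\omega\}$, so the regularity constraint $s_2\in(1,\infty]\cup\{\omega\}$ is satisfied, and the conclusion gives that the composite is a $\ZygSpacemap{s+1}$ map on $\phi_\alpha(V')$. Since $p$ was arbitrary, this yields membership in $\ZygSpaceloc{s+1}$ on all of $\phi_\alpha(V_\alpha \cap (f_2\circ f_1)^{-1}(Z_\gamma))$, completing the proof. I do not expect any real obstacle here beyond the bookkeeping of shrinking the domain so that each chart-composition is defined and lands in the intended target chart.
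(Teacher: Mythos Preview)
Your proof is correct and is essentially the same approach as the paper's: the paper simply writes ``This follows from \cref{Lemma::FuncManifold::ComposeEuclid},'' and your argument is precisely the routine unwinding of definitions needed to make that citation work.
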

\begin{proof}
This follows from \cref{Lemma::FuncManifold::ComposeEuclid}.
\end{proof}

\begin{lemma}\label{Lemma::FuncManfiold::InverseMap}
Suppose $s\in (0,\infty]\cup\{\omega\}$, $M_1$ and $M_2$ are $\ZygSpace{s+1}$ manifolds, and $f:M_1\rightarrow M_2$ is a $\ZygSpacemap{s+1}$ map which is also a $C^1$ diffeomorphism.  Then
$f^{-1}:M_2\rightarrow M_1$ is a $\ZygSpacemap{s+1}$ map.
\end{lemma}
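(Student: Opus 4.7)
The plan is to reduce the manifold statement to its Euclidean counterpart (\cref{Lemma::FuncManifold::InverseEuclid}) by working chart-by-chart, exactly in the spirit of \cref{Lemma::FuncMfld::CompositionOfMaps}. Being a $\ZygSpacemap{s+1}$ map is local in the target and the source, so it suffices to check, for each point $q\in M_2$ with $p:=f^{-1}(q)\in M_1$, that some pair of charts around $p$ and $q$ expresses $f^{-1}$ as a $\ZygSpacemap{s+1}$ map between open subsets of Euclidean space.

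First, I would pick charts $(\phi_\alpha,V_\alpha)$ around $p$ in the atlas of $M_1$ and $(\psi_\beta,W_\beta)$ around $q$ in the atlas of $M_2$. Shrinking $V_\alpha$ if necessary (using continuity of $f$ and the fact that $f$ is a homeomorphism, being a $C^1$ diffeomorphism), I may assume $f(V_\alpha)\subseteq W_\beta$, and likewise, shrinking $W_\beta$, I may assume $f^{-1}(W_\beta)\subseteq V_\alpha$. Define the Euclidean representative
\begin{equation*}
F := \psi_\beta\circ f\circ \phi_\alpha^{-1}: \phi_\alpha(V_\alpha)\longrightarrow \psi_\beta(W_\beta).
\end{equation*}
By hypothesis $F$ is a $\ZygSpacemap{s+1}$ map between open subsets of $\R^n$, and since $\phi_\alpha,\psi_\beta$ are homeomorphisms (they are in fact $\ZygSpace{s+1}$ coordinate charts, hence $C^1$ diffeomorphisms onto their images because $s+1>1$) and $f$ is a $C^1$ diffeomorphism, $F$ is also a $C^1$ diffeomorphism.

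Now I would invoke \cref{Lemma::FuncManifold::InverseEuclid}, which applies since $s+1\in(1,\infty]\cup\{\omega\}$: it yields that $F^{-1}:\psi_\beta(W_\beta)\to\phi_\alpha(V_\alpha)$ is a $\ZygSpacemap{s+1}$ map. But
\begin{equation*}
\phi_\alpha\circ f^{-1}\circ \psi_\beta^{-1} = F^{-1},
\end{equation*}
so this is exactly the local coordinate expression of $f^{-1}$ in the chosen charts. Since the point $q\in M_2$ and the charts from the atlases of $M_1$ and $M_2$ were arbitrary (any two charts on overlap differ by a $\ZygSpacemap{s+1}$ transition, and composition with such a transition preserves the $\ZygSpacemap{s+1}$ property by \cref{Lemma::FuncManifold::ComposeEuclid}), we conclude that $f^{-1}$ is a $\ZygSpacemap{s+1}$ map.

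No step in this argument is truly hard: the only mild subtlety is the bookkeeping needed to ensure that the local Euclidean statement is applicable — specifically, arranging $f(V_\alpha)\subseteq W_\beta$ and $f^{-1}(W_\beta)\subseteq V_\alpha$ so that both $F$ and $F^{-1}$ are well-defined as maps between the same pair of open Euclidean sets — and verifying independence of the conclusion from the chosen charts via \cref{Lemma::FuncManifold::ComposeEuclid}. The substantive analytic content lies entirely in \cref{Lemma::FuncManifold::InverseEuclid}, which the proof treats as a black box.
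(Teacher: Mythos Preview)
Your proposal is correct and takes essentially the same approach as the paper: both reduce the statement to the Euclidean case via \cref{Lemma::FuncManifold::InverseEuclid}. The paper's proof is a one-liner citing that lemma, whereas you have spelled out the routine chart-level bookkeeping that makes the reduction precise.
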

\begin{proof}
This follows from \cref{Lemma::FuncManifold::InverseEuclid}.
\end{proof}

\begin{defn}
Suppose $s\in (0,\infty]\cup\{\omega\}$, and $M_1$ and $M_2$ are $\ZygSpace{s+1}$ manifolds.  We say $f:M_1\rightarrow M_2$ is a $\ZygSpacediff{s+1}$ diffeomorphism if $f:M_1\rightarrow M_2$ is
invertible and $f:M_1\rightarrow M_2$ and $f^{-1}:M_2\rightarrow M_1$ are $\ZygSpacemap{s+1}$ maps.
\end{defn}

\begin{rmk}\label{Rmk::FuncMflds::CoordChartsAreDiffeo}
For $s\in (0,\infty]\cup\{\omega\}$, if  $M$ is a $\ZygSpace{s+1}$ manifold with with $\ZygSpace{s+1}$ atlas $\{(\phi_\alpha,V_\alpha)\}$, as described in \cref{Defn::FuncMfld::Atlas}, then the maps
$\phi_{\alpha}:V_{\alpha}\rightarrow U_\alpha$ are $\ZygSpacediff{s+1}$ diffeomorphisms, where $U_\alpha$ is given the $\ZygSpace{\omega}$ manifold structure described in \cref{Rmk::FuncMfld::OpenSetsAreManifolds}.
This follows from \cref{Lemma::FuncManfiold::InverseMap}.
\end{rmk}

Because a $\ZygSpace{s+1}$ manifold is a $C^1$ manifold, it makes sense to talk about vector fields on such a manifold.

\begin{defn}
For $s\in (0,\infty]\cup \{\omega\}$ let $M$ be a $\ZygSpace{s+1}$ manifold of dimension $n$ with $\ZygSpace{s+1}$ atlas $\{(\phi_{\alpha}, V_\alpha)\}$; here $\phi_{\alpha}:V_{\alpha}\rightarrow U_\alpha$ 
is a $\ZygSpacediff{s+1}$ diffeomorphism
and
$U_\alpha\subseteq \R^n$ is open.  We say a vector field $X$ on $M$ is a $\ZygSpace{s}$ vector field if $(\phi_{\alpha})_{*} X\in \ZygSpaceloc{s}[U_\alpha][\R^n]$, $\forall \alpha$.
\end{defn}

\section{Bundles}\label{Section::Bundles}
In this section, we include the standard definitions we use concerning bundles.  In the smooth case, these definitions are contained in \cite{TrevesHypoanalyticStructures,BerhanuCordaroHounieAnIntroductionToInvolutive},
and we follow these sources.
Fix $s\in (0,\infty]\cup\{\omega\}$, and let $M$ be a $\ZygSpace{s+2}$ manifold.
 We let $\C TM$ denote the complexified tangent space of $M$:  $\C T_{\zeta} M := TM \otimes_{\R} \C$ 
(see \cref{Appendix::LinearAlgebra} for some comments on the complexification of real vector spaces).

\begin{defn}
A $\ZygSpace{s+1}$ sub-bundle $\LVS$ of $\C TM$ of rank $m\in \N$ is a disjoint union
\begin{equation*}
	\LVS = \bigcup_{\zeta \in M} \LVS[\zeta] \subseteq \C T M
\end{equation*}
such that:
\begin{itemize}
	\item $\forall \zeta\in M$, $\LVS[\zeta]$ is an $m$-dimensional vector subspace of $\C T_\zeta M$.
	\item $\forall \zeta_0\in M$, there exists an open neighborhood $U\subseteq M$ of $\zeta_0$ and a finite collection of complex $\ZygSpace{s+1}$ vector fields
	$L_1,\ldots, L_K$ on $U$, such that $\forall \zeta\in U$,
	\begin{equation*}
		\Span_\C \{ L_1(\zeta),\ldots, L_K(\zeta) \} = \LVS[\zeta].
	\end{equation*}
\end{itemize}
\end{defn}

\begin{defn}
For a $\ZygSpace{s+1}$ sub-bundle $\LVS$ of $\C TM$, we define $\LVSb$ by $\LVSb[\zeta]= \{ \overline{z} : z\in \LVS[\zeta]\}$.  It is easy to see
that $\LVSb$ is a $\ZygSpace{s+1}$ sub-bundle of $\C TM$.
\end{defn}

\begin{defn}
Let $W\subseteq M$ be open, $L$ a complex vector field on $W$, and $\LVS$ a $\ZygSpace{s+1}$ sub-bundle of $\C TM$.
We say $L$ is a section of $\LVS$ over $W$ if $\forall \zeta\in W$, $L(\zeta)\in \LVS[\zeta]$.
We say $L$ is a $\ZygSpace{s+1}$ section of $\LVS$ over $W$ is if $L$ is a section of $\LVS$ over $W$ and $L$ is a $\ZygSpace{s+1}$ complex vector field on $W$.
\end{defn}

\begin{defn}
Let $\LVS$ be a $\ZygSpace{s+1}$ sub-bundle of $\C TM$.  We say $\LVS$ is a $\ZygSpace{s+1}$ formally integrable structure if the following holds.
For all $W\subseteq M$ open, and all $\ZygSpace{s+1}$ 
sections $L_1$ and $L_2$ of $\LVS$ over $W$,
we have $[L_1,L_2]$ is a section of $\LVS$ over $W$.
\end{defn}

\begin{defn}\label{Defn::Bundles::EllipticStructure}
Let $\LVS$ be a $\ZygSpace{s+1}$ formally integrable structure on $M$.  We say $\LVS$ is a $\ZygSpace{s+1}$ elliptic structure if 
$\LVS[\zeta]+\LVSb[\zeta]=\C T_{\zeta} M$, $\forall \zeta\in M$.
\end{defn}

\begin{lemma}\label{Lemma::Bundles::DimXConst}
Let $\LVS$ be an elliptic structure on $M$.  Then, the map $\zeta\mapsto \dim (\LVS[\zeta]\cap \LVSb[\zeta])$ is constant, $M\rightarrow \N$.
\end{lemma}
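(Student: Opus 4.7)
The plan is to reduce this to pure finite-dimensional linear algebra, applied pointwise. The key observation is that under the hypotheses, all three of $\dim_\C \LVS[\zeta]$, $\dim_\C \LVSb[\zeta]$, and $\dim_\C (\LVS[\zeta]+\LVSb[\zeta])$ are constant in $\zeta$, so the standard formula $\dim(V+W)=\dim V+\dim W-\dim(V\cap W)$ forces $\dim(\LVS[\zeta]\cap \LVSb[\zeta])$ to be constant as well.

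More concretely, first I would observe that because $\LVS$ is a $\ZygSpace{s+1}$ sub-bundle of $\C TM$ of some rank $m\in \N$ (by the definition of sub-bundle given above), $\dim_\C \LVS[\zeta]=m$ is independent of $\zeta\in M$. Next I would check that $\dim_\C \LVSb[\zeta]=m$ as well: the fiberwise complex-conjugation map $v\mapsto \overline{v}$ on $\C T_\zeta M=T_\zeta M\otimes_\R \C$ is an $\R$-linear, conjugate-$\C$-linear bijection, so if $v_1,\ldots,v_m$ is a $\C$-basis of $\LVS[\zeta]$, then $\overline{v}_1,\ldots, \overline{v}_m$ is a $\C$-basis of $\LVSb[\zeta]$. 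Thus $\dim_\C \LVSb[\zeta]=m$ independently of $\zeta$.

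Finally, letting $n$ denote the (constant) dimension of $M$ as a $\ZygSpace{s+2}$ manifold, we have $\dim_\C \C T_\zeta M=n$ for every $\zeta\in M$. The ellipticity hypothesis $\LVS[\zeta]+\LVSb[\zeta]=\C T_\zeta M$ (\cref{Defn::Bundles::EllipticStructure}) together with the dimension formula for the sum and intersection of two finite-dimensional complex vector subspaces gives
\begin{equation*}
n=\dim_\C\bigl(\LVS[\zeta]+\LVSb[\zeta]\bigr)=\dim_\C \LVS[\zeta]+\dim_\C \LVSb[\zeta]-\dim_\C\bigl(\LVS[\zeta]\cap \LVSb[\zeta]\bigr)=2m-\dim_\C\bigl(\LVS[\zeta]\cap \LVSb[\zeta]\bigr),
\end{equation*}
so $\dim_\C(\LVS[\zeta]\cap \LVSb[\zeta])=2m-n$ for every $\zeta\in M$, which is the desired constant.

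There is no serious obstacle here; the statement is essentially a bookkeeping consequence of the rank assumption in the definition of a sub-bundle and of the pointwise ellipticity assumption. The only point requiring a brief justification is that the complex dimension of $\LVSb[\zeta]$ equals that of $\LVS[\zeta]$, for which one uses that fiberwise conjugation sends a $\C$-basis to a $\C$-basis (it is additive and sends $\lambda v$ to $\overline{\lambda}\,\overline{v}$, preserving $\C$-linear independence).
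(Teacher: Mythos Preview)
Your argument is correct and is essentially the same as the paper's: both use the dimension formula $\dim(\LVS[\zeta]+\LVSb[\zeta])+\dim(\LVS[\zeta]\cap\LVSb[\zeta])=2\dim\LVS[\zeta]$ (the paper isolates this as \cref{Lemma::AppendCR::dimFormula}, with the same conjugation reasoning you give), together with the constancy of $\dim\LVS[\zeta]$ from the sub-bundle definition and $\dim(\LVS[\zeta]+\LVSb[\zeta])=\dim M$ from ellipticity.
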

\begin{proof}
By \cref{Lemma::AppendCR::dimFormula}, $\dim (\LVS[\zeta]\cap \LVSb[\zeta])= 2\dim(\LVS[\zeta])-\dim (\LVS[\zeta]+\LVSb[\zeta])$.
The definition of a sub-bundle implies $\zeta\mapsto \dim(\LVS[\zeta])$ is constant, and the definition of an elliptic structure implies
$\dim (\LVS[\zeta]+\LVSb[\zeta])=\dim \C T_\zeta M = \dim M$, $\forall \zeta\in M$.
The result follows.
\end{proof}

Let $\LVS$ be an elliptic structure on $M$.  Set $r:=\dim  (\LVS[\zeta]\cap \LVSb[\zeta])$ and $n+r:=\dim (\LVS[\zeta])$.  By the definition of a sub-bundle
and \cref{Lemma::Bundles::DimXConst}, $n$ and $r$ are constant in $\zeta$. 

\begin{defn}
Let $\LVS$ be a elliptic structure on $M$ and let $n$ and $r$ be as above.  We say $\LVS$ is an elliptic structure of dimension $(r,n)$.
\end{defn}

\begin{rmk}\label{Rmk::Bundles::DimensionMfld}
Let $\LVS$ be an elliptic structure of dimension $(r,n)$.  Then,  $\dim M = \dim \C T_{\zeta} M = \dim (\LVS[\zeta]+\LVSb[\zeta]) =2n+r$,
where in the last equality we have used \cref{Lemma::AppendCR::dimFormula}.
\end{rmk}

\section{E-manifolds}\label{Section::EMflds}
It is convenient to state our results in a category of manifolds which contain real manifolds and complex manifolds as full sub-categories.
We define these manifolds here, and call them E-manifolds.\footnote{The manifold structure we discuss here is well-known to experts, but we could not find a name for the category of such manifolds,
and decided to call them E-manifolds for lack of a better name.}

\begin{rmk}
``E'' in the name E-manifolds stands for ``elliptic''.  Indeed, using the terminology of \cite[Definition I.2.3]{TrevesHypoanalyticStructures},
a complex manifold is a manifold endowed with a complex structure, a CR-manifold is a manifold endowed with a CR structure, and (as we will see in \cref{Thm::EMfld::MainThm}) an
E-manifold is a manifold endowed with an elliptic structure; see \cref{Defn::EMfld::sL}.  Unfortunately, the name ``elliptic manifold'' is already taken by an unrelated concept.
\end{rmk}

\begin{defn}
Let $U_1\subseteq \R^{r_1}\times \C^{n_1}$ and $U_2\subseteq \R^{r_2}\times \C^{n_2}$ be open sets.  We give $\R^{r_1}\times \C^{n_1}$ coordinates $(t,z)$
and $\R^{r_2}\times \C^{n_2}$ coordinates $(u,w)$.  We say a $C^1$ map $f:U_1\rightarrow U_2$ is an E-map if
\begin{equation*}
	df(t,z) \diff{t_k}, df(t,z)\diff{\zb[j]}\in \Span_{\C}\left\{\diff{u_1},\ldots, \diff{u_{r_2}},\diff{\wb[1]},\ldots, \diff{\wb[n_2]}\right\},\quad \forall (t,z)\in U_1, 1\leq k\leq r_1, 1\leq j\leq n_1.
\end{equation*}
For $s\in (0,\infty]\cup\{\omega\}$, we say $f:U_1\rightarrow U_2$ is a $\ZygSpaceloc{s}$ E-map if it is an E-map which is also a $\ZygSpaceloc{s}$ map. 
\end{defn}

\begin{rmk}\label{Rmk::EMfld::InverseEuclid}
Suppose $U_1,U_2\subseteq \R^{r}\times \C^{n}$ and $f:U_1\rightarrow U_2$ is an E-map which is also a $C^1$-diffeomorphism.  Then, $f^{-1}:U_2\rightarrow U_1$
is an E-map.
\end{rmk}

\begin{rmk}\label{Rmk::Emfld::Holomorphic}
Note that when $r_1=r_2=0$, if $U_1\subseteq \R^{0}\times \C^{n_1}\cong \C^{n_1}$, $U_2\subseteq \R^{0}\times \C^{n_2}\cong \C^{n_2}$, then
$f:U_1\rightarrow U_2$ is an E-map if and only if it is holomorphic.
\end{rmk}

\begin{lemma}\label{Lemma::EMfld::CompuseEuclid}
Let $U_1\subseteq \R^{r_1}\times \C^{n_1}$, $U_2\subseteq \R^{r_2}\times \C^{n_2}$, and $U_3\subseteq \R^{r_3}\times \C^{n_3}$ be open sets, and let $s\in (0,\infty]\cup\{\omega\}$.
Suppose $f_1:U_1\rightarrow U_2$ and $f_2:U_2\rightarrow U_3$ are $\ZygSpacemap{s+1}$ E-maps. Then $f_2\circ f_1:U_1\rightarrow U_3$ is a $\ZygSpacemap{s+1}$ E-map.
\end{lemma}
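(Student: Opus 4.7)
The plan is to verify the two defining properties of a $\ZygSpacemap{s+1}$ E-map separately: the Zygmund regularity and the E-map condition on the differential. The regularity assertion is essentially a black-box application of an earlier result, while the E-map condition is a routine chain rule computation. I do not anticipate a main obstacle; the statement is essentially bookkeeping once the definitions are unwound.

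First, for the Zygmund regularity of the composition: since $s \in (0,\infty]\cup\{\omega\}$, both $s_1 := s+1$ and $s_2 := s+1$ satisfy $s_2 \geq s_1$ and $s_2 \in (1,\infty]\cup\{\omega\}$. Therefore \cref{Lemma::FuncManifold::ComposeEuclid} applies and yields that $f_2 \circ f_1 : U_1 \to U_3$ is a $\ZygSpacemap{s+1}$ map.

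Second, for the E-map property: give $U_1$ coordinates $(t,z) \in \R^{r_1} \times \C^{n_1}$, give $U_2$ coordinates $(u,w) \in \R^{r_2} \times \C^{n_2}$, and give $U_3$ coordinates $(v,y) \in \R^{r_3} \times \C^{n_3}$. By the chain rule, for any $(t,z) \in U_1$,
\begin{equation*}
d(f_2 \circ f_1)(t,z) = df_2(f_1(t,z)) \circ df_1(t,z).
\end{equation*}
Since $f_1$ is an E-map, for each $1\leq k\leq r_1$ and $1\leq j\leq n_1$, the vectors $df_1(t,z)\diff{t_k}$ and $df_1(t,z)\diff{\zb[j]}$ lie in $\Span_\C\{\diff{u_1},\ldots,\diff{u_{r_2}},\diff{\wb[1]},\ldots,\diff{\wb[n_2]}\}$. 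Applying the linear map $df_2(f_1(t,z))$ and using that $f_2$ is an E-map, each of these basis vectors is sent into $\Span_\C\{\diff{v_1},\ldots,\diff{v_{r_3}},\diff{\yb[1]},\ldots,\diff{\yb[n_3]}\}$. By linearity, $d(f_2\circ f_1)(t,z)\diff{t_k}$ and $d(f_2\circ f_1)(t,z)\diff{\zb[j]}$ lie in this span as well, which is exactly the E-map condition for $f_2 \circ f_1$.

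Combining the two parts, $f_2 \circ f_1$ is a $\ZygSpacemap{s+1}$ E-map, completing the proof.
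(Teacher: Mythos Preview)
Your proposal is correct and follows exactly the same approach as the paper: invoke \cref{Lemma::FuncManifold::ComposeEuclid} for the $\ZygSpacemap{s+1}$ regularity of the composition, and use the chain rule for the E-map condition. You have simply spelled out the chain rule step in more detail than the paper's one-line proof.
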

\begin{proof}
That $f_2\circ f_1$ is a $\ZygSpacemap{s+1}$ map follows from \cref{Lemma::FuncManifold::ComposeEuclid}.  That it is an E-map follows from the chain rule.
\end{proof}

\begin{defn}
Let $M$ be a 
topological space and fix $n, r\in \N$, $s\in (1,\infty]\cup\{\omega\}$.  We say $\{(\phi_\alpha, V_\alpha) : \alpha\in \sI\}$ (where $\sI$ is some index set)
is a $\ZygSpace{s}$ E-atlas of dimension $(r,n)$ if $\{V_\alpha: \alpha\in \sI\}$ is an open cover for $M$,
$\phi_{\alpha}:V_\alpha\rightarrow U_\alpha$ is a homeomorphism where $U_\alpha\subseteq \R^r\times \C^n$ is open, and
$\phi_\beta \circ \phi_{\alpha}^{-1}: \phi_{\alpha}(V_\beta\cap V_\alpha)\rightarrow U_\beta$ is a $\ZygSpacemap{s}$ E-map, $\forall \alpha,\beta$.
\end{defn}

\begin{defn}
A $\ZygSpace{s}$ E-manifold $M$ of dimension $(r,n)$ is a paracompact\footnote{We do not use paracompactness in this paper; so the reader who does not require that manifolds be paracompact is free to do so in this paper.}  topological space $M$ endowed with a $\ZygSpace{s}$ E-atlas
of dimension $(r,n)$.
\end{defn}

\begin{rmk}
One may analogously define $C^m$ E-manifolds in the obvious way.  $C^\infty$ E-manifolds and $\ZygSpace{\infty}$ E-manifolds are the same.
\end{rmk}

\begin{defn}
For $s\in (0,\infty]\cup\{\omega\}$,
let $M$ and $N$ be $\ZygSpace{s+1}$ E-manifolds with $\ZygSpace{s+1}$ E-atlases $\{(\phi_{\alpha},V_{\alpha})\}$ and $\{(\psi_\beta, W_\beta)\}$, respectively.
We say $f:M\rightarrow N$ is a $\ZygSpacemap{s+1}$ E-map if $\psi_\beta\circ f\circ \phi_{\alpha}^{-1}$ is a $\ZygSpacemap{s+1}$ E-map, $\forall \alpha,\beta$.
\end{defn}

\begin{lemma}
For $s\in (0,\infty]\cup\{\omega\}$, let $M_1$, $M_2$, and $M_3$ be $\ZygSpace{s+1}$ E-manifolds and $f_1:M_1\rightarrow M_2$ and $f_2:M_2\rightarrow M_3$ be
$\ZygSpacemap{s+1}$ E-maps.  Then, $f_2\circ f_1:M_1\rightarrow M_3$ is a $\ZygSpacemap{s+1}$ E-map.
\end{lemma}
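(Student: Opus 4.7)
The plan is to mirror the proof of \cref{Lemma::FuncMfld::CompositionOfMaps} in the non-E setting, reducing the manifold-level composition to the Euclidean composition result \cref{Lemma::EMfld::CompuseEuclid}. Fix $\ZygSpace{s+1}$ E-atlases $\{(\phi_\alpha,V_\alpha)\}$, $\{(\psi_\beta,W_\beta)\}$, $\{(\chi_\gamma,Y_\gamma)\}$ for $M_1,M_2,M_3$ respectively. By definition, I need to show that for every pair $(\alpha,\gamma)$, the map $\chi_\gamma\circ(f_2\circ f_1)\circ\phi_\alpha^{-1}$ is a $\ZygSpacemap{s+1}$ E-map on the (open) set where it is defined.

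Since being a $\ZygSpacemap{s+1}$ E-map between open subsets of Euclidean/complex space is a local property, it is enough to verify this on a neighborhood of each point in the domain. Given $p_0$ in the domain, write $p_0=\phi_\alpha(\zeta)$, and choose $\beta$ with $f_1(\zeta)\in W_\beta$. By continuity of $f_1$ and $f_2$, there is an open neighborhood $U$ of $p_0$ small enough that $\phi_\alpha^{-1}(U)\subseteq V_\alpha$, $f_1(\phi_\alpha^{-1}(U))\subseteq W_\beta$, and $f_2(f_1(\phi_\alpha^{-1}(U)))\subseteq Y_\gamma$. On $U$, I then factor
\begin{equation*}
\chi_\gamma\circ f_2\circ f_1\circ\phi_\alpha^{-1}=\bigl(\chi_\gamma\circ f_2\circ\psi_\beta^{-1}\bigr)\circ\bigl(\psi_\beta\circ f_1\circ\phi_\alpha^{-1}\bigr).
\end{equation*}
By the hypothesis that $f_1$ and $f_2$ are $\ZygSpacemap{s+1}$ E-maps between E-manifolds, each of the two parenthesized factors is a $\ZygSpacemap{s+1}$ E-map between open subsets of the appropriate $\R^{r}\times\C^{n}$. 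Applying \cref{Lemma::EMfld::CompuseEuclid} to this composition gives that $\chi_\gamma\circ f_2\circ f_1\circ\phi_\alpha^{-1}$ is a $\ZygSpacemap{s+1}$ E-map on $U$, completing the proof.

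There is essentially no genuine obstacle here; the only care-point is the standard one of shrinking $U$ so that the intermediate chart $\psi_\beta$ contains the image of $f_1$, which is handled by continuity. In effect the lemma is a corollary of \cref{Lemma::EMfld::CompuseEuclid} together with the fact that the E-map condition and the Zygmund regularity condition are local in nature.
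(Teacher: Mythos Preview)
Your proof is correct and takes essentially the same approach as the paper: the paper's proof is the one-line ``This follows from \cref{Lemma::EMfld::CompuseEuclid},'' and you have simply spelled out the standard chart-and-neighborhood reduction that makes this invocation work.
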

\begin{proof}
This follows from \cref{Lemma::EMfld::CompuseEuclid}.
\end{proof}

\begin{lemma}
For $s\in (0,\infty]\cup\{\omega\}$, let $M_1$ and $M_2$ be $\ZygSpace{s+1}$ E-manifolds and let $f:M_1\rightarrow M_2$ be a $\ZygSpacemap{s+1}$ E-map which is
also a $C^1$ diffeomorphism.  Then, $f^{-1}:M_2\rightarrow M_1$ is a $\ZygSpacemap{s+1}$ E-map.
\end{lemma}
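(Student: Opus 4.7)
The plan is to reduce the statement to the Euclidean-level results already in hand, namely \cref{Lemma::FuncManifold::InverseEuclid} for the $\ZygSpacemap{s+1}$ regularity of inverses between open subsets of $\R^n$, and \cref{Rmk::EMfld::InverseEuclid} for the E-map property of inverses between open subsets of $\R^{r}\times \C^{n}$. This mirrors the proof of \cref{Lemma::FuncManfiold::InverseMap}, with one extra invocation needed to carry along the E-structure.

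The argument will be entirely local on $M_2$. Fix $p \in M_2$ and set $q := f^{-1}(p) \in M_1$. I choose E-atlas charts $(\phi_\alpha, V_\alpha)$ of $M_1$ with $q \in V_\alpha$ and $(\psi_\beta, W_\beta)$ of $M_2$ with $p \in W_\beta$. Since $f$ is a homeomorphism, $f(V_\alpha)$ is open, so after replacing $W_\beta$ by $W_\beta \cap f(V_\alpha)$ I may arrange $f^{-1}(W_\beta) \subseteq V_\alpha$. Then the composition $g := \psi_\beta \circ f \circ \phi_\alpha^{-1}$ is defined on the open set $\phi_\alpha(f^{-1}(W_\beta))$ with image $\psi_\beta(W_\beta)$. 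By hypothesis on $f$, the map $g$ is a $\ZygSpacemap{s+1}$ E-map between open subsets of Euclidean space; and since $f$ and each chart map is a $C^1$-diffeomorphism, so is $g$.

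At this point \cref{Lemma::FuncManifold::InverseEuclid} gives that $g^{-1}$ is a $\ZygSpacemap{s+1}$ map, and \cref{Rmk::EMfld::InverseEuclid} gives that $g^{-1}$ is an E-map; combining, $g^{-1}$ is a $\ZygSpacemap{s+1}$ E-map. Since $g^{-1} = \phi_\alpha \circ f^{-1} \circ \psi_\beta^{-1}$ on $\psi_\beta(W_\beta)$, this exhibits $f^{-1}$ in the chart pair $(\psi_\beta, \phi_\alpha)$ as a $\ZygSpacemap{s+1}$ E-map on a neighborhood of $p$. As $p \in M_2$ was arbitrary, $f^{-1}: M_2 \to M_1$ is a $\ZygSpacemap{s+1}$ E-map.

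There is no genuine obstacle beyond bookkeeping the chart domains; the content is just packaging the two Euclidean lemmas. The one minor subtlety worth noting is that the two chart codomains must have matching $(r,n)$ for \cref{Rmk::EMfld::InverseEuclid} to apply in the stated form; this is forced by the joint hypothesis that $f$ is a $C^1$-diffeomorphism and an E-map, together with the dimension identities $\dim(\LVS + \LVSb) = \dim \C TM$ and the constancy statement from \cref{Lemma::Bundles::DimXConst}.
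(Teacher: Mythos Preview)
Your proof is correct and follows essentially the same approach as the paper: invoke the manifold-level inverse regularity (\cref{Lemma::FuncManfiold::InverseMap}, which you unfold into charts via \cref{Lemma::FuncManifold::InverseEuclid}) for the $\ZygSpacemap{s+1}$ part, and \cref{Rmk::EMfld::InverseEuclid} for the E-map part. The only difference is cosmetic---the paper cites \cref{Lemma::FuncManfiold::InverseMap} directly rather than re-deriving its chart-level content---and your closing remark on matching $(r,n)$ is a valid observation the paper leaves implicit.
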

\begin{proof}
That $f^{-1}:M_2\rightarrow M_1$ is a $\ZygSpacemap{s+1}$ map follows from \cref{Lemma::FuncManfiold::InverseMap}.  That $f^{-1}:M_2\rightarrow M_1$ is an E-map follows from \cref{Rmk::EMfld::InverseEuclid}.
\end{proof}

\begin{defn}
Suppose $s\in (0,\infty]\cup\{\omega\}$, $M_1$ and $M_2$ are $\ZygSpace{s+1}$ E-manifolds.  We say $f:M_1\rightarrow M_2$ is a $\ZygSpacediff{s+1}$ E-diffeomorphism if $f:M_1\rightarrow M_2$ is
invertible and $f:M_1\rightarrow M_2$ and $f^{-1}:M_2\rightarrow M_1$ are $\ZygSpacemap{s+1}$ E-maps.
\end{defn}

\begin{rmk}\label{Rmk::EMfld::FullSubcategory}
For $s>1$, the category of $\ZygSpace{s}$ E-manifolds, whose objects are $\ZygSpace{s}$ E-manifolds and morphisms are $\ZygSpacemap{s}$ E-maps,
contains both $\ZygSpace{s}$ real manifolds and complex manifolds as full subcategories.  The real manifolds of dimension $r$ are those
with E-dimension $(r,0)$, while the complex manifolds of complex dimension $n$ are those with E-dimension $(0,n)$.  That complex manifolds (with morphisms given by holomorphic maps)
embed as a \textit{full} subcategory follows from \cref{Rmk::Emfld::Holomorphic}.
The isomorphisms in the category of $\ZygSpace{s}$ E-manifolds are the $\ZygSpacediff{s}$ E-diffeomorphisms.
\end{rmk}

\begin{rmk}
Note that open subsets of $\R^r\times \C^n$ are $\ZygSpace{\omega}$
E-manifolds of dimension $(r,n)$, by using the atlas consisting
of one coordinate chart (the identity map).  
Henceforth,
we give such sets this E-manifold structure.
\end{rmk}


\begin{rmk}
An E-manifold of dimension $(r,n)$ has an underlying manifold structure of dimension $2n+r$, and it therefore makes sense to talk about any of the usual objects on manifolds with respect to an E-manifold.
\end{rmk}

For $s\in (0,\infty]\cup \{\omega\}$, on a $\ZygSpace{s+2}$ E-manifold $M$ of dimension $(r,n)$, there is a naturally associated $\ZygSpace{s+1}$ elliptic structure on $M$ of dimension $(r,n)$ defined as follows.
Let $(\phi_\alpha,V_\alpha)$ be an E-atlas for $M$.  For $\zeta\in M$, we have $\zeta\in V_\alpha$ for some $\alpha$.
We set:
\begin{equation*}
\LVS_{\zeta}:= \Span_{\C}\left\{ d\Phi_{\alpha}^{-1}(\Phi_\alpha(\zeta)) \diff{t_1},\ldots, d\Phi_{\alpha}^{-1}(\Phi_\alpha(\zeta)) \diff{t_r},d\Phi_{\alpha}^{-1}(\Phi_\alpha(\zeta)) \diff{\zb[1]},\ldots, d\Phi_{\alpha}^{-1}(\Phi_\alpha(\zeta)) \diff{\zb[n]} \right\}.
\end{equation*}
It is straightforward to check that $\LVS_{\zeta}\subseteq \C T_{\zeta}M$ is well-defined\footnote{I.e., $\LVS_{\zeta}$ does not depend on which $\alpha$ we pick with $\zeta\in V_\alpha$.} and $\LVS=\bigcup_{\zeta\in M} \LVS[\zeta]$ is a $\ZygSpace{s+1}$ elliptic structure on $M$ of dimension $(r,n)$.

\begin{defn}\label{Defn::EMfld::sL}
We call $\LVS$ the elliptic structure associated to the E-manifold $M$.
\end{defn}

\begin{lemma}\label{Lemma::EMfld::RecongnizeEMap}
Suppose $M$ and $\Mh$ are $\ZygSpace{s+2}$ E-manifolds with associated elliptic structures $\LVS$ and $\LVSh$.
Then a $\ZygSpacemap{s+2}$ map $f:M\rightarrow \Mh$ is a $\ZygSpacemap{s+2}$ E-map if and only if
$df(\zeta) \LVS_\zeta \subseteq \LVSh_{f(\zeta)}$, $\forall \zeta\in M$.
\end{lemma}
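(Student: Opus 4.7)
The plan is to prove the biconditional by a direct chart computation, using the explicit coordinate description of the associated elliptic structures. Since being a $\ZygSpacemap{s+2}$ E-map and the containment $df(\zeta)\LVS_\zeta \subseteq \LVSh_{f(\zeta)}$ are both pointwise/local conditions, I fix an arbitrary $\zeta_0 \in M$ and choose $\ZygSpace{s+2}$ E-charts $\phi_\alpha:V_\alpha\to U_\alpha\subseteq \R^r\times \C^n$ around $\zeta_0$ and $\psi_\beta:W_\beta\to U_\beta\subseteq \R^{r'}\times \C^{n'}$ around $f(\zeta_0)$. Set $\tilde f:=\psi_\beta \circ f\circ \phi_\alpha^{-1}$; by definition $f$ is a $\ZygSpacemap{s+2}$ E-map precisely when $\tilde f$ is a $\ZygSpacemap{s+2}$ E-map for every such pair of charts, and since $f$ is assumed to be a $\ZygSpacemap{s+2}$ map the same holds for $\tilde f$, so the remaining content is whether $\tilde f$ satisfies the E-map condition on its differential.

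The engine of the argument is the coordinate formula for $\LVS$ from \cref{Defn::EMfld::sL}: with $(t,z) = \phi_\alpha(\zeta)$, one has
\begin{equation*}
\LVS_\zeta = d\phi_\alpha^{-1}(t,z)\,\Span_{\C}\mleft\{ \diff{t_1},\ldots,\diff{t_r},\diff{\zb[1]},\ldots,\diff{\zb[n]} \mright\},
\end{equation*}
and the analogous formula holds at $f(\zeta)\in W_\beta$ via $d\psi_\beta^{-1}$ applied to $\Span_\C\{\diff{u_l},\diff{\wb[m]}\}$. Writing $f = \psi_\beta^{-1}\circ \tilde f\circ \phi_\alpha$ and applying the chain rule, the containment $df(\zeta)\LVS_\zeta\subseteq \LVSh_{f(\zeta)}$ becomes equivalent to
\begin{equation*}
d\tilde f(t,z)\diff{t_k},\; d\tilde f(t,z)\diff{\zb[j]} \in \Span_{\C}\mleft\{\diff{u_1},\ldots,\diff{u_{r'}},\diff{\wb[1]},\ldots,\diff{\wb[n']}\mright\}
\end{equation*}
for all $(t,z)\in \phi_\alpha(V_\alpha\cap f^{-1}(W_\beta))$ and all admissible indices. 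But this is literally the definition of $\tilde f$ being an E-map at $(t,z)$. Running the equivalence over all such $(t,z)$ yields both directions of the biconditional.

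The one thing worth checking is that this chart-level criterion is independent of the particular pair of charts chosen, but this is immediate: $\LVS_\zeta$ and $\LVSh_{f(\zeta)}$ are intrinsically defined (this is part of the content of \cref{Defn::EMfld::sL}), and by \cref{Lemma::EMfld::CompuseEuclid} the transition maps between overlapping E-charts are themselves E-maps, so composing $\tilde f$ with such transitions preserves the E-map property. Consequently I expect no substantive obstacle: the proof is a routine translation between an intrinsic condition on $df$ and its coordinate expression via the chain rule, assembled from the already-established coordinate formula for the associated elliptic structure and the $\ZygSpacemap{s+2}$ regularity of $f$ itself.
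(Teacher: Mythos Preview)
Your argument is correct and is exactly the unpacking of the definitions that the paper has in mind; the paper's own proof is the single sentence ``This follows immediately from the definitions.'' You have simply written out in full the chart computation and chain-rule reduction that makes that sentence true.
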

\begin{proof}This follows immediately from the definitions.\end{proof}

The main result of this paper (\cref{Thm::Intro::MainThm}) can be rephrased as follows.
\begin{thm}\label{Thm::EMfld::MainThm}
Let $s\in (0,\infty]\cup \{\omega\}$ and let $M$ be a $\ZygSpace{s+2}$ manifold.  For each $\zeta\in M$, let $\LVS[\zeta]$ be a vector subspace of $\C T_\zeta M$, and let $\LVS = \bigcup_{\zeta\in M} \LVS[\zeta]$.
The following are equivalent:
\begin{enumerate}[(i)]
\item\label{Item::MainThm::EMfld} There is a $\ZygSpace{s+2}$ E-manifold structure on $M$, compatible with its $\ZygSpace{s+2}$ structure, such that $\LVS$ is the $\ZygSpace{s+1}$ elliptic structure associated to $M$.
\item\label{Item::MainThm::Bundle} $\LVS$ is a $\ZygSpace{s+1}$ elliptic structure.
\end{enumerate}
Moreover, under these conditions, the E-manfiold structure given in \cref{Item::MainThm::EMfld} is unique in the sense that if $M$ is given another $\ZygSpace{s+2}$ E-manifold structure, compatible with its $\ZygSpace{s+2}$ structure, with respect to which $\LVS$ is the associated
elliptic sub-bundle, then the identity map $M\rightarrow M$ is a $\ZygSpacediff{s+2}$ E-diffeomorphism, between these two $\ZygSpace{s+2}$ E-manifold structures on $M$.
\end{thm}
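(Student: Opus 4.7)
The plan is to prove the two implications and the uniqueness claim separately, with virtually all the real content already packaged inside Theorem 1.1.

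The direction \cref{Item::MainThm::EMfld}~$\Rightarrow$~\cref{Item::MainThm::Bundle} requires no new work: the elliptic structure associated to any $\ZygSpace{s+2}$ E-manifold was shown, in the paragraph preceding \cref{Defn::EMfld::sL}, to be a $\ZygSpace{s+1}$ elliptic structure of dimension $(r,n)$.

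For \cref{Item::MainThm::Bundle}~$\Rightarrow$~\cref{Item::MainThm::EMfld}, I would apply \cref{Thm::Intro::MainThm} at every $\zeta\in M$ to obtain an open neighborhood $V_\zeta\subseteq M$ and a $\ZygSpacediff{s+2}$ diffeomorphism $\Phi_\zeta:B_{\R^r\times\C^n}(1)\rightarrow V_\zeta$ whose push-forwards of $\partial/\partial t_k$ and $\partial/\partial\zb[j]$ span $\LVS$ pointwise. Setting $\phi_\zeta:=\Phi_\zeta^{-1}$, I claim $\{(\phi_\zeta,V_\zeta):\zeta\in M\}$ is a $\ZygSpace{s+2}$ E-atlas of dimension $(r,n)$. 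The topological hypotheses of an atlas are immediate. For a transition map $\phi_{\zeta_1}\circ\phi_{\zeta_2}^{-1}=\phi_{\zeta_1}\circ\Phi_{\zeta_2}$, the regularity follows by composing two $\ZygSpacemap{s+2}$ maps (\cref{Lemma::FuncMfld::CompositionOfMaps}), where each $\phi_{\zeta_i}$ is $\ZygSpacemap{s+2}$ by \cref{Lemma::FuncManfiold::InverseMap}. To see that the transition map is an E-map, observe that at any point the vectors $d\Phi_{\zeta_2}\mleft(\partial/\partial t_k\mright)$ and $d\Phi_{\zeta_2}\mleft(\partial/\partial\zb[j]\mright)$ lie in $\LVS$ by the defining property of $\Phi_{\zeta_2}$, and then $d\phi_{\zeta_1}$ sends $\LVS$ into $\Span_\C\{\partial/\partial t_k,\partial/\partial\zb[j]\}$ by the defining property of $\Phi_{\zeta_1}$. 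Compatibility with the original $\ZygSpace{s+2}$ structure on $M$ is automatic since each $\Phi_\zeta$ is already a $\ZygSpacediff{s+2}$ diffeomorphism in the ordinary sense. Finally, the elliptic structure associated (in the sense of \cref{Defn::EMfld::sL}) to this E-manifold structure is, by the very definition of $\Phi_\zeta$, equal to $\LVS$.

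For uniqueness, suppose $M$ carries a second $\ZygSpace{s+2}$ E-manifold structure compatible with its $\ZygSpace{s+2}$ structure and having the same associated elliptic structure $\LVS$. Then the identity map $M\rightarrow M$ between the two E-manifold structures is a $\ZygSpacemap{s+2}$ map of the underlying $\ZygSpace{s+2}$ manifolds (by compatibility) and satisfies $d(\mathrm{id})\LVS_\zeta=\LVS_\zeta$ at every $\zeta$. Applying \cref{Lemma::EMfld::RecongnizeEMap} in both directions shows that the identity is a $\ZygSpacediff{s+2}$ E-diffeomorphism between the two E-manifold structures, giving uniqueness.

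The only genuinely hard step in this outline is invoking \cref{Thm::Intro::MainThm} to produce the adapted coordinate charts $\Phi_\zeta$; once that theorem is available, the proof here is purely categorical bookkeeping, combining \cref{Lemma::FuncMfld::CompositionOfMaps}, \cref{Lemma::FuncManfiold::InverseMap}, and \cref{Lemma::EMfld::RecongnizeEMap}.
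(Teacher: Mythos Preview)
Your argument is correct and mirrors the paper's proof almost exactly: the paper likewise builds the E-atlas from the adapted charts, checks the transition maps are E-maps by pushing $\partial/\partial t_k,\partial/\partial\zb[j]$ through $\LVS$, and appeals to \cref{Lemma::EMfld::RecongnizeEMap} for uniqueness.

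One organizational point: in the paper's logical order, \cref{Thm::Intro::MainThm} is deduced \emph{from} \cref{Thm::EMfld::MainThm}, not the other way around, so citing \cref{Thm::Intro::MainThm} here is formally circular. The paper instead proves the chart-producing statement directly as \cref{Lemma::Proof::ChartAtPoint} (via \cref{Lemma::Proof::GoodAt0} and \cref{Thm::Nirenberg::MainThm}) and then runs exactly your bookkeeping. Replacing your invocation of \cref{Thm::Intro::MainThm} with \cref{Lemma::Proof::ChartAtPoint} removes the circularity and yields a proof identical to the paper's.
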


This paper is devoted to proving \cref{Thm::EMfld::MainThm}.

\begin{rmk}
In \cref{Thm::EMfld::MainThm}, following standard terminology, we have used the word ``structure'' in two different ways.  When we speak of a $\ZygSpace{s+2}$ E-manifold ``structure'' on $M$ we mean the equivalence class of $\ZygSpace{s+2}$ E-atlases (where two atlases on $M$ are equivalent if the identity map $M\rightarrow M$ is a $\ZygSpace{s+2}$ E-diffeomorphism).  When we speak of an elliptic ``structure,'' we are referring to \cref{Defn::Bundles::EllipticStructure}.
This double use of terminology is justified by \cref{Thm::EMfld::MainThm} which shows that giving an E-manifold structure is equivalent to giving an elliptic structure.
\end{rmk}

\begin{rmk}
When $s\in \{\infty,\omega\}$, \cref{Thm::EMfld::MainThm} is well-known.  Our proof yields these cases as simple corollaries, so we include them.
\end{rmk}

\begin{rmk}
In the special case $\LVS[\zeta]\cap \LVSb[\zeta]=\{0\}$, $\forall \zeta\in M$, 
\cref{Thm::EMfld::MainThm} is the Newlander-Nirenberg Theorem \cite{NewlanderNirenbergComplexAnalyticCoordiantesInAlmostComplexManifolds}, with sharp regularity as proved by Malgrange \cite{MalgrangeSurLIntegbrabilite}.
In this case E-manifolds are complex manifolds--see \cref{Rmk::EMfld::FullSubcategory}.
\end{rmk}

\section{Function Spaces Revisited}
In this section we present some basic properties of the function spaces introduced in \cref{Section::FunctionSpaces}.  Fix $\Omega\subseteq \R^n$ an open set.

\begin{prop}\label{Prop::FuncSpaceRev::Algebra}
For $s\in (0,\infty]\cup\{\omega\}$, $\ZygSpace{s}[\Omega]$ is an algebra:  if $f,g\in \ZygSpace{s}[\Omega]$, then $fg\in \ZygSpace{s}[\Omega]$.  Moreover, for $s\in (0,\infty)$ and $f,g\in \ZygSpace{s}[\Omega]$,
\begin{equation*}
\ZygNorm{fg}{s}[\Omega]\leq C_s \ZygNorm{f}{s}[\Omega]\ZygNorm{g}{s}[\Omega].
\end{equation*}
For $s\in (0,\infty]\cup \{\omega\}$, these spaces have multiplicative inverses for functions
which are bounded away from zero:   if $f\in \ZygSpace{s}[\Omega]$ with $\inf_x |f(x)|\geq c_0>0$, then $f(x)^{-1}=\frac{1}{f(x)}\in \ZygSpace{s}[\Omega]$.
Moreover if $s\in (0,\infty)$ and $\inf_x |f(x)|\geq c_0>0$ then
\begin{equation*}
	\ZygNorm{f(x)^{-1}}{s}[\Omega]\leq C,
\end{equation*}
where $C$ can be chosen to depend only on $s$, $n$, $c_0$, and an upper bound for $\ZygNorm{f}{s}[\Omega]$.
\end{prop}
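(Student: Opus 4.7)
The plan is to reduce everything to the base case $s \in (0,1]$. The real-analytic case $s = \omega$ is classical (local power series multiplication and inversion); the smooth case $s = \infty$ then follows from the finite-$s$ case by intersecting $\ZygSpace{\infty}[\Omega] = \bigcap_{t > 0} \ZygSpace{t}[\Omega]$ per the definition. For finite $s$, I would write $s = m + a$ with $m \in \N$ and $a \in (0,1]$, prove the base case $m = 0$ by direct estimates on second differences, then induct on $m$ using Leibniz' rule (for the product) and the identity $\partial_j(1/f) = -(\partial_j f)/f^2$ together with the algebra property (for the reciprocal).

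For the algebra in the base case, the $\HSpace{0}{s/2}$ component of $\ZygNorm{fg}{s}[\Omega]$ is controlled by the standard Hölder product rule $|(fg)(x+h) - (fg)(x)| \leq |f(x+h)|\,|g(x+h)-g(x)| + |g(x)|\,|f(x+h)-f(x)|$. For the second-difference component, I would use the algebraic identity
\[\Delta^2_h(fg)(x) = f(x)\,\Delta^2_h g(x) + g(x)\,\Delta^2_h f(x) + \Delta_{2h} f(x)\,\Delta_{2h} g(x) - 2\,\Delta_h f(x)\,\Delta_h g(x),\]
where $\Delta_{kh} u(x) := u(x+kh) - u(x)$ and $\Delta^2_h u(x) := u(x+2h) - 2u(x+h) + u(x)$; this is verified by writing $f(x+kh) = f(x) + \Delta_{kh}f(x)$ and expanding. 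The first two terms on the right are obviously bounded by $\ZygNorm{f}{s}[\Omega]\ZygNorm{g}{s}[\Omega]\,|h|^s$, while the last two are handled via the elementary bound $|\Delta_{kh} u(x)| \leq \HNorm{u}{0}{s/2}[\Omega]\,|kh|^{s/2} \leq 2^{s/2}\ZygNorm{u}{s}[\Omega]\,|h|^{s/2}$. The $\HSpace{0}{s/2}$ half-norm built into the Zygmund norm is exactly what is needed here, which is the whole reason for the $s/2$ (as opposed to, say, $s$ or $0$) appearing in the definition.

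For the inverse, the base case $s \in (0,1]$ follows from the analogous direct identity: writing $a = f(x), b = f(x+h), c = f(x+2h)$,
\[\frac{1}{c} - \frac{2}{b} + \frac{1}{a} = \frac{ab - 2ac + bc}{abc} = \frac{-b\,\Delta^2_h f(x) + 2\,\Delta_h f(x)\,\Delta_h f(x+h)}{f(x)\,f(x+h)\,f(x+2h)},\]
whose numerator is $O(\ZygNorm{f}{s}[\Omega]^2\,|h|^s)$ and whose denominator is bounded below by $c_0^3$; the first-difference and sup parts of the norm are handled even more directly. For the inductive step (both parts), Leibniz's rule reduces $\partial^\alpha(fg)$ for $|\alpha| \leq m$ to a sum of products $\partial^\beta f \cdot \partial^{\alpha-\beta} g$, each lying in a Zygmund space of order at least $a$ and thus controlled by the base case; similarly, $\partial_j(1/f) = -(\partial_j f)\,(1/f)^2$ combined with the algebra property and the inductive hypothesis that $1/f \in \ZygSpace{m-1+a}$ places $\partial_j(1/f)$ in $\ZygSpace{m-1+a}$, hence $1/f \in \ZygSpace{m+a}$. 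The main obstacle I anticipate is the bookkeeping for the inverse: each inductive step introduces factors of $(1/f)^k$ controlled only by polynomials in $c_0^{-1}$ and in $\ZygNorm{f}{s}[\Omega]$, and one must verify that after $m$ iterations the resulting constant still depends only on $s$, $n$, $c_0$, and an upper bound for $\ZygNorm{f}{s}[\Omega]$, as claimed.
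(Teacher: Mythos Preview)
Your argument is correct. The paper itself does not give a proof here: it simply records the result as standard, citing the companion paper \cite{StovallStreetI} for the case $s\in(0,\infty]$ and declaring the case $s=\omega$ classical. So there is no in-text proof to compare against; what you have written is a perfectly good direct proof of the cited fact.

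Two minor remarks. First, your second-difference identity for products and your numerator identity for $1/f$ both check out exactly as written, and the $\HSpace{0}{s/2}$ term built into $\ZygNorm{\cdot}{s}$ is indeed precisely what makes the cross terms $\Delta_h f\,\Delta_h g$ contribute $O(|h|^s)$; this is the point the paper alludes to when it says the $\HSpace{0}{s/2}$ term is ``somewhat more convenient to work with.'' Second, your ``bookkeeping'' worry about the inverse is not a real obstacle: the algebra estimate for $\ZygSpace{m-1+a}$ is already available (you proved it for all $s$ by one-step reduction to the base case via Leibniz, no induction needed), so each application of $\partial_j(1/f)=-(\partial_j f)(1/f)^2$ introduces a constant depending only on $m-1+a$, $n$, $c_0$, and $\ZygNorm{f}{m+a}$, and after finitely many steps the final constant still depends only on these quantities.
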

\begin{proof}
For $s=\omega$, this is standard.
For $s\in (0,\infty]$, this is standard and contained in \cite[\SSZygIsAlgebra]{StovallStreetI}.
\end{proof}

\begin{rmk}\label{Rmk::FuncSpaceRev::InverseMatrix}
For $s\in (0,\infty]\cup \{\omega\}$,
suppose $A\in \ZygSpace{s}[\Omega][\M^{k\times k}]$
is such that $\inf_{t\in \Omega} |\det A(t)|>0$.  Then it follows that $A^{-1}\in \ZygSpace{s}[\Omega][\M^{k\times k}]$.
Indeed, this follows from \cref{Prop::FuncSpaceRev::Algebra} using the cofactor representation of $A^{-1}$.  
When $s\in (0,\infty)$, $\ZygNorm{A^{-1}}{s}[\Omega]$ can be bounded in terms of $s$, $k$, $n$, a lower bound for $\inf_{t\in \Omega} |\det A(t)|>0$, and an upper bound for
$\ZygNorm{A}{s}[\Omega]$.
\end{rmk}

\begin{lemma}\label{Lemma::FuncSpaceRev::Composition}
Let $D_1,D_2>0$, $s_1\in (0,\infty)$, $s_2\geq s_1$, $s_2\in (1,\infty)$, $f\in \ZygSpace{s_1}[B_{\R^n}(D_1)]$, $g\in \ZygSpace{s_2}[B_{\R^m}(D_2)][\R^n]$
with $g(B_{\R^m}(D_2))\subseteq B_{\R^n}(D_1)$.  Then, $f\circ g\in \ZygSpace{s_1}[B_{\R^m}(D_2)]$ and
$\ZygNorm{f\circ g}{s_1}[B_{\R^m}(D_2)]\leq C \ZygNorm{f}{s_1}[B_{\R^n}(D_1)]$, where $C$ can be chosen to depend only on $s_1$, $s_2$, $D_1$, $D_2$,
$m$, $n$, and an upper bound for $\ZygNorm{g}{s_2}[B_{\R^m}(D_2)]$.
\end{lemma}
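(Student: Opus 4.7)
The plan is to prove the lemma by induction on $\lceil s_1 \rceil$, using the chain rule and the algebra property (\cref{Prop::FuncSpaceRev::Algebra}) to reduce to low-regularity base cases. Throughout, the hypothesis $s_2 > 1$ forces $g \in C^1$ with first derivatives bounded by $\ZygNorm{g}{s_2}[B_{\R^m}(D_2)]$ (in particular $g$ is Lipschitz), and the hypothesis $s_2 \geq s_1$ ensures the induction hypothesis applies to each composition $(\partial f) \circ g$ that arises. For the base case $s_1 \in (0,1)$, since $\ZygSpace{s_1}[B_{\R^n}(D_1)] = \HSpace{0}{s_1}[B_{\R^n}(D_1)]$ by \cref{Rmk::FuncEudlid::ZygAndHolderTheSame}, I would directly compute
\[
|f(g(x)) - f(g(y))| \leq \HNorm{f}{0}{s_1}[B_{\R^n}(D_1)] \, |g(x) - g(y)|^{s_1} \leq C L^{s_1} \ZygNorm{f}{s_1}[B_{\R^n}(D_1)] \, |x-y|^{s_1},
\]
where $L$ is a Lipschitz constant for $g$ bounded by $\ZygNorm{g}{s_2}[B_{\R^m}(D_2)]$; combined with $\|f \circ g\|_\infty \leq \|f\|_\infty$, this yields the lemma in this range.

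For the $s_1 = 1$ base case I would bound the second difference $f(g(x+2h)) - 2f(g(x+h)) + f(g(x))$ by $C|h|$ via the midpoint $a := \tfrac{1}{2}(g(x+2h) + g(x))$:
\[
f(g(x+2h)) - 2f(g(x+h)) + f(g(x)) = \bigl[f(g(x+2h)) - 2f(a) + f(g(x))\bigr] + 2\bigl[f(a) - f(g(x+h))\bigr].
\]
The first bracket is a second difference of $f$ with base $g(x)$ and step $\tfrac{1}{2}(g(x+2h) - g(x)) = O(|h|)$, so the Zygmund condition on $f$ controls it by $C\ZygNorm{f}{1}|h|$. For the second bracket, $|a - g(x+h)| = \tfrac{1}{2}|g(x+2h) - 2g(x+h) + g(x)| \leq C\ZygNorm{g}{s_2}|h|^{s_2}$ by the Zygmund condition on $g$, and the second-difference bound on $f$ combined with boundedness yields the standard almost-Lipschitz modulus of continuity $\omega_f(t) \leq C \ZygNorm{f}{1} \, t(1 + |\log t|)$ for $t \in (0,1]$; this estimates the bracket by $O(|h|^{s_2} \log(1/|h|)) = O(|h|)$ since $s_2 > 1$.

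For the induction step $s_1 > 1$, $f \in C^1$ and the chain rule yields $\partial_{x_j}(f \circ g)(x) = \sum_i (\partial_{y_i}f)(g(x)) \, (\partial_{x_j} g_i)(x)$. Applying the induction hypothesis with outer function $\partial_{y_i} f \in \ZygSpace{s_1 - 1}$ and inner function $g \in \ZygSpace{s_2}$ (both $s_2 \geq s_1 - 1$ and $s_2 > 1$ hold) gives $(\partial_{y_i} f) \circ g \in \ZygSpace{s_1 - 1}$ with norm linear in $\ZygNorm{f}{s_1}$; since $\partial_{x_j} g_i \in \ZygSpace{s_2 - 1}$ embeds continuously into $\ZygSpace{s_1 - 1}$, \cref{Prop::FuncSpaceRev::Algebra} puts the product into $\ZygSpace{s_1 - 1}$, whence $f \circ g \in \ZygSpace{s_1}$ with the claimed bound. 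The main obstacle is the $s_1 = 1$ case: the $\HSpace{0}{1/2}$ piece of the $\ZygSpace{1}$ norm alone gives only $O(|h|^{s_2/2})$ for the second bracket, which is insufficient when $s_2 \in (1,2)$, so one must genuinely exploit the stronger almost-Lipschitz modulus of continuity implied by the full second-difference condition (a Littlewood--Paley decomposition would give an equally satisfactory uniform alternative).
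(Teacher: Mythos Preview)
The paper does not prove this lemma itself; it simply cites \cite{StovallStreetII}.  Your direct induction-on-$\lceil s_1\rceil$ argument is correct and self-contained.  The only delicate point is the $s_1=1$ base case, and your treatment there---decomposing the second difference of $f\circ g$ via the midpoint $a=\tfrac12(g(x+2h)+g(x))$ and then invoking the log-Lipschitz modulus $\omega_f(t)\lesssim \ZygNorm{f}{1}\,t(1+|\log t|)$ for $f\in\ZygSpace{1}$---is the standard classical way to close this case without Littlewood--Paley machinery; on the convex ball $B_{\R^n}(D_1)$ this modulus is indeed available with constants depending only on $D_1$ and $n$, and the three points $g(x),a,g(x+2h)$ lie in $B_{\R^n}(D_1)$ by convexity, so the Zygmund condition on $f$ applies to the first bracket as you claim.

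Two small additions worth recording in a full write-up.  First, in the induction step you should also note that the zeroth-order piece $\ZygNorm{f\circ g}{s}$ of the $\ZygNorm{\cdot}{s_1}$ norm (where $s_1=m+s$, $s\in(0,1]$) is handled directly by the base case applied with outer regularity $s$ in place of $s_1$, via the embedding $\ZygSpace{s_1}[B_{\R^n}(D_1)]\hookrightarrow\ZygSpace{s}[B_{\R^n}(D_1)]$.  Second, your bound on the second difference of $g$ as $O(|h|^{s_2})$ is literally correct only for $s_2\in(1,2)$; for $s_2\geq 2$ one gets $O(|h|^2)$ (with a logarithm when $s_2=2$), but of course this is even better and the argument goes through unchanged.
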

\begin{proof}This is standard and proved in \cite{StovallStreetII}.\end{proof}

\begin{lemma}\label{Lemma::FuncSpaceRev::Inverse}
Fix $s\in (1,\infty)$, $D_1,D_2>0$.  Suppose $H\in \ZygSpace{s}[B_{\R^n}(D_1)][\R^n]$ is such that $B_{\R^n}(D_2)\subseteq H(B_{\R^n}(D_1))$,
$H:B_{\R^n}(D_1)\rightarrow H(B_{\R^n}(D_1))$ is a homeomorphism, and $\inf_{t\in B_{\R^n}(D_1)} |\det dH(t)|\geq c_0>0$.
Then $H^{-1}\in \ZygSpace{s}[B_{\R^n}(D_2)][\R^n]$, with $\ZygNorm{H^{-1}}{s}[B_{\R^n}(D_2)][\R^n]\leq C$, where $C$ can be chosen
to depend only on $n$, $s$, $D_1$, $D_2$, $c_0$, and an upper bound for $\ZygNorm{H}{s}[B_{\R^n}(D_1)][\R^n]$.
\end{lemma}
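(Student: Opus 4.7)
The plan is to combine the classical $C^1$ inverse function theorem with \cref{Lemma::FuncSpaceRev::Composition}, bootstrapping in finitely many steps via the identity
\begin{equation*}
dH^{-1}(y) = (dH)^{-1}\bigl(H^{-1}(y)\bigr), \qquad y \in B_{\R^n}(D_2),
\end{equation*}
which transfers Zygmund regularity from $(dH)^{-1}$ through the inner function $H^{-1}$.

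First I would invoke the $C^1$ inverse function theorem. Since $s>1$, the hypothesis on $\ZygNorm{H}{s}[B_{\R^n}(D_1)]$ controls $\CjNorm{H}{1}[B_{\R^n}(D_1)]$; together with $|\det dH|\geq c_0$ and the assumption that $H$ is a homeomorphism, this yields $H^{-1}\in C^1$ on $B_{\R^n}(D_2)$ satisfying the above derivative formula, and in particular $H^{-1}$ is Lipschitz with constant controlled only by $n,s,c_0,D_1,D_2$ and an upper bound for $\ZygNorm{H}{s}[B_{\R^n}(D_1)]$. Moreover, \cref{Rmk::FuncSpaceRev::InverseMatrix} applied to $dH\in\ZygSpace{s-1}[B_{\R^n}(D_1)][\M^{n\times n}]$ gives $(dH)^{-1}\in\ZygSpace{s-1}[B_{\R^n}(D_1)][\M^{n\times n}]$ with a quantitative norm bound of the same type.

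The heart of the argument is the bootstrap claim: if $H^{-1}\in\ZygSpace{t}[B_{\R^n}(D_2)][\R^n]$ for some $t\in(1,s]$ with a quantitative norm bound, then $H^{-1}\in\ZygSpace{\min(s,t+1)}[B_{\R^n}(D_2)][\R^n]$ with a quantitative norm bound. Indeed, applying \cref{Lemma::FuncSpaceRev::Composition} with $s_1 := \min(s-1,t)$ and $s_2 := t$ (so that $s_2 > 1$ and $s_2\geq s_1 > 0$) to $(dH)^{-1}\in\ZygSpace{s-1}\subseteq\ZygSpace{s_1}$ and $H^{-1}\in\ZygSpace{s_2}$ yields $dH^{-1} = (dH)^{-1}\circ H^{-1}\in\ZygSpace{\min(s-1,t)}$, hence $H^{-1}\in\ZygSpace{\min(s,t+1)}$. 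To initiate the bootstrap (i.e., to escape the Lipschitz regime where \cref{Lemma::FuncSpaceRev::Composition} does not apply), I would fix any $\alpha\in(0,\min(1,s-1))$; then $(dH)^{-1}$ is $\alpha$-H\"older, and composing with the Lipschitz map $H^{-1}$ gives the elementary estimate
\begin{equation*}
\mleft|dH^{-1}(y_1) - dH^{-1}(y_2)\mright| \leq C \mleft|H^{-1}(y_1) - H^{-1}(y_2)\mright|^{\alpha} \leq C \mleft|y_1 - y_2\mright|^{\alpha},
\end{equation*}
so $H^{-1}\in\ZygSpace{1+\alpha}[B_{\R^n}(D_2)]$ with quantitative bound. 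Iterating $t_{k+1} := \min(s, t_k+1)$ from $t_0 := 1+\alpha$ reaches $t_N = s$ in at most $\lceil s \rceil$ steps, and the quantitative versions of \cref{Lemma::FuncSpaceRev::Composition} and \cref{Rmk::FuncSpaceRev::InverseMatrix} propagate to give the claimed bound on $\ZygNorm{H^{-1}}{s}[B_{\R^n}(D_2)]$.

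The main obstacle is the bootstrap step near integer values of $s$, where $\ZygSpace{m}$ strictly contains $C^{m-1,1}$ and a naive H\"older-to-H\"older composition argument would lose a Zygmund endpoint at each step. What makes the argument go through cleanly is that \cref{Lemma::FuncSpaceRev::Composition} is already stated for Zygmund classes (requiring only $s_2 > 1$ on the inner function), so each bootstrap iteration gains a full unit of Zygmund regularity without loss; the preliminary H\"older step is needed only to lift $H^{-1}$ out of the Lipschitz regime into $\ZygSpace{1+\alpha}$, after which the Zygmund composition takes over.
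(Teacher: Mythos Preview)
The paper does not actually prove this lemma; its ``proof'' consists solely of the sentence ``This is standard and proved in \cite{StovallStreetII}.'' Your bootstrap argument---using the identity $dH^{-1}(y)=(dH)^{-1}(H^{-1}(y))$, \cref{Rmk::FuncSpaceRev::InverseMatrix} to control $(dH)^{-1}$ in $\ZygSpace{s-1}$, an elementary H\"older composition to lift $H^{-1}$ from Lipschitz into $\ZygSpace{1+\alpha}$, and then iterated application of \cref{Lemma::FuncSpaceRev::Composition} to gain one full Zygmund derivative per step---is correct and is indeed the standard route to this result.
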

\begin{proof}This is standard and proved in  \cite{StovallStreetII}.\end{proof}

	\subsection{Spaces of Real Analytic Functions}\label{Section::FuncRev::RealAnal}
For the proofs that follow, it is convenient to introduce two, closely related, Banach spaces of real analytic functions.
For $s>0$, we define
$\ASpace{n}{s}$ to be the space of those $f\in \CSpace{B_{\R^n}(s)}$ such that $f(t)=\sum_{\alpha\in \N^n} \frac{c_\alpha}{\alpha!} t^{\alpha}$, $\forall t\in B_{\R^n}(s)$, where
\begin{equation*}
\ANorm{f}{n}{s}:= \sum_{\alpha\in \N^n} \frac{|c_{\alpha}|}{\alpha!} s^{|\alpha|}<\infty.
\end{equation*}

We now turn to the other Banach space of real analytic functions we use.
Let $\Omega\subset \C^N$ be a bounded, open set, and let $m\in \N$.  We set
\begin{equation*}
\OSpace{m}[\Omega]:=\left\{f:\Omega\rightarrow \C^m \: \big|\: f\text{ is holomorphic and }f\text{ extends to a continuous function }\Extend(f)\in \CSpace{\overline{\Omega}}\right\}.
\end{equation*}
With the norm
\begin{equation*}
\ONorm{f}{m}[\Omega]:=\CNorm{\Extend(f)}{\Omega},
\end{equation*}
$\OSpace{m}[\Omega]$ is a Banach space.
We set, for $\eta>0$,
\begin{equation*}
\sBSpace{N}{m}{\eta}:=\left\{
f:B_{\R^N}(\eta)\rightarrow \C^m \:\big|\: f\text{ is real analytic and extends to a holomorphic function }\Extend(f)\in \OSpace{m}[B_{\C^N}(\eta)]
\right\}.
\end{equation*}
With the norm
\begin{equation*}
\sBNorm{f}{N}{m}{\eta}
:=\ONorm{\Extend(f)}{m}[B_{\C^N}(\eta)],
\end{equation*}
$\sBSpace{N}{m}{\eta}$ is a Banach space.
Sometimes we wish to replace $\C^m$ in the above definitions with
a more general complex Banach space $\BanachSpace$.
We write this space as $\sBSpace{N}{\BanachSpace}{\eta}$ and define
the norm in the obvious way. 

\begin{lemma}\label{Lemma::FuncSpaceRevAnal::AInB}
Let $\BanachSpace$ be a Banach space.  Then $\ASpace{n}{\eta}[\BanachSpace]\subseteq \sBSpace{n}{\BanachSpace}{\eta}$ and
$\sBNorm{f}{n}{\BanachSpace}{\eta}\leq \ANorm{f}{n}{\eta}[\BanachSpace]$.
\end{lemma}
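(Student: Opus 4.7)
The plan is to construct the required holomorphic extension $\Extend(f)$ directly from the power series that defines $f$. Given $f \in \ASpace{n}{\eta}[\BanachSpace]$ with $f(t)=\sum_{\alpha\in\N^n} \frac{c_\alpha}{\alpha!} t^\alpha$ on $B_{\R^n}(\eta)$ (where now the coefficients $c_\alpha$ lie in $\BanachSpace$ and $\ANorm{f}{n}{\eta}[\BanachSpace] = \sum_\alpha \frac{\Norm{c_\alpha}[\BanachSpace]}{\alpha!}\eta^{|\alpha|}$), I would take the candidate extension to be the very same formula viewed on the complex ball, namely $\Extend(f)(z):=\sum_{\alpha\in\N^n}\frac{c_\alpha}{\alpha!}z^\alpha$ for $z\in \overline{B_{\C^n}(\eta)}$.

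The single ingredient needed is the coordinatewise bound: for $z=(z_1,\dots,z_n)\in\overline{B_{\C^n}(\eta)}$ and any multi-index $\alpha$, one has $|z_j|\leq|z|\leq\eta$, and hence $|z^\alpha|=\prod_j|z_j|^{\alpha_j}\leq\eta^{|\alpha|}$. Consequently each term of the series for $\Extend(f)$ is bounded in $\BanachSpace$-norm by $\frac{\Norm{c_\alpha}[\BanachSpace]}{\alpha!}\eta^{|\alpha|}$, and the majorant series sums to exactly $\ANorm{f}{n}{\eta}[\BanachSpace]<\infty$. By the Weierstrass $M$-test, the partial sums converge absolutely and uniformly on $\overline{B_{\C^n}(\eta)}$, so the limit $\Extend(f)$ is continuous on $\overline{B_{\C^n}(\eta)}$ and, being a locally uniform limit of $\BanachSpace$-valued polynomials on $B_{\C^n}(\eta)$, is holomorphic there. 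Since $\Extend(f)$ restricts to $f$ on $B_{\R^n}(\eta)\subseteq B_{\C^n}(\eta)$, this verifies $f\in\sBSpace{n}{\BanachSpace}{\eta}$, and the same uniform bound gives $\sBNorm{f}{n}{\BanachSpace}{\eta}=\CNorm{\Extend(f)}{B_{\C^n}(\eta)}\leq\ANorm{f}{n}{\eta}[\BanachSpace]$.

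I do not expect any genuine obstacle; the result is essentially the observation that the norm on $\ASpace{n}{\eta}$ is engineered to dominate the sup norm of the complex power-series extension on the closed complex ball of radius $\eta$. The only technicality worth checking carefully is that holomorphy passes to uniform limits for $\BanachSpace$-valued functions, which one can verify either by Morera's theorem applied to $\ell\circ\Extend(f)$ for each bounded linear functional $\ell$ on $\BanachSpace$ (followed by the Hahn--Banach characterization of holomorphy), or simply by noting that each coordinate $\dbar$-derivative of a polynomial vanishes identically and the uniform convergence permits passage of $\dbar$ under the limit.
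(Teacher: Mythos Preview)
Your proof is correct and is precisely the argument the paper has in mind: the paper's own proof simply reads ``This follows immediately from the definitions,'' and what you have written is exactly the unpacking of that remark. There is nothing to add.
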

\begin{proof}
This follows immediately from the definitions.
\end{proof}

\begin{lemma}\label{Lemma::FuncSpaceRevAnal::BanachAlgebra}
Let $\BanachAlgebra$ be a Banach algebra.  Then $\ASpace{n}{s}[\BanachAlgebra]$ and $\sBSpace{N}{\BanachAlgebra}{\eta}$ are Banach algebras.  Indeed,
if $\BanachSpace$ denotes either of these spaces, then if $f,g\in \BanachSpace$, we have $fg\in \BanachSpace$ and $\Norm{fg}[\BanachSpace]\leq \Norm{f}[\BanachSpace]\Norm{g}[\BanachSpace]$.
\end{lemma}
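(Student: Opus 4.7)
The plan is to handle the two spaces independently; both arguments share the outline of verifying that the product remains in the space and then obtaining the norm bound from submultiplicativity of $\Norm{\cdot}[\BanachAlgebra]$. The $\sBSpace{N}{\BanachAlgebra}{\eta}$ case will be essentially immediate from the theory of $\BanachAlgebra$-valued holomorphic functions; the $\ASpace{n}{s}[\BanachAlgebra]$ case will reduce to a standard Cauchy-product calculation combined with Fubini for a nonnegative double series.

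For $\sBSpace{N}{\BanachAlgebra}{\eta}$: given $f,g$ in this space, the holomorphic extensions $\Extend(f),\Extend(g):B_{\C^N}(\eta)\to\BanachAlgebra$ extend continuously to $\overline{B_{\C^N}(\eta)}$. Because multiplication on $\BanachAlgebra$ is continuous and bilinear, the pointwise product $\Extend(f)\Extend(g)$ is again $\BanachAlgebra$-valued holomorphic on $B_{\C^N}(\eta)$ (verified via a $\BanachAlgebra$-valued product rule at each point) and continuous up to the boundary; this is the extension witnessing $fg\in \sBSpace{N}{\BanachAlgebra}{\eta}$. Applying the pointwise submultiplicativity $\Norm{\Extend(f)(z)\Extend(g)(z)}[\BanachAlgebra]\leq \Norm{\Extend(f)(z)}[\BanachAlgebra]\Norm{\Extend(g)(z)}[\BanachAlgebra]$ and taking the supremum over $z\in\overline{B_{\C^N}(\eta)}$ yields the desired estimate $\sBNorm{fg}{N}{\BanachAlgebra}{\eta}\leq \sBNorm{f}{N}{\BanachAlgebra}{\eta}\sBNorm{g}{N}{\BanachAlgebra}{\eta}$.

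For $\ASpace{n}{s}[\BanachAlgebra]$: I would write $f(t)=\sum_\alpha \frac{c_\alpha}{\alpha!}t^\alpha$ and $g(t)=\sum_\beta\frac{d_\beta}{\beta!}t^\beta$ with $c_\alpha,d_\beta\in \BanachAlgebra$, and then form the Cauchy product. Grouping by $\gamma=\alpha+\beta$ gives coefficients $e_\gamma = \sum_{\alpha+\beta=\gamma}\binom{\gamma}{\alpha}c_\alpha d_\beta$, keeping the order $c_\alpha d_\beta$ intact since $\BanachAlgebra$ need not be commutative (here $\binom{\gamma}{\alpha}$ is the multi-index multinomial coefficient $\gamma!/(\alpha!\beta!)$). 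The triangle inequality combined with submultiplicativity will then give
$$\ANorm{fg}{n}{s}[\BanachAlgebra] \leq \sum_\gamma\sum_{\alpha+\beta=\gamma}\frac{\Norm{c_\alpha}[\BanachAlgebra]\Norm{d_\beta}[\BanachAlgebra]}{\alpha!\,\beta!}s^{|\alpha|+|\beta|} = \ANorm{f}{n}{s}[\BanachAlgebra]\ANorm{g}{n}{s}[\BanachAlgebra],$$
where the equality is Fubini for nonnegative double series. The only mildly subtle point — and certainly not a genuine obstacle — is justifying that the formally rearranged Cauchy product actually represents $fg$ pointwise rather than merely formally; this is automatic, because the very estimate above shows the underlying double series $\sum_{\alpha,\beta}\Norm{c_\alpha}[\BanachAlgebra]\Norm{d_\beta}[\BanachAlgebra]|t|^{|\alpha|+|\beta|}/(\alpha!\beta!)$ converges absolutely on $B_{\R^n}(s)$, so any rearrangement (including the one grouping by $\gamma$) computes $f(t)g(t)$ there.
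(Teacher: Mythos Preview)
Your proposal is correct and is precisely the routine verification the paper has in mind: the paper's own proof is simply ``This follows easily from the definitions,'' and your two arguments (pointwise submultiplicativity plus sup for $\sBSpace{N}{\BanachAlgebra}{\eta}$, Cauchy product plus Fubini for $\ASpace{n}{s}[\BanachAlgebra]$) are exactly how one unpacks that sentence.
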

\begin{proof}
This follow easily from the definitions.
\end{proof}

\begin{lemma}\label{Lemma::FuncSpaceReAnal::Restrict}
Fix $0<\eta_1<\eta_2$.  If $f\in \ASpace{n}{\eta_2}$ with $f(0)=0$, then
\begin{equation}\label{Eqn::FuncSpaceReAnal::RestrctA}
\ANorm{f}{n}{\eta_1}\leq \frac{\eta_1}{\eta_2} \ANorm{f}{n}{\eta_2}.
\end{equation}
Similarly, if $f\in \sBSpace{N}{m}{\eta_2}$ with $f(0)=0$, then
\begin{equation}\label{Eqn::FuncSpaceReAnal::RestrctB}
\sBNorm{f}{N}{m}{\eta_1}\leq \frac{\eta_1}{\eta_2}\sBNorm{f}{N}{m}{\eta_2}.
\end{equation}
The same results hold (with the same proofs) for functions taking values in Banach spaces.
\end{lemma}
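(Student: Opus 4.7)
For the first inequality, I would simply expand in the power series. Since $f \in \ASpace{n}{\eta_2}$ has the representation $f(t) = \sum_{\alpha \in \N^n} \frac{c_\alpha}{\alpha!} t^\alpha$, the hypothesis $f(0) = 0$ forces $c_0 = 0$. Therefore every remaining term has $|\alpha| \geq 1$, and the key inequality $\eta_1^{|\alpha|} \leq (\eta_1/\eta_2)\, \eta_2^{|\alpha|}$ holds for all $|\alpha| \geq 1$ (using $\eta_1 < \eta_2$). Summing against $|c_\alpha|/\alpha!$ gives \cref{Eqn::FuncSpaceReAnal::RestrctA} directly. No further machinery is needed.

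For the second inequality, the natural approach is a Schwarz-lemma-along-complex-lines argument. Let $F = \Extend(f) \in \OSpace{m}[B_{\C^N}(\eta_2)]$, so $F$ is holomorphic on $B_{\C^N}(\eta_2)$, continuous on the closure, with $F(0) = 0$ and $\sup_{|z| \leq \eta_2} |F(z)| = \sBNorm{f}{N}{m}{\eta_2}$. Fix any $z_0 \in \overline{B_{\C^N}(\eta_1)}$ with $z_0 \ne 0$, set $w = z_0/|z_0| \in \C^N$, and consider the one-variable holomorphic function $h(\zeta) = F(\zeta w)$ defined on $\{\zeta \in \C : |\zeta| < \eta_2\}$, continuous on the closed disk. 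Then $h(0) = 0$ and $|h(\zeta)| \leq \sBNorm{f}{N}{m}{\eta_2}$ on this disk, so the classical Schwarz lemma (after rescaling $\zeta \mapsto \eta_2 \zeta$) gives $|h(\zeta)| \leq (|\zeta|/\eta_2)\, \sBNorm{f}{N}{m}{\eta_2}$. Evaluating at $\zeta = |z_0|$ yields $|F(z_0)| \leq (|z_0|/\eta_2)\, \sBNorm{f}{N}{m}{\eta_2} \leq (\eta_1/\eta_2)\, \sBNorm{f}{N}{m}{\eta_2}$. Taking the supremum over $z_0$ gives \cref{Eqn::FuncSpaceReAnal::RestrctB}.

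The Banach-space-valued version is identical: for part one, the coefficients $c_\alpha$ live in $\BanachSpace$ and the computation goes through unchanged since the norm on $\BanachSpace$ replaces $|\cdot|$. For part two, one can either reduce to the scalar case by applying an arbitrary bounded linear functional $\lambda \in \BanachSpace^*$ with $\|\lambda\| \leq 1$ (giving a scalar holomorphic function $\lambda \circ F$ vanishing at $0$, to which the argument above applies, and then taking supremum over $\lambda$), or directly use the vector-valued Schwarz lemma. I do not expect any real obstacle here — both estimates are soft consequences of the series/maximum-modulus definitions; the only small care needed is isolating the role of $f(0) = 0$, which is exactly what removes the $|\alpha| = 0$ term (respectively, supplies the vanishing hypothesis of Schwarz's lemma).
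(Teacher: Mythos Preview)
Your proposal is correct and essentially identical to the paper's proof: the power-series argument for \cref{Eqn::FuncSpaceReAnal::RestrctA} is the same, and for \cref{Eqn::FuncSpaceReAnal::RestrctB} the paper also restricts to complex lines through the origin, writing out the Schwarz lemma explicitly (factoring $g(z)=zg_1(z)$ and invoking the Maximum Modulus Principle) where you simply cite Schwarz by name.
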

\begin{proof}
Suppose $f\in \ASpace{n}{\eta_2}$ with $f(0)=0$.  Then, $f(t)=\sum_{|\alpha|>0} c_\alpha t^{\alpha}$ with
$\ANorm{f}{n}{\eta_2} = \sum_{|\alpha|>0} |c_{\alpha}| \eta_2^{|\alpha|}$.  We have
\begin{equation*}
\ANorm{f}{n}{\eta_1} = \sum_{|\alpha|>0} |c_\alpha| \eta_1^{|\alpha|}\leq \frac{\eta_1}{\eta_2} \sum_{|\alpha|>0} |c_{\alpha}| \eta_2^{|\alpha|}=\frac{\eta_1}{\eta_2}\ANorm{f}{n}{\eta_2}, 
\end{equation*}
completing the proof of \cref{Eqn::FuncSpaceReAnal::RestrctA}.

Let $g\in \OSpace{m}[B_{\C}(\eta_2)]$ with $g(0)=0$.  We claim
\begin{equation}\label{Eqn::FuncSpaceReAnal::RestrctO1}
\ONorm{g}{m}[B^1(\eta_1)]\leq \frac{\eta_1}{\eta_2} \ONorm{g}{m}[B_{\C}(\eta_2)].
\end{equation}
Indeed, we may write $g(z)=zg_1(z)$, where $g_1\in \OSpace{m}[B_{\C}(\eta_2)]$.  We have, by the Maximum Modulus Principle:
\begin{equation*}
\ONorm{g}{m}[B^1(\eta_1)] \leq \eta_1 \ONorm{g_1}{m}[B_{\C}(\eta_1)] \leq \eta_1 \sup_{|z|=\eta_2} |g_1(z)| = \frac{\eta_1}{\eta_2}  \sup_{|z|=\eta_2} |g(z)| = \frac{\eta_1}{\eta_2} \ONorm{g}{m}[B^1(\eta_2)],
\end{equation*}
completing the proof of \cref{Eqn::FuncSpaceReAnal::RestrctO1}.

Let $h\in \OSpace{m}[B_{\C^n}(\eta_2)]$ with $h(0)=0$.  We claim
\begin{equation}\label{Eqn::FuncSpaceReAnal::RestrctOn}
\ONorm{h}{m}[B_{\C^n}(\eta_1)]\leq \frac{\eta_1}{\eta_2} \ONorm{h}{m}[B_{\C^n}(\eta_2)].
\end{equation}
Indeed, for $0\ne w\in B_{\C^n}(\eta_1)$, apply \cref{Eqn::FuncSpaceReAnal::RestrctO1} to $g(z):= h(z w/|w|)$, to see $|h(w)|\leq \frac{\eta_1}{\eta_2}  \ONorm{h}{m}[B^n(\eta_2)]$.
Taking the supremum over all such $w$ yields \cref{Eqn::FuncSpaceReAnal::RestrctOn}.

\Cref{Eqn::FuncSpaceReAnal::RestrctB} is an immediate consequence of \cref{Eqn::FuncSpaceReAnal::RestrctOn}.
\end{proof}

\begin{lemma}\label{Lemma::FuncSpaceReAnal::AinB}
Fix $0<\eta_1<\eta_2$.  Then $\sBSpace{n}{1}{\eta_2}\subseteq \ASpace{n}{\eta_1}$ and for $f\in \sBSpace{n}{1}{\eta_2}$,
\begin{equation*}
\ANorm{f}{n}{\eta_1}\leq C\sBNorm{f}{n}{1}{\eta_2},
\end{equation*}
where $C$ can be chosen to depend only on $n$, $\eta_2$, and $\eta_1$.

Similarly for $s\in (0,\infty)$, $\sBSpace{n}{1}{\eta_2}\subseteq \ZygSpace{s}[B^n(\eta_1)]$ and for $f\in \ZygSpace{s}[B^n(\eta_1)]$,
\begin{equation*}
\ZygNorm{f}{s}[B^n(\eta_1)]\leq C\sBNorm{f}{n}{1}{\eta_2},
\end{equation*}
where $C$ can be chosen to depend only on $s$, $n$, $\eta_2$, and $\eta_1$.
\end{lemma}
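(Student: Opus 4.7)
Both inclusions will follow from Cauchy estimates applied to the holomorphic extension. Fix $f \in \sBSpace{n}{1}{\eta_2}$ and set $M := \sBNorm{f}{n}{1}{\eta_2}$, so that $\Extend(f)$ is holomorphic on $B_{\C^n}(\eta_2)$ and bounded in modulus by $M$ there. Expanding the extension in Taylor series at the origin yields $\Extend(f)(z) = \sum_\alpha (c_\alpha/\alpha!)\, z^\alpha$ with $c_\alpha = \partial^\alpha f(0)$.

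For the first inclusion, I would pick an intermediate radius $\rho \in (\eta_1, \eta_2)$ and apply the iterated Cauchy integral formula on a polydisc of polyradius $(r,\ldots,r)$ inscribed in $B_{\C^n}(\rho)$ (where $r$ is a fixed positive fraction of $\rho$ depending only on $n$ and on which Banach-space norm is used on $\C^n$). This produces the coefficient estimate
\[
\frac{|c_\alpha|}{\alpha!} \;\leq\; r^{-|\alpha|} M .
\]
Summing the resulting geometric series yields
\[
\ANorm{f}{n}{\eta_1} \;=\; \sum_\alpha \frac{|c_\alpha|}{\alpha!}\, \eta_1^{|\alpha|} \;\leq\; M \sum_\alpha (\eta_1/r)^{|\alpha|} \;=\; \frac{M}{(1 - \eta_1/r)^n},
\]
which is finite as long as $\rho$ is chosen close enough to $\eta_2$ that $r > \eta_1$. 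This gives the first estimate with $C$ depending only on $n, \eta_1, \eta_2$.

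For the Zygmund bound, I would pass through an intermediate $C^m$ estimate. For each $t_0 \in B_{\R^n}(\eta_1)$ there is a polydisc centered at $t_0$ of polyradius comparable to $\eta_2 - \eta_1$ which is contained in $B_{\C^n}(\eta_2)$, so Cauchy's formula on this polydisc gives $|\partial^\alpha f(t_0)| \leq C_\alpha M$ uniformly in $t_0$, with $C_\alpha$ depending on $|\alpha|, n, \eta_1, \eta_2$. Summing over $|\alpha| \leq m$ yields $\CjNorm{f}{m}[B_{\R^n}(\eta_1)] \leq C_m M$ for every $m \in \N$. Fixing any integer $m > s$ and invoking the elementary embedding $\CjSpace{m}[B_{\R^n}(\eta_1)] \hookrightarrow \ZygSpace{s}[B_{\R^n}(\eta_1)]$ on the bounded Lipschitz domain $B_{\R^n}(\eta_1)$ (via \cref{Rmk::FuncEudlid::ZygAndHolderTheSame} together with the mean value theorem applied to derivatives of order $\lfloor s \rfloor$) then delivers the desired inequality with constant depending only on $s, n, \eta_1, \eta_2$.

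No step is genuinely hard; the only care needed is to track the geometric factor relating $B_{\C^n}(\eta_2)$ to an inscribed polydisc so that it can be absorbed into a finite constant depending only on the stated parameters. Once $\eta_1 < \eta_2$ is fixed this absorption poses no obstacle, since both estimates leave room by choosing the intermediate radius $\rho$ strictly between $\eta_1$ and $\eta_2$.
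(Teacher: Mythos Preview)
Your argument for the second inclusion is sound: Cauchy estimates on a small polydisc centred at each $t_0\in B_{\R^n}(\eta_1)$ give uniform $C^m$ bounds on the smaller ball, and the elementary embedding $C^m\hookrightarrow\ZygSpace{s}$ (for $m>s$) finishes the job. Your argument for the first inclusion, however, has a genuine gap. A polydisc inscribed in the Euclidean ball $B_{\C^n}(\rho)$ has polyradius only $\rho/\sqrt{n}$, so the condition $r>\eta_1$ you need in order to sum the geometric series $\sum_\alpha(\eta_1/r)^{|\alpha|}$ is attainable only when $\eta_1<\eta_2/\sqrt{n}$, not for every $\eta_1<\eta_2$ as the lemma asserts.

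This is not merely a limitation of the polydisc method: the first inclusion is in fact \emph{false} in the range $\eta_2/\sqrt{n}\le\eta_1<\eta_2$. For $n=2$ take $f(z_1,z_2)=\bigl(1-(z_1+z_2)/\sqrt{2}\bigr)^{-1}$; since $|z_1+z_2|\le\sqrt{2}\,|z|$ this function is holomorphic and bounded on $B_{\C^2}(\eta_2)$ for any $\eta_2<1$, hence lies in $\sBSpace{2}{1}{\eta_2}$, yet a direct computation gives $\ANorm{f}{2}{\eta_1}=\sum_{k\ge0}(\sqrt{2}\,\eta_1)^k$, which diverges once $\eta_1\ge 1/\sqrt{2}$. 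The paper argues instead via the Cauchy--Szeg\H{o} reproducing kernel on the ball, a more natural tool on a ball domain that cleanly delivers the derivative bounds needed for the second inclusion; but no argument can rescue the first inclusion as written. In the paper's applications the inner radius is always a fixed small multiple of the outer one, so the statement should really be read with the proviso $\eta_1<\eta_2/\sqrt{n}$ (or with $\eta_1$ an admissible constant free to be shrunk), under which your polydisc argument goes through without change.
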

\begin{proof}
It suffices to prove both results for $\eta_2=1$ and $\eta_1\in (0,1)$, by rescaling.  
When $\eta_2=1$, we extend $f$ to a holomorphic function $\Extend(f)\in \OSpace{1}[B_{\C^n}(1)]$, and use the well-known representation:
\begin{equation*}
 \Extend f(z)=\frac{(n-1)!}{2\pi^n} \int_{\partial B_{\C^n}(1)}  \Extend f(\zeta) \frac{1-\overline{z}\cdot \zeta}{|\zeta-z|^{2n}}\: d\sigma(\zeta),
\end{equation*}
where $\sigma$ denotes the surface area measure on $\partial B_{\C^n}(1)$.  From here, the results follows easily.
\end{proof}

\begin{lemma}\label{Lemma::FuncSpaceRev::Scale}
Fix $\eta_1>0$, $D>0$.  For $0<\gamma\leq  \frac{\eta_1}{D}$ and $f:B^n(\eta_1)\rightarrow \C$, define $f_\gamma:B^n(D)\rightarrow \C$ by $f_\gamma(t)=f(\gamma t)$.
\begin{enumerate}[(i)]
\item\label{Item::FuncSpaceRev::Scaling::Zygmund} Let $m\in \N$ with $m\geq 1$ and $s\in (0,1]$.
Then, for $0<\gamma \leq \min\{\frac{\eta_1}{D}, 1\}$, we have for $f\in \ZygSpace{m+s}[B^n(\eta_1)]$ with $f(0)=0$,
\begin{equation*}
	\ZygNorm{f_\gamma}{m+s}[B^n(D)]\leq \gamma (15(D+1)+1)\ZygNorm{f}{m+s}[B^n(\eta_1)].
\end{equation*}

\item\label{Item::FuncSpaceRev::Scaling::Anal} For $0<\gamma \leq \frac{\eta_1}{D}$,
we have for  $f\in \ASpace{n}{\eta_1}$ with $f(0)=0$,
\begin{equation*}
\ANorm{f_\gamma}{n}{D} \leq \frac{\gamma D}{\eta_1} \ANorm{f}{n}{\eta_1}.
\end{equation*}
\end{enumerate}
\end{lemma}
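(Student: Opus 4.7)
Both parts follow by direct computation, using the hypothesis $f(0)=0$ to extract one extra power of $\gamma$ beyond what bare rescaling would yield. For \cref{Item::FuncSpaceRev::Scaling::Anal}, expand $f(t)=\sum_{|\alpha|\geq 1}\frac{c_\alpha}{\alpha!}t^{\alpha}$ (the sum begins at $|\alpha|\geq 1$ since $f(0)=0$) and compute
\begin{equation*}
\ANorm{f_{\gamma}}{n}{D}
=\sum_{|\alpha|\geq 1}\frac{|c_{\alpha}|}{\alpha!}(\gamma D)^{|\alpha|}
=\sum_{|\alpha|\geq 1}\frac{|c_{\alpha}|}{\alpha!}\Bigl(\frac{\gamma D}{\eta_1}\Bigr)^{|\alpha|}\eta_1^{|\alpha|}
\leq \frac{\gamma D}{\eta_1}\ANorm{f}{n}{\eta_1},
\end{equation*}
the final step using $\gamma D/\eta_1\leq 1$ together with $|\alpha|\geq 1$.

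For \cref{Item::FuncSpaceRev::Scaling::Zygmund}, split $\ZygNorm{f_{\gamma}}{m+s}[B^n(D)]=\sum_{|\alpha|\leq m}\ZygNorm{\partial_t^{\alpha} f_{\gamma}}{s}[B^n(D)]$ into the ranges $|\alpha|\geq 1$ and $|\alpha|=0$. When $|\alpha|\geq 1$, the chain rule gives $\partial_t^{\alpha}f_{\gamma}(t)=\gamma^{|\alpha|}(\partial_x^{\alpha}f)(\gamma t)$, and the change of variable $x\mapsto \gamma x$ in each of the three terms comprising $\ZygNorm{\cdot}{s}$ (the sup norm, the $s/2$-H\"older seminorm, and the second-order Zygmund seminorm) produces additional factors of $1$, $\gamma^{s/2}$, and $\gamma^{s}$ respectively, each $\leq 1$ since $\gamma\leq 1$. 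Combining with $\gamma^{|\alpha|}\leq \gamma$ and summing over $1\leq |\alpha|\leq m$, this range contributes at most $\gamma\,\ZygNorm{f}{m+s}[B^n(\eta_1)]$.

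The case $|\alpha|=0$ is where $f(0)=0$ and $m\geq 1$ are essential. Since $m\geq 1$, $f\in C^1$ on $B^n(\eta_1)$, and the mean value theorem gives $|f(\gamma t)|=|f(\gamma t)-f(0)|\leq \gamma D\,\|\grad f\|_{L^{\infty}}\leq \gamma D\,\ZygNorm{f}{m+s}[B^n(\eta_1)]$, controlling the sup-norm piece. Similarly $|f_{\gamma}(x)-f_{\gamma}(y)|\leq \gamma|x-y|\,\|\grad f\|_{L^{\infty}}$; dividing by $|x-y|^{s/2}$ and using $|x-y|^{1-s/2}\leq (2D)^{1-s/2}\leq 2(D+1)$ (valid since $s\leq 1$) controls the first-order piece by $\gamma\cdot 2(D+1)\,\ZygNorm{f}{m+s}[B^n(\eta_1)]$. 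For the second-order piece, one cannot exploit the Zygmund seminorm of $f$ directly under rescaling (that would yield only $\gamma^{s}$, not $\gamma$); instead, apply the identity
\begin{equation*}
f(y+2h)-2f(y+h)+f(y)=\int_0^1 h\cdot\bigl[\grad f(y+h+\tau h)-\grad f(y+\tau h)\bigr]\,d\tau
\end{equation*}
with $y=\gamma x$ and $h$ replaced by $\gamma h$, bound the bracket by $(\gamma|h|)^{s/2}$ times the $s/2$-H\"older seminorm of $\grad f$ (itself controlled by $\ZygNorm{f}{m+s}[B^n(\eta_1)]$ since $m\geq 1$), and use $|h|^{1-s/2}\leq D^{1-s/2}\leq D+1$ to get a further $O(\gamma(D+1))\,\ZygNorm{f}{m+s}[B^n(\eta_1)]$ contribution. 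Adding the three $|\alpha|=0$ pieces to the $|\alpha|\geq 1$ bound fits comfortably within $\gamma(15(D+1)+1)\,\ZygNorm{f}{m+s}[B^n(\eta_1)]$; the main subtlety is the second-order seminorm just discussed, and the rest is routine bookkeeping.
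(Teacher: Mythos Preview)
Your proof is correct and follows the same outline as the paper's: part \cref{Item::FuncSpaceRev::Scaling::Anal} is handled identically, and for part \cref{Item::FuncSpaceRev::Scaling::Zygmund} both arguments split into $|\alpha|\geq 1$ (treated the same way) and $|\alpha|=0$. The only difference is in the $|\alpha|=0$ case: rather than estimating the three pieces of $\ZygNorm{f_\gamma}{s}$ separately (and in particular invoking the integral identity for second differences together with the $s/2$-H\"older continuity of $\grad f$), the paper simply bounds $\CjNorm{f_\gamma}{1}[B^n(D)]\leq (D+1)\gamma\,\CjNorm{f}{1}[B^n(\eta_1)]$ from $f_\gamma(0)=0$ and then applies the single packaged inequality $\ZygNorm{g}{s}[B]\leq 15\,\CjNorm{g}{1}[B]$ with $g=f_\gamma$. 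This is where the specific constant $15(D+1)$ comes from; your more hands-on bound actually yields a smaller constant, but the paper's route is shorter.
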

\begin{proof}
We begin with \cref{Item::FuncSpaceRev::Scaling::Zygmund}.  Using $0<\gamma \leq \min\{\frac{\eta_1}{D}, 1\}$, it follows immediately from the definitions that
\begin{equation}\label{Eqn::FuncSpaceRev::Scale0Vanish0}
\begin{split}
&\sum_{1\leq |\alpha|\leq m} \ZygNorm{\partial_x^{\alpha} f_\gamma}{s}[B^n(D)] = \sum_{1\leq |\alpha|\leq m} \gamma^{|\alpha|} \ZygNorm{(\partial_x^{\alpha} f)(\gamma\cdot)}{s}[B^n(D)]
\\&
\leq \sum_{1\leq |\alpha|\leq m} \gamma^{|\alpha|} \ZygNorm{\partial_x^{\alpha} f}{s}[B^n(\eta_1)] \leq \gamma \ZygNorm{f}{m+s}[B^n(\eta_1)].
\end{split}
\end{equation}
Since $f_\gamma(0)=f(0)=0$, we have (using the Fundamental Theorem of Calculus)
\begin{equation}\label{Eqn::FuncSpaceRev::Scale0Vanish1}
\begin{split}
&\CjNorm{f_\gamma}{1}[B^n(D)] = \CjNorm{f_\gamma}{0}[B^n(D)] +\sum_{|\alpha|=1} \CjNorm{\partial_x^{\alpha} f_\gamma}{0}[B^n(D)]
\\&\leq (D+1) \sum_{|\alpha|=1} \CjNorm{\partial_x^{\alpha} f_\gamma}{0}[B^n(D)] \leq (D+1)\gamma \CjNorm{f}{1}[B^n(\eta_1)].
\end{split}
\end{equation}
It is easy to see, directly from the definitions, that (for any function $g$ on any ball $B$),\footnote{See \cite[\SSCompareFunctionSpaces]{StovallStreetI}
for a result like \cref{Eqn::FuncSpaceRevAnal::CompareBallFS}--in the proof of that result, one can see how the constants $5$ and $15$ arise.  However, these particular constants
are not essential for what follows.}
\begin{equation}\label{Eqn::FuncSpaceRevAnal::CompareBallFS}
\ZygNorm{g}{s}[B]\leq 5 \HNorm{g}{0}{s}[B]\leq 15 \HNorm{g}{0}{1}[B]\leq 15 \CjNorm{g}{1}[B]\leq 15 \ZygNorm{g}{m+s}[B].
\end{equation}
Thus, using \cref{Eqn::FuncSpaceRev::Scale0Vanish1}, we have
\begin{equation*}
\ZygNorm{f_\gamma}{s}[B^n(D)]\leq 15 \CjNorm{f_\gamma}{1}[B^n(D)] \leq  15(D+1) \gamma \CjNorm{f}{1}[B^n(\eta_1)]\leq 15(D+1) \gamma \ZygNorm{f}{m+s}[B^n(\eta_1)].
\end{equation*}
Combining this with \cref{Eqn::FuncSpaceRev::Scale0Vanish0} completes the proof of \cref{Item::FuncSpaceRev::Scaling::Zygmund}.

We turn to \cref{Item::FuncSpaceRev::Scaling::Anal}.  Let $f\in \ASpace{n}{\eta_1}$ with $f(0)=0$, so that $f(t)=\sum_{|\alpha|>0} c_\alpha t^{\alpha}$,
and $\ANorm{f}{n}{\eta_1} =\sum_{|\alpha|>0} |c_\alpha|\eta_1^{|\alpha|}$.  For $0<\gamma \leq \frac{\eta_1}{D}$ we have
$f_\gamma(t) = \sum_{|\alpha|>0} c_\alpha \gamma^{|\alpha|} t^{\alpha}$, and therefore $f_\gamma\in \ASpace{n}{D}$ and we have
\begin{equation*}
\ANorm{f_\gamma}{n}{D}=\sum_{|\alpha|>0} |c_\alpha| (\gamma D)^{|\alpha|} = \sum_{|\alpha|>0} |c_\alpha| (\eta_1)^{|\alpha|} \mleft(\frac{\gamma D}{\eta_1}\mright)^{|\alpha|}
\leq \mleft(\frac{\gamma D}{\eta_1}\mright) \ANorm{f}{n}{\eta_1},
\end{equation*}
completing the proof of \cref{Item::FuncSpaceRev::Scaling::Anal}.
\end{proof}

\begin{lemma}\label{Lemma::FuncSpaceRev::ComposeAnal}
Let $\eta_1,\eta_2>0$, $n_1,n_2\in \N$, and let $\BanachSpace$ be a Banach space.
Suppose $f\in \ASpace{n_1}{\eta_1}[\BanachSpace]$, $g\in \ASpace{n_2}{\eta_2}[\R^{n_1}]$ with
$\ANorm{g}{n_2}{\eta_2}[\R^{n_1}]\leq \eta_1$.  Then, $f\circ g\in \ASpace{n_2}{\eta_2}[\BanachSpace]$
with $\ANorm{f\circ g}{n_2}{\eta_2}\leq \ANorm{f}{n_1}{\eta_1}$.
\end{lemma}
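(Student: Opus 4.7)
The plan is to view $\ASpace{n}{s}[\BanachSpace]$ as a weighted $\ell^1$-style Banach space of Taylor coefficients and to read off the composition bound directly from the submultiplicative structure furnished by \cref{Lemma::FuncSpaceRevAnal::BanachAlgebra}. Expand $f(y)=\sum_{\alpha\in\N^{n_1}} \frac{c_\alpha}{\alpha!} y^{\alpha}$ with $c_\alpha\in\BanachSpace$, so that $\ANorm{f}{n_1}{\eta_1}=\sum_\alpha \frac{|c_\alpha|}{\alpha!}\eta_1^{|\alpha|}$, and write $g=(g_1,\ldots,g_{n_1})$ with $g_j\in\ASpace{n_2}{\eta_2}[\R]$.

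Next, I would estimate the norm of each monomial summand $\frac{c_\alpha}{\alpha!}g^{\alpha}$, where $g^{\alpha}=g_1^{\alpha_1}\cdots g_{n_1}^{\alpha_{n_1}}$. By \cref{Lemma::FuncSpaceRevAnal::BanachAlgebra}, $\ASpace{n_2}{\eta_2}[\R]$ is a Banach algebra, so $\ANorm{g^{\alpha}}{n_2}{\eta_2}\leq \prod_j \ANorm{g_j}{n_2}{\eta_2}^{\alpha_j}$. Because the Taylor coefficients of $g_j$ are components of those of $g$ (and each coordinate is bounded by the $\R^{n_1}$-norm of the full vector), one has $\ANorm{g_j}{n_2}{\eta_2}\leq \ANorm{g}{n_2}{\eta_2}[\R^{n_1}]\leq \eta_1$ for every $j$, hence
\begin{equation*}
\ANorm{g^{\alpha}}{n_2}{\eta_2}\leq \ANorm{g}{n_2}{\eta_2}[\R^{n_1}]^{|\alpha|}\leq \eta_1^{|\alpha|}.
\end{equation*}

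Summing these bounds,
\begin{equation*}
\sum_{\alpha}\ANorm{\tfrac{c_\alpha}{\alpha!}g^{\alpha}}{n_2}{\eta_2}[\BanachSpace]\leq \sum_{\alpha}\frac{|c_\alpha|}{\alpha!}\eta_1^{|\alpha|}=\ANorm{f}{n_1}{\eta_1}<\infty,
\end{equation*}
so by completeness of $\ASpace{n_2}{\eta_2}[\BanachSpace]$ the formal series $h(t):=\sum_\alpha \frac{c_\alpha}{\alpha!}g(t)^{\alpha}$ converges absolutely in $\ASpace{n_2}{\eta_2}[\BanachSpace]$ to an element $h$ satisfying $\ANorm{h}{n_2}{\eta_2}\leq \ANorm{f}{n_1}{\eta_1}$. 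Finally, one checks $h=f\circ g$ pointwise: the bound $\ANorm{g}{n_2}{\eta_2}\leq\eta_1$ forces $g(t)\in \overline{B_{\R^{n_1}}(\eta_1)}$ for every $t\in B_{\R^{n_2}}(\eta_2)$, whence the Taylor series for $f$ at $g(t)$ converges absolutely and equals $\sum_\alpha \frac{c_\alpha}{\alpha!}g(t)^{\alpha}=h(t)$.

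There is no genuine obstacle; the whole argument is a formal manipulation of absolutely convergent series. The only minor bookkeeping point is the passage from $\ANorm{g}{n_2}{\eta_2}[\R^{n_1}]$ to $\ANorm{g_j}{n_2}{\eta_2}[\R]$, which is an immediate consequence of the definition of the vector-valued norm on Taylor coefficients.
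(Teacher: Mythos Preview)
Your proposal is correct and takes essentially the same approach as the paper, which merely states that the result ``is immediate from the definitions''; you have supplied exactly the details behind that remark. The only cosmetic point is phrasing: ``the Taylor series for $f$ at $g(t)$'' should read ``the Taylor series of $f$ (about $0$) evaluated at $g(t)$'', and strictly speaking $g(t)$ may lie on $\partial B_{\R^{n_1}}(\eta_1)$, but the absolute convergence of the series on the closed ball (which you have already established) handles this.
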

\begin{proof}
This is immediate from the definitions.
\end{proof}

\begin{lemma}\label{Lemma::FuncSpaceRev::DerivOfAnal}
Fix $0<\eta_2<\eta_1$, and suppose $f\in \ASpace{n}{\eta_1}[\BanachSpace]$, where $\BanachSpace$ is a Banach space.
Then, for each $j=1,\ldots, n$, $\diff{t_j} f(t)\in \ASpace{n}{\eta_2}[\BanachSpace]$ and
$\ANorm{\diff{t_j} f}{n}{\eta_2}[\BanachSpace] \leq C \ANorm{f}{n}{\eta_1}[\BanachSpace]$, where $C$ can be chosen to depend only on $\eta_1$ and $\eta_2$.
\end{lemma}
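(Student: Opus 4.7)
The plan is to proceed by a direct manipulation of the defining power series. Write $f(t)=\sum_{\alpha\in\N^n}\frac{c_\alpha}{\alpha!}t^\alpha$ with $c_\alpha\in\BanachSpace$, so that by definition $\ANorm{f}{n}{\eta_1}[\BanachSpace]=\sum_{\alpha}\frac{\|c_\alpha\|}{\alpha!}\eta_1^{|\alpha|}$. Differentiating term-by-term and shifting indices via $\alpha=\beta+e_j$ gives
\begin{equation*}
\diff{t_j}f(t)=\sum_{\beta\in\N^n}\frac{c_{\beta+e_j}}{\beta!}t^{\beta},
\end{equation*}
so that, formally,
\begin{equation*}
\ANorm{\diff{t_j}f}{n}{\eta_2}[\BanachSpace]
=\sum_{\beta}\frac{\|c_{\beta+e_j}\|}{\beta!}\eta_2^{|\beta|}
=\frac{1}{\eta_2}\sum_{\alpha:\,\alpha_j\geq 1}\frac{\alpha_j\,\|c_\alpha\|}{\alpha!}\,\eta_2^{|\alpha|},
\end{equation*}
using $(\beta+e_j)!=(\beta_j+1)\,\beta!=\alpha_j\,\beta!$.

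The key elementary fact is that since $\eta_2/\eta_1<1$, the map $k\mapsto k(\eta_2/\eta_1)^{k}$ is bounded on $\N$ by a constant $C_0=C_0(\eta_1,\eta_2)$. Applying this with $k=|\alpha|\geq \alpha_j$ yields $\alpha_j\,\eta_2^{|\alpha|}\leq C_0\,\eta_1^{|\alpha|}$ for every $\alpha$ with $|\alpha|\geq 1$. Substituting into the identity above gives
\begin{equation*}
\ANorm{\diff{t_j}f}{n}{\eta_2}[\BanachSpace]
\leq \frac{C_0}{\eta_2}\sum_{\alpha}\frac{\|c_\alpha\|}{\alpha!}\,\eta_1^{|\alpha|}
=\frac{C_0}{\eta_2}\,\ANorm{f}{n}{\eta_1}[\BanachSpace],
\end{equation*}
which is the desired bound with $C=C_0/\eta_2$ depending only on $\eta_1$ and $\eta_2$. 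In particular, the computation shows that $\diff{t_j}f$ lies in $\ASpace{n}{\eta_2}[\BanachSpace]$.

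There is no serious obstacle here: the argument is a bookkeeping exercise on power series coefficients, and the only substantive input is the uniform boundedness of $k\,r^{k}$ for fixed $r\in(0,1)$. The only minor care needed is to justify the term-by-term differentiation, which is immediate from absolute convergence of $\sum_\alpha\frac{\|c_\alpha\|}{\alpha!}\eta_1^{|\alpha|}$ on any smaller polydisc and the fact that the $\ASpace{n}{\cdot}[\BanachSpace]$ norm controls uniform convergence of the differentiated series on $B_{\R^n}(\eta_2)$. (Alternatively, one could deduce the estimate from Cauchy's formula applied to the holomorphic extension $\Extend(f)$ on $B_{\C^n}(\eta_1)$ via \cref{Lemma::FuncSpaceRevAnal::AInB} and then pass back to $\ASpace{n}{\eta_2}$ using \cref{Lemma::FuncSpaceReAnal::AinB}, but this routes through two norm comparisons and is less direct than the coefficientwise argument.)
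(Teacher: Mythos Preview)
Your proof is correct and follows essentially the same approach as the paper: both arguments write $f$ as a power series, differentiate term-by-term, and bound the resulting coefficient sum using the elementary fact that $\alpha_j\,(\eta_2/\eta_1)^{|\alpha|}$ is uniformly bounded in $\alpha$. Your index shift $\alpha=\beta+e_j$ and the paper's direct computation with $\alpha-e_1$ are the same manipulation written slightly differently.
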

\begin{proof}
Without loss of generality, we prove the result for $j=1$.  We let $e_1$ denote the first standard basis element:  $e_1=(1,0,\ldots, 0)\in \R^n$.
Suppose $f(t) = \sum c_{\alpha} \frac{t^{\alpha}}{\alpha!}$.  Then, $\diff{t_j} f(t) = \sum_{\alpha_1>0} c_\alpha \frac{t^{\alpha-e_1}}{(\alpha-e_1)!}$.
Hence,
\begin{equation*}
\BANorm{\diff{t_1} f}{n}{\eta_2} = \sum_{\alpha_1>0} \frac{\Norm{c_\alpha}[\BanachSpace]}{(\alpha-e_1)!} \eta_2^{|\alpha-e_1|}
=\sum_{\alpha} \frac{\Norm{c_\alpha}[\BanachSpace]}{\alpha!} \eta_1^{|\alpha|} \mleft(\frac{\eta_2}{\eta_1}\mright)^{|\alpha|} \frac{\alpha_1}{\eta_1}
\leq \mleft( \sup_{\alpha}\mleft(\frac{\eta_2}{\eta_1}\mright)^{|\alpha|} \frac{\alpha_1}{\eta_1} \mright) \ANorm{f}{n}{\eta_1},
\end{equation*}
completing the proof.
\end{proof}

\section{Some Additional Notation}\label{Section::Notation}
If $f:M\rightarrow N$ is a $C^1$ map between $C^1$ manifolds, we write $df(x):T_xM\rightarrow T_xN$ for the usual differential.  We extend this to be a complex linear map
$df(x):\C T_xM\rightarrow \C T_x N$, where $\C T_x M = T_x M\otimes_{\R} \C$ denotes the complexified tangent space.
Even if the manifold $M$ has additional structure (e.g., in the case of a complex manifold), $df(x)$ is defined in terms of the underlying real manifold structure.

When working on $\R^r\times \C^n$ we will often use coordinates $(t,z)$ where $t=(t_1,\ldots, t_r)\in \R^r$ and $z=(z_1,\ldots, z_n)\in \C^n$.
We write
\begin{equation*}
\diff{t}=
\begin{bmatrix}
\diff{t_1}\\
\diff{t_2}\\
\vdots\\
\diff{t_r}
\end{bmatrix},
\quad
\diff{z}
=\begin{bmatrix}
\diff{z_1}\\
\diff{z_2}\\
\vdots\\
\diff{z_n}
\end{bmatrix},
\quad
\diff{\zb}
=\begin{bmatrix}
\diff{\zb[1]}\\
\diff{\zb[2]}\\
\vdots\\
\diff{\zb[n]}
\end{bmatrix}.
\end{equation*}
At times we will instead use coordinates $(u,w)$ where $u\in \R^r$ and $w\in \C^n$ and define $\diff{u}$, $\diff{w}$, and $\diff{\wb}$ similarly.

For a function $F(t,z)=(F_1(t,z),\ldots, F_m(t,z)):\R^r\times \C^n\rightarrow \C^m$ we write
\begin{equation*}
d_t F =
\begin{bmatrix}
\frac{\partial F_1}{\partial t_1} & \cdots & \frac{\partial F_1}{\partial t_r} \\
\vdots &\ddots&\vdots\\
\frac{\partial F_m}{\partial t_1} & \cdots & \frac{\partial F_m}{\partial t_r}
\end{bmatrix},
\: d_z F =
\begin{bmatrix}
\frac{\partial F_1}{\partial z_1} & \cdots & \frac{\partial F_1}{\partial z_n} \\
\vdots &\ddots&\vdots\\
\frac{\partial F_m}{\partial z_1} & \cdots & \frac{\partial F_m}{\partial z_n}
\end{bmatrix},
\: d_{\zb} F =
\begin{bmatrix}
\frac{\partial F_1}{\partial \zb[1]} & \cdots & \frac{\partial F_1}{\partial \zb[n]} \\
\vdots &\ddots&\vdots\\
\frac{\partial F_m}{\partial \zb[1]} & \cdots & \frac{\partial F_m}{\partial \zb[n]}
\end{bmatrix},
\end{equation*}
We identify $\R^{r}\times \R^{2n}\cong \R^{r}\times \C^n$ via the map $(t_1,\ldots, t_r, x_1,\ldots,x_{2n})\mapsto (t_1,\ldots, t_r, x_1+ix_{n+1}, \ldots, x_n+ix_{2n})$.
Thus, given a function $G(t,z):\R^r\times \C^n\rightarrow \R^s\times \C^m$, we may also think of $F$ as function $G(t,x)=(G_1(t,x),\ldots, G_{s+2m}(t,x)):\R^{r}\times \R^{2n}\rightarrow \R^{s}\times \R^{2m}$.
For such a function, we write
\begin{equation*}
d_{(t,x)} G =
\begin{bmatrix}
\frac{\partial G_1}{\partial t_1} & \cdots & \frac{\partial G_1}{\partial t_r} & \frac{\partial G_1}{\partial x_1} & \cdots & \frac{\partial G_1}{\partial x_{2n}}\\
\vdots &\ddots &\vdots&\vdots&\ddots& \vdots\\
\vdots &\ddots &\vdots&\vdots&\ddots& \vdots\\
\frac{\partial G_{s+2m}}{\partial t_1} & \cdots & \frac{\partial G_{s+2m}}{\partial t_r} & \frac{\partial G_{s+2m}}{\partial x_1} & \cdots & \frac{\partial G_{s+2m}}{\partial x_{2n}}
\end{bmatrix}.
\end{equation*} 
We write $I_{N\times N}\in \M^{N\times N}$ to denote the $N\times N$ identity matrix, and $0_{a\times b}\in\M^{a\times b}$ to denote the $a\times b$ zero matrix.

\section{The Main Technical Result}\label{Section::Nirenbeg}
In this section, we state and prove the main technical result needed to prove \cref{Thm::EMfld::MainThm}.

Fix $s_0\in (0,\infty)\cup \{\omega\}$ and let
$X_1,\ldots, X_r, L_1,\ldots, L_n$ be complex vector fields
on $B_{\R^r\times \C^n}(1)$ with:
\begin{itemize}
\item If $s_0\in (0,\infty)$, $X_k, L_j\in \ZygSpace{s_0+1}[B_{\R^r\times \C^n}(1)][\C^{r+2n}]$.
\item If $s_0=\omega$, $X_k, L_j\in \ASpace{r+2n}{1}[\C^{r+2n}]$.
\end{itemize}
We suppose:
\begin{itemize}
\item $X_k(0)=\diff{t_k}$, $L_j(0)=\diff{\zb[j]}$.
\item $\forall \zeta\in B_{\R^r\times \C^n}(1)$,
$[X_{k_1},X_{k_2}](\zeta), [X_k, L_j](\zeta), [L_{j_1}, L_{j_2}](\zeta)
\in \Span_{\C}\left\{X_1(\zeta),\ldots, X_r(\zeta), L_1(\zeta),\ldots, L_n(\zeta)\right\}$.
\end{itemize}

Under these hypotheses, Nirenberg's theorem on the integrability of elliptic structures\footnote{Originally, Nirenberg considered only the case of $C^\infty$ vector fields and worked in the case when $X_1,\ldots, X_r$ were real.} implies
that there exists a map $\Phi_4:B_{\R^r\times \C^n}(1)\rightarrow B_{\R^r\times \C^n}(1)$, with $\Phi_4(0)=0$, $\Phi_4$ is a diffeomorphism
onto its image (which is an open neighborhood of $0\in B_{\R^r\times \C^n}(1)$), and such that
$\Phi_4^{*}X_k (u,w), \Phi_4^{*}L_j(u,w)\in \Span_{\C}\{\diff{u_1},\ldots, \diff{u_r}, \diff{\wb[1]},\ldots, \diff{\wb[r]}\}$, $\forall (u,w)$ (here
we are giving the domain space $\R^r\times \C^n$ coordinates $(u,w)$).
Our goal in this section is to give a quantitative version of
this result which gives $\Phi_4$ the optimal regularity (namely,
when $s_0\in (0,\infty)$, $\Phi_4$ is in $\ZygSpace{s_0+2}$,
and when $s_0=\omega$, $\Phi_4$ is real analytic).

As discussed in \cref{Section::Intro::MainMotivation}, for future applications we need keep track of what the constants depend on in this section, and need to make the statement of the results more precise than would be required just
for the main results of this paper.  To ease notation, we introduce notions of ``admissible'' constants.
These are constants which only depend on certain parameters.  The use of these constants greatly simplifies notation in both the statements of the results and the proofs.


\begin{defn}
If $s_0\in (0,\infty)$, for $s\geq s_0$ if we say $C$ is an
$\Zygad{s}$-admissible constant, it means that we assume
$X_k, L_j\in \ZygSpace{s+1}[B_{\R^r\times \C^n}(1)][\C^{r+2n}]$, $\forall j,k$.  $C$ can then be chosen to depend only on
$n$, $r$, $s$, $s_0$, and upper bounds for
$\ZygNorm{X_k}{s+1}[B_{\R^r\times \C^n}(1)]$ and
$\ZygNorm{L_j}{s+1}[B_{\R^r\times \C^n}(1)]$, $1\leq k\leq r$, $1\leq j\leq n$.
For $s\leq s_0$, we define $\Zygad{s}$-admissible constants
to be $\Zygad{s_0}$-admissible constants.
\end{defn}

\begin{defn}
If $s_0=\omega$, we say $C$ is an $\Zygad{\omega}$-admissible constant
if $C$ can be chosen to depend only on $n$, $r$, and upper bounds
for $\ANorm{X_k}{2n+r}{1}$, $\ANorm{L_j}{2n+r}{1}$, $1\leq k\leq r$, $1\leq j\leq n$.
\end{defn}

We write $A\lesssim_{\Zygad{s}} B $ to mean $A\leq CB$, where $C$ is a positive $\Zygad{s}$-admissible constant.  We write $A\approx_{\Zygad{s}} B$ for $A\lesssim_{\Zygad{s}} B$ and $B\lesssim_{\Zygad{s}} A$.

\begin{thm}\label{Thm::Nirenberg::MainThm}
There exists an $\Zygad{s_0}$-admissible constant $K_2\geq 1$
and a map $\Phi_4:B_{\R^r\times \C^n}(1)\rightarrow B_{\R^r\times \C^n}(1)$ such that
\begin{enumerate}[(i)]
\item\label{Item::Nirenberg::PhiRegularity}
\begin{itemize}
\item If $s_0\in (0,\infty)$, $\Phi_4\in \ZygSpace{s_0+2}[B_{\R^r\times \C^n}(1)][\R^r\times \C^n]$ and
    $\ZygNorm{\Phi_4}{s+2}[B_{\R^r\times \C^n}(1)]\lesssim_{\Zygad{s}} 1$, $\forall s>0$.
\item If $s_0=\omega$, $\Phi_4\in \ASpace{2n+r}{2}[\R^r\times \C^n]$
and $\ANorm{\Phi_4}{2n+r}{2}\leq 1$.  In particular, $\Phi_4$ extends to a real analytic function on $B_{\R^r\times \C^n}(2)$.
\end{itemize}
\item\label{Item::Nirenberg::Phiof0} $\Phi_4(0)=0$ and $d_{(t,x)} \Phi_4(0) = K_2^{-1} I_{(r+2n)\times (r+2n)}$.  See \cref{Section::Notation} for the notation $d_{(t,x)}$.

\item\label{Item::Nirenberg::JacobianConst} $\forall \zeta\in B_{\R^r\times\C^n}(1)$, $\det d_{(t,x)} \Phi_4 (\zeta)\approx_{\Zygad{s_0}} 1$.

\item\label{Item::Nirenberg::Phi4Open} $\Phi_4(B_{\R^r\times \C^n}(1))\subseteq B_{\R^r\times \C^n}(1)$ is an open set and $\Phi_4:B_{\R^r\times \C^n}(1)\rightarrow \Phi_4(B_{\R^r\times \C^n}(1))$ is a diffeomorphism\footnote{Here, and in the rest of the paper, we say $F:U_1\rightarrow U_2$
is a diffeomorphism if $F$ is a bijection and $dF$ is everywhere nonsingular.}.
\item\label{Item::Nirenberg::AMatrix}
\begin{equation*}
\begin{bmatrix}
\diff{u} \\ \diff{\wb}
\end{bmatrix}
= K_2^{-1}(I+\AMatrix)
\begin{bmatrix}
\Phi_4^{*} X \\ \Phi_4^{*} L
\end{bmatrix},
\end{equation*}
where $\AMatrix:B_{\R^r\times\C^n}(1)\rightarrow \M^{(n+r)\times (n+r)}(\C)$, $\AMatrix(0)=0$ and
\begin{itemize}
    \item If $s_0\in (0,\infty)$, $\ZygNorm{\AMatrix}{s+1}[B_{\R^r\times\C^n}(1)][\M^{(n+r)\times (n+r)}]\lesssim_{\Zygad{s}} 1$, $\forall s>0$ and $$\ZygNorm{\AMatrix}{s_0+1}[B_{\R^r\times\C^n}(1)][\M^{(n+r)\times (n+r)}]\leq \frac{1}{4}.$$
    \item If $s_0=\omega$, $\ANorm{\AMatrix}{2n+r}{1}[\M^{(n+r)\times (n+r)}]\leq \frac{1}{4}$.
\end{itemize}
In either case, note that this implies $(I+\AMatrix)$ is an invertible matrix on $B_{\R^r\times\C^n}(1)$.
\item\label{Item::Nirenberg::MainThm::ZEst} Suppose $Z$ is another complex vector field on $B_{\R^r\times\C^n}(1)$.  Then,
    \begin{itemize}
        \item If $s_0\in (0,\infty)$,
        $\ZygNorm{\Phi_4^{*} Z}{s+1}[B_{\R^r\times\C^n}(1)]\lesssim_{\Zygad{s}} \ZygNorm{Z}{s+1}[B_{\R^r\times\C^n}(1)]$, $\forall s>0$.
        \item If $s_0=\omega$,
        $\ANorm{\Phi_4^{*} Z}{2n+r}{1}\lesssim_{\Zygad{\omega}} \ANorm{Z}{2n+r}{1}$.
    \end{itemize}
\end{enumerate}
\end{thm}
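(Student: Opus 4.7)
The plan is to adapt Malgrange's proof of the Newlander--Nirenberg theorem to this setting: I will construct $\Phi_4$ as the composition of a dilation by $K_2^{-1}$ with a near-identity diffeomorphism $\Psi$, the latter obtained as the fixed point of an elliptic iteration. Since $X_k(0)=\diff{t_k}$ and $L_j(0)=\diff{\zb[j]}$, the perturbations $X_k-\diff{t_k}$ and $L_j-\diff{\zb[j]}$ vanish at the origin. Rescaling the coordinates by a sufficiently small admissible factor $K_2^{-1}$ (using \cref{Lemma::FuncSpaceRev::Scale}), the rescaled fields $\tilde X_k(\zeta):=K_2^{-1} X_k(K_2^{-1}\zeta)$ and $\tilde L_j(\zeta):=K_2^{-1} L_j(K_2^{-1}\zeta)$ differ from the constant-coefficient model $\diff{t_k},\diff{\zb[j]}$ by an arbitrarily small amount in the relevant norm (either $\ZygSpace{s_0+1}$ on $B_{\R^r\times\C^n}(2)$ or $\ASpace{2n+r}{2}$). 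This rescaling also fixes the normalization $d_{(t,x)}\Phi_4(0)=K_2^{-1} I$ in (ii).

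Next, write the near-identity map as $\Psi=\mathrm{id}+\eta$ with $\eta(0)=0$, $d\eta(0)=0$. The condition that $\Psi^{*}\tilde X_k$, $\Psi^{*}\tilde L_j$ lie in $\Span_\C\{\diff{u_l},\diff{\wb[j]}\}$, after using the involutivity hypothesis to close the overdetermined system, reduces to a nonlinear equation of the schematic form
\begin{equation*}
\mathcal{D}\eta = \mathcal{N}(\eta, d\eta),
\end{equation*}
where $\mathcal{D}=(\diff{t},\diff{\zb})$ is the constant-coefficient elliptic model operator and $\mathcal{N}$ is at least quadratic in $(\eta,d\eta)$ and depends linearly on the (small) perturbations $\tilde X-\diff{t}$, $\tilde L-\diff{\zb}$. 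Inverting $\mathcal{D}$ by its fundamental solution (a Bochner--Martinelli kernel in the $\bar z$ variables composed with integration in $t$) yields a one-derivative-smoothing operator $T$, turning the problem into the Banach fixed point $\eta = T\mathcal{N}(\eta,d\eta)$. Contraction on a small ball in $\ASpace{2n+r}{2}$ (analytic case) or $\ZygSpace{s_0+2}[B_{\R^r\times\C^n}(1)]$ (Zygmund case) follows from the algebra and composition properties collected in \cref{Section::FuncRev::RealAnal} and the preceding function-space section, together with the smallness from rescaling.

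The one-derivative gain of $T$, combined with $\mathcal{N}$ depending on $\tilde X,\tilde L$ only at their native regularity $\ZygSpace{s_0+1}$, delivers $\eta\in\ZygSpace{s_0+2}$ exactly as required by (i); in the analytic case the analogous estimate in $\ASpace{2n+r}{2}$ gives the claimed bound and the extension to $B_{\R^r\times\C^n}(2)$. Statements (ii)--(iv) then follow by tracking the dilation and the fixed-point estimates: (ii) by the normalization $\eta(0)=d\eta(0)=0$, (iii) by continuity of the Jacobian together with $d_{(t,x)}\Phi_4(0)=K_2^{-1} I$, and (iv) by the inverse function theorem applied to the near-identity $\Psi$. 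The matrix $\AMatrix$ in (v) is read off the matrix of $\Psi^{*}[\tilde X;\tilde L]$ expressed in the basis $\{\diff{u_l},\diff{\wb[j]}\}$; its smallness is inherited from that of $\eta$. Upgrading from $\ZygSpace{s_0+2}$ to $\ZygSpace{s+2}$ for arbitrary $s>0$ in (i) and (v) is a standard bootstrap, differentiating the equation and re-applying $T$. Finally, (vi) is a direct consequence of the composition and algebra estimates applied to $Z\mapsto\Phi_4^{*}Z$.

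The main obstacle will be the first step: setting up the fixed-point equation $\mathcal{D}\eta=\mathcal{N}(\eta,d\eta)$ in such a way that the involutivity of $\{X_k,L_j\}$ is used precisely enough to convert the overdetermined straightening conditions into a square, genuinely elliptic fixed point for the $r+n$ unknown components of $\eta$, with a nonlinearity whose $T$-smoothing exactly matches the one-derivative gap between the $\ZygSpace{s_0+1}$ coefficients and the $\ZygSpace{s_0+2}$ solution. Once this algebraic reduction is clean, the contraction and the regularity bootstrap proceed by the routine quantitative estimates in $\ASpace{\cdot}{\cdot}$, $\sBSpace{\cdot}{\cdot}{\cdot}$, and $\ZygSpace{\cdot}$ already developed in the earlier sections.
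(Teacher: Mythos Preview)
Your proposal differs substantially from the paper's route and leaves unresolved precisely the step you yourself flag as ``the main obstacle.'' You propose to invert the linearized operator $\mathcal D=(\partial_t,\partial_{\bar z})$ by ``a Bochner--Martinelli kernel in the $\bar z$ variables composed with integration in $t$,'' but $(\partial_t,\partial_{\bar z})$ is overdetermined: it has a left inverse, not a right inverse, and applying a left inverse $T$ to the nonlinear right-hand side does not reproduce the full system $X_k w_l=0$, $L_j w_l=0$ unless the involutivity compatibility conditions are shown to propagate through every iterate. You neither specify a square elliptic sub-system to which the contraction mapping is actually applied nor explain how involutivity closes it; without that, the fixed-point iteration as written is not well posed, and the regularity bootstrap has nothing to bootstrap.

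The paper avoids this by \emph{not} solving directly for the straightening map. After the same rescaling you describe and a normalization to $X=\partial_t+E\partial_z$, $L=\partial_{\bar z}+F\partial_z$ with $E,F$ small, it first finds an auxiliary map $H(t,z)=(t,z+R_2(t,z))$ by solving a genuinely square \emph{second-order} elliptic existence problem (\cref{Prop::AppendExist::MainProp}) whose linearization is $\sum_k\partial_{t_k}^2+\sum_j\partial_{z_j}\partial_{\bar z_j}$; the purpose of $H$ is to force the transformed coefficients $(B,D)$ to satisfy the extra equation $\sum_k\partial_{u_k}B_k+\sum_j\partial_{w_j}D_j=0$. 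Commutativity together with this extra equation then makes $(B,D)$ solve a constant-coefficient bilinear first-order elliptic system for which \cref{Prop::AppendRealAnal::MainProp} yields real analyticity with controlled $\sBSpace{\cdot}{\cdot}{\cdot}$ norm. Only at that point is the straightening map produced, via the holomorphic Frobenius theorem (\cref{Prop::NirenbergHolo::MainProp}) applied to the complexified analytic fields. The final $\Phi_4$ is the composite $H^{-1}\circ\Phi_1$ followed by a dilation. In short, involutivity is consumed as an algebraic identity on the \emph{coefficients} $(B,D)$, not as a compatibility condition in a fixed point for the straightening map, and the $\ZygSpace{s_0+2}$ regularity of $\Phi_4$ comes from the second-order gain in the existence problem for $R_2$ together with the analyticity of $\Phi_1$.
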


\begin{rmk}
In \cref{Thm::Nirenberg::MainThm} (and in the rest of this section), we have written $s>0$ to mean $s\in (0,\infty)$ and similarly for other such inequalities.
For example if $s_0\in (0,\infty)$ and we write $s\geq s_0$, it means $s\in [s_0,\infty)$.
\end{rmk}

\begin{rmk}\label{Rmk::Nirenberg::ReductionCosts}
Proofs of results like \cref{Thm::Nirenberg::MainThm} in the literature
only prove that $\Phi_4$ is $\ZygSpace{s_0+1}$ (instead of $\ZygSpace{s_0+2}$); and
each of the estimates is similarly off by a derivative.\footnote{However, the results in this section concerning real analytic vector fields are standard.}
\end{rmk} 

\begin{rmk}
When $s_0=\omega$, the hypothesis $X_k, L_j\in \ASpace{r+2n}{1}[\C^{r+2n}]$ can be replaced with the slightly weaker hypothesis 
$X_k, L_j\in \sBSpace{r+2n}{r+2n}{1}$; one can achieve the same result with the same proof.  However, our applications use $X_k, L_j\in \ASpace{r+2n}{1}[\C^{r+2n}]$,
so we use this space instead.  In any case, it is straightforward to see (using \cref{Lemma::FuncSpaceRevAnal::AInB,Lemma::FuncSpaceReAnal::AinB}) that either choice yields an equivalent theorem.
\end{rmk}

    \subsection{A Reduction}
To prove \cref{Thm::Nirenberg::MainThm}, we prove the following
proposition.  For it we use the same notation and setting
as \cref{Thm::Nirenberg::MainThm}.

\begin{prop}\label{Prop::Nirenberg::MainProp}
There exist $\Zygad{s_0}$-admissible constants $K_1\geq 1$
and $\eta_3\in (0,1]$ and a map $\Phi_3:B_{\R^r\times \C^n}(\eta_3)\rightarrow B_{\R^r\times \C^n}(1)$ such that:
\begin{enumerate}[(i)]
\item\label{Item::Nirenberg::Phi3Reg}
\begin{itemize}
    \item If $s_0\in (0,\infty)$, $\Phi_3\in \ZygSpace{s_0+2}[B_{\R^r\times \C^n}(\eta_3)][\R^r\times \C^n]$ and $\ZygNorm{\Phi_3}{s+2}[B_{\R^r\times \C^n}(\eta_3)]\lesssim_{\Zygad{s}} 1$, $\forall s>0$.
    \item If $s_0=\omega$, $\Phi_3\in \ASpace{2n+r}{2\eta_3}[\R^r\times\C^n]$ and
    $\ANorm{\Phi_3}{2n+r}{2\eta_3}\leq 1$.  In particular, $\Phi_3$ extends to a real analytic function on $B_{\R^r\times \C^n}(2\eta_3)$.
\end{itemize}
\item\label{Item::Nirenberg::MainProp::0anddet} $\Phi_3(0)=0$ and $d_{(t,x)}\Phi_3(0)=K_1^{-1} I_{(r+2n)\times (r+2n)}$.
\item\label{Item::Nirenberg::MainProp::Diffeo} $\Phi_3(B_{\R^r\times \C^n}(\eta_3))\subseteq B_{\R^r\times \C^n}(1)$ is open
and $\Phi_3:B_{\R^r\times \C^n}(\eta_3)\rightarrow \Phi_3(B_{\R^r\times \C^n}(\eta_3))$ is a diffeomorphism.

\item\label{Item::Niremberg::MainProp::AEst}
\begin{equation*}
\begin{bmatrix}
\diff{u} \\ \diff{\wb}
\end{bmatrix}
= K_1^{-1}(I+\AMatrix_3)
\begin{bmatrix}
\Phi_3^{*} X \\ \Phi_3^{*} L
\end{bmatrix},
\end{equation*}
where $\AMatrix_3:B_{\R^r\times\C^n}(\eta_3)\rightarrow \M^{(n+r)\times (n+r)}(\C)$, $\AMatrix_3(0)=0$ and
\begin{itemize}
    \item If $s_0\in (0,\infty)$, $\ZygNorm{\AMatrix_3}{s+1}[B_{\R^r\times\C^n}(\eta_3)]\lesssim_{\Zygad{s}} 1$, $\forall s>0$.
    \item If $s_0=\omega$, $\ANorm{\AMatrix_3}{n}{\eta_3}\lesssim_{\Zygad{\omega}} 1$.
\end{itemize}

\end{enumerate}
\end{prop}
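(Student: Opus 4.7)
The plan is to adapt Malgrange's deformation approach to the Newlander-Nirenberg theorem to this mixed real/complex setting. Rather than constructing $\Phi_3$ directly, I would derive a parametrized ODE whose solution yields $\Phi_3$, and establish the sharp regularity by careful bookkeeping in the Banach spaces developed in \cref{Section::FuncRev::RealAnal}.

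First I would normalize the vector fields: since the matrix whose columns are $X_k(\zeta), L_j(\zeta), \Lb[j](\zeta)$ equals the identity at $\zeta=0$, it is invertible on some small ball, and by \cref{Rmk::FuncSpaceRev::InverseMatrix} its inverse enjoys the same regularity as the original. Taking complex-linear combinations accordingly, I may assume the vector fields are in the normal form
\begin{equation*}
\tilde X_k = \frac{\partial}{\partial t_k} - \sum_{j=1}^n a_{kj}(t,z) \frac{\partial}{\partial z_j}, \qquad \tilde L_j = \frac{\partial}{\partial \zb[j]} - \sum_{j'=1}^n b_{jj'}(t,z) \frac{\partial}{\partial z_{j'}},
\end{equation*}
where $a_{kj}, b_{jj'}$ vanish at the origin and are therefore small on small balls. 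The formal integrability assumption translates into first-order PDE compatibility relations among $a$ and $b$.

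Next I would seek $\Phi_3$ of the form $\Phi_3(u,w) = (u, F(u,w))$ with $F(0)=0$ and $d_w F(0)$ close to the identity. The requirement that $\Phi_3^{*}\tilde X_k$ and $\Phi_3^{*}\tilde L_j$ lie in $\Span_{\C}\{\partial/\partial u,\partial/\partial\wb\}$ is equivalent to a first-order overdetermined system for $F$. Following the strategy signalled by the cross-references to \SSDeriveODE{} and \SSExistODE{}, I would introduce a deformation parameter $\tau\in[0,1]$ (for instance, replacing $(a,b)$ by $(\tau a,\tau b)$), producing a family $F^\tau$ that interpolates between the identity at $\tau=0$ and the desired $F$ at $\tau=1$, and differentiate in $\tau$. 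The outcome is an ODE $\partial F^\tau/\partial\tau = Y(F^\tau,\tau)$ in the Banach space $\ASpace{2n+r}{\eta}$ (analytic case) or an analogous Zygmund-valued Banach space, where $Y$ is produced by inverting a $\dbar$-type operator on a small ball combined with direct integration in $t$. Standard Banach-space existence theory then yields $\Phi_3 = F^1$, with $\eta_3$ chosen admissibly small so that the required contraction closes and the coefficients $a,b$ are sufficiently small.

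The hard part will be maintaining the sharp $\ZygSpace{s+2}$ regularity throughout: the $\dbar$-inverse gains one derivative in the $(z,\zb)$ directions, which is what ultimately allows $\Phi_3$ to be one derivative smoother than the vector fields, but compatibility of this gain with the $\partial/\partial t$-integration (which does not automatically gain a derivative) relies essentially on the integrability relations between $a$ and $b$. The auxiliary constructions of $\xi_1,\xi_2$ foreshadowed by \SSExistXiOne{} and \SSExistXiTwo{}, together with the density computations of \SSSectionDensities{}, appear to be the bookkeeping tools that force this gain to propagate correctly through both families of variables. Once the ODE is solved, the initial condition $F^0 = w$ combined with the implicit function theorem yield \cref{Item::Nirenberg::MainProp::0anddet,Item::Nirenberg::MainProp::Diffeo} (after absorbing a suitable normalization into $K_1$), and the bound on $\AMatrix_3$ follows by inverting the matrix expressing $\partial/\partial u, \partial/\partial\wb$ in terms of $\Phi_3^{*}X, \Phi_3^{*}L$, applying \cref{Prop::FuncSpaceRev::Algebra,Rmk::FuncSpaceRev::InverseMatrix}.
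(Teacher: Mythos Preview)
Your plan diverges substantially from the paper's proof, and the gap you yourself flag (``the hard part'') is precisely the obstacle the paper circumvents by a different mechanism.

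The paper does \emph{not} construct $\Phi_3$ via a deformation ODE with a $\dbar$-inverse. Instead it proceeds by a chain of reductions (\cref{Section::Nirenberg::HoloFrob}--\cref{Section::Nirenberg::Final}): after the normalization you describe (so $X=\partial_t+E\partial_z$, $L=\partial_{\zb}+F\partial_z$ with $E,F$ small after rescaling), it seeks a change of coordinates $H(t,z)=(t,z+R_2(t,z))$ so that the pushed-forward, renormalized vector fields $\tilde X=\partial_u+B\partial_w$, $\tilde L=\partial_{\wb}+D\partial_w$ satisfy the \emph{gauge condition} $\sum_k\partial_{u_k}B_k+\sum_j\partial_{w_j}D_j=0$. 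Finding such an $H$ is a second-order nonlinear PDE for $R_2$ whose linearization at $0$ is $\sum_k\partial_{t_k}^2+\sum_j\partial_{z_j}\partial_{\zb[j]}$, hence elliptic; \cref{Prop::AppendExist::MainProp} solves it. The point is that once this gauge is imposed, the commutation relations for $\tilde X,\tilde L$ \emph{together with} the gauge equation form an overdetermined first-order system $\sE(B,D)=\Gamma((B,D),\nabla(B,D))$ where $\sE$ is elliptic (\cref{Lemma::AppendElliptic::OneOperator}). By \cref{Prop::AppendRealAnal::MainProp}, $B$ and $D$ are then \emph{real analytic}, and the real analytic case is handled by the holomorphic Frobenius theorem. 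The extra derivative for $\Phi_3$ comes for free because in the end everything is real analytic on a small ball.

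Your approach tries to gain the derivative directly from a $\dbar$-type inverse, and you correctly note that the $t$-directions do not cooperate. The paper sidesteps this entirely: it never tries to gain a derivative by inverting a first-order operator; it gains \emph{all} derivatives at once via analytic hypoellipticity of a second-order system. Also, the cross-references you invoke (\SSDeriveODE, \SSExistODE, \SSExistXiOne, \SSExistXiTwo, \SSSectionDensities) are to a companion paper, not to this one; none of that machinery is used here.
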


First we see how \cref{Thm::Nirenberg::MainThm} follows from
\cref{Prop::Nirenberg::MainProp}.

\begin{proof}[Proof of \cref{Thm::Nirenberg::MainThm}]
Let $\Phi_3$, $K_1$, $\eta_3$, and $\AMatrix_3$ be
as in \cref{Prop::Nirenberg::MainProp}.
It follows from \cref{Prop::Nirenberg::MainProp} \cref{Item::Nirenberg::MainProp::0anddet}
that
$\det d_{(t,x)}\Phi_3(0)\approx_{\Zygad{s_0}} 1$.
Next we claim that if $\etah\approx_{\Zygad{s_0}} 1$ is chosen sufficiently small (with $\etah\leq \eta_3$), then $\det d_{(t,x)}\Phi_3(\zeta)\approx_{\Zygad{s_0}} 1$, $\forall \zeta\in B_{\R^r\times \C^n}(\etah)$.  Indeed:
\begin{itemize}
\item Suppose $s_0\in (0,\infty)$.  Note $\CjNorm{\Phi_3}{2}[B_{\R^r\times \C^n}(\eta_3)]\leq \ZygNorm{\Phi_3}{s_0+2}[B_{\R^r\times \C^n}(\eta_3)]\lesssim_{\Zygad{s_0}} 1$.
Using the fact that $\det d_{(t,x)}\Phi_3(0)\approx_{\Zygad{s_0}} 1$,
if $\etah\approx_{\Zygad{s_0}} 1$ is chosen sufficiently
small (with $\etah\leq \eta_3$), we have
$\det d_{(t,x)} \Phi_3(\zeta) \approx_{\Zygad{s_0}} 1$, $\forall
\zeta\in B_{\R^r\times \C^n}(\etah)$.

\item Suppose $s_0=\omega$.  By \cref{Lemma::FuncSpaceRevAnal::AInB,Lemma::FuncSpaceReAnal::AinB}, $\CjNorm{\Phi_3}{2}[B_{\R^r\times \C^n}(\eta_3)] \lesssim_{\Zygad{s_0}} \sBNorm{\Phi_3}{2n+r}{2n+r}{2\eta_3}\leq \ANorm{\Phi_3}{2n+r}{\eta_3}\lesssim_{\Zygad{s_0}} 1$.  Thus, using the fact that $\det d_{(t,x)}\Phi_3(0)\approx_{\Zygad{s_0}} 1$, if $\etah\approx_{\Zygad{s_0}} 1$ is chosen sufficiently
small (with $\etah\leq \eta_3$), we have
$\det d_{(t,x)} \Phi_3(\zeta) \approx_{\Zygad{s_0}} 1$, $\forall
\zeta\in B_{\R^r\times \C^n}(\etah)$.
\end{itemize}

For $\gamma\leq \etah$, set $\Psi_\gamma:B_{\R^r\times \C^n}(1)\rightarrow B_{\R^r\times \C^n}(\etah)$ by $\Psi_\gamma(\zeta)=\gamma\zeta$.
We will set $\Phi_4:=\Phi_3\circ \Psi_\gamma$ for appropriately
chosen $\gamma$.  Consider,
\begin{equation*}
\frac{1}{\gamma}\begin{bmatrix}
\diff{u}\\ \diff{\wb}
\end{bmatrix}
=\Psi_\gamma^{*}
\begin{bmatrix}
\diff{u}\\ \diff{\wb}
\end{bmatrix}
=\Psi_\gamma^{*} K_1^{-1} (I+\AMatrix_3)
\begin{bmatrix}
\Phi_3^{*} X \\ \Phi_3^{*} L
\end{bmatrix}
=K_1^{-1} (I+\AMatrix_3\circ \Psi_\gamma)
\begin{bmatrix}
(\Phi_3\circ \Psi_\gamma)^{*} X\\
(\Phi_3\circ \Psi_\gamma)^{*} L
\end{bmatrix}.
\end{equation*}
Since $\AMatrix_3(0)=0$, using \cref{Prop::Nirenberg::MainProp} \cref{Item::Niremberg::MainProp::AEst} and \cref{Lemma::FuncSpaceRev::Scale}, we have:
\begin{itemize}
\item If $s_0\in (0,\infty)$,
$\ZygNorm{\AMatrix_3\circ \Psi_\gamma}{s_0+1}[B_{\R^r\times \C^n}(1)]\lesssim_{\Zygad{s_0}} \gamma \ZygNorm{\AMatrix_3}{s_0+1}[B_{\R^r\times \C^n}(\eta_3)]\lesssim_{\Zygad{s_0}} \gamma$.
Thus, by taking $\gamma$ to be a sufficiently small $\Zygad{s_0}$-admissible
constant, we have
$\ZygNorm{\AMatrix_3\circ \Psi_\gamma}{s_0+1}[B_{\R^r\times \C^n}(1)]\leq \frac{1}{4}$.
\item If $s_0=\omega$, we have
$\ANorm{\AMatrix_3\circ \Psi_\gamma}{2n+r}{1}\leq \frac{\gamma}{\eta_3} \ANorm{\AMatrix_3}{2n+r}{\eta_3}\lesssim_{\Zygad{\omega}} \gamma$.  
Also, set $R(t,z):= d\Phi_3(t,z)- K_1^{-1} I_{(2n+r)\times (2n+r)}$, so that $R(0,0)=0$, and by \cref{Lemma::FuncSpaceRev::DerivOfAnal}, $R\in \ASpace{2n+r}{\eta_3}[\M^{(2n+r)\times (2n+r)}]$ and $\ANorm{R}{2n+r}{\eta_3}\lesssim_{\Zygad{\omega}} 1$.
We have $\ANorm{R\circ \Psi_\gamma}{2n+r}{1}\leq \frac{\gamma}{\eta_3}\ANorm{R}{2n+r}{\eta_3}\lesssim_{\Zygad{\omega}} \gamma$. 
Thus,
by taking $\gamma$ to be a sufficiently small $\Zygad{\omega}$-admissible
constant, we have $\ANorm{\AMatrix_3\circ \Psi_\gamma}{2n+r}{1}\leq \frac{1}{4}$ and $\ANorm{R\circ \Psi_\gamma}{2n+r}{1}\leq (2K_1)^{-1}$.
\end{itemize}
Taking $\gamma$ as above and setting $\Phi_4=\Phi_3\circ \Psi_\gamma$,
\cref{Thm::Nirenberg::MainThm} \cref{Item::Nirenberg::PhiRegularity}, \cref{Item::Nirenberg::Phiof0}, \cref{Item::Nirenberg::JacobianConst}, \cref{Item::Nirenberg::Phi4Open}, and \cref{Item::Nirenberg::AMatrix} follow
with $K_2=\gamma^{-1} K_1$ and $\AMatrix=\AMatrix_3\circ \Psi_\gamma$.
%

We turn to \cref{Item::Nirenberg::MainThm::ZEst}.  Recall, 
\begin{equation}\label{Eqn::Nirenberg::FormulaForPullBack}
\Phi_4^{*} Z (u,w) = d\Phi_4(u,w)^{-1} Z(\Phi_4(u,w)).
\end{equation}
If $s_0\in (0,\infty)$, we have from \cref{Item::Nirenberg::PhiRegularity} and \cref{Lemma::FuncSpaceRev::Composition} that $\ZygNorm{Z\circ \Phi_4}{s+1}[B_{\R^r\times \C^n}(1)]\lesssim_{\Zygad{s}} \ZygNorm{Z}{s+1}[B_{\R^r\times \C^n}(1)]$.  Also, by \cref{Item::Nirenberg::PhiRegularity}, \cref{Item::Nirenberg::JacobianConst}, and \cref{Rmk::FuncSpaceRev::InverseMatrix} we have
$\ZygNorm{(d\Phi_4)^{-1}}{s+1}[B_{\R^r\times \C^n}(1)]\lesssim_{\Zygad{s}} 1$.  Using these estimates, \cref{Eqn::Nirenberg::FormulaForPullBack}, and \cref{Prop::FuncSpaceRev::Algebra},
\cref{Item::Nirenberg::MainThm::ZEst} follows in the case $s_0\in (0,\infty)$.

If $s_0=\omega$, \cref{Item::Nirenberg::PhiRegularity} and \cref{Lemma::FuncSpaceRev::ComposeAnal} show $\ANorm{Z\circ \Phi_4}{2n+r}{1}\lesssim_{\Zygad{\omega}} \ANorm{Z}{2n+r}{1}$.
Letting $R$ be as above, we have $d\Phi_4 = \gamma K_1^{-1} (I + K_1 R\circ \Psi_{\gamma})$.  Since $\ANorm{K_1 R\circ \Psi_\gamma}{2n+r}{1}[\M^{(2n+r)\times (2n+r)}]\leq 1/2$, and since $\ASpace{2n+r}{1}[\M^{(2n+r)\times (2n+r)}]$ is a Banach Algebra
(\cref{Lemma::FuncSpaceRevAnal::BanachAlgebra}), it follows (by using the Neumann series for $(I + K_1 R\circ \Psi_{\gamma})^{-1}$) that $\ANorm{(d\Phi_4)^{-1}}{2n+r}{1}\leq 2 K_1\gamma^{-1}\lesssim_{\Zygad{\omega}} 1$.
Using these estimates, \cref{Eqn::Nirenberg::FormulaForPullBack}, and \cref{Prop::FuncSpaceRev::Algebra}, \cref{Item::Nirenberg::MainThm::ZEst} follows in the case $s_0=\omega$, completing the proof.
\end{proof}

%
%

We now turn to the proof of \cref{Prop::Nirenberg::MainProp}, which encompasses
the rest of  \cref{Section::Nirenbeg}.  We do this by presenting
a series of increasingly general versions of the proposition,
and reducing each to the previous; eventually culminating
with the full \cref{Prop::Nirenberg::MainProp}.
The outline of this proof is:
\begin{itemize}
\item In \cref{Section::Nirenberg::HoloFrob} we present a quantitative version of the holomorphic Frobenius theorem; this result is standard.
\item In \cref{Section::Nirenberg::RA} we prove the special case of \cref{Prop::Nirenberg::MainProp} when the vector fields are all assumed to be real analytic and commute.
We do this by reducing it to the holomorphic case.  This procedure is standard.
\item In \cref{Section::Nirenberg::Additional} we present an easily checkable special case of the real analytic setting using elliptic PDEs.  This is a generalization of part of Malgrange's approach \cite{MalgrangeSurLIntegbrabilite}.
\item In \cref{Section::Nirenberg::SmallError} we use elliptic PDEs to reduce the case of vector fields which are a small perturbation of $\diff{t}$ and $\diff{\zb}$ to the previous case.  This is a generalization of part of
Malgrange's approach \cite{MalgrangeSurLIntegbrabilite}.
\item In \cref{Section::Nirenberg::Commute} we use a simple scaling argument to study vector fields which might be a large perturbation of $\diff{t}$ and $\diff{\zb}$; we do this by reducing to the previous case.
\item In \cref{Section::Nirenberg::Final} we complete the proof by using some simple linear algebra.
\end{itemize}

\begin{rmk}\label{Rmk::Nirenberg::AdmissibleOkay}
In each subsection which follows we use notions of admissible constants which are specific to that section; we are explicit about what we mean in each subsection.  In each subsection, we prove progressively stronger results, eventually
culminating in the proof of \cref{Prop::Nirenberg::MainProp}; we do this by reducing each result to the weaker results which proceed it.  The admissible constants in each result are defined so that constants which are admissible
in the result we are proving are admissible in the weaker results which we reduce it to.  So that, for example, the main result in \cref{Section::Nirenberg::SmallError} is reduced to the main result in \cref{Section::Nirenberg::Additional};
and in this application of the main result in \cref{Section::Nirenberg::Additional}, each constant which is admissible in the sense of \cref{Section::Nirenberg::Additional} is admissible in the sense of \cref{Section::Nirenberg::SmallError}.
Thus the various notions of admissible constants seamlessly glue together to yield  \cref{Prop::Nirenberg::MainProp}.
\end{rmk}

    \subsection{The Holomorphic Frobenius Theorem}\label{Section::Nirenberg::HoloFrob}
Fix $\eta_0>0$.  In this section we work on $\C^r\times \C^{2n}$
with complex coordinates $(\sigma,\zeta)=(\sigma_1,\ldots, \sigma_r,\zeta_1,\ldots, \zeta_{2n})$.
We are given holomorphic vector fields:
\begin{equation*}
X_k = \diff{\sigma_k} +\sum_{l=1}^{2n} b_{1,k}^l(\sigma,\zeta) \diff{\zeta_l} + \sum_{l=1}^{r} b_{2,k}^l(\sigma,\zeta) \diff{\sigma_l}, \quad 1\leq k\leq r,
\end{equation*}
\begin{equation*}
L_j = \frac{1}{2} \left( \diff{\zeta_j} + i \diff{\zeta_{j+n}} \right) + \sum_{l=1}^{2n} b_{3,j}^l(\sigma,\zeta) \diff{\zeta_l} + \sum_{l=1}^r b_{4,j}^l(\sigma,\zeta)\diff{\sigma_l}, \quad 1\leq j\leq n,
\end{equation*}
where $b_{c,d}^e\in \OSpace{1}[B_{\C^{r+2n}}(\eta_0)]$, $\forall c,d,e$ (see \cref{Section::FuncRev::RealAnal} for the definition of the space $\OSpace{1}[B_{\C^{r+2n}}(\eta_0)]$).
We also assume $b_{c,d}^e(0,0)=0$, $\forall c,d,e$.

We assume $[L_{j_1},L_{j_2}]=0$, $[L_j, X_k]=0$, $[X_{k_1},X_{k_2}]=0$,
$\forall j_1, j_2, k_1, k_2, j, k$.
Take $C_1$ so that $\ONorm{b_{c,d}^e}{1}[B_{\C^{r+2n}}(\eta_0)]\leq C_1$, $\forall c,d,e$.

\begin{defn}
We say $C$ is an admissible constant if $C$ can be chosen
to depend only on $\eta_0$, $n$, $r$, and $C_1$.
\end{defn}

We write $A\lesssim B$ for $A\leq CB$, where $C$ is an admissible constant.  We write $A\approx B$ for $A\lesssim B$ and $B\lesssim A$.\footnote{We use similar notation
in the following sections without explicitly defining it.}  

\begin{prop}\label{Prop::NirenbergHolo::MainProp}
There exists an admissible constant $\eta_1>0$ and $w_1,\ldots, w_n\in \OSpace{1}[B_{\C^{r+2n}}(\eta_1)]$ such that:
\begin{itemize}
\item $w_l(0)=0$ and $d w_l(0) = d\zeta_l+id\zeta_{l+n}$.
\item $\ONorm{w_l}{1}[B_{\C^{r+2n}}(\eta_1)]\lesssim 1$, $\forall l$.
\item $L_j w_l=0$, $X_k w_l=0$, $\forall j,k,l$.
\end{itemize}
\end{prop}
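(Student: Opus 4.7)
The plan is to prove this by the classical construction of joint flow coordinates for a commuting family of holomorphic vector fields, tracked quantitatively in the Banach algebra $\OSpace{1}$ on polydiscs of admissible radius. First I would perform the linear change of coordinates $u_l := \zeta_l + i\zeta_{l+n}$, $v_l := \zeta_l - i\zeta_{l+n}$, so that in coordinates $(\sigma,u,v)\in\C^r\times\C^n\times\C^n$ the vector fields take the form $X_k = \partial/\partial\sigma_k + S_k$ and $L_j = \partial/\partial v_j + R_j$, where $S_k, R_j$ are holomorphic with coefficients vanishing at the origin and admissible $\OSpace{1}$ bound. The target conclusion becomes $dw_l(0) = du_l$ in these coordinates.

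Next I would use the holomorphic Cauchy--Picard theorem, applied in $\OSpace{1}$ on a polydisc of admissible radius, to construct the complex-time flow maps $\theta_k^{\sigma_k}(p)$ of $X_k$ and $\phi_j^{v_j}(p)$ of $L_j$. The iteration converges because $S_k$ and $R_j$ have vanishing constant term, making the flows small perturbations of translations. Because all brackets $[X_{k_1},X_{k_2}]$, $[X_k,L_j]$, $[L_{j_1},L_{j_2}]$ vanish, the standard ODE argument shows these flows pairwise commute. Setting
\begin{equation*}
F(\sigma,v,u) := \theta_1^{\sigma_1}\circ\cdots\circ\theta_r^{\sigma_r}\circ\phi_1^{v_1}\circ\cdots\circ\phi_n^{v_n}\bigl((0,u,0)\bigr),
\end{equation*}
commutativity guarantees $F^{*}X_k = \partial/\partial\sigma_k$ and $F^{*}L_j = \partial/\partial v_j$, while $F(0)=0$ and $dF(0)=I$ follow by direct inspection.

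Finally I would invert $F$ by the holomorphic inverse function theorem, realized as a Picard iteration in $\OSpace{1}$ since $F-\mathrm{id}$ has small $\OSpace{1}$ norm on a sufficiently small ball. Writing $F^{-1}(\sigma,u,v) = (\widetilde\sigma,\widetilde v,\widetilde u)$, I would set $w_l := \widetilde u_l$. Since $w_l\circ F$ is the coordinate function $u_l$, the identities $F^{*}X_k = \partial/\partial\sigma_k$ and $F^{*}L_j = \partial/\partial v_j$ give $X_k w_l = L_j w_l = 0$; the normalization $w_l(0) = 0$ and $dw_l(0) = d\zeta_l + i\,d\zeta_{l+n}$ follows from $F(0) = 0$ and $dF(0) = I$; and the $\OSpace{1}$ bound on $w_l$ is inherited from that on $F^{-1}$.

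The main obstacle is the quantitative bookkeeping: one must verify that the Cauchy--Picard iteration for each flow, and then the composition of $r+n$ such flows in $F$, remain in $\OSpace{1}$ on polydiscs whose radii do not degenerate below admissible scales, so that the final inversion of $F$ can be carried out on a ball of admissible radius $\eta_1$ with admissible $\OSpace{1}$ bounds. Once the flows are in hand with such bounds, commutativity and the inverse function theorem in $\OSpace{1}$ complete the argument.
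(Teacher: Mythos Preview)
Your proposal is correct and follows essentially the same route as the paper: construct flow coordinates from the commuting holomorphic family, invert via the holomorphic inverse function theorem with admissible bounds, and read off the $w_l$ as the transversal coordinate functions. The only cosmetic difference is that you first perform the linear change $u_l=\zeta_l+i\zeta_{l+n}$, $v_l=\zeta_l-i\zeta_{l+n}$ and flow from the initial slice $\{\sigma=0,\,v=0\}$, whereas the paper leaves the coordinates alone and instead inserts the constant vector fields $Z_j=\tfrac12(\partial/\partial\zeta_j - i\,\partial/\partial\zeta_{j+n})$ into the composite flow starting at the origin; since $Z_j=\partial/\partial u_j$ in your coordinates, the two constructions coincide.
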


In what follows, we use the exponentiation of holomorphic vector fields.
So that if $V$ is a holomorphic vector field on an open set $\Omega\subseteq \C^N$,
it makes sense to define $(t,z)\mapsto e^{tV}z$, for
$z\in \Omega$ and
$t$ in a neighborhood
of $0\in \C$ (depending on $z$).  If $\Omega'\Subset\Omega$ is a relatively
compact open set, then the map $e^{tV}z$ exists for $z\in \Omega'$
and $t\in B_{\C}(\delta)$, where $\delta$ can be chosen to depend only on upper bounds for $\mathrm{dist}(\Omega', \partial \Omega)^{-1}$,
$\ONorm{Z}{N}[\Omega]$, and $N$.  Furthermore,
$\ONorm{e^{tZ}z-z}{N}[B_{\C}(\delta)\times \Omega']$ can be bounded
in terms of upper bounds for $\ONorm{Z}{N}[\Omega]$, $\delta$, and $N$.  This is all proved
using the standard Contraction Mapping Principle argument.  See Chapter I, Section 1 of \cite{IlyashenkoYakovenkoLecturesOnAnalyticDifferentialEquations} for a proof of this
standard fact.

\begin{proof}[Proof of \cref{Prop::NirenbergHolo::MainProp}]
Let $Z_1,\ldots, Z_n$ be given by $Z_j=\frac{1}{2}\left(\diff{\zeta_j} -  i\diff{\zeta_{j+n}}\right)$, and set
\begin{equation*}
\Psi(t_1,\ldots, t_r, u_1,\ldots, u_n, v_1,\ldots, v_n):=e^{t_1 X_1} e^{t_2 X_2} \cdots e^{t_r X_r} e^{u_1 L_1} e^{u_2 L_2} \cdots e^{u_n L_n} e^{v_1 Z_1} e^{v_2 Z_2}\cdots e^{v_n Z_n} 0.
\end{equation*}
By the above discussion, there exists an admissible constant $\eta'>0$
with $\Psi\in \OSpace{r+2n}[B_{\C^{r+2n}}(\eta')]$
and $\ONorm{\Psi}{r+2n}[B_{\C^{r+2n}}(\eta')]\lesssim 1$.

Since $\diff{t_k}\big|_{t=0,u=0,v=0} \Psi(t,u,v) = X_k(0) = \diff{\sigma_k}$,
$\diff{u_j}\big|_{t=0,u=0,v=0} \Psi(t,u,v) =L_j(0) = \frac{1}{2} \left( \diff{\zeta_j} + i \diff{\zeta_{j+n}} \right)$, and
$\diff{v_j}\big|_{t=0,u=0,v=0} \Psi(t,u,v) = Z_j(0) = \frac{1}{2}\left(\diff{\zeta_j} - i\diff{\zeta_{j+n}}\right)$,
we have (where $I_{a\times a}$ denotes the $a\times a$ idenitity matrix
and $0_{a\times b}$ denotes the $a\times b$ zero matrix):
\begin{equation*}
d_{t,u,v} \Psi(0,0,0)
=
\mleft[\def\arraystretch{1.5}
\begin{array}{c|c|c}
I_{r\times r} & 0_{r\times n} & 0_{r\times n}\\
\hline
0_{n\times r} & \frac{1}{2} I_{n\times n} & \frac{1}{2} I_{n\times n}\\
\hline
0_{n\times r} & \frac{i}{2} I_{n\times n} & -\frac{i}{2} I_{n\times n}
\end{array}
\mright].
\end{equation*}
In particular, $d_{t,u,v} \Psi(0,0,0)$ is invertible and
\begin{equation}\label{Eqn::NirenbergHolo::dPsiinv}
\mleft( d_{t,u,v} \Psi(0,0,0) \mright)^{-1} =
\mleft[\def\arraystretch{1.5}
\begin{array}{c|c|c}
I_{r\times r} & 0_{r\times n} & 0_{r\times n}\\
\hline
0_{n\times r} & I_{n\times n} & -i I_{n\times n}\\
\hline
0_{n\times r} & I_{n\times n} & i I_{n\times n}
\end{array}
\mright].
\end{equation}
Since $\Psi(0,0,0)=(0,0,0)$, the holomorphic Inverse Function Theorem
applies to show that there exist admissible constants
$\eta'',\eta_1>0$ such that
\begin{equation*}
\Psi : B_{\C^{r+2n}}(\eta'')\rightarrow \Psi(B_{\C^{r+2n}}(\eta''))
\end{equation*}
is a biholomorphism,
$B_{\C^{r+2n}}(\eta_1)\subseteq \Psi(B_{\C^{r+2n}}(\eta''))$,
and $\ONorm{\Psi^{-1}}{2n+r}[B_{\C^{r+2n}}(\eta_1)]\lesssim 1$.

We give $B_{\C^{r+2n}}(\eta'')$ holomorphic coordiantes
$(t_1,\ldots, t_r, u_1,\ldots, u_n, v_1\ldots, v_n)$.
Set $V_j(t_1,\ldots, t_r, u_1,\ldots, u_n, v_1,\ldots, v_n) = v_j$.
Define, for $1\leq j\leq n$, $w_j\in \OSpace{1}[B_{\C^{r+2n}}(\eta_1)]$
by
\begin{equation*}
w_j(\sigma, \zeta):=V_j\circ \Psi^{-1}(\sigma, \zeta).
\end{equation*}
Note $\ONorm{w_j}{1}[B_{\C^{r+2n}}(\eta_1)]\lesssim 1$ and $w_j(0)=0$.

Because the $X_k$s and $L_j$s commute, we have
$\Psi_{*} \diff{t_k} =X_k$ and $\Psi_{*} \diff{u_j} =L_j$.
Thus, since $\diff{t_k} V_l=0$ and $\diff{u_j} V_l=0$, we have
$X_k w_l=0$ and $L_j w_l=0$.

Finally, we compute $dw_j(0) = dV_j(0)d\Psi^{-1}(0)$.
$d V_j(0)$ is the row vector which has $1$ in the $r+n+j$ component
and $0$ in all other components and $d\Psi^{-1}(0)$ is given
in \cref{Eqn::NirenbergHolo::dPsiinv}.  Thus,
$dw_j(0)$ is the vector which equals $1$ in the $r+j$ component,
$i$ in the $r+n+j$ component, and $0$ in all other components.
I.e., $dw_j(0) = d\zeta_j+ i d\zeta_{j+n}$.
\end{proof} 

    \subsection{Real Analytic Vector Fields}\label{Section::Nirenberg::RA}
Fix $\eta_0>0$.  Let $X_1,\ldots, X_r, L_1,\ldots, L_n$
be real analytic vector fields on $\R^r\times \C^n\cong \R^{r+2n}$
of the form
\begin{equation*}
X_k = \diff{t_k} + A_k \diff{t} + B_k \diff{z} + E_k \diff{\zb}, \quad 1\leq k\leq r,
\end{equation*}
\begin{equation*}
L_j = \diff{\zb[j]} + C_j \diff{t} + D_j \diff{z} + F_j \diff{\zb}, \quad 1\leq j\leq n.
\end{equation*}
Here we are thinking of $A_k$, $B_k$, $C_j$, $D_j$, $E_k$, and $F_j$ as
real analytic row vectors
vectors:  $A_k, C_j\in \sBSpace{r+2n}{r}{\eta_0}$, $B_k, D_j, E_k, F_j\in \sBSpace{r+2n}{n}{\eta_0}$ (see \cref{Section::FuncRev::RealAnal} for the definition of $\sBSpace{r+2n}{\cdot}{\eta_0}$).
We assume $A_k(0)=0$, $B_k(0)=0$, $C_j(0)=0$, $D_j(0)=0$, $E_k(0)=0$, and $F_j(0)=0$,
and we assume the $X$s and $L$s all commute:
$[X_{k_1},X_{k_2}]=0$, $[X_k, L_j]=0$, $[L_{j_1},L_{j_2}]=0$,
$\forall j_1, j_2, k_1, k_2, j, k$.

\begin{defn}
We say $K$ is an admissible constant if $K$ can be chosen to depend only
on $\eta_0$, $n$, $r$, and upper bounds
for $\sBNorm{A_k}{r+2n}{r}{\eta_0}$, $\sBNorm{B_k}{r+2n}{n}{\eta_0}$, $\sBNorm{C_j}{r+2n}{r}{\eta_0}$,  $\sBNorm{D_j}{r+2n}{n}{\eta_0}$, $\sBNorm{E_k}{r+2n}{n}{\eta_0}$, and $\sBNorm{F_j}{r+2n}{n}{\eta_0}$, $\forall j,k$.
\end{defn}

\begin{prop}\label{Prop::NirenbergRA::MainProp}
There exists an admissible constant $\eta_2>0$ and a map
\begin{equation*}
\Phi_1 : B_{\R^r\times \C^n}(\eta_2)\rightarrow B_{\R^r\times \C^n}(\eta_0)
\end{equation*}
such that:
\begin{itemize}
\item $\Phi_1\in \sBSpace{r+2n}{r+2n}{\eta_2}$ with $\sBNorm{\Phi_1}{r+2n}{r+2n}{\eta_2}\lesssim 1$.
\item $\Phi_1(B_{\R^r\times \C^n}(\eta_2))\subseteq B_{\R^r\times \C^n}(\eta_0)$ is open and $\Phi_1:B_{\R^r\times \C^n}(\eta_2)\rightarrow \Phi_1(B_{\R^r\times \C^n}(\eta_2))$ is a real analytic diffeomorphism.
\item $\Phi_1(0)=0$ and $d_{t,x} \Phi_1 (0) = I_{(r+2n)\times (r+2n)}$.
\item
\begin{equation*}
\begin{bmatrix}
\diff{u} \\ \diff{\wb}
\end{bmatrix}
= (I+\AMatrix_1)
\begin{bmatrix}
\Phi_1^{*} X \\ \Phi_1^{*} L
\end{bmatrix},
\end{equation*}
where $\AMatrix_1(0)=0$,
$\AMatrix_1\in \sBSpace{r+2n}{\M^{(n+r)\times (n+r)}}{\eta_2}$,
and $\sBNorm{\AMatrix_1}{r+2n}{\M^{(n+r)\times (n+r)}}{\eta_2}\leq 1$.
\end{itemize}
\end{prop}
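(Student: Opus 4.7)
The plan is to reduce \cref{Prop::NirenbergRA::MainProp} to the holomorphic Frobenius theorem of \cref{Section::Nirenberg::HoloFrob} via complexification. Since the coefficients $A_k, B_k, C_j, D_j, E_k, F_j$ lie in $\sBSpace{r+2n}{\cdot}{\eta_0}$, they extend by definition to holomorphic functions on $B_{\C^{r+2n}}(\eta_0)$, where $\C^{r+2n}$ has coordinates $(\sigma,\zeta)$ extending the real coordinates $(t,x,y)$ on $\R^r\times\C^n$ (with $z = x+iy$ identifying $\C^n$ with $\R^{2n}$). Rewriting $\partial_{\bar z_j} = \tfrac{1}{2}(\partial_{x_j} + i\partial_{y_j})$ and $\partial_{z_j} = \tfrac{1}{2}(\partial_{x_j} - i\partial_{y_j})$ puts the holomorphic extensions $X_k^{\C}, L_j^{\C}$ in exactly the form required by \cref{Section::Nirenberg::HoloFrob}, with coefficient $\OSpace{1}$-norms admissibly controlled. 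Since Lie brackets are polynomial in the coefficients and their first derivatives, the holomorphic brackets $[X_{k_1}^{\C}, X_{k_2}^{\C}]$, $[X_k^{\C}, L_j^{\C}]$, $[L_{j_1}^{\C}, L_{j_2}^{\C}]$ are holomorphic extensions of the (vanishing) real brackets, hence vanish identically.

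Applying \cref{Prop::NirenbergHolo::MainProp} yields an admissible $\eta_1 > 0$ and holomorphic $w_1,\dots,w_n \in \OSpace{1}[B_{\C^{r+2n}}(\eta_1)]$ with $X_k^{\C} w_l = L_j^{\C} w_l = 0$, $w_l(0) = 0$, and $dw_l(0) = d\zeta_l + i\,d\zeta_{l+n}$. Restricting to $\R^{r+2n}$ gives real analytic complex-valued $w_l$ satisfying $X_k w_l = L_j w_l = 0$ and $dw_l(0) = dx_l + i\,dy_l = dz_l$. Define
\begin{equation*}
  \Phi_1^{-1}(t,z) := \bigl(t_1,\dots,t_r,\, w_1(t,z),\dots,w_n(t,z)\bigr).
\end{equation*}
Under the identification $\R^r\times\C^n \cong \R^{r+2n}$, the differential $d_{(t,x)}\Phi_1^{-1}(0)$ is the identity. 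The quantitative holomorphic inverse function theorem, applied to the holomorphic extension of $\Phi_1^{-1}$, then produces an admissible $\eta_2 \in (0,\eta_1]$ and a real analytic $\Phi_1 \in \sBSpace{r+2n}{r+2n}{\eta_2}$ inverting $\Phi_1^{-1}$ on $B_{\R^r\times\C^n}(\eta_2)$, with $\Phi_1(0) = 0$, $d_{(t,x)}\Phi_1(0) = I$, image open, and $\sBNorm{\Phi_1}{r+2n}{r+2n}{\eta_2} \lesssim 1$.

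For the pullback, let $(u, w')$ denote source coordinates on $B_{\R^r\times\C^n}(\eta_2)$. The chain rule gives, for any complex vector field $V$ on the $(t,z)$-space,
\begin{equation*}
  \Phi_1^{*} V = \sum_l (Vt_l)\,\partial_{u_l} + \sum_l (Vw_l)\,\partial_{w'_l} + \sum_l (V\bar w_l)\,\partial_{\bar w'_l}.
\end{equation*}
Taking $V = X_k$ or $V = L_j$, the $\partial_{w'_l}$ terms vanish since $X_k w_l = L_j w_l = 0$, yielding
\begin{equation*}
  \begin{bmatrix} \Phi_1^{*} X \\ \Phi_1^{*} L \end{bmatrix} = B \begin{bmatrix} \partial_u \\ \partial_{\bar w'} \end{bmatrix},
\end{equation*}
where $B$ is the $(r+n)\times(r+n)$ matrix whose entries are $X_k t_l$, $X_k \bar w_l$, $L_j t_l$, $L_j \bar w_l$. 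At the origin $X_k(0) = \partial_{t_k}$ and $L_j(0) = \partial_{\bar z_j}$, so $X_k t_l(0) = \delta_{kl}$, $L_j t_l(0) = 0$, $X_k\bar w_l(0) = \overline{\partial_{t_k} w_l(0)} = 0$, and $L_j\bar w_l(0) = \overline{\partial_{z_j} w_l(0)} = \delta_{jl}$, giving $B(0) = I$. Set $\AMatrix_1 := B^{-1} - I$, which is real analytic near $0$ with $\AMatrix_1(0) = 0$. By \cref{Lemma::FuncSpaceReAnal::Restrict}, shrinking $\eta_2$ once more (still admissibly) forces $\sBNorm{\AMatrix_1}{r+2n}{\M^{(n+r)\times(n+r)}}{\eta_2} \leq 1$, completing the construction. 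The main obstacle is the bookkeeping around complexification and the correct computation $B(0) = I$; all substantive analytic work is delegated to \cref{Prop::NirenbergHolo::MainProp}.
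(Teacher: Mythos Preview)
Your proof is correct and follows essentially the same approach as the paper: complexify the coefficients to apply \cref{Prop::NirenbergHolo::MainProp}, restrict the resulting $w_l$ back to $\R^{r+2n}$, define $\Psi(t,z)=(t,w(t,z))$ and invert it to obtain $\Phi_1$, then use the annihilation conditions $X_kw_l=L_jw_l=0$ to see that $\Phi_1^{*}X$, $\Phi_1^{*}L$ have no $\partial_{w}$ components and compute the resulting matrix at the origin. The paper merely packages the second half as a separate conditional lemma (\cref{Lemma::NirenbergRA::MainLemma}) before verifying the existence of the $w_l$, but the content is identical.
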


To prove \cref{Prop::NirenbergRA::MainProp} we start with a conditional lemma.

\begin{lemma}\label{Lemma::NirenbergRA::MainLemma}
We take the same setting as \cref{Prop::NirenbergRA::MainProp}.
Suppose there is an admissible constant $\eta_1>0$ and functions
$w_1,\ldots, w_n\in \sBSpace{r+2n}{1}{\eta_1}$ such that:
$w_l(0)=0$, $dw_l(0)=dz_l$, $\sBNorm{w_l}{r+2n}{1}{\eta_1}\lesssim 1$,
and $L_j w_l=0$, $X_k w_l=0$, $\forall j,k,l$.  Then,
the conclusions of \cref{Prop::NirenbergRA::MainProp} hold.
\end{lemma}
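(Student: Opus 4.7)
The plan is to define a candidate map $\Psi : B_{\R^r \times \C^n}(\eta') \rightarrow \R^r \times \C^n$ (for a suitable admissible $\eta' \leq \eta_1$) by
\begin{equation*}
\Psi(t, z) := (t,\, w_1(t, z),\, \ldots,\, w_n(t, z)),
\end{equation*}
and then set $\Phi_1 := \Psi^{-1}$. Denote the coordinates on the source of $\Phi_1$ (playing the role of $(u, w)$ in the statement of the proposition) by $(u, \zeta) \in \R^r \times \C^n$. Writing $z_j = x_j + i y_j$, the hypothesis $dw_l(0) = dz_l$ immediately gives $d_{(t, x, y)} \Psi(0) = I_{(r + 2n) \times (r + 2n)}$, so $\Psi$ is a candidate local diffeomorphism.

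The first technical step is to invoke a quantitative real analytic Inverse Function Theorem in the $\sBSpace{}{}{}$ framework; concretely, the hypothesis $w_l \in \sBSpace{r + 2n}{1}{\eta_1}$ gives a holomorphic extension of $\Psi$ (after splitting into real and imaginary parts and complexifying each), to which the classical holomorphic Inverse Function Theorem with contraction-mapping control (as in the proof of \cref{Prop::NirenbergHolo::MainProp}) applies. This yields an admissible $\eta_2 > 0$ and a holomorphic inverse on $B_{\C^{r + 2n}}(2 \eta_2)$ with uniform bounds; restricting to the real locus produces $\Phi_1 \in \sBSpace{r + 2n}{r + 2n}{\eta_2}$ with $\sBNorm{\Phi_1}{r + 2n}{r + 2n}{\eta_2} \lesssim 1$, $\Phi_1(0) = 0$, and $d \Phi_1(0) = I$. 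In particular $\Phi_1$ is a real analytic diffeomorphism onto its open image, which (after shrinking $\eta_2$ admissibly) lies in $B_{\R^r \times \C^n}(\eta_0)$.

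Second, I verify the pull-back structure. Since $\zeta_l \circ \Psi = w_l$, we have $\zeta_l = w_l \circ \Phi_1$, so for all $j, k, l$,
\begin{equation*}
(\Phi_1^* X_k)(\zeta_l) = X_k(w_l) \circ \Phi_1 = 0, \qquad (\Phi_1^* L_j)(\zeta_l) = L_j(w_l) \circ \Phi_1 = 0.
\end{equation*}
Therefore $\Phi_1^* X_k$ and $\Phi_1^* L_j$ lie in $\Span_{\C}\{\diff{u_1}, \ldots, \diff{u_r}, \diff{\overline{\zeta}_1}, \ldots, \diff{\overline{\zeta}_n}\}$. Writing $\begin{bmatrix} \Phi_1^* X \\ \Phi_1^* L \end{bmatrix} = M \begin{bmatrix} \diff{u} \\ \diff{\overline{\zeta}} \end{bmatrix}$, the entries of $M$ are computed by applying the pulled-back fields to the coordinates $u_m = t_m \circ \Psi$ and $\overline{\zeta}_l = \overline{w}_l \circ \Phi_1$. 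Using $d \Phi_1(0) = I$ together with the normalization $X_k(0) = \diff{t_k}$, $L_j(0) = \diff{\zb[j]}$, one finds $M(0) = I_{(n + r) \times (n + r)}$, and \cref{Lemma::FuncSpaceRev::ComposeAnal} and \cref{Lemma::FuncSpaceRevAnal::BanachAlgebra} give admissible bounds on the relevant $\sBSpace{}{}{}$-norm of $M$.

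Finally, since $M(0) = I$, the Banach algebra property (\cref{Lemma::FuncSpaceRevAnal::BanachAlgebra}) and the Neumann series give $M^{-1} = I + \AMatrix_1$ with $\AMatrix_1(0) = 0$ and admissible bounds. Using $\AMatrix_1(0) = 0$ and the restriction-scaling estimate \cref{Lemma::FuncSpaceReAnal::Restrict}, I then shrink $\eta_2$ to a smaller admissible value to ensure $\sBNorm{\AMatrix_1}{r + 2n}{\M^{(n + r) \times (n + r)}}{\eta_2} \leq 1$, completing the proof. The main obstacle is not conceptual — once one notices that $(t, w_1, \ldots, w_n)$ is the natural coordinate system making $X_k, L_j$ align with $\diff{u_k}, \diff{\overline{\zeta}_j}$ — but is bookkeeping: transferring the holomorphic Inverse Function Theorem into the $\sBSpace{}{}{}$-quantitative setting and tracking compositions and matrix inversion carefully through these Banach spaces.
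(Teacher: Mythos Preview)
Your proposal is correct and follows essentially the same approach as the paper: define $\Psi(t,z)=(t,w_1,\ldots,w_n)$, invert it via the (holomorphic/real analytic) Inverse Function Theorem to obtain $\Phi_1=\Psi^{-1}$ with $\sBSpace{}{}{}$-bounds, use $X_k w_l = L_j w_l = 0$ to see that $\Phi_1^{*}X_k,\Phi_1^{*}L_j$ lie in the span of $\diff{u},\diff{\wb}$, write the resulting transition matrix as $I+M$ with $M(0)=0$, and invert via Neumann series after shrinking with \cref{Lemma::FuncSpaceReAnal::Restrict}. Two cosmetic remarks: the paper shrinks \emph{before} applying the Neumann series (to guarantee $\sBNorm{M}{}{}{}\leq\frac12$) rather than after, and \cref{Lemma::FuncSpaceRev::ComposeAnal} is stated for the $\ASpace{}{}$-spaces rather than $\sBSpace{}{}{}$, but the analogous composition bound for $\sBSpace{}{}{}$ is immediate from the definition.
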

\begin{proof}
We define $\Psi:B_{\R^r\times \C^n}(\eta_1)\rightarrow \R^r\times \C^n$
by
\begin{equation*}
\Psi(t,z) = (t, w_1(t,z),\ldots, w_n(t,z)).
\end{equation*}
I.e., by identifying $\R^{2n}\cong \C^n$ via the map
$(x_1,\ldots, x_{2n})\mapsto (x_1+ix_{n+1},\ldots, x_n+ix_{2n})$,
we have
\begin{equation*}
\Psi(t,x) = (t, \Real(w_1)(t,x),\ldots, \Real(w_n)(t,x), \Imag(w_1)(t,x),\ldots, \Imag(w_n)(t,x)).
\end{equation*}
Note that $\Psi(0,0)=0$.
Since $dw_j(0)=dz_j$ it follows that
$d_{t,x} \Psi(0)= I_{(r+2n)\times (r+2n)}$.
Thus, the Inverse Function Theorem applies to $\Psi$
to show that there exists an admissible constants $\eta',\eta''>0$
such that $\Psi:B_{\R^r\times\C^n}(\eta')\rightarrow \Psi(B_{\R^r\times \C^n}(\eta'))$ is a real analytic diffeomorphism,
$B_{\R^r\times \C^n}(\eta'')\subseteq \Psi(B_{\R^r\times \C^n}(\eta'))$,
and $\Phi_1:=\Psi^{-1}:B_{\R^r\times \C^n}(\eta'')\rightarrow B_{\R^r\times \C^n}(\eta')$ satisfies $\Phi_1\in \sBSpace{r+2n}{r+2n}{\eta''}$ with $\sBNorm{\Phi_1}{r+2n}{r+2n}{\eta''}\lesssim 1$.

Using coordinates $(u,w)$ on $\R^r\times \C^n$, since
$L_j w_l=0$ and $X_kw_l=0$, $\forall j,k,l$, we have
\begin{equation*}
\Phi_1^{*} X_k(u,w), \Phi_1^{*} L_j(u,w)\in \Span_{\C}\mleft\{ \diff{u_1},\ldots, \diff{u_r}, \diff{\wb[1]},\ldots, \diff{\wb[n]} \mright\}, \quad \forall (u,w)\in B_{\R^r\times \C^n}(\eta'').
\end{equation*}
Since $d_{t,x}\Phi_1(0)=d_{t,x}\Psi(0)^{-1} = I_{(r+2n)\times (r+2n)}$
and $X_k(0)=\diff{t_k}$, $L_j(0)=\diff{\wb[j]}$ we have
\begin{equation*}
\begin{bmatrix}
\Phi_1^{*} X \\ \Phi_1^{*} L
\end{bmatrix}
=(I+M) \begin{bmatrix}
\diff{u}\\ \diff{\wb}
\end{bmatrix},
\end{equation*}
where $M$ is a real analytic matrix, $M(0)=0$, and
$\sBNorm{M}{2n+r}{\M^{(n+r)\times (n+r)}}{\eta''/2}\lesssim 1$ (here $\eta''/2$ can be replaced with any fixed number in $(0,\eta'')$).
Since $M(0)=0$, for $\eta_2\in (0,\eta''/2]$ we have, using \cref{Lemma::FuncSpaceReAnal::Restrict},
\begin{equation*}
\sBNorm{M}{2n+r}{\M^{(n+r)\times (n+r)}}{\eta_2}\lesssim \eta_2.
\end{equation*}
By taking $\eta_2>0$ to be a sufficiently small admissible constant,
we have
\begin{equation*}
\sBNorm{M}{2n+r}{\M^{(n+r)\times (n+r)}}{\eta_2}\leq \frac{1}{2}.
\end{equation*}
We define $I+\AMatrix_1:=(I+M)^{-1}$.  Then we have
$\AMatrix_1(0)=0$,
$\sBNorm{\AMatrix_1}{2n+r}{\M^{(n+r)\times (n+r)}}{\eta_2}\leq 1$ (since $\sBSpace{2n+r}{\M^{(n+r)\times (n+r)}}{\eta_2}$ is a Banach algebra and we have used the Neumann series for $(1+M)^{-1}$),
and
\begin{equation*}
\begin{bmatrix}
\diff{u}\\ \diff{\wb}
\end{bmatrix}
=(I+\AMatrix_1) \begin{bmatrix}
\Phi_1^{*} X \\ \Phi_1^{*} L
\end{bmatrix},
\end{equation*}
as desired, completing the proof.
\end{proof}

\begin{proof}[Proof of \cref{Prop::NirenbergRA::MainProp}]
We need to show that there exist functions $w_1,\ldots, w_n$
as in \cref{Lemma::NirenbergRA::MainLemma}.
By the definition of $\sBSpace{r+2n}{\cdot}{\eta_0}$,  the functions $A_k$, $B_k$, $C_j$, $D_j$, $E_k$, and $F_j$ extend to holomorphic
functions
$\Extend(A_k), \Extend(C_j)\in \OSpace{r}[B_{\C^{r+2n}}(\eta_0)]$,
$\Extend(B_k), \Extend(D_j), \Extend(E_k), \Extend(F_j)\in \OSpace{n}[B_{\C^{r+2n}}(\eta_0)]$,
with
$$\ONorm{\Extend(A_k)}{r},\ONorm{\Extend(C_j)}{r},\ONorm{\Extend(B_k)}{n},\ONorm{\Extend(D_j)}{n},\ONorm{\Extend(E_k)}{n},\ONorm{\Extend(F_j)}{n}\lesssim 1.$$

We give $\C^r\times \C^{2n}$ coordinates $(\sigma, \zeta)$.
Let
$$\diff{\zeta_{\cdot}}:=\begin{bmatrix}
\diff{\zeta_1}\\
\vdots\\
\diff{\zeta_n}
\end{bmatrix}, \quad
\diff{\zeta_{\cdot+n}}:=\begin{bmatrix}
\diff{\zeta_{n+1}}\\
\vdots\\
\diff{\zeta_{2n}}
\end{bmatrix}. $$
We extend $X_k$ and $L_j$ to holomorphic vector fields on $\C^{r}\times \C^{2n}$,
 by setting
\begin{equation*}
\Extend(X_k) = \diff{\sigma_j} + \Extend(A_k) \diff{\sigma} + \Extend(B_k) \frac{1}{2} \mleft( \diff{\zeta_{\cdot}} -i \diff{\zeta_{\cdot+n}} \mright)  + \Extend(E_k) \frac{1}{2}\mleft( \diff{\zeta_j} + i \diff{\zeta_{j+n}} \mright) ,
\end{equation*}
\begin{equation*}
\Extend(L_j) = \frac{1}{2}\mleft( \diff{\zeta_j} + i \diff{\zeta_{j+n}} \mright) + \Extend(C_j) \diff{\sigma} + \Extend(D_j) \frac{1}{2} \mleft( \diff{\zeta_{\cdot}} -i \diff{\zeta_{\cdot+n}} \mright)  + \Extend(F_j) \frac{1}{2}\mleft( \diff{\zeta_j} + i \diff{\zeta_{j+n}} \mright) .
\end{equation*}
I.e., we have extended each $t_k$ to the complex variable $\sigma_k$
and each $x_j$ to the complex variable $\zeta_j$.
Since the $X$s and $L$s commute, the same is true of the $\Extend(X)$s and $\Extend(L)$s by analytic continuation:
$[\Extend(X_{k_1}), \Extend(X_{k_2})]=0$, $[\Extend(L_{j_1}), \Extend(L_{j_2})]=0$, $[\Extend(X_k), \Extend(L_j)]=0$, $\forall k_1,k_2,j_1,j_1,j,k$.

\Cref{Prop::NirenbergHolo::MainProp} applies to $\Extend(X_1),\ldots, \Extend(X_r), \Extend(L_1),\ldots, \Extend(L_n)$ and each constant which is admissible in the sense of \cref{Prop::NirenbergHolo::MainProp} is admissible in the sense of this section.
This shows that there exists
an admissible constant $\eta_1>0$ and functions
$\wh_1,\ldots, \wh_n\in \OSpace{1}[B_{\C^{r+2n}}(\eta_1)]$,
with $\ONorm{\wh_l}{1}\lesssim 1$, $\wh_l(0)=0$, $d\wh_l(0) = d\zeta_l+id\zeta_{l+n}$, and
$\Extend(L_j) \wh_l=0$, $\Extend(X_k)\wh_l=0$, $\forall j,k,l$.

Define, for $(t,x)\in B_{\R^r\times \R^{2n}}(\eta_1)$,
\begin{equation*}
w_l(t_1,\ldots, t_r, x_1,\ldots, x_{2n}):= \wh_l(t_1+i0,\ldots, t_r+i0, x_1+i0,\ldots, x_{2n}+i0).
\end{equation*}
Note that $\wh_l$ is the analytic extension of $w_l$ and therefore
$\sBNorm{w_l}{r+2n}{1}{\eta_1}\lesssim 1$.
Also, $dw_l(0) = dx_l+idx_{l+n}= dz_l$, $w_l(0)=\wh_l(0)=0$.
Finally, since $\Extend(X_k)\wh_l=0$ and $\Extend(L_j) \wh_l=0$ we have
$X_k w_l=0$ and $L_j w_l=0$, $\forall j,k,l$.
Thus \cref{Lemma::NirenbergRA::MainLemma} applies, completing the proof.
\end{proof} 

    \subsection{Vector fields satisfying an additional equation}\label{Section::Nirenberg::Additional}
Fix $s_0\in (0,\infty)$.  We let $X_1,\ldots, X_r, L_1,\ldots, L_n$
be $\ZygSpace{s_0+1}$ complex vector fields on $B_{\R^r\times \C^n}(1)$
of the following form:
\begin{equation*}
X= \diff{t} + A\diff{t} + B\diff{z}+E\diff{\zb},
\quad
L=\diff{\zb}+C\diff{t}+D\diff{z}+ F\diff{\zb}.
\end{equation*}
Here we are using matrix notation; so that
$X$ is the column vector $\begin{bmatrix} X_1,\ldots, X_r\end{bmatrix}^{\transpose}$, $\diff{t}=\begin{bmatrix}\diff{t_1},\ldots, \diff{t_r}\end{bmatrix}^{\transpose}$,
similarly for $L$, $\diff{z}$, and $\diff{\zb}$,
and $A$, $B$, $C$,  $D$, $E$, and $F$ are matrices of the appropriate size.
Thus, if we let $A_k$ denote the $k$th row of $A$, and similarly for $B$, $C$, $D$, $E$, and $F$ we have
\begin{equation*}
X_k = \diff{t_k} + A_k \diff{t}+B_k\diff{z} + E_k \diff{\zb}, \quad L_j=\diff{\zb[j]} + C_j \diff{t} + D_j\diff{z} + F_j \diff{\zb}.
\end{equation*}
We assume:
\begin{itemize}
\item $A\in \ZygSpace{s_0+1}[B_{\R^r\times \C^n}(1)][\M^{r\times r}(\C)]$,
    $B, E\in \ZygSpace{s_0+1}[B_{\R^r\times \C^n}(1)][\M^{r\times n}(\C)]$,
    $C\in \ZygSpace{s_0+1}[B_{\R^r\times \C^n}(1)][\M^{n\times r}(\C)]$,
    $D, F\in \ZygSpace{s_0+1}[B_{\R^r\times \C^n}(1)][\M^{n\times n}(\C)]$.
\item $A(0)=0_{r\times r}$, $B(0)=0_{r\times n}$, $C(0)=0_{n\times r}$, $D(0)=0_{n\times n}$, $E(0)=0_{r\times n}$, and $F(0)=0_{n\times n}$.
\item The $X$s and $L$s commute: $[X_{k_1},X_{k_2}]=0$, $[L_{j_1},L_{j_2}]=0$, and $[X_k, L_j]=0$, $\forall j_1,j_2,k_1,k_2,j,k$.
\item
\begin{equation}\label{Eqn::NirenbergAdditional::Bil0}
\sum_{k=1}^r \frac{\partial A_k}{\partial t_k} +\sum_{j=1}^n \frac{\partial C_j}{\partial z_j}=0, \quad
\sum_{k=1}^r \frac{\partial B_k}{\partial t_k} +\sum_{j=1}^n \frac{\partial D_j}{\partial z_j}=0, \quad
\sum_{k=1}^r \frac{\partial E_k}{\partial t_k} +\sum_{j=1}^n \frac{\partial F_j}{\partial z_j}=0.
\end{equation}
\end{itemize}

\begin{defn}
We say $K$ is an admissible constant if $K$ can be chosen
to depend only on $n$, $r$, and $s_0$.
\end{defn}

\begin{prop}\label{Prop::NirenbergAddtional::MainProp}
There exists an admissible constant $\gamma>0$ such that if
\begin{equation*}
\begin{split}
&\ZygNorm{A}{s_0+1}[B_{\R^r\times \C^n}(1)], \ZygNorm{B}{s_0+1}[B_{\R^r\times \C^n}(1)], \ZygNorm{C}{s_0+1}[B_{\R^r\times \C^n}(1)],
\\&
 \ZygNorm{D}{s_0+1}[B_{\R^r\times \C^n}(1)], 
 \ZygNorm{E}{s_0+1}[B_{\R^r\times \C^n}(1)],  \ZygNorm{F}{s_0+1}[B_{\R^r\times \C^n}(1)]\leq \gamma,
\end{split}
\end{equation*} 
then there exists an admissible
constant $\eta_2>0$ and a map $\Phi_1:B_{\R^r\times \C^n}(\eta_2)\rightarrow B_{\R^r\times \C^n}(1)$ such that:
\begin{itemize}
\item $\Phi_1\in \sBSpace{r+2n}{r+2n}{\eta_2}$ with $\sBNorm{\Phi_1}{r+2n}{r+2n}{\eta_2}\lesssim 1$.
\item $\Phi_1(B_{\R^r\times \C^n}(\eta_2))\subseteq B_{\R^r\times \C^n}(1)$ is open and $\Phi_1:B_{\R^r\times \C^n}(\eta_2)\rightarrow \Phi_1(B_{\R^r\times \C^n}(\eta_2))$ is a real analytic diffeomorphism.
\item $\Phi_1(0)=0$ and $d_{t,x} \Phi_1 (0) = I_{(r+2n)\times (r+2n)}$.
\item
\begin{equation*}
\begin{bmatrix}
\diff{u} \\ \diff{\wb}
\end{bmatrix}
= (I+\AMatrix_1)
\begin{bmatrix}
\Phi_1^{*} X \\ \Phi_1^{*} L
\end{bmatrix},
\end{equation*}
where $\AMatrix_1(0)=0$,
$\AMatrix_1\in \sBSpace{r+2n}{\M^{(n+r)\times (n+r)}}{\eta_2}$,
and $\sBNorm{\AMatrix_1}{r+2n}{\M^{(n+r)\times (n+r)}}{\eta_2}\leq 1$.
\end{itemize}
\end{prop}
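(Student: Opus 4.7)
The plan is to adapt Malgrange's elliptic-PDE argument for the Newlander--Nirenberg theorem, with hypothesis \cref{Eqn::NirenbergAdditional::Bil0} playing the crucial role of killing the first-order part of a naturally associated second-order operator.  I would first reduce the conclusion to finding $n$ scalar functions $W_l = z_l + \varphi_l$, $l = 1,\ldots,n$, with $\varphi_l(0) = 0$ and $d\varphi_l(0) = 0$, such that $X_k W_l = 0$ and $L_j W_l = 0$ for all admissible $j,k,l$.  The map $\Phi_1$ is then obtained as the real-analytic local inverse of $(t,z) \mapsto (t, W(t,z))$, and the matrix $\AMatrix_1$ is read off by the linear-algebraic step used inside the proof of \cref{Prop::NirenbergRA::MainProp}.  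In terms of $\varphi_l$ the conditions become the overdetermined first-order system $X\varphi_l = -B_{\cdot,l}$, $L\varphi_l = -D_{\cdot,l}$.

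I would then introduce the second-order differential operator
\begin{equation*}
	\mathcal{D} \;=\; \sum_{k=1}^{r}\partial_{t_k}X_k \;+\; \sum_{j=1}^{n}\partial_{z_j}L_j,
\end{equation*}
and verify by direct expansion that $\mathcal{D} = \Delta_0 + R_2$, where $\Delta_0 := \sum_{k}\partial_{t_k}^2 + \sum_{j}\partial_{z_j}\partial_{\bar z_j}$ is the constant-coefficient elliptic model and $R_2$ is purely second order, with coefficients controlled by $A,B,C,D,E,F$.  Crucially, the first-order part of $\mathcal{D}$ coming from the product rule has coefficients equal to exactly the left-hand sides of \cref{Eqn::NirenbergAdditional::Bil0} and so vanishes identically.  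Thus $\mathcal{D}$ is a small second-order perturbation of $\Delta_0$ carrying no first-order terms.

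Next I would solve the scalar determined elliptic equation $\mathcal{D}\varphi_l = g_l$ with $g_l = -\sum_k \partial_{t_k}B_{k,l} - \sum_j \partial_{z_j}D_{j,l}$, by using the explicit real-analytic fundamental solution of $\Delta_0$ on the ball to build a right inverse and then running a Neumann iteration $\varphi_l = \Delta_0^{-1}(g_l - R_2\varphi_l)$, which contracts in the real-analytic Banach space $\sBSpace{r+2n}{1}{\eta_2}$ once $\gamma$ is small enough.  Once $\varphi_l$ is produced from this determined equation, I would verify by the standard commutator argument that it also satisfies the original overdetermined system: the residuals $e_l := X\varphi_l + B_{\cdot,l}$ and $f_l := L\varphi_l + D_{\cdot,l}$ vanish at the origin and, using the commutation $[X,X]=[X,L]=[L,L]=0$ together with the structure of $X,L$, satisfy a homogeneous first-order elliptic system with zero initial data, hence $e_l = f_l = 0$.

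The principal obstacle is closing the Neumann iteration of the third step in the real-analytic norm even though the coefficients of $R_2$ are only of class $\ZygSpace{s_0+1}$: this is the heart of Malgrange's technique and exploits the two-derivative gain of $\Delta_0^{-1}$, the complete absence of first-order terms in $\mathcal{D}$ (guaranteed by \cref{Eqn::NirenbergAdditional::Bil0}), and the smallness parameter $\gamma$.  The remaining steps are essentially routine: commutator bookkeeping, the compatibility check above, and an application of the analytic inverse function theorem of the same flavor as that used in the proof of \cref{Prop::NirenbergRA::MainProp}.
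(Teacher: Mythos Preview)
Your plan has a genuine gap at the point you yourself flag as ``the principal obstacle.''  The Neumann iteration $\varphi_l = \Delta_0^{-1}(g_l - R_2\varphi_l)$ cannot contract in the real-analytic Banach space $\sBSpace{r+2n}{1}{\eta_2}$, because neither the source term nor the operator lives there.  The function $g_l = -\sum_k \partial_{t_k}B_{k,l} - \sum_j \partial_{z_j}D_{j,l}$ is only in $\ZygSpace{s_0}$, and the coefficients of $R_2$ are only in $\ZygSpace{s_0+1}$; applying $\Delta_0^{-1}$ gains two derivatives in the Zygmund scale but does not manufacture analyticity.  So the map $\varphi\mapsto \Delta_0^{-1}(g_l - R_2\varphi)$ sends the analytic ball into $\ZygSpace{s_0+2}$, not back into $\sBSpace{r+2n}{1}{\eta_2}$.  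Your appeal to ``Malgrange's technique'' does not help here: in Malgrange's Newlander--Nirenberg argument the new coordinates are holomorphic \emph{with respect to the new structure}, but in the \emph{original} coordinates they are only as regular as elliptic theory allows.  The proposition, however, asserts that $\Phi_1$ (equivalently $W$) is real analytic in the original coordinates, and this is simply false for general $\ZygSpace{s_0+1}$ coefficients.

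What makes the proposition true is that the coefficients $A,B,C,D,E,F$ are themselves forced to be real analytic, and this is what the paper proves.  The commutation relations $[X,X]=[X,L]=[L,L]=0$ yield a system of first-order equations on $(A,B,C,D,E,F)$ with bilinear right-hand side; the three equations \cref{Eqn::NirenbergAdditional::Bil0} are exactly what is needed to complete this to an \emph{elliptic} first-order system $\sE(A,\ldots,F)=\Gamma((A,\ldots,F),\nabla(A,\ldots,F))$ (the operator $\sE$ of \cref{Lemma::AppendElliptic::OneOperator}, applied blockwise).  Then \cref{Prop::AppendRealAnal::MainProp}---analytic hypoellipticity for small solutions of elliptic systems with bilinear nonlinearity---gives $A,\ldots,F\in\sBSpace{r+2n}{\cdot}{\eta_0}$ with controlled norms, and one finishes by invoking \cref{Prop::NirenbergRA::MainProp}.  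So your observation that \cref{Eqn::NirenbergAdditional::Bil0} has an ``ellipticity'' role is correct, but it acts on the \emph{coefficients}, not on the sought-for solutions $\varphi_l$.  Your compatibility step (deducing the overdetermined system from the determined one via ``zero initial data'') is also suspect---vanishing at a point is not a uniqueness condition for elliptic systems---but this is moot given the earlier gap.
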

\begin{proof}
To prove the proposition, we will show that if $\gamma>0$ is a sufficiently small admissible constant, then $A$, $B$, $C$, $D$, $E$, and $F$ are real analytic, and there
exists an admissible constant $\eta_0>0$ such that
\begin{equation}\label{Eqn::NirernbergAdditional::ToShowRegEqn}
\sBNorm{A_k}{r+2n}{r}{\eta_0},\sBNorm{B_k}{r+2n}{n}{\eta_0}, \sBNorm{C_j}{r+2n}{r}{\eta_0},\sBNorm{D_j}{r+2n}{n}{\eta_0},\sBNorm{E_k}{r+2n}{n}{\eta_0}, \sBNorm{F_j}{r+2n}{n}{\eta_0}\lesssim 1,\quad \forall j,k.
\end{equation}
The result will then follow immediately from \cref{Prop::NirenbergRA::MainProp}.

The equation $[X_{k_1}, X_{k_2}]=0$ can be equivalently
rewritten as the following three equations:
\begin{equation}\label{Eqn::NirenbergAdditional::FirstA}
\frac{\partial A_{k_2}}{\partial t_{k_1}} - \frac{\partial A_{k_1}}{\partial t_{k_2}} = A_{k_2} \diff{t} A_{k_1} - A_{k_1} \diff{t} A_{k_2} + B_{k_2} \diff{z} A_{k_1}-B_{k_1} \diff{z}A_{k_2} +E_{k_2}\diff{\zb} A_{k_1} - E_{k_1} \diff{\zb} A_{k_2},
\end{equation}
\begin{equation}\label{Eqn::NirenbergAdditional::SecondA}
\frac{\partial B_{k_2}}{\partial t_{k_1}} - \frac{\partial B_{k_1}}{\partial t_{k_2}} = A_{k_2}\diff{t} B_{k_1} -A_{k_1} \diff{t} B_{k_2} + B_{k_2} \diff{z} B_{k_1} - B_{k_1} \diff{z} B_{k_2} +E_{k_2} \diff{\zb} B_{k_1} - E_{k_1} \diff{\zb} B_{k_2},
\end{equation}
\begin{equation}\label{Eqn::NirenbergAdditional::ThirdA}
\frac{\partial E_{k_2}}{\partial t_{k_1}} - \frac{\partial E_{k_1}}{\partial t_{k_2}} = A_{k_2} \diff{t} E_{k_1} - A_{k_1} \diff{t} E_{k_2} + B_{k_2}\diff{z} E_{k_1} - B_{k_1} \diff{z} E_{k_2} +E_{k_2}\diff{\zb} E_{k_1} -E_{k_1} \diff{\zb} E_{k_2}.
\end{equation}
We write \cref{Eqn::NirenbergAdditional::FirstA}, \cref{Eqn::NirenbergAdditional::SecondA}, and \cref{Eqn::NirenbergAdditional::ThirdA} as the following equation:
\begin{equation}\label{Eqn::NirenbergAdditional::Bil1}
\begin{split}
&\mleft( 
\mleft(
\frac{\partial A_{k_2}}{\partial t_{k_1}} - \frac{\partial A_{k_1}}{\partial t_{k_2}}
\mright)_{1\leq k_1<k_2\leq r},
\mleft(
\frac{\partial B_{k_2}}{\partial t_{k_1}} - \frac{\partial B_{k_1}}{\partial t_{k_2}}
\mright)_{1\leq k_1<k_2\leq r},
\mleft(
\frac{\partial E_{k_2}}{\partial t_{k_1}} - \frac{\partial E_{k_1}}{\partial t_{k_2}}
\mright)_{1\leq k_1<k_2\leq r}
 \mright) 
 \\&= \Gamma_1((A,B, E), \grad(A,B,E)),
 \end{split}
\end{equation}
where $\Gamma_1$ is an explicit constant coefficient bilinear form
depending only on $n$ and $r$.
Similarly, $[L_{j_1},L_{j_2}]=0$ can be written as:
\begin{equation}\label{Eqn::NirenbergAdditional::Bil2}
\begin{split}
&\mleft(
\mleft(
\frac{\partial C_{j_2}}{\partial \zb[j_1]} - \frac{\partial C_{j_1}}{\partial \zb[j_2]}
\mright)_{1\leq j_1<j_2\leq n},
\mleft(
\frac{\partial D_{j_2}}{\partial \zb[j_1]} - \frac{\partial D_{j_1}}{\partial \zb[j_2]}
\mright)_{1\leq j_1<j_2\leq n},
\mleft(
\frac{\partial F_{j_2}}{\partial \zb[j_1]} - \frac{\partial F_{j_1}}{\partial \zb[j_2]}
\mright)_{1\leq j_1<j_2\leq n}
\mright)
\\&=\Gamma_2((C,D,F), \grad(C,D,F)).
\end{split}
\end{equation}
Finally, $[X_k,L_j]=0$ can be written as:
\begin{equation}\label{Eqn::NirenbergAdditional::Bil3}
\begin{split}
&\mleft(
\mleft(
\frac{\partial C_j}{\partial t_k} - \frac{\partial A_k}{\partial \zb[j]}
\mright)_{\substack{1\leq j\leq n\\ 1\leq k\leq r}},
\mleft(
\frac{\partial D_j}{\partial t_k} - \frac{\partial B_k}{\partial \zb[j]}
\mright)_{\substack{1\leq j\leq n\\ 1\leq k\leq r}},
\mleft(
\frac{\partial F_j}{\partial t_k} - \frac{\partial E_k}{\partial \zb[j]}
\mright)_{\substack{1\leq j\leq n\\ 1\leq k\leq r}}
\mright)
\\&=\Gamma_3((A,B,C,D,E,F), \grad(A,B,C,D,E,F)).
\end{split}
\end{equation}

Combining \cref{Eqn::NirenbergAdditional::Bil1}, \cref{Eqn::NirenbergAdditional::Bil2}, \cref{Eqn::NirenbergAdditional::Bil3}, and \cref{Eqn::NirenbergAdditional::Bil0} we see that
$(A,B,C,D,E,F)$ satisfies the following equation:
\begin{equation}\label{Eqn::NirenbergAdditional::BilF}
\sE (A,B,C,D,E,F)= \Gamma((A,B,C,D,E,F), \grad(A, B, C,D,E,F)),
\end{equation}
where $\Gamma$ is an explicit constant coefficient, bilinear form, depending
only on $n$ and $r$, and $\sE$ is the following explicit operator (which depends only on $n$ and $r$):
\begin{equation*}
\begin{split}
\sE (A,B,C,D,E,F)
=
\Bigg(&
\mleft(
\frac{\partial A_{k_2}}{\partial t_{k_1}} - \frac{\partial A_{k_1}}{\partial t_{k_2}}
\mright)_{1\leq k_1<k_2\leq r},
\mleft(
\frac{\partial C_{j_2}}{\partial \zb[j_1]} - \frac{\partial C_{j_1}}{\partial \zb[j_2]}
\mright)_{1\leq j_1<j_2\leq n},
\\&
\mleft(
\frac{\partial C_j}{\partial t_k} - \frac{\partial A_k}{\partial \zb[j]}
\mright)_{\substack{1\leq j\leq n\\ 1\leq k\leq r}},
\sum_{k=1}^r \frac{\partial A_k}{\partial t_k} +\sum_{j=1}^n \frac{\partial C_j}{\partial z_j},
\\&
\mleft(
\frac{\partial B_{k_2}}{\partial t_{k_1}} - \frac{\partial B_{k_1}}{\partial t_{k_2}}
\mright)_{1\leq k_1<k_2\leq r},
\mleft(
\frac{\partial D_{j_2}}{\partial \zb[j_1]} - \frac{\partial D_{j_1}}{\partial \zb[j_2]}
\mright)_{1\leq j_1<j_2\leq n},
\\&
\mleft(
\frac{\partial D_j}{\partial t_k} - \frac{\partial B_k}{\partial \zb[j]}
\mright)_{\substack{1\leq j\leq n\\ 1\leq k\leq r}},
\sum_{k=1}^r \frac{\partial B_k}{\partial t_k} +\sum_{j=1}^n \frac{\partial D_j}{\partial z_j},
\\&
\mleft(
\frac{\partial E_{k_2}}{\partial t_{k_1}} - \frac{\partial E_{k_1}}{\partial t_{k_2}}
\mright)_{1\leq k_1<k_2\leq r},
\mleft(
\frac{\partial F_{j_2}}{\partial \zb[j_1]} - \frac{\partial F_{j_1}}{\partial \zb[j_2]}
\mright)_{1\leq j_1<j_2\leq n},
\\&
\mleft(
\frac{\partial F_j}{\partial t_k} - \frac{\partial E_k}{\partial \zb[j]}
\mright)_{\substack{1\leq j\leq n\\ 1\leq k\leq r}},
\sum_{k=1}^r \frac{\partial E_k}{\partial t_k} +\sum_{j=1}^n \frac{\partial F_j}{\partial z_j}
\Bigg).
\end{split}
\end{equation*}
\Cref{Lemma::AppendElliptic::OneOperator} shows that $\sE$ is elliptic.

\Cref{Prop::AppendRealAnal::MainProp}, applied to \cref{Eqn::NirenbergAdditional::BilF}, shows that there is an admissible
$\gamma>0$ such that if 
$$\ZygNorm{A}{s_0+1}, \ZygNorm{B}{s_0+1}, \ZygNorm{C}{s_0+1}, \ZygNorm{D}{s_0+1},  \ZygNorm{E}{s_0+1},  \ZygNorm{F}{s_0+1}\leq \gamma,$$
then there exists an admissible $\eta_0>0$ such that \cref{Eqn::NirernbergAdditional::ToShowRegEqn} holds. 
Now the result follows from \cref{Prop::NirenbergRA::MainProp}.
\end{proof}

\begin{rmk}
We only use \cref{Prop::NirenbergAddtional::MainProp} in the special case $A\equiv 0$, $C\equiv 0$, $E\equiv 0$, and $F\equiv 0$; however the proof in this special case is no easier than the more general
case covered in \cref{Prop::NirenbergAddtional::MainProp}.
\end{rmk}

    \subsection{Vector fields with small error}\label{Section::Nirenberg::SmallError}
Fix $s_0\in (0,\infty)$.  We consider $\ZygSpace{s_0+1}$ complex vector fields,
$X_1,\ldots, X_r, L_1,\ldots, L_n$, on
$B_{\R^r\times \C^n}(2)$ of the following form:
\begin{equation*}
X=\diff{t} + E\diff{z}, \quad L=\diff{\zb}+F\diff{z}.
\end{equation*}
Here we are again using the matrix notation from
\cref{Section::Nirenberg::Additional}.

We assume:
\begin{enumerate}[(I)]
\item $E\in \ZygSpace{s_0+1}[B_{\R^r\times \C^n}(2)][\M^{r\times n}(\C)]$, $F\in \ZygSpace{s_0+1}[B_{\R^r\times \C^n}(2)][\M^{n\times n}(\C)]$.
    \item $E(0)=0$, $F(0)=0$.
    \item\label{Item::NirenbergPerterb::Commute} $\forall \zeta\in B_{\R^r\times \C^n}(2)$,
    $[X_{k_1}, X_{k_2}](\zeta), [L_{j_1}, L_{j_2}](\zeta), [X_k,L_j](\zeta)\in \Span_{\C}\mleft\{X_1(\zeta),\ldots, X_r(\zeta), L_1(\zeta),\ldots, L_n(\zeta)\mright\}$, $\forall j,k,l$.
\end{enumerate}

\begin{rmk}\label{Rmk::NirenbergPerterm::Commute}
Assumption \cref{Item::NirenbergPerterb::Commute} is equivalent to assuming
$X_1,\ldots, X_r, L_1,\ldots, L_n$ commute.  Indeed, under \cref{Item::NirenbergPerterb::Commute} and because
of the form of $X$ and $L$, we have $\forall \zeta\in B_{\R^r\times \C^n}(2)$,
\begin{equation*}
\begin{split}
&[X_{k_1}, X_{k_2}](\zeta), [L_{j_1}, L_{j_2}](\zeta), [X_k,L_j](\zeta)
\\&\in \Span_{\C}\mleft\{X_1(\zeta),\ldots, X_r(\zeta), L_1(\zeta),\ldots, L_n(\zeta)\mright\}\bigcap \Span_{\C}\mleft\{\diff{z_1},\ldots, \diff{z_n}\mright\}=\{0\}.
\end{split}
\end{equation*}
\end{rmk}

\begin{defn}
For $s>s_0$ if we say $C$ is an $\Zygad{s}$-admissible constant,
it means that we assume
$E\in \ZygSpace{s+1}[B_{\R^r\times \C^n}(2)][\M^{r\times n}(\C)]$ and $F\in \ZygSpace{s+1}[B_{\R^r\times \C^n}(2)][\M^{n\times n}(\C)]$.
$C$ can be chosen to depend only on $n$, $r$, $s_0$, $s$,
and upper bounds for $\ZygNorm{E}{s+1}[B_{\R^r\times \C^n}(2)]$ and $\ZygNorm{F}{s+1}[B_{\R^r\times \C^n}(2)]$.
For $0<s\leq s_0$ we say $C$ is an $\Zygad{s}$-admissible constant
if $C$ can be chosen to depend only on $n$, $r$, and $s_0$.
\end{defn}

\begin{prop}\label{Prop::NirenbergPertrub::MainProp}
There exists $\sigma=\sigma(n,r,s_0)>0$ such that if
$\ZygNorm{E}{s_0+1}[B_{\R^r\times \C^n}(2)], \ZygNorm{F}{s_0+1}[B_{\R^r\times \C^n}(2)]\leq \sigma$,
then there exists an $\Zygad{s_0}$-admissible constant $\eta_3>0$
and a map $\Phi_2:B_{\R^r\times \C^n}(\eta_3)\rightarrow B_{\R^r\times \C^n}(2)$ such that:
\begin{itemize}
\item $\Phi_2\in \ZygSpace{s_0+2}[B_{\R^r\times \C^n}(\eta_3)][\R^r\times \C^n]$ and $\ZygNorm{\Phi_2}{s+2}[B_{\R^r\times \C^n}(\eta_3)]\lesssim_{\Zygad{s}} 1$, $\forall s>0$.
\item $\Phi_2(0)=0$, $d_{t,x} \Phi_2(0) = I_{(r+2n)\times (r+2n)}$.
\item $\Phi_2(B_{\R^r\times \C^n}(\eta_3))\subseteq B_{\R^r\times \C^n}(2)$ is open and $\Phi_2:B_{\R^r\times \C^n}(\eta_3)\rightarrow  \Phi_2(B_{\R^r\times \C^n}(\eta_3))$ is a $\ZygSpace{s_0+2}$ diffeomorphism.
\item \begin{equation*}
\begin{bmatrix}
\diff{u} \\ \diff{\wb}
\end{bmatrix}=
(I+\AMatrix_2)
\begin{bmatrix}
\Phi_2^{*} X\\ \Phi_2^{*} L
\end{bmatrix},
\end{equation*}
where $\AMatrix_2:B_{\R^r\times \C^n}(\eta_3)\rightarrow \M^{(r+n)\times (r+n)}(\C)$, $\AMatrix_2(0)=0$, and $\ZygNorm{\AMatrix}{s+1}[B_{\R^r\times \C^n}(\eta_3)]\lesssim_{\Zygad{s}} 1$.
\end{itemize}
\end{prop}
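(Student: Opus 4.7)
The plan is to reduce \cref{Prop::NirenbergPertrub::MainProp} to \cref{Prop::NirenbergAddtional::MainProp}. Our initial vector fields already have the form of \cref{Section::Nirenberg::Additional} with only the $B$- and $D$-coefficients (in that section's notation) nonzero; the $A,C,E,F$-coefficients there are identically zero. By \cref{Rmk::NirenbergPerterm::Commute}, the bracket hypothesis gives exact commutation of the $X_k$ and $L_j$. Consequently, of the three divergence-free gauge equations in \eqref{Eqn::NirenbergAdditional::Bil0}, only the middle one (involving $B$ and $D$) is nontrivial for our initial data. The idea is to apply a near-identity change of coordinates together with a linear recombination of $X, L$ to enforce all three gauge equations, without leaving the class to which \cref{Prop::NirenbergAddtional::MainProp} applies.

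Concretely, I would look for a $\ZygSpace{s_0+2}$ near-identity diffeomorphism $\Theta\colon B_{\R^r\times\C^n}(\eta)\to B_{\R^r\times\C^n}(2)$ and an invertible matrix $M$ close to the identity, such that the vector fields $M\begin{pmatrix}\Theta^{*}X\\\Theta^{*}L\end{pmatrix}$ satisfy \eqref{Eqn::NirenbergAdditional::Bil0}. Setting $h:=\Theta-\mathrm{id}$, the coefficients of the transformed vector fields are first order in $(h, M-I, E, F)$, so the gauge equations yield a PDE system whose principal part on $h$ is second-order elliptic (of Laplace type, arising from the combination of $\partial_u$ and $\partial_{\bar w}$ in the divergence together with the vector field directions). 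The right-hand side involves one derivative of $E, F$ and hence sits in $\ZygSpace{s_0}$; inverting the second-order elliptic operator recovers two derivatives, giving $h\in \ZygSpace{s_0+2}$. For $E, F$ sufficiently small in $\ZygSpace{s_0+1}$ (the role of $\sigma$), a contraction mapping argument on a small ball in $\ZygSpace{s_0+2}$ produces a genuine solution; higher-regularity estimates for $s>s_0$ follow by standard bootstrapping.

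With $\Theta$ and $M$ in hand, \cref{Prop::NirenbergAddtional::MainProp} applies to $M\begin{pmatrix}\Theta^{*}X\\\Theta^{*}L\end{pmatrix}$ and yields a real-analytic $\Phi_1$ with the straightening property. Setting $\Phi_2:=\Theta\circ\Phi_1$, the chain rule delivers $\Phi_2(0)=0$ and $d_{(t,x)}\Phi_2(0)=I$, while \cref{Lemma::FuncSpaceRev::Composition} (applied with the real-analytic map as inner argument) gives $\Phi_2\in \ZygSpace{s+2}$ for every $s>0$ with the claimed bounds. The formula for $\AMatrix_2$ comes from combining $M$ with $\AMatrix_1$ and with $(d\Phi_2)^{-1}$; the $\ZygSpace{s+1}$ bounds and the equality $\AMatrix_2(0)=0$ follow from \cref{Prop::FuncSpaceRev::Algebra} together with \cref{Rmk::FuncSpaceRev::InverseMatrix}. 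The main obstacle is the construction of $\Theta$ with the sharp $\ZygSpace{s_0+2}$ regularity: this is the generalization of Malgrange's Newlander--Nirenberg argument ($r=0$) to mixed dimensions, where the $\partial_{t_k}$ and $\partial_{\bar z_j}$ directions must be treated simultaneously in the elliptic gauge system.
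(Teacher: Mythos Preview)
Your outline is correct and matches the paper's approach. The paper packages the construction of your $\Theta$ and $M$ into \cref{Lemma::NirenbergPerturb::MainLemma}: the diffeomorphism is taken of the special form $H(t,z)=(t,z+R_2(t,z))$ (only the $z$-component perturbed), which together with a triangular $(I+M)$ keeps the pushed-forward fields in the form $\partial_u+B\partial_w$, $\partial_{\bar w}+D\partial_w$; hence only the single gauge equation \eqref{Eqn::NirenbergPerterb::NewEqn} needs to be imposed, and its linearization at $R_2=0$ is precisely $\sum_k\partial_{t_k}^2+\sum_j\partial_{z_j}\partial_{\bar z_j}$, with existence supplied by \cref{Prop::AppendExist::MainProp}. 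Then $\Phi_2=H^{-1}\circ\Phi_1$ (your $\Theta\circ\Phi_1$ with $\Theta=H^{-1}$), and the $\AMatrix_2$ and regularity bookkeeping is exactly as you describe.
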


To prove \cref{Prop::NirenbergPertrub::MainProp}, we prove the following lemma.

\begin{lemma}\label{Lemma::NirenbergPerturb::MainLemma}
Fix $\gamma>0$.  There exists $\sigma=\sigma(n,r,s_0,\gamma)>0$ such that if $\ZygNorm{E}{s_0+1}[B_{\R^r\times \C^n}(2)], \ZygNorm{F}{s_0+1}[B_{\R^r\times \C^n}(2)]\leq \sigma$,
there exists $H\in \ZygSpace{s_0+2}[B_{\R^r\times \C^n}(2)][\R^r\times \C^n]$ such that
\begin{enumerate}[(i)]
\item $H(t,z)=(t,z)+R(t,z)$, $R(0,0)=0$, $d_{t,x}R(0,0)=0_{(r+2n)\times (r+2n)}$. 
\item $\ZygNorm{H}{s+2}[B_{\R^r\times \C^n}(3/2)]\lesssim_{\Zygad{s}} 1$, $\forall s>0$.
\item\label{Item::NirenbergPlus::Diffeo} $H:B_{\R^r\times \C^n}(2)\rightarrow B_{\R^r\times \C^n}(3)$ is injective, $H(B_{\R^r\times \C^n}(2))\subseteq B_{\R^r\times \C^n}(3)$ is open,
and $H:B_{\R^r\times \C^n}(2)\rightarrow H(B_{\R^r\times \C^n}(2))$ is a diffeomorphism.
\item\label{Item::NirenbergPlus::BigImage} $B_{\R^r\times \C^n}(1)\subseteq H(B_{\R^r\times \C^n}(3/2))$.
\item $\ZygNorm{H^{-1}}{s+2}[B_{\R^r\times \C^n}(1)]\lesssim_{\Zygad{s}} 1$, $\forall s>0$.
\item Let $V_k:=H_{*} X_k$ and $W_j=H_{*} L_j$.  Then there exists a matrix
$M\in \ZygSpace{s_0+1}[B_{\R^r\times \C^n}(1)][\M^{(r+n)\times (r+n)}(\C)]$ with $M(0)=0$ and such that:
	\begin{itemize}
		\item $\ZygNorm{M}{s+1}[B_{\R^r\times \C^n}(1)]\lesssim_{\Zygad{s}} 1$, $\forall s>0$.
		\item If
			\begin{equation*}
				\begin{bmatrix}
				\Xt \\
				\Lt
				\end{bmatrix}:=
				(I+M)
				\begin{bmatrix}
				V\\
				W
				\end{bmatrix},
			\end{equation*}
			then
			\begin{equation*}
			\Xt=\diff{u}+B\diff{w}, \quad \Lt=\diff{\wb}+D\diff{w},
			\end{equation*}
			where we are using the matrix notation from \cref{Section::Nirenberg::Additional}.  We have
			\begin{equation*}
			\ZygNorm{B}{s_0+1}[B_{\R^r\times \C^n}(1)],\ZygNorm{D}{s_0+1}[B_{\R^r\times \C^n}(1)]\leq \gamma,
			\end{equation*}
			\begin{equation*}
			\ZygNorm{B}{s+1}[B_{\R^r\times \C^n}(1)],\ZygNorm{D}{s+1}[B_{\R^r\times \C^n}(1)]\lesssim_{\Zygad{s}} 1,\quad \forall s>0,
			\end{equation*}
			and $B(0)=0$, $D(0)=0$.
			Finally, if we let $B_k$ denote the $k$th row of $B$, and similarly for $D_j$,
			we have
			\begin{equation}\label{Eqn::NirenbergPerterb::NewEqn}
\sum_{k=1}^r \frac{\partial B_k}{\partial u_k} +\sum_{j=1}^n \frac{\partial D_j}{\partial w_j}=0.
\end{equation}
		\item $\Xt_1,\ldots, \Xt_r, \Lt_1,\ldots, \Lt_n$ commute on $B_{\R^r\times \C^n}(1)$.
	\end{itemize}
\end{enumerate}
\end{lemma}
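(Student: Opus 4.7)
The strategy is to reduce the statement to the divergence-free normal-form setting of \cref{Prop::NirenbergAddtional::MainProp}, by constructing $H$ as the solution of a nonlinear elliptic PDE. The first ingredient is the purely algebraic part of the normalization: for any map $H=\mathrm{id}+R$ with $R(0)=0$, $dR(0)=0$, and $\|R\|_{C^1}$ sufficiently small, the pushforward vector fields $V_k=H_*X_k$, $W_j=H_*L_j$ take the shape
\begin{equation*}
V_k = P_k\,\partial_u + Q_k\,\partial_w + S_k\,\partial_{\bar w}, \qquad W_j = \tilde{P}_j\,\partial_{\bar w} + \tilde{Q}_j\,\partial_w + \tilde{S}_j\,\partial_u,
\end{equation*}
with leading blocks $P\approx I_{r\times r}$, $\tilde{P}\approx I_{n\times n}$, and $S,\tilde{S}$ small. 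Inverting the $(r+n)\times(r+n)$ block multiplying $(\partial_u,\partial_{\bar w})^{\transpose}$ determines a unique small matrix $M$ such that $(I+M)(V,W)^{\transpose}=(\tilde{X},\tilde{L})^{\transpose}$ takes the required form $\tilde{X}=\partial_u+B\,\partial_w$, $\tilde{L}=\partial_{\bar w}+D\,\partial_w$, with $B$, $D$, $M$ depending smoothly on $R$, $\nabla R$, $E$, $F$ and vanishing at $0$. Commutation of the $\tilde{X}_k$'s and $\tilde{L}_j$'s is then automatic, exactly as in \cref{Rmk::NirenbergPerterm::Commute}: these brackets have only $\partial_w$-components by the explicit form, while $\Span_\C\{\tilde{X},\tilde{L}\}$ intersects $\Span_\C\{\partial_{w_1},\ldots,\partial_{w_n}\}$ only in $\{0\}$.

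The remaining task is to choose $R$ so that the divergence equation~\cref{Eqn::NirenbergPerterb::NewEqn} holds. Substituting the explicit formulas for $B$ and $D$, this becomes a nonlinear system of $n$ complex equations for the $(r+2n)$-dimensional unknown $R$. The system is underdetermined, so I would pair it with a gauge condition on $R$---for instance, requiring a chosen $r$-real-dimensional projection of $R$ to be harmonic---chosen so that the linearization at $R=0$ is a second-order overdetermined elliptic system whose principal symbol matches that of the operator $\sE$ from the proof of \cref{Prop::NirenbergAddtional::MainProp}. The resulting nonlinear equation can then be solved for $R$ by a Banach fixed-point argument in a small ball of $\ZygSpace{s_0+2}[B_{\R^r\times\C^n}(2)][\R^r\times\C^n]$, using elliptic Schauder estimates on Zygmund spaces together with \cref{Prop::FuncSpaceRev::Algebra} and \cref{Lemma::FuncSpaceRev::Composition} to handle the nonlinear terms; smallness of $\sigma$ provides the contraction. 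The sharper estimates $\ZygNorm{H}{s+2}[B_{\R^r\times\C^n}(3/2)]\lesssim_{\Zygad{s}}1$ for each $s>0$ then follow by a bootstrap, re-examining the same equation with higher-regularity data $E,F\in\ZygSpace{s+1}$.

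Once $R$ is in hand, \cref{Item::NirenbergPlus::Diffeo} and \cref{Item::NirenbergPlus::BigImage} reduce to a quantitative Inverse Function Theorem: smallness of $\|R\|_{C^1}$ from the fixed-point ball gives injectivity of $H$ on $B_{\R^r\times\C^n}(2)$, openness of the image, the containment in $B_{\R^r\times\C^n}(3)$, and $B_{\R^r\times\C^n}(1)\subseteq H(B_{\R^r\times\C^n}(3/2))$. The Zygmund bound on $H^{-1}$ is supplied by \cref{Lemma::FuncSpaceRev::Inverse}, and the claimed estimates on $B$, $D$, $M$ follow from the explicit formulas together with \cref{Prop::FuncSpaceRev::Algebra} and \cref{Lemma::FuncSpaceRev::Composition}.

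The main obstacle is the identification of the correct gauge so that the linearization of the divergence constraint is elliptic with the symbol of $\sE$; it is exactly this identification that lets elliptic regularity convert $\ZygSpace{s+1}$ control of $E$ and $F$ into $\ZygSpace{s+2}$ control of $H$. This two-derivative gain is the key quantitative improvement over the classical reduction to the Newlander--Nirenberg theorem, which would only produce a $\ZygSpace{s+1}$ diffeomorphism (see \cref{Rmk::Nirenberg::ReductionCosts}).
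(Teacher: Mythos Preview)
Your outline follows the right architecture and matches the paper's approach in all but one key step: the choice of the ansatz for $R$.  You treat $R$ as a full $(r+2n)$-real-dimensional unknown and propose to impose an additional ``gauge'' (e.g.\ harmonicity of an $r$-dimensional projection) in order to manufacture an elliptic linearization.  This is precisely the step you flag as the main obstacle, and you do not actually carry it out.

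The paper sidesteps this entirely by the much simpler choice $R(t,z)=(0,R_2(t,z))$ with $R_2:B_{\R^r\times\C^n}(2)\to\C^n$.  With the $t$-component of $H$ fixed to the identity, the pushforwards $V=H_*X$, $W=H_*L$ have no $\partial_u$-perturbation at all, and the algebraic normalization $(I+M)$ becomes an explicit $2\times 2$ block-triangular matrix.  The resulting formulas for $B[E,F,R_2]$ and $D[E,F,R_2]$ are written out explicitly, and the divergence constraint \cref{Eqn::NirenbergPerterb::NewEqn} becomes $n$ complex equations in the $n$ complex unknowns $R_2$---a \emph{determined} system, with no gauge needed.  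Its linearization at $E=F=R_2=0$ is computed directly to be
\[
\left(\sum_{k=1}^r \partial_{t_k}^2 + \sum_{j=1}^n \partial_{z_j}\partial_{\bar z_j}\right) R_2,
\]
i.e.\ the standard Laplacian acting diagonally, which is manifestly second-order elliptic (not overdetermined, and not the first-order operator $\sE$ from \cref{Prop::NirenbergAddtional::MainProp}).  Existence of $R_2$ with the required estimates is then obtained by invoking \cref{Prop::AppendExist::MainProp}, which already packages the fixed-point argument and the higher-regularity bootstrap you describe.

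Everything else in your outline---the block inversion defining $M$, the commutation argument via \cref{Rmk::NirenbergPerterm::Commute}, the quantitative inverse function theorem for \cref{Item::NirenbergPlus::Diffeo}--\cref{Item::NirenbergPlus::BigImage}, and the estimate for $H^{-1}$ via \cref{Lemma::FuncSpaceRev::Inverse}---matches the paper.
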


First we see why \cref{Lemma::NirenbergPerturb::MainLemma} gives \cref{Prop::NirenbergPertrub::MainProp}
\begin{proof}[Proof of \cref{Prop::NirenbergPertrub::MainProp}]
Take $\gamma=\gamma(n,r,s_0)>0$ as in \cref{Prop::NirenbergAddtional::MainProp}.
We take $\sigma=\sigma(n,r,s_0,\gamma)>0$ as in \cref{Lemma::NirenbergPerturb::MainLemma}.  With this choice of $\sigma$ and $\gamma$, \cref{Lemma::NirenbergPerturb::MainLemma}
shows that \cref{Prop::NirenbergAddtional::MainProp} applies to the vector fields $\Xt_1,\ldots, \Xt_r,\Lt_1,\ldots, \Lt_n$ from \cref{Lemma::NirenbergPerturb::MainLemma} (and constants which are admissible in the sense of \cref{Prop::NirenbergAddtional::MainProp}
are $\Zygad{s_0}$-admissible in the sense of this section)--here we are taking $A\equiv 0$, $C\equiv 0$, $E\equiv 0$, and $F\equiv 0$ in \cref{Prop::NirenbergAddtional::MainProp}.

Thus, we obtain an $\Zygad{s_0}$-admissible constant $\eta_2>0$ and a map $\Phi_1:B_{\R^r\times\C^n}(\eta_2)\rightarrow B_{\R^r\times \C^n}(1)$ as in  \cref{Prop::NirenbergAddtional::MainProp}.
Set $\eta_3:=\eta_2/2$.
For each $s>0$, we have, using \cref{Lemma::FuncSpaceReAnal::AinB},
\begin{equation*}
\ZygNorm{\Phi_1}{s+2}[B_{\R^r\times \C^n}(\eta_3)]\leq C_{s,\eta_2} \sBNorm{\Phi_1}{r+2n}{r+2n}{\eta_2}\lesssim_{\Zygad{s_0}} C_{s,\eta_2},
\end{equation*}
where $C_{s,\eta_2}$ can be chosen to depend only on $s$ and $\eta_2$.  We conclude,
\begin{equation*}
\ZygNorm{\Phi_1}{s+2}[B_{\R^r\times \C^n}(\eta_3)]\lesssim_{\Zygad{s}} 1,\quad \forall s>0.
\end{equation*}
Similarly, if $\AMatrix_1$ is as in  \cref{Prop::NirenbergAddtional::MainProp}, we have $\AMatrix_1(0)=0$,
and using \cref{Lemma::FuncSpaceReAnal::AinB},
$\ZygNorm{\AMatrix_1}{s+1}[B_{\R^r\times \C^n}(\eta_3)]\lesssim_{\Zygad{s}} \sBNorm{\AMatrix_1}{r+2n}{\M^{(n+r)\times (n+r)}}{\eta_2}\leq 1$, $\forall s>0$, and
\begin{equation*}
	\begin{bmatrix}
	\diff{u}\\\diff{\wb}
	\end{bmatrix}
	=(I+\AMatrix_1)\begin{bmatrix}
	\Phi_1^{*} \Xt\\ \Phi_1^{*} \Lt
	\end{bmatrix}.
\end{equation*}
We have, with $M$ and $H$ as in \cref{Lemma::NirenbergPerturb::MainLemma},
\begin{equation*}
	\begin{bmatrix}
	\diff{u}\\\diff{\wb}
	\end{bmatrix}
	=(I+\AMatrix_1)(I+M\circ \Phi_1)\begin{bmatrix}
	\Phi_1^{*} V\\ \Phi_1^{*} W
	\end{bmatrix}
	=(I+\AMatrix_1)(I+M\circ \Phi_1)
	\begin{bmatrix}
	(H^{-1}\circ \Phi_1)^{*} X \\
	(H^{-1}\circ \Phi_1)^{*} L
	\end{bmatrix}
	=:(I+\AMatrix_2)
	\begin{bmatrix}
	\Phi_2^{*} X \\
	\Phi_2^{*} L
	\end{bmatrix},
\end{equation*}
where $\Phi_2=H^{-1}\circ \Phi_1$ and $I+\AMatrix_2 = (I+\AMatrix_1)(I+M\circ \Phi_1)$.
Since we have already noted $\ZygNorm{\Phi_1}{s+2}[B_{\R^r\times \C^n}(\eta_3)]\lesssim_{\Zygad{s}} 1$ and $\ZygNorm{\AMatrix_1}{s+1}[B_{\R^r\times \C^n}(\eta_3)]\lesssim_{\Zygad{s}} 1$, $\forall s>0$, and \cref{Lemma::NirenbergPerturb::MainLemma} gives $\ZygNorm{M}{s+1}[B_{\R^r\times \C^n}(1)]\lesssim_{\Zygad{s}} 1$, $\forall s>0$,
it follows from \cref{Prop::FuncSpaceRev::Algebra,Lemma::FuncSpaceRev::Composition} that
$\ZygNorm{\AMatrix_2}{s+1}[B_{\R^r\times \C^n}(\eta_3)]\lesssim_{\Zygad{s}} 1$, $\forall s>0$.
Since $\AMatrix_1(0)=0$, $M(0)=0$, and $\Phi_1(0)=0$, we have $\AMatrix_2(0)=0$.
Since $\ZygNorm{H^{-1}}{s+2}[B_{\R^r\times \C^n}(1)]\lesssim_{\Zygad{s}} 1$ (by \cref{Lemma::NirenbergPerturb::MainLemma}) and $\ZygNorm{\Phi_1}{s+2}[B_{\R^r\times \C^n}(\eta_3)]\lesssim_{\Zygad{s}} 1$,
for all $s>0$, it follows from \cref{Lemma::FuncSpaceRev::Composition} that
$\ZygNorm{\Phi_2}{s+2}[B_{\R^r\times \C^n}(\eta_3)]\lesssim_{\Zygad{s}} 1$, $\forall s>0$.
$\Phi_2(0)=H^{-1}(\Phi_1(0))=H^{-1}(0)=0$, since $H(0)=0$.  $d_{t,x} \Phi_2(0)= (d_{t,x} H(0))^{-1} d_{t,x} \Phi_1(0)= I \cdot I = I$.
Finally, that $\Phi_2$ is a diffeomorphism onto its image follows from the corresponding results for $H$ and $\Phi_1$ in \cref{Lemma::NirenbergPerturb::MainLemma} and \cref{Prop::NirenbergAddtional::MainProp}.
\end{proof}

\begin{proof}[Proof of \cref{Lemma::NirenbergPerturb::MainLemma}]
Let $\sigma_0=\sigma_0(n,r,s_0,\gamma)>0$ be a small constant (depending only on $n$, $r$, $s_0$, and $\gamma$), to be chosen later.
We will find $H$ of the form $H(t,z)=(t,z)+R(t,z)$, where $R(t,z)=(0,R_2(t,z))$, $R_2\in \ZygSpace{s_0+2}[B_{\R^r\times \C^n}(2)][\C^n]$, $R_2(0,0)=0$, $dR_2(0,0)=0$,
and $\ZygNorm{R_2}{s_0+2}[B_{\R^r\times \C^n}(2)]\leq \sigma_0$.  Note that if $\sigma_0>0$ is sufficiently small (depending only on $n$ and $r$),
\cref{Item::NirenbergPlus::Diffeo} and \cref{Item::NirenbergPlus::BigImage} follow immediately from the inverse function theorem.
Moreover, we will also
have
\begin{equation}\label{Eqn::NirenbergPerterb::BoundDet}
\inf_{(t,z)\in B_{\R^r\times \C^n}(2)} |\det dH(t,z)|\geq \frac{1}{2}.
\end{equation}
Henceforth, we take $\sigma_0>0$ so small that these consequences hold.

We begin by studying an arbitrary $H(t,z)$ of the form $H(t,z)=(t,z)+(0,R_2(t,z))$ with $\ZygNorm{R_2}{s_0+2}[B_{\R^r\times \C^n}(2)]\leq \sigma_0$, $R_2(0,0)=0$, $dR_2(0,0)=0$ (we will later
specialize to a specific choice of $R_2$).  In what follows, for $s>0$ if we write $A\lesssim_s B$, it means that we assume $R_2\in \ZygSpace{s+2}[B_{\R^r\times \C^n}(3/2)][\C^n]$ and $A\leq CB$
where $C$ is a positive $\Zygad{s}$-admissible constant which is also allowed to depend on an upper bound for $\ZygNorm{R_2}{s+2}[B_{\R^r\times \C^n}(3/2)]$.  At the end of the proof,
we will chose a particular $R_2$ with $\ZygNorm{R_2}{s+2}[B_{\R^r\times \C^n}(3/2)]\lesssim_{\Zygad{s}} 1$; once we do this, $\lesssim_s$ and $\lesssim_{\Zygad{s}}$ will denote the same thing.

For such $H$, by the above remarks, it makes sense to consider $H^{-1}:B_{\R^r\times \C^n}(1)\rightarrow B_{\R^r\times \C^n}(3/2)$.  Moreover,
it follows from 
\cref{Lemma::FuncSpaceRev::Inverse} (using \cref{Eqn::NirenbergPerterb::BoundDet})
that
\begin{equation}\label{Eqn::NirenbergPerterb::BoundHinv}
\ZygNorm{H^{-1}}{s+2}[B_{\R^r\times \C^n}(1)]\lesssim_{s} 1.
\end{equation}

Set $H_1(t,z)=t$, $H_2(t,z)=z+R_2(t,z)$ so that $H(t,z)=(H_1(t,z), H_2(t,z))$.  We have the following obvious equalities:
\begin{equation}\label{Eqn::NirenbergPerterb::Basicds}
\begin{split}
&d_t H_1 = I, d_z H_1=0, d_{\zb} H_1=0, d_t H_2 = d_t R_2, d_z H_2 = I+d_z R_2, 
\\&d_{\zb} H_2 = d_{\zb} R_2, d_t \Hb[2] = d_t \Rb[2], d_z\Hb[2] = d_z \Rb[2], d_{\zb} \Hb[2] = I+ d_{\zb} \Rb[2].
\end{split}
\end{equation}
Using the notation from \cref{Section::Notation}, we have (thinking of $H$ mapping the $(t,z)$ variable to the $(u,w)$ variable):
\begin{equation*}
H_{*} \diff{t} =(d_t H_1(t,z))^{\transpose} \diff{u} + (d_t H_2(t,z))^{\transpose} \diff{w} +(d_t \Hb[2](t,z))^{\transpose} \diff{\wb}\bigg|_{(t,z)=H^{-1}(u,w)},
\end{equation*}
\begin{equation*}
H_{*} \diff{z} =(d_z H_1(t,z))^{\transpose} \diff{u} + (d_z H_2(t,z))^{\transpose} \diff{w} +(d_z \Hb[2](t,z))^{\transpose} \diff{\wb}\bigg|_{(t,z)=H^{-1}(u,w)},
\end{equation*}
\begin{equation*}
H_{*} \diff{\zb} =(d_{\zb} H_1(t,z))^{\transpose} \diff{u} + (d_{\zb} H_2(t,z))^{\transpose} \diff{w} +(d_{\zb} \Hb[2](t,z))^{\transpose} \diff{\wb}\bigg|_{(t,z)=H^{-1}(u,w)}.
\end{equation*}
Thus, if $V=H_{*}X$ and $W=H_{*}L$, using \cref{Eqn::NirenbergPerterb::Basicds} we have
\begin{equation*}
\begin{split}
V(u,w) = \diff{u} &+ \bigg[ (d_tR_2(t,z))^{\transpose} + E(t,z) (I+d_zR_2(t,z))^{\transpose} \bigg]\diff{w}
\\&+\bigg[ (d_t \Rb[2](t,z) )^{\transpose} +E(t,z) (d_z \Rb[2](t,z))^{\transpose} \bigg]\diff{\wb}\bigg|_{(t,z)=H^{-1}(u,w)},
\end{split}
\end{equation*}
\begin{equation*}
\begin{split}
W(u,w) = \diff{\wb} &+ \bigg[ (d_{\zb} R_2(t,z))^{\transpose} + F(t,z) (I+d_z R_2(t,z))^{\transpose}  \bigg]\diff{w}
\\& + \bigg[ (d_{\zb} \Rb[2](t,z))^{\transpose} + F(t,z) (d_z \Rb[2](t,z))^{\transpose} \bigg]\diff{\wb}\bigg|_{(t,z)=H^{-1}(u,w)}.
\end{split}
\end{equation*}

Our goal is to pick $\sigma=\sigma(n,r,s_0,\gamma)>0$ so that the conclusions of the lemma hold for
$$\ZygNorm{E}{s_0+1}[B_{\R^r\times \C^n}(2)], \ZygNorm{F}{s_0+1}[B_{\R^r\times \C^n}(2)]\leq \sigma.$$
  We will choose $\sigma$ at the end of the proof;
but we will ensure $\sigma\leq 1$, so that we may henceforth assume $\ZygNorm{E}{s_0+1}[B_{\R^r\times \C^n}(2)], \ZygNorm{F}{s_0+1}[B_{\R^r\times \C^n}(2)]\leq 1$.  Using this and
the assumption $\ZygNorm{R_2}{s_0+2}[B_{\R^r\times \C^n}(2)]\leq \sigma_0$,  we have, by taking $\sigma_0>0$ sufficiently small (depending only on $n$ and $r$),
\begin{equation*}
\inf_{(t,z)\in B_{\R^r\times \C^n}(2)} \mleft| \det \bigg[ I+ (d_{\zb} \Rb[2](t,z))^{\transpose} +   F(t,z) (d_z \Rb[2](t,z))^{\transpose}\bigg] \mright|\geq \frac{1}{2}.
\end{equation*}
Thus, $I+d_{\zb}\Rb[2]^{\transpose} + F d_z\Rb[2]^{\transpose}$ is invertible on $B_{\R^r\times \C^n}(2)$ and
\cref{Rmk::FuncSpaceRev::InverseMatrix} implies
\begin{equation}\label{Eqn::NirenbergPerterb::BoundInverse}
\BZygNorm{\mleft(I+d_{\zb}\Rb[2]^{\transpose} + F d_z\Rb[2]^{\transpose}\mright)^{-1}}{s+1}[B_{\R^r\times \C^n}(3/2)][\M^{n\times n}]\lesssim_{s} 1.
\end{equation}
We define a matrix 
$M(u,w):H(B_{\R^r\times \C^n}(2))\rightarrow \M^{(r+n)\times (r+n)}(\C)$
by
\begin{equation*}
I+M\circ H :=
{\mleft[\def\arraystretch{1.5}
\begin{array}{c | c}
I_{r\times r} &  -\mleft( d_t \Rb[2]^{\transpose} +E d_z\Rb[2]^{\transpose}\mright)\mleft( I+d_{\zb} \Rb[2]^{\transpose} +F d_z \Rb[2]^{\transpose} \mright)^{-1}\\
\hline
0_{n\times r} & \mleft( I+ d_{\zb} \Rb[2]^{\transpose}+ Fd_z \Rb[2]^{\transpose}  \mright)^{-1}
\end{array}
\mright],}
\end{equation*}
where each part of the above equation is evaluated at $(t,z)$ and we are using notation like $d_t\Rb[2]^{\transpose}$ to mean $(d_t \Rb[2](t,z))^{\transpose}$.
In particular, since $B_{\R^r\times \C^n}(1)\subseteq H(B_{\R^r\times \C^n}(2))$ (by \cref{Item::NirenbergPlus::BigImage} which we have already verified), $M$ is defined on $B_{\R^r\times \C^n}(1)$.

By \cref{Eqn::NirenbergPerterb::BoundInverse} and \cref{Prop::FuncSpaceRev::Algebra}, we have
$\ZygNorm{I+M\circ H}{s+1}[B_{\R^r\times \C^n}(3/2)]\lesssim_s 1$.  Combining this with \cref{Eqn::NirenbergPerterb::BoundHinv}, \cref{Lemma::FuncSpaceRev::Composition} shows
\begin{equation*}
\ZygNorm{M}{s+1}[B_{\R^r\times \C^n}(1)]\lesssim_s 1, \quad \forall s>0.
\end{equation*}
Also, since $d R_2(0)=0$ and $H(0)=0$, we have $M(0)=0$.  Set
\begin{equation}\label{Eqn::NirenberPerterb::DefineXtLt}
\begin{bmatrix}
\Xt \\ \Lt
\end{bmatrix}:=(I+M)
\begin{bmatrix} V\\W\end{bmatrix}.
\end{equation}
Note that
\begin{equation}\label{Eqn::NirenbergPerterb::FormXtLt}
\Xt = \diff{u} + B\diff{w}, \quad \Lt=\diff{\wb}+D\diff{w},
\end{equation}
where $B$ and $D$ depend on $E$, $F$, and $R_2$, and (in what follows each function is evaluated at $(t,z)$ unless otherwise mentioned):
\begin{equation}\label{Eqn::NirenberPerterb::FormB}
\begin{split}
&B(u,w) = B[E,F,R_2](u,w)
\\&=
\mleft( d_t R_2^{\transpose} + E (I+d_z R_2^{\transpose}) \mright) - \mleft( d_t \Rb[2]^{\transpose} + E d_z \Rb[2]^{\transpose}   \mright)\mleft( I+ d_{\zb} \Rb[2]^{\transpose} + F d_z \Rb[2]^{\transpose} \mright)^{-1}
\mleft( d_{\zb} R_2^{\transpose} + F(I+d_z R_2^{\transpose}) \mright)\bigg|_{(t,z)=H^{-1}(u,w)},
\end{split}
\end{equation}
\begin{equation}\label{Eqn::NirenberPerterb::FormD}
\begin{split}
&D(u,w)=D[E,F,R_2](u,w)=
\mleft( I+ d_{\zb} \Rb[2]^{\transpose}+F d_z\Rb[2]^{\transpose} \mright)^{-1}
\mleft( d_{\zb} R_2^{\transpose} + F(I+d_z R_2^{\transpose}) \mright)\bigg|_{(t,z)=H^{-1}(u,w)}.
\end{split}
\end{equation}
Note that since $E(0)=0$, $F(0)=0$,  $dR_2(0)=0$, and $H^{-1}(0)=0$, we have $B(0)=0$ and $D(0)=0$.
Let $\sigma_1=\sigma_1(n,r,s_0,\gamma)\in (0,1]$ be a small constant to be chosen later.
At the end of the proof, we will take $\sigma\leq \sigma_1$ so we may assume $\ZygNorm{E}{s_0+1}[B_{\R^r\times \C^n}(2)], \ZygNorm{F}{s_0+1}[B_{\R^r\times \C^n}(2)]\leq \sigma_1$.
We have, using \cref{Eqn::NirenbergPerterb::BoundHinv}, \cref{Eqn::NirenbergPerterb::BoundInverse}, \cref{Prop::FuncSpaceRev::Algebra}, and \cref{Lemma::FuncSpaceRev::Composition},
\begin{equation*}
\ZygNorm{D}{s_0+1}[B_{\R^r\times \C^n}(1)] \lesssim_{\Zygad{s_0}} \ZygNorm{d_{\zb} R_2^{\transpose} + F(I+d_z R_2^{\transpose}) }{s_0+1}[B_{\R^r\times \C^n}(2)] \lesssim_{\Zygad{s_0}} \sigma_0+\sigma_1,
\end{equation*}
and
\begin{equation*}
\ZygNorm{D}{s+1}[B_{\R^r\times \C^n}(1)] \lesssim_{s} \ZygNorm{d_{\zb} R_2^{\transpose} + F(I+d_z R_2^{\transpose}) }{s+1}[B_{\R^r\times \C^n}(3/2)] \lesssim_{s} 1,\quad \forall s>0.
\end{equation*}
Similarly, we have
\begin{equation*}
\ZygNorm{B}{s_0+1}[B_{\R^r\times \C^n}(1)] \lesssim_{\Zygad{s_0}} \sigma_0+\sigma_1, \quad 
\ZygNorm{B}{s+1}[B_{\R^r\times \C^n}(1)] \lesssim_s 1, \quad \forall s>0.
\end{equation*}
In particular, if we take $\sigma_0$ and $\sigma_1$ sufficiently small (depending only on $n$, $r$, $s_0$, and $\gamma$), we have
\begin{equation*}
\ZygNorm{B}{s_0+1}[B_{\R^r\times \C^n}(1)], \ZygNorm{D}{s_0+1}[B_{\R^r\times \C^n}(1)] \leq \gamma.
\end{equation*}

Next we claim that $\Xt_1,\ldots, \Xt_r,\Lt_1,\ldots, \Lt_n$ commute.  We are given that $X_1,\ldots, X_r,L_1,\ldots, L_n$ commute (see \cref{Rmk::NirenbergPerterm::Commute}), and it follows that
$V_1,\ldots, V_r, W_1,\ldots, W_n$ commute.  Since $I+M(u,w)$ is clearly an invertible matrix by its definition, \cref{Eqn::NirenberPerterb::DefineXtLt} shows $\forall (u,w)\in H(B_{\R^r\times \C^n}(1))$,
$\forall j,k, j_1, j_2, k_1,k_2$,
\begin{equation*}
\begin{split}
&[\Xt_{k_1}, \Xt_{k_2}](u,w), [\Lt_{j_1}, \Lt_{j_2}](u,w), [\Xt_k, \Lt_j](u,w)
\in \Span_\C\{ V_1(u,w),\ldots, V_r(u,w),W_1(u,w),\ldots, W_n(u,w)\} 
\\&= \Span_{\C}\{ \Xt_1(u,w),\ldots, \Xt_r(u,w), \Lt_1(u,w),\ldots, \Lt_n(u,w)\}.
\end{split}
\end{equation*}
Because of the form of $\Xt$ and $\Lt$ given in \cref{Eqn::NirenbergPerterb::FormXtLt} this implies $\Xt_1,\ldots, \Xt_r, \Lt_1,\ldots, \Lt_n$ commute (just as in \cref{Rmk::NirenbergPerterm::Commute}).

So far we have shown that if we have $R_2$ as above with $R_2(0)=0$, $d R_2(0)=0$,  $\ZygNorm{R_2}{s_0+2}[B_{\R^r\times \C^n}(2)]\leq \sigma_0$,
 and have $\ZygNorm{R_2}{s+2}[B_{\R^r\times \C^n}(3/2)]\lesssim_{\Zygad{s}} 1$, then all of the conclusions of the lemma hold,
except possibly for \cref{Eqn::NirenbergPerterb::NewEqn}.  Thus all that remains to show is that we can pick such an $R_2$ so that \cref{Eqn::NirenbergPerterb::NewEqn} holds
(provided $\sigma$ is small enough).  To do this we use \cref{Prop::AppendExist::MainProp}.

Given $E$, $F$, and $R_2$, we define $B=B[E,F,R_2]$ and $D=D[E,F,R_2]$ by \cref{Eqn::NirenberPerterb::FormB} and \cref{Eqn::NirenberPerterb::FormD}.
We let $B_{k,l}$ denote the $(k,l)$ component of the matrix $B$, and similarly for $D$.
For $(t,z)\in B_{\R^r\times \C^n}(2)$ and $1\leq m\leq n$,
\begin{equation*}
\Psi_{m}(E,F,R_2)(t,z):=\sum_{k=1}^r \frac{\partial B_{k,m}(u,w)}{\partial u_k} + \sum_{j=1}^n \frac{\partial D_{j,m}(u,w)}{\partial w_j}\bigg|_{(u,w)=H(t,z)}.
\end{equation*}
Set $\Psi(E,F,R_2):=(\Psi_{1}(E,F,R_2),\ldots, \Psi_{n}(E,F,R_2))$.
Note that \cref{Eqn::NirenbergPerterb::NewEqn} follows from $\Psi(E,F,R_2)=0$, so our goal is to solve for $R_2$ (in terms of $E$ and $F$)
so that $\Psi(E,F,R_2)=0$.

Letting $R(t,z) = (0, R_2(t,z))$, 
for any function $K(t,x)$ we have
\begin{equation*}
\diff{u_k} K(H^{-1}(u,w))\bigg|_{(u,w)=H(t,z)} = d K(t,z) (I + d R(t,z))^{-1} e_k,
\end{equation*}
where $e_k$ is the $k$th standard basis element--what is important is that the right hand side is a function of $dK(t,z)$ and $dR_2(t,z)$.  Similar comments hold
for $\diff{y_l}K(H^{-1}(s,w))$ where $w_j=y_{j}+iy_{j+n}$.
Thus, using the formulas for $B$ and $D$ in  \cref{Eqn::NirenberPerterb::FormB} and \cref{Eqn::NirenberPerterb::FormD}, using the notation of \cref{Prop::AppendExist::MainProp}, and writing $z_j=x_j+ix_{j+n}$ we see that there is a smooth function $g$, taking values in in $\C^n$,
which vanishes at the origin, such that
\begin{equation*}
\Psi(E,F,R_2)(t,x) = g( \Deriv^1 E(t,x), \Deriv^1 F(t,x), \Deriv^2 R_2(t,x)).
\end{equation*}
Furthermore, the function $g$ depends only on $n$ and $r$.  Also it is easy to see that $g$ is quasi-linear in $R_2$ in the sense of \cref{Eqn::AppendExist::Quasi}.\footnote{It is not necessary for what follows that $g$ be quasi-linear; though the proof of \cref{Prop::AppendExist::MainProp} is simpler in the quasi-linear case.}

To apply \cref{Prop::AppendExist::MainProp}, we wish to show that $g$ is elliptic in $R_2$ at $E=0$, $F=0$, $R_2=0$, in the sense of that proposition.  I.e., define $\sE_2$ as in \cref{Prop::AppendExist::MainProp};
we wish to show $\sE_2$ is elliptic.  Note the map
\begin{equation*}
R_2\mapsto \frac{d}{d\epsilon}\bigg|_{\epsilon=0} \Psi(0,0,\epsilon R_2)
\end{equation*}
is a second order, constant coefficient differential operator acting on $R_2$ whose principal symbol is $\sE_2$.  Thus we wish to show that this operator is elliptic.

To make the dependance of $H$ on $R_2$ explicit, we write $H_{R_2}$ in place of $H$.  I.e., $H_{R_2}(t,z)=(t,z)+(0,R_2(t,z))$.  It suffices to compute
$\frac{d}{d\epsilon}\big|_{\epsilon=0} \Psi(0,0,\epsilon R_2)$ in the case $R_2\in C^\infty$.  In that case, we have
$H_{\epsilon R_2}^{-1}(u,w) = (u,w)-\epsilon (0, R_2(u,w))+O(\epsilon^2)$, and for example,
\begin{equation}\label{Eqn::NirenbergPerterb::DerivError}
\epsilon (d_t R_2)(H_{\epsilon R_2}^{-1}(u,w))= \epsilon (d_t R_2)(u,w)+O(\epsilon^2)\text{ and }\epsilon (d_tR_2)(H_{\epsilon R_2}(t,z)) = \epsilon (d_t R_2)(t,z)+O(\epsilon^2),
\end{equation}
and similarly for $d_t$ replaced by $d_{\zb}$.  Here, $O(\epsilon^{2})$ it denotes a term which is $C^\infty$ in the variables $(t,z)$ or $(u,w)$ and every derivative, of every order $\geq 0$, in these variables is $O(\epsilon^2)$ as $\epsilon\rightarrow 0$.

Thus, using the formulas \cref{Eqn::NirenberPerterb::FormB} and \cref{Eqn::NirenberPerterb::FormD}, we have
\begin{equation}\label{Eqn::NirenbergPererb::AError}
B[0,0,\epsilon R_2](s,w) = \epsilon d_t R_2(s,w)^{\transpose} +O(\epsilon^2),
\quad  D[0,0,\epsilon R_2](s,w) = \epsilon d_{\zb} R_2(s,w)^{\transpose} + O(\epsilon^2).
\end{equation}
We write  $R_2(t,z)=(R_{2,1}(t,z),\ldots, R_{2,n}(t,z))$.
We also write $d_t R_2(s,w)_{l,k}$ for the $(l,k)$ component of the matrix $d_t R_2$, and similarly for $d_{\zb} R_2$ (see the discussion of this notation in \cref{Section::Notation}).
  Using this notation, plugging \cref{Eqn::NirenbergPererb::AError} into the definition of $\Psi$,
and using \cref{Eqn::NirenbergPerterb::DerivError}, we have for $1\leq m\leq n$,
\begin{equation*}
\begin{split}
\Psi_{m}(0,0,\epsilon R_2)(t,z) &= \epsilon \sum_{k=1}^r \diff{u_k} (d_t R_2)(u,w)_{m,k} + \epsilon \sum_{j=1}^n \diff{w_j} (d_{\zb} R_2)(u,w)_{m,j} + O(\epsilon^2)\bigg|_{(u,w)=H_{\epsilon R_2}(t,z)}
\\&=\epsilon\sum_{k=1}^r \frac{\partial^2}{\partial t_k^2} R_{2,m}(t,z) + \epsilon \sum_{j=1}^n \frac{\partial^2}{\partial z_j \partial \zb[j]} R_{2,m}(t,z) + O(\epsilon^2).
\end{split}
\end{equation*}
We conclude
\begin{equation*}
\frac{d}{d\epsilon}\bigg|_{\epsilon=0} \Psi(0,0,\epsilon R_2) = \mleft( \sum_{k=1}^r \frac{\partial^2}{\partial t_k^2}  + \sum_{j=1}^n \frac{\partial^2}{\partial z_j \partial \zb[j]}  \mright)R_2,
\end{equation*}
and is therefore elliptic, as desired.

We apply \cref{Prop::AppendExist::MainProp} with $D=2$, $\eta=3/2$, and
$$N=\{ R_2\in \ZygSpace{s_0+1}[B_{\R^r\times \C^n}(2)][ \C^n] :  \ZygNorm{R_2}{s_0+2}[B_{\R^r\times \C^n}(2)][\C^n]< \sigma_0\}.$$
We conclude that there exists $\sigma_2>0$ (depending only on $n$, $r$, $s_0$, and $\sigma_0$--since $g$ depends only on $n$ and $r$) so that if $\ZygNorm{E}{s_0+1}[B_{\R^r\times \C^n}(2)], \ZygNorm{F}{s_0+1}[B_{\R^r\times \C^n}(2)]\leq \sigma_2$,
then we may find $R_2=R_2(E,F)\in N$ so that $\Psi(E,F,R_2)=0$.  The conclusions of \cref{Prop::AppendExist::MainProp} show that this $R_2$ satisfies $R_2(0)=0$, $dR_2(0)=0$, and
$\ZygNorm{R_2}{s+2}[B_{\R^r\times \C^n}(3/2)]\lesssim_{\Zygad{s}} 1$, $\forall s>0$.
Setting $\sigma:=\min\{\sigma_1, \sigma_2\}$ completes the proof.
\end{proof}


%

%

%

%
%

%


    \subsection{Commuting vector fields}\label{Section::Nirenberg::Commute}
Fix $\eta_0>0$, $s_0\in (0,\infty)\cup \{\omega\}$, and let $X_1,\ldots, X_r,L_1,\ldots, L_n$ be complex $\ZygSpace{s_0+1}$ vector fields on $B_{\R^r\times\C^n}(\eta_0)$ of the form
\begin{equation*}
X=\diff{t}+E\diff{z}, \quad L=\diff{\zb}+F\diff{z},
\end{equation*}
where $E(0)=0$, $F(0)=0$, we are using the matrix notation from \cref{Section::Nirenberg::Additional}, and:
\begin{itemize}
	\item If $s_0\in (0,\infty)$, $E\in \ZygSpace{s_0+1}[B_{\R^r\times \C^n}(\eta_0)][\M^{r\times n}(\C)]$ and $F \in \ZygSpace{s_0+1}[B_{\R^r\times \C^n}(\eta_0)][\M^{n\times n}(\C)]$.
	\item If $s_0=\omega$, $E\in \ASpace{r+2n}{\eta_0}[\M^{r\times n}(\C)]$ and $F\in \ASpace{r+2n}{\eta_0}[\M^{n\times n}(\C)]$.
\end{itemize}
We suppose $\forall \zeta\in B_{\R^r\times \C^n}(\eta_0)$,
\begin{equation*}
[X_{k_1},X_{k_2}](\zeta), [L_{j_1},L_{j_2}](\zeta), [X_k,L_j](\zeta)\in \Span_{\C}\{ X_1(\zeta),\ldots, X_r(\zeta),L_1(\zeta),\ldots, L_n(\zeta)\}, \quad \forall j,k,j_1,j_2,k_1,k_2.
\end{equation*}
As in \cref{Rmk::NirenbergPerterm::Commute}, this is the same as assuming the vector fields commute.

\begin{defn}
If $s_0\in (0,\infty)$, for $s\in [s_0,\infty)$, if we say $C$ is an $\Zygad{s}$-admissible constant, it means that we assume $E,F\in \ZygSpace{s+1}[B_{\R^r\times \C^n}(\eta_0)]$.
$C$ can then be chosen to depend only on $s$, $s_0$, $n$, $r$, $\eta_0$, and upper bounds for $\ZygNorm{E}{s+1}[B_{\R^r\times \C^n}(\eta_0)]$ and
$\ZygNorm{F}{s+1}[B_{\R^r\times \C^n}(\eta_0)]$.  For $s\in (0,s_0)$, we define $\Zygad{s}$-admissible constants to be $\Zygad{s_0}$-admissible constants.
\end{defn}

\begin{defn}
If $s_0=\omega$, we say $C$ is an $\Zygad{\omega}$-admissible constant if $C$ can be chosen to depend only on $n$, $r$, $\eta_0$,
and upper bounds for $\ANorm{E}{r+2n}{\eta_0}$ and $\ANorm{F}{r+2n}{\eta_0}$.
\end{defn}

\begin{prop}\label{Prop::NirenbergCommute::MainProp}
There exist $\Zygad{s_0}$-admissible constants $\eta_3>0$, $K_1\geq 1$ and a map
$\Phi_3:B_{\R^r\times \C^n}(\eta_3)\rightarrow B_{\R^r\times \C^n}(\eta_0)$ such that:
\begin{enumerate}[(i)]
\item \begin{itemize}
\item If $s_0\in (0,\infty)$, $\Phi_3\in \ZygSpace{s_0+2}[B_{\R^r\times \C^n}(\eta_3)][\R^r\times \C^n]$ and $\ZygNorm{\Phi_3}{s+2}[B_{\R^r\times \C^n}(\eta_3)]\lesssim_{\Zygad{s}} 1$, $\forall s>0$.
\item If $s_0=\omega$, $\Phi_3\in \ASpace{r+2n}{2\eta_3}[\R^r\times \C^n]$ and $\ANorm{\Phi_3}{r+2n}{2\eta_3}\leq \eta_0$.
\end{itemize}
\item $\Phi_3(0)=0$ and $d_{t,x} \Phi_3(0)= K_1^{-1} I_{(r+2n)\times (r+2n)}$.
\item $\Phi_3(B_{\R^r\times \C^n}(\eta_3))\subseteq B_{\R^r\times \C^n}(\eta_0)$ is open and $\Phi_3:B_{\R^r\times \C^n}(\eta_3)\rightarrow \Phi_3(B_{\R^r\times \C^n}(\eta_3))$ is a diffeomorphism.
\item \begin{equation*}
\begin{bmatrix}
\diff{u} \\ \diff{\wb}
\end{bmatrix}
=K_1^{-1} (I+\AMatrix_2)
\begin{bmatrix}
\Phi_3^{*} X\\
\Phi_3^{*} L
\end{bmatrix},
\end{equation*}
where $\AMatrix_2:B_{\R^r\times \C^n}(\eta_3)\rightarrow \M^{(r+n)\times (r+n)}(\C)$, $\AMatrix_2(0)=0$, and:
\begin{itemize}
	\item If $s_0\in (0,\infty)$, $\ZygNorm{\AMatrix_2}{s+1}[B_{\R^r\times \C^n}(\eta_3)]\lesssim_{\Zygad{s}} 1$, $\forall s>0$.
	\item If $s_0=\omega$, $\ANorm{\AMatrix_2}{2n+r}{\eta_3}\lesssim 1$.
\end{itemize}
\end{enumerate}
\end{prop}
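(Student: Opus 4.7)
\medskip

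The strategy is to reduce the present proposition to the special cases already established in this section, via a scaling argument that parallels the reduction from \cref{Prop::Nirenberg::MainProp} to \cref{Thm::Nirenberg::MainThm} carried out at the start of the section. For $s_0\in(0,\infty)$ I would reduce to \cref{Prop::NirenbergPertrub::MainProp}, and for $s_0=\omega$ to \cref{Prop::NirenbergRA::MainProp}. Throughout, let $\Psi_\gamma(t,z):=\gamma(t,z)$ for a small admissible constant $\gamma>0$ to be chosen.

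\medskip

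Consider first $s_0\in(0,\infty)$. Set
\begin{equation*}
\tilde X_k := \gamma\,\Psi_\gamma^{*}X_k = \diff{t_k}+E_k(\gamma t,\gamma z)\diff{z},\qquad
\tilde L_j := \gamma\,\Psi_\gamma^{*}L_j = \diff{\zb[j]}+F_j(\gamma t,\gamma z)\diff{z}.
\end{equation*}
Because $E(0)=F(0)=0$, \cref{Lemma::FuncSpaceRev::Scale}~\cref{Item::FuncSpaceRev::Scaling::Zygmund} shows that for $\gamma\leq\min\{\eta_0/2,1\}$ the new coefficients lie in $\ZygSpace{s_0+1}[B_{\R^r\times\C^n}(2)]$ with norms bounded by $\gamma$ times $\Zygad{s_0}$-admissible constants. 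Scalar multiplication and pullback preserve brackets, so the $\tilde X_k,\tilde L_j$ still satisfy the integrability hypothesis, hence commute by \cref{Rmk::NirenbergPerterm::Commute}. Choosing $\gamma$ a sufficiently small $\Zygad{s_0}$-admissible constant forces these norms below the threshold $\sigma=\sigma(n,r,s_0)$ of \cref{Prop::NirenbergPertrub::MainProp}, which then supplies an admissible $\eta_3>0$, a map $\Phi_2\colon B_{\R^r\times\C^n}(\eta_3)\to B_{\R^r\times\C^n}(2)$ and a matrix $\AMatrix'$ satisfying the conclusions there. Now set $\Phi_3:=\Psi_\gamma\circ\Phi_2$; then $\Phi_3(t,z)=\gamma\Phi_2(t,z)$, so $\Phi_3(0)=0$, $d_{(t,x)}\Phi_3(0)=\gamma I$, and $\Phi_3$ is a $\ZygSpace{s_0+2}$ diffeomorphism onto its image in $B_{\R^r\times\C^n}(2\gamma)\subseteq B_{\R^r\times\C^n}(\eta_0)$, with Zygmund bounds inherited from $\Phi_2$. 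Since $\Psi_\gamma^{*}X=\gamma^{-1}\tilde X$, one has $\Phi_2^{*}\tilde X=\gamma\Phi_3^{*}X$ (and similarly for $L$), so the matrix identity of \cref{Prop::NirenbergPertrub::MainProp} reads
\begin{equation*}
\begin{bmatrix}\diff{u}\\ \diff{\wb}\end{bmatrix}
=(I+\AMatrix')\begin{bmatrix}\Phi_2^{*}\tilde X\\ \Phi_2^{*}\tilde L\end{bmatrix}
=\gamma\,(I+\AMatrix')\begin{bmatrix}\Phi_3^{*}X\\ \Phi_3^{*}L\end{bmatrix},
\end{equation*}
and the result follows with $K_1:=\gamma^{-1}$, $\AMatrix_2:=\AMatrix'$.

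\medskip

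For $s_0=\omega$, \cref{Prop::NirenbergRA::MainProp} applies directly to $X,L$ (with the matrices labeled $A,C,E,F$ in that proposition taken to be zero and $B,D$ taken to be our $E,F$), since \cref{Lemma::FuncSpaceRevAnal::AInB} gives $\ASpace{r+2n}{\eta_0}\subseteq \sBSpace{r+2n}{r+2n}{\eta_0}$ with comparable norms. This yields a real analytic $\Phi_1$ and matrix $\AMatrix_1$ on some admissible $B_{\R^r\times\C^n}(\eta_2)$, with $\Phi_1(0)=\AMatrix_1(0)=0$ and $d_{(t,x)}\Phi_1(0)=I$. Define $\Phi_3:=\Phi_1\circ\Psi_\gamma$; then $d_{(t,x)}\Phi_3(0)=\gamma I$. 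Using \cref{Lemma::FuncSpaceReAnal::AinB} to pass from $\sBSpace{\cdot}{\cdot}{\cdot}$ at radius $\eta_2$ to $\ASpace{\cdot}{\cdot}$ at a smaller radius, followed by \cref{Lemma::FuncSpaceRev::Scale}~\cref{Item::FuncSpaceRev::Scaling::Anal} applied to $\Phi_1$ and $\AMatrix_1$ (both vanishing at the origin), yields for sufficiently small admissible $\gamma$ and $\eta_3$ that $\Phi_3\in\ASpace{r+2n}{2\eta_3}$ with $\ANorm{\Phi_3}{r+2n}{2\eta_3}\leq\eta_0$, and that $\AMatrix_2:=\AMatrix_1\circ\Psi_\gamma$ satisfies $\AMatrix_2(0)=0$ and $\ANorm{\AMatrix_2}{r+2n}{\eta_3}\lesssim_{\Zygad{\omega}}1$. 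Since $\Psi_\gamma^{*}\diff{u}=\gamma^{-1}\diff{u}$, pulling back the identity of \cref{Prop::NirenbergRA::MainProp} by $\Psi_\gamma$ rearranges to $\bigl[\diff{u},\diff{\wb}\bigr]^{\transpose}=\gamma(I+\AMatrix_2)\bigl[\Phi_3^{*}X,\Phi_3^{*}L\bigr]^{\transpose}$, so $K_1:=\gamma^{-1}$ works.

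\medskip

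The hard part is not conceptual—the scaling reduction mirrors one already carried out in the section—but the bookkeeping. In particular one must verify that every constant admissible in the sense of \cref{Prop::NirenbergPertrub::MainProp} or \cref{Prop::NirenbergRA::MainProp} remains admissible in the sense of the present section (cf.\ \cref{Rmk::Nirenberg::AdmissibleOkay}), and, in the analytic case, one must navigate between the $\ASpace{\cdot}{\cdot}$ and $\sBSpace{\cdot}{\cdot}{\cdot}$ scales via \cref{Lemma::FuncSpaceRevAnal::AInB,Lemma::FuncSpaceReAnal::AinB} without forfeiting the $\Zygad{\omega}$-admissible bounds.
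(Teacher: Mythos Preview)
Your argument is correct and, for $s_0\in(0,\infty)$, essentially identical to the paper's: scale by $\Psi_\gamma$ to push the coefficients below the threshold $\sigma$ of \cref{Prop::NirenbergPertrub::MainProp}, apply that proposition, and set $\Phi_3=\Psi_\gamma\circ\Phi_2$, $K_1=\gamma^{-1}$.

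For $s_0=\omega$ your route works but is more elaborate than the paper's. Since \cref{Prop::NirenbergRA::MainProp} has no smallness hypothesis on the coefficients, the paper applies it directly, sets $\Phi_3:=\Phi_1$, $\AMatrix_2:=\AMatrix_1$, $K_1:=1$, and $\eta_3:=\eta_2/4$; the passage from $\sBSpace{\cdot}{\cdot}{\eta_2}$ to $\ASpace{\cdot}{\eta_2/2}$ via \cref{Lemma::FuncSpaceReAnal::AinB} already gives the required $\ASpace{r+2n}{2\eta_3}$ bounds with no post-composition by $\Psi_\gamma$. Your extra scaling step is harmless (and arguably makes the inequality $\ANorm{\Phi_3}{r+2n}{2\eta_3}\leq\eta_0$ cleaner to justify), but it is not needed; the paper's remark after the proposition even records that one may take $K_1=1$ in the analytic case.
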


\begin{rmk}
If $s_0\in (0,\infty)$ we will show $\eta_3$ depends only on $n$, $r$, and $s_0$.  For $s_0=\omega$, we will take $K_1=1$.  This is not important
in the sequel, however.
\end{rmk}

\begin{proof}[Proof of \cref{Prop::NirenbergCommute::MainProp} when $s_0=\omega$]
Since $E\in \ASpace{r+2n}{\eta_0}[\M^{r\times n}]\subseteq \sBSpace{r+2n}{\M^{r\times n}}{\eta_0}$, $F\in \ASpace{r+2n}{\eta_0}[\M^{n\times n}]\subseteq \sBSpace{r+2n}{\M^{n\times n}}{\eta_0}$,
and
\begin{equation*}
\sBNorm{E}{r+2n}{\M^{r\times n}}{\eta_0} \leq \ANorm{E}{r+2n}{\eta_0}[\M^{r\times n}]\lesssim_{\Zygad{\omega}} 1\text{ and }\sBNorm{F}{r+2n}{\M^{n\times n}}{\eta_0}\leq \ANorm{F}{r+2n}{\eta_0}[\M^{n\times n}]\lesssim_{\Zygad{\omega}} 1,
\end{equation*}
we see that \cref{Prop::NirenbergRA::MainProp} applies to the vector fields $X_1,\ldots, X_r,L_1,\ldots, L_n$ and every constant which is admissible in the sense of that
proposition is $\Zygad{\omega}$-admissible here.

Thus, we obtain an $\Zygad{\omega}$-admissible constant $\eta_2>0$ and a map $\Phi_1:B_{\R^r\times \C^n}(\eta_2)\rightarrow B_{\R^r\times \C^n}(\eta_0)$
as in that proposition.  Letting $\AMatrix_1$ be the matrix from that proposition, and setting $\eta_3:=\eta_2/4$, we have (using \cref{Lemma::FuncSpaceReAnal::AinB})
\begin{equation*}
\ANorm{\Phi_1}{r+2n}{2\eta_3}\lesssim_{\Zygad{\omega}} \sBNorm{\Phi_1}{r+2n}{r+2n}{\eta_2}\lesssim_{\Zygad{\omega}} 1, \quad
\ANorm{\AMatrix_1}{r+2n}{\eta_3}\lesssim_{\Zygad{\omega}} \sBNorm{\AMatrix_1}{r+2n}{\M^{(n+r)\times(n+r)}}{\eta_2}\leq 1.
\end{equation*}

Taking $\Phi_3:=\Phi_1$, $\AMatrix_2:=\AMatrix_1$, and $K_1:=1$, all of the conclusions of \cref{Prop::NirenbergCommute::MainProp} now follow from the corresponding
conclusions in \cref{Prop::NirenbergRA::MainProp}.
\end{proof}

We now turn to the proof of \cref{Prop::NirenbergCommute::MainProp} when $s_0\in (0,\infty)$.  Because of the definition of $\Zygad{s}$-admissible constants,
it suffices to prove the result just for $s\in [s_0,\infty)$, and that is how we will proceed.  We begin with a lemma.

\begin{lemma}\label{Lemma::NirenbergCommute::MainLemma}
Define for $\gamma>0$, $\Psi_\gamma:B_{\R^r\times \C^n}(\eta_0/\gamma)\rightarrow B_{\R^r\times \C^n}(\eta_0)$ by
$\Psi_\gamma(t,z) = (\gamma t, \gamma z)$.  Let $X_k^{\gamma}:=\gamma \Psi_{\gamma}^{*}  X_k$ and $L_j^{\gamma}:=\gamma \Psi_{\gamma}^{*} L_j$.
Then,
\begin{equation}\label{Eqn::NirenbergCommute::XgammaForm}
X^{\gamma}= \diff{t} + E_\gamma \diff{z},\quad L^{\gamma} = \diff{\zb}+F_{\gamma} \diff{z},
\end{equation}
where $E_\gamma(0)=0$, $F_\gamma(0)=0$, and for $0<\gamma\leq \min\{\eta_0/2,1\}$, $s\in [s_0,\infty)$,
\begin{equation}\label{Eqn::NirenbergCommute::EstEgamma}
\ZygNorm{E_\gamma}{s+1}[B_{\R^r\times \C^n}(2)][\M^{r\times n}], \ZygNorm{F_\gamma}{s+1}[B_{\R^r\times \C^n}(2)][\M^{n\times n}]\lesssim_{\Zygad{s}} \gamma.
\end{equation}
Finally, $X_1^{\gamma},\ldots, X_r^{\gamma},L_1^{\gamma},\ldots, L_n^{\gamma}$ commute.
\end{lemma}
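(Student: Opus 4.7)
The plan is to simply compute the pull-backs explicitly, then invoke the scaling lemma (\cref{Lemma::FuncSpaceRev::Scale} \cref{Item::FuncSpaceRev::Scaling::Zygmund}) from \cref{Section::FuncRev::RealAnal} to obtain the quantitative bound on $E_\gamma$ and $F_\gamma$.

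First, since $\Psi_\gamma(t,z)=(\gamma t,\gamma z)$ is linear with $d\Psi_\gamma=\gamma I$, we have $\Psi_\gamma^{*}\diff{t_k}=\gamma^{-1}\diff{t_k}$ and similarly for $\diff{z_j}$ and $\diff{\zb[j]}$. A direct computation therefore yields
\begin{equation*}
X_k^{\gamma}(t,z)=\gamma \Psi_\gamma^{*}\mleft(\diff{t_k}+E_k\diff{z}\mright)=\diff{t_k}+E_k(\gamma t,\gamma z)\diff{z},
\end{equation*}
and the analogous formula for $L_j^{\gamma}$, which establishes \cref{Eqn::NirenbergCommute::XgammaForm} with $E_\gamma(t,z)=E(\gamma t,\gamma z)$ and $F_\gamma(t,z)=F(\gamma t,\gamma z)$. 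Since $E(0)=0$ and $F(0)=0$, we also get $E_\gamma(0)=0$ and $F_\gamma(0)=0$.

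Next, for $0<\gamma\leq\min\{\eta_0/2,1\}$, the scaled ball $\Psi_\gamma(B_{\R^r\times\C^n}(2))\subseteq B_{\R^r\times\C^n}(\eta_0)$, so $E_\gamma$ and $F_\gamma$ are well-defined on $B_{\R^r\times\C^n}(2)$. For each $s\in[s_0,\infty)$, write $s+1=m+a$ with $m\geq 1$ and $a\in(0,1]$, and apply \cref{Lemma::FuncSpaceRev::Scale} \cref{Item::FuncSpaceRev::Scaling::Zygmund} with $\eta_1=\eta_0$ and $D=2$ componentwise to $E$ and $F$; since both vanish at $0$, this gives
\begin{equation*}
\ZygNorm{E_\gamma}{s+1}[B_{\R^r\times\C^n}(2)]\lesssim \gamma \ZygNorm{E}{s+1}[B_{\R^r\times\C^n}(\eta_0)]\lesssim_{\Zygad{s}}\gamma,
\end{equation*}
with the same bound for $F_\gamma$, yielding \cref{Eqn::NirenbergCommute::EstEgamma}.

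Finally, the commutativity of $X_1^{\gamma},\ldots,X_r^{\gamma},L_1^{\gamma},\ldots,L_n^{\gamma}$ follows because pull-back by a diffeomorphism preserves the Lie bracket, and constant multiples of commuting vector fields commute: explicitly, $[\gamma \Psi_\gamma^{*}X_{k_1},\gamma\Psi_\gamma^{*}X_{k_2}]=\gamma^2\Psi_\gamma^{*}[X_{k_1},X_{k_2}]=0$, and similarly for the other brackets. None of these steps presents a real obstacle; the lemma is purely a bookkeeping step that converts the standing commutation hypothesis into the small-perturbation regime required to invoke \cref{Prop::NirenbergPertrub::MainProp}.
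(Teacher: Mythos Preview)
Your proof is correct and follows essentially the same approach as the paper: identify $E_\gamma(t,z)=E(\gamma t,\gamma z)$ and $F_\gamma(t,z)=F(\gamma t,\gamma z)$ by direct computation of the pull-back, then apply \cref{Lemma::FuncSpaceRev::Scale}~\cref{Item::FuncSpaceRev::Scaling::Zygmund} using the vanishing at the origin, and observe commutativity is preserved under pull-back and scalar multiplication. The paper's version is slightly terser but otherwise identical.
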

\begin{proof}
That $X_1^{\gamma},\ldots, X_r^{\gamma},L_1^{\gamma},\ldots, L_n^{\gamma}$ commute follows immediately from the same property of $X_1,\ldots, X_r, L_1,\ldots, L_n$.
Note that \cref{Eqn::NirenbergCommute::XgammaForm} holds with $E_\gamma(t,z) = E(\gamma t, \gamma z)$ and $F_\gamma(t,z)=F(\gamma t, \gamma z)$.
Thus, since $E(0)=0$ and $F(0)=0$, the same is true for $E_\gamma$ and $F_\gamma$, and we have for $0<\gamma\leq \min\{\eta_0/2,1\}$, using \cref{Lemma::FuncSpaceRev::Scale},
\begin{equation*}
\ZygNorm{E_{\gamma}}{s+1}[B_{\R^r\times \C^n}(2)]\leq 46 \gamma \ZygNorm{E}{s+1}[B_{\R^r\times \C^n}(\eta_0)]\lesssim_{\Zygad{s}} \gamma,
\end{equation*}
and similarly for $F_\gamma$.  This completes the proof.
\end{proof}

\begin{proof}[Proof of \cref{Prop::NirenbergCommute::MainProp} when $s_0\in (0,\infty)$]
Let $\sigma=\sigma(n,r,s_0)>0$ be the constant from \cref{Prop::NirenbergPertrub::MainProp}.
For $\gamma\leq \eta_0/2$, define $\Psi_\gamma$, $X^\gamma$, $L^\gamma$, $E_{\gamma}$, and $F_{\gamma}$ as in \cref{Lemma::NirenbergCommute::MainLemma}.
By \cref{Eqn::NirenbergCommute::EstEgamma}, if $\gamma\in (0,\eta_0/2]$ is a sufficiently small $\Zygad{s_0}$-admissible constant (without loss of generality, $\gamma\leq 1$), we
have
\begin{equation*}
\ZygNorm{E_\gamma}{s_0+1}[B_{\R^r\times \C^n}(2)], \ZygNorm{F_\gamma}{s_0+1}[B_{\R^r\times \C^n}(2)]\leq \sigma.
\end{equation*}
With this choice of $\gamma$,  \cref{Prop::NirenbergPertrub::MainProp} applies to the vector fields $X^\gamma$ and $L^\gamma$
to yield a constant $\eta_3=\eta_3(n,r,s_0)>0$ and a map $\Phi_2:B_{\R^r\times \C^n}(\eta_3)\rightarrow B_{\R^r\times \C^n}(2)$ as in that result,
and any constant which is $\Zygad{s}$-admissible in that proposition is $\Zygad{s}$-admissible in the sense of this section.
Set $\Phi_3:=\Psi_\gamma\circ \Phi_2:B_{\R^r\times \C^n}(\eta_3)\rightarrow B_{\R^r\times \C^n}(\eta_0)$.
We take $K_1:=\gamma^{-1}\geq 1$.
Since $\gamma$ is $\Zygad{s_0}$-admissible and $\ZygNorm{\Phi_2}{s+2}[B_{\R^r\times \C^n}(\eta_3)]\lesssim_{\Zygad{s}} 1$, $\forall s$ (by \cref{Prop::NirenbergPertrub::MainProp}),
we have $\ZygNorm{\Phi_3}{s+2}[B_{\R^r\times \C^n}(\eta_3)]\lesssim_{\Zygad{s}} 1$, $\forall s$.  Also, $\Phi_3(0)=\Psi_\gamma(\Phi_2(0))=\Psi_\gamma(0)=0$,
and $d_{t,x} \Phi_3(0) = \gamma d_{t,x} \Phi_2(0)= K_1^{-1} I_{(2n+r)\times (2n+r)}$.
That $\Phi_3$ is a diffeomorphism onto its image follows from the corresponding result about $\Phi_2$ in \cref{Prop::NirenbergPertrub::MainProp}.
Finally, if $\AMatrix_2$ is as in \cref{Prop::NirenbergPertrub::MainProp}, we have,
\begin{equation*}
\begin{bmatrix}
\diff{u} \\ \diff{\wb}
\end{bmatrix}
=(I+\AMatrix_2)
\begin{bmatrix}
\Phi_2^{*} X^{\gamma}\\
\Phi_2^{*} L^{\gamma}
\end{bmatrix}
=(I+\AMatrix_2) K_1^{-1} \begin{bmatrix}
\Phi_2^{*} \Psi_\gamma^{*} X\\
\Phi_2^{*} \Psi_\gamma^{*} L
\end{bmatrix}
=(I+\AMatrix_2) K_1^{-1} \begin{bmatrix}
\Phi_3^{*} X\\
\Phi_3^{*}  L
\end{bmatrix}.
\end{equation*}
All of the desired estimates for $\AMatrix_2$ are stated in \cref{Prop::NirenbergPertrub::MainProp} and this completes the proof.
\end{proof}

    \subsection{Proof of \texorpdfstring{\cref{Prop::Nirenberg::MainProp}}{the main proposition}}\label{Section::Nirenberg::Final}
Using the matrix notation of \cref{Section::Nirenberg::Additional} we may write
\begin{equation*}
X=\diff{t} + B_1 \diff{t}+B_2\diff{z}+B_3\diff{\zb},\quad L=\diff{\zb}+B_4\diff{t}+B_5\diff{z}+B_6\diff{\zb},
\end{equation*}
where each $B_l$ takes values in matrices of an appropriate size, $B_l(0)=0$ for each $l$, and
\begin{itemize}
	\item If $s_0\in (0,\infty)$, $\ZygNorm{B_l}{s+1}[B_{\R^r\times \C^n}(1)]\lesssim_{\Zygad{s}} 1$, $\forall s>0$.
	\item If $s_0=\omega$, $\ANorm{B_l}{r+2n}{1}\lesssim_{\Zygad{\omega}} 1$.
\end{itemize}
Define $M$ to be the $(r+n)\times (r+n)$ matrix:
\begin{equation*}
M:=\mleft[
\begin{array}{c|c}
B_1 & B_3 \\
\hline
B_4 & B_6
\end{array}
\mright].
\end{equation*}
We have
\begin{equation*}
\begin{bmatrix}
X \\ L
\end{bmatrix}
=
(I+M)
\begin{bmatrix}
\diff{t} \\ \diff{\zb}
\end{bmatrix}
+\begin{bmatrix}
B_2 \\ B_5
\end{bmatrix}
\diff{z},
\end{equation*}
and $M(0)=0$.
\begin{itemize}
\item If $s_0\in (0,\infty)$, we have
$\ZygNorm{M}{s_0+1}[B_{\R^r\times \C^n}(1)] \lesssim_{\Zygad{s_0}} 1$.
Thus,
by taking $\eta_0>0$ to be a sufficiently small $\Zygad{s_0}$-admissible constant and using that $M(0)=0$,
we have
\begin{equation*}
\inf_{\zeta\in B_{\R^r\times \C^n}(\eta_0)} | \det (I+M(\zeta))|\geq \frac{1}{2}.
\end{equation*}
\Cref{Rmk::FuncSpaceRev::InverseMatrix} shows that
$\ZygNorm{(I+M)^{-1}}{s+1}[B_{\R^r\times \C^n}(\eta_0)]\lesssim_{\Zygad{s}} 1$.
\item If $s_0=\omega$, we have
$\ANorm{M}{2n+r}{1}\lesssim_{\Zygad{\omega}} 1$.
Since $M(0)=0$, \cref{Lemma::FuncSpaceReAnal::Restrict} implies
$\ANorm{M}{2n+r}{\eta_0}
\lesssim_{\Zygad{\omega}} \eta_0$, for $\eta_0\in (0,1]$.
Thus, by taking $\eta_0>0$ to be a sufficiently
small $\Zygad{\omega}$-admissible constant we have 
\begin{equation*}
\ANorm{M}{2n+r}{\eta_0}[\M^{(r+n)\times (r+n)}]
\leq \frac{1}{2}.
\end{equation*}
Since $\ASpace{2n+r}{\eta_0}[\M^{(r+n)\times (r+n}]$
is a Banach algebra
(\cref{Prop::FuncSpaceRev::Algebra}) it follows
that
$\ANorm{(I+M)^{-1}}{2n+r}{\eta_0}[\M^{(r+n)\times (r+n}]\leq 2$; here we have used the Neumann series for $(1+M)^{-1}$.
\end{itemize}
In either case we have an $\Zygad{s_0}$-admissible constant $\eta_0>0$ so that $(I+M)^{-1}$
satisfies
good estimates on $B_{\R^r\times \C^n}(\eta_0)$.

Define vector fields $\Xh_1,\ldots, \Xh_r,\Lh_1,\ldots, \Lh_n$ on $B_{\R^r\times \C^n}(\eta_0)$ by
\begin{equation*}
\begin{bmatrix} \Xh \\ \Lh\end{bmatrix}
=(I+M)^{-1} \begin{bmatrix} X \\L\end{bmatrix}
=\begin{bmatrix} \diff{t} \\ \diff{\zb} \end{bmatrix} + (I+M)^{-1} \begin{bmatrix} B_2 \\ B_5 \end{bmatrix}\diff{z}.
\end{equation*}
Thus, we have
\begin{equation*}
\Xh = \diff{t} +\Eh \diff{z}, \quad \diff{\zb}+\Fh\diff{z},
\end{equation*}
where $\Eh(0)=0$, $\Fh(0)=0$ and using \cref{Prop::FuncSpaceRev::Algebra} and the bounds for $(I+M)^{-1}$,
\begin{itemize}
\item If $s_0\in (0,\infty)$, $\ZygNorm{\Eh}{s+1}[B_{\R^r\times \C^n}(\eta_0)],\ZygNorm{\Fh}{s+1}[B_{\R^r\times \C^n}(\eta_0)]\lesssim_{\Zygad{s}} 1$, $\forall s>0$.
\item If $s_0=\omega$, $\ANorm{\Eh}{2n+r}{\eta_0}, \ANorm{\Fh}{2n+r}{\eta_0}\lesssim_{\Zygad{\omega}} 1$.
\end{itemize}
Furthermore, we have $\forall \zeta\in B_{\R^r\times \C^n}(\eta_0)$,
\begin{equation*}
[\Xh_{k_1}, \Xh_{k_2}](\zeta), [\Lh_{j_1}, \Lh_{j_2}](\zeta), [\Xh_{k},\Lh_j](\zeta)\in \Span_{\C}\{ \Xh_1(\zeta),\ldots, \Xh_r(\zeta), \Lh_1(\zeta),\ldots, \Lh_n(\zeta)\},
\end{equation*}
which follows from the corresponding assumption on the $X$s and $L$s (and the fact that $(I+M)^{-1}$ is an invertible matrix).

\Cref{Prop::NirenbergCommute::MainProp} applies to the vector fields $\Xh,\Lh$, and any constant which is $\Zygad{s}$-admissible
in the sense of that proposition is $\Zygad{s}$-admissible in the sense of this section.  We obtain $\Zygad{s_0}$-admissible
constants $\eta_3>0$, $K_1\geq 1$, a map
$\Phi_3:B_{\R^r\times \C^n}(\eta_3)\rightarrow B_{\R^r\times \C^n}(\eta_0)$,
and a matrix $\AMatrix_2:B_{\R^r\times \C^n}(\eta_3)\rightarrow \M^{(r+n)\times (r+n)}(\C)$ as in that proposition.
\Cref{Item::Nirenberg::Phi3Reg}, \cref{Item::Nirenberg::MainProp::0anddet}, and \cref{Item::Nirenberg::MainProp::Diffeo}
follow immediately from the corresponding results in \cref{Prop::NirenbergCommute::MainProp}.

Next we establish \cref{Item::Niremberg::MainProp::AEst}.  We have, from \cref{Prop::NirenbergCommute::MainProp},
\begin{equation*}
\begin{bmatrix}
\diff{u} \\ \diff{\wb}
\end{bmatrix}
=K_1^{-1} (I+\AMatrix_2) \begin{bmatrix} \Phi_3^{*} \Xh \\ \Phi_3^{*} \Lh\end{bmatrix}
=K_1^{-1} (I+\AMatrix_2) (I+M\circ \Phi_3)^{-1}\begin{bmatrix} \Phi_3^{*} X \\ \Phi_3^{*} L\end{bmatrix}
=:K_1^{-1} (I+\AMatrix_3) \begin{bmatrix} \Phi_3^{*} X \\ \Phi_3^{*} L\end{bmatrix},
\end{equation*}
where $I+\AMatrix_3 := (I+\AMatrix_2) (I+M\circ \Phi_3)^{-1}$.  Since $M(0)=0$, $\Phi_3(0)=0$, and $\AMatrix_2(0)=0$, we have
$\AMatrix_3(0)=0$.  Also, we have
\begin{itemize}
\item If $s_0\in (0,\infty)$, since $\ZygNorm{(I+M)^{-1}}{s+1}[B_{\R^r\times \C^n}(\eta_0)]\lesssim_{\Zygad{s}} 1$,
$\ZygNorm{\Phi_3}{s+2}[B_{\R^r\times \C^n}(\eta_3)]\lesssim_{\Zygad{s}} 1$, and $\Phi_3(B_{\R^r\times \C^n}(\eta_3))\subseteq B_{\R^r\times \C^n}(\eta_0)$,
it follows from \cref{Lemma::FuncSpaceRev::Composition} that $\ZygNorm{(I+M\circ \Phi_3)^{-1}}{s+1}[B_{\R^r\times \C^n}(\eta_3)]\lesssim_{\Zygad{s}} 1$.
Combining this with $\ZygNorm{\AMatrix_2}{s+1}[B_{\R^r\times \C^n}(\eta_3)]\lesssim_{\Zygad{s}} 1$ (see \cref{Prop::NirenbergCommute::MainProp}),
\cref{Prop::FuncSpaceRev::Algebra} implies $\ZygNorm{\AMatrix_3}{s+1}[B_{\R^r\times \C^n}(\eta_3)]\lesssim_{\Zygad{s}} 1$.

\item If $s_0=\omega$, since $\ANorm{(I+M)^{-1}}{2n+r}{\eta_0}\leq 2$ and since $\ANorm{\Phi_3}{r+2n}{\eta_3}\leq \eta_0$, it follows from
\cref{Lemma::FuncSpaceRev::ComposeAnal}
that $\ANorm{(I+M\circ \Phi_3)^{-1}}{r+2n}{\eta_3}\leq 2$.  Since $\ANorm{\AMatrix_2}{r+2n}{\eta_3}\lesssim_{\Zygad{\omega}} 1$ (see \cref{Prop::NirenbergCommute::MainProp}),
\cref{Prop::FuncSpaceRev::Algebra} implies $\ANorm{\AMatrix_3}{r+2n}{\eta_3}\lesssim_{\Zygad{\omega}} 1$.
\end{itemize}
The above comments complete the proof.


\section{Proof of the Main Result}\label{Section::Proofs}
In this section, we prove \cref{Thm::EMfld::MainThm}; \cref{Thm::Intro::MainThm} is an immediate consequence of \cref{Thm::EMfld::MainThm}.
Throughout this section, fix $s\in (0,\infty]\cup \{\omega\}$ and let $M$ be a $\ZygSpace{s+2}$ manifold.  As in the rest of the paper, we give $\R^r\times \C^n$ coordinates
$(t_1,\ldots, t_r, z_1,\ldots, z_n)$.

\begin{lemma}\label{Lemma::Proof::GoodAt0}
Let $\LVS$ be a $\ZygSpace{s+1}$ elliptic structure on $M$ of dimension $(r,n)$.  Then, $\forall \zeta_0\in M$, there exists a neighborhood
$V_0$ of $\zeta_0$, $\ZygSpace{s+1}$ sections $L_1,\ldots, L_n,X_1,\ldots, X_r$ of $\LVS$ over $V_0$, and a $\ZygSpacediff{s+2}$ diffeomorphism
$\Psi_0:B_{\R^r\times \C^n}(1)\rightarrow V_0$ such that:
\begin{enumerate}[(i)]
\item $\Psi_0(0)=\zeta_0$.
\item $\forall \zeta\in V_0$, $L_1(\zeta),\ldots, L_n(\zeta), X_1(\zeta),\ldots, X_r(\zeta)$ is a basis for $\LVS[\zeta]$.
\item $\Psi_0^{*}L_j(0)=\diff{\zb[j]}$, $\Psi_0^{*} X_k(0)=\diff{t_k}$, $1\leq j\leq n$, $1\leq k\leq r$.
\item For $1\leq j \leq n$, $1\leq k\leq r$,
\begin{itemize}
\item If $s\in (0,\infty]$, $\Psi_0^{*} L_j, \Psi_0^{*} X_k\in \ZygSpace{s+1}[B_{\R^r\times \C^n}(1)][\C^{2n+r}]$.
\item If $s=\omega$, $\Psi_0^{*} L_j, \Psi_0^{*} X_k \in \ASpace{2n+r}{1}[\C^{2n+r}]$.
\end{itemize}
\item\label{Item::Proof::CommutatorFirst} $\forall j_1, j_2, k_1, k_2, j,k$, $\forall \xi\in B_{\R^r\times \C^n}(1)$,
\begin{equation*}
\begin{split}
&[\Psi_0^{*}L_{j_1}, \Psi_0^{*}L_{j_2}](\xi), [\Psi_0^{*}X_k, \Psi_0^{*}L_j](\xi), [\Psi_0^{*}X_{k_1}, \Psi_0^{*}X_{k_2}](\xi)
\\&\in \Span_{\C} \{ \Psi_0^{*}L_1(\xi), \ldots, \Psi_0^{*}L_n(\xi), \Psi_0^{*}X_1(\xi),\ldots, \Psi_0^{*}X_r(\xi)  \}.
\end{split}
\end{equation*}
\end{enumerate}
\end{lemma}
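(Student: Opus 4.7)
The strategy is to reduce to a Euclidean coordinate chart, use the ellipticity of $\LVS$ to pick a basis of $\LVS[\zeta_0]$ adapted to the standard frame $\partial/\partial t_k,\partial/\partial\bar z_j$, then rescale to the unit ball. The commutator condition will be automatic from formal integrability.

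First, use the $\ZygSpace{s+2}$ atlas on $M$ to fix a chart $\phi:U\to M$ with $\phi(0)=\zeta_0$, where $U\subseteq \R^r\times\C^n$ is a neighborhood of $0$ (identifying $\R^{2n+r}\cong\R^r\times\C^n$). By the definition of a $\ZygSpace{s+1}$ sub-bundle, on a smaller neighborhood $U'\subseteq U$ of $0$ there are $\ZygSpace{s+1}$ complex vector fields $\tilde L_1,\ldots,\tilde L_K$ whose pointwise span equals $\phi^*\LVS$; in the real-analytic case these are real analytic. The heart of the proof is linear-algebraic. By \cref{Lemma::Bundles::DimXConst} and the fact that $\LVS[\zeta_0]\cap\LVSb[\zeta_0]$ is closed under complex conjugation, this intersection is the complexification of an $r$-dimensional real subspace $\sR\subseteq T_{\zeta_0}M$. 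Pick a real basis $v_1,\ldots,v_r$ of $\sR$ and complete it to a $\C$-basis $v_1,\ldots,v_r,w_1,\ldots,w_n$ of $\LVS[\zeta_0]$. Then $\{v_k,w_j,\bar w_j\}$ spans $\LVS[\zeta_0]+\LVSb[\zeta_0]=\C T_{\zeta_0}M$ and, by a dimension count, is a $\C$-basis; decomposing $w_j=\mathrm{Re}(w_j)+i\,\mathrm{Im}(w_j)$ it follows that the $v_k$ together with the real and imaginary parts of the $w_j$ form an $\R$-basis of $T_{\zeta_0}M$. Precomposing $\phi$ with an appropriate real linear automorphism of $\R^r\times\C^n$, we may assume that in the new chart $(d\phi(0))^{-1}v_k=\partial/\partial t_k|_0$ and $(d\phi(0))^{-1}w_j=\partial/\partial\bar z_j|_0$.

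Next, write $v_k,w_j\in\LVS[\zeta_0]$ as explicit $\C$-linear combinations of $\tilde L_1(0),\ldots,\tilde L_K(0)$, and use the \emph{same} constant coefficients to build $\ZygSpace{s+1}$ sections $\tilde X_k,\tilde L_j$ of $\phi^*\LVS$ on $U'$; after shrinking $U'$, continuity guarantees that $\tilde X_1,\ldots,\tilde X_r,\tilde L_1,\ldots,\tilde L_n$ form a pointwise $\C$-basis of $\phi^*\LVS$. Choose $\delta>0$ with $B_{\R^r\times\C^n}(\delta)\subseteq U'$, set $\Psi_0(\xi)=\phi(\delta\xi)$, $V_0=\Psi_0(B_{\R^r\times\C^n}(1))$, and take $X_k=\delta\,\phi_*\tilde X_k$, $L_j=\delta\,\phi_*\tilde L_j$ (multiplication by a constant preserves $\ZygSpace{s+1}$ or real-analytic membership). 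The scaling factor cancels to give $\Psi_0^*X_k(0)=\partial/\partial t_k$ and $\Psi_0^*L_j(0)=\partial/\partial\bar z_j$, so (i), (ii), (iii) hold. The regularity in (iv) follows from the chain rule and \cref{Lemma::FuncSpaceRev::Composition} combined with \cref{Rmk::FuncSpaceRev::InverseMatrix} when $s\in(0,\infty)$; when $s=\omega$, the pullbacks extend holomorphically to some polydisc around $0$, and choosing $\delta$ small enough ensures their power series converge absolutely on $B_{\R^r\times\C^n}(1)$, yielding membership in $\ASpace{2n+r}{1}[\C^{2n+r}]$. Finally, (v) is immediate from formal integrability of $\LVS$ applied to the $\ZygSpace{s+1}$ sections $X_k,L_j$, since pullback by a $C^1$ diffeomorphism preserves the pointwise $\C$-span.

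\textbf{Main obstacle.} The only non-routine step is the linear-algebraic normalization, in particular verifying that the $v_k$ together with the real and imaginary parts of the $w_j$ form an $\R$-basis of $T_{\zeta_0}M$; this is what permits reducing to the standard frame by a linear change of coordinates, and it is where the ellipticity hypothesis enters decisively. The trivialization of $\LVS$, the scaling to the unit ball, and the preservation of regularity under pullback are all standard once the function-space machinery of \cref{Section::FunctionSpaces} is in place.
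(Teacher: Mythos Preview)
Your proposal is correct and follows essentially the same approach as the paper: pick a real basis of $\LVS[\zeta_0]\cap\LVSb[\zeta_0]$ (the paper invokes \cref{Lemma::AppendVS::RealBasis} for this), extend to a $\C$-basis of $\LVS[\zeta_0]$, use ellipticity to see that the real and imaginary parts span $T_{\zeta_0}M$, precompose the chart with a real linear isomorphism to normalize at the origin, build the local frame as constant-coefficient combinations of the given sections, and finally dilate to the unit ball. The only differences are cosmetic---you fix the chart before constructing the frame while the paper does the reverse, and your reference to \cref{Rmk::FuncSpaceRev::InverseMatrix} for part (iv) is unnecessary since after reduction you are simply composing with a dilation.
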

\begin{proof}
Note that, by the definition of elliptic structures of dimension $(r,n)$, we have $\dim \LVS[\zeta]=n+r$, $\forall \zeta\in M$ and $\dim M=2n+r$ (see \cref{Rmk::Bundles::DimensionMfld}).
By \cref{Lemma::AppendVS::RealBasis} we may pick a basis $y_1,\ldots, y_r$ of $\LVS[\zeta_0]\cap \LVSb[\zeta_0]$ with $y_1,\ldots, y_r\in T_{\zeta_0} M$ (i.e., $y_1,\ldots, y_r$ are \textit{real}).
Extend $y_1,\ldots, y_r$ to a basis $l_1,\ldots, l_n, y_1,\ldots, y_r$ of $\LVS[\zeta_0]$.

By the definition of a $\ZygSpace{s+1}$ bundle, we may find a neighborhood $U_1$ of $\zeta_0$ and $\ZygSpace{s+1}$ sections $Z_1,\ldots, Z_K$ of $\LVS$ over $U_0$ such that
$\forall \zeta\in U_0$, $\Span_{\C} \{Z_1(\zeta),\ldots, Z_K(\zeta)\}= \LVS[\zeta]$.  Without loss of generality, reorder $Z_1,\ldots, Z_K$ so that $Z_1(\zeta_0),\ldots, Z_{n+r}(\zeta_0)$ form
a basis of $\LVS[\zeta_0]$.  By continuity, there exists a neighborhood $U_2\subseteq U_1$ of $\zeta_0$ such that
$\forall \zeta\in U_2$, $Z_1(\zeta),\ldots, Z_{n+r}(\zeta)$ are linearly independent.  We conclude $\forall \zeta\in U_2$, $Z_1(\zeta),\ldots, Z_{n+r}(\zeta)$ forms a basis for $\LVS[\zeta]$.

Let $M\in \M^{(n+r)\times (n+r)}(\C)$ be the invertible matrix such that
\begin{equation*}
M
\begin{bmatrix}
Z_1(\zeta_0) \\
\vdots\\
Z_{n+r}(\zeta_0)
\end{bmatrix}
=
\begin{bmatrix}
y_1\\
\vdots\\
y_r\\
l_1\\
\vdots\\
l_n
\end{bmatrix}.
\end{equation*}
For $\zeta\in U_2$ set
\begin{equation*}
\begin{bmatrix}
\Xh_1(\zeta)\\
\vdots\\
\Xh_r(\zeta)\\
\Lh_1(\zeta)\\
\vdots\\
\Lh_n(\zeta)
\end{bmatrix}:=
M
\begin{bmatrix}
Z_1(\zeta) \\
\vdots\\
Z_{n+r}(\zeta)
\end{bmatrix}.
\end{equation*}
Since $M$ is a (constant) invertible matrix, we have $\forall \zeta\in U_2$, $\Lh_1(\zeta),\ldots, \Lh_n(\zeta), \Xh_1(\zeta),\ldots, \Xh_r(\zeta)$ forms a basis for $\LVS[\zeta]$,
and $\Lh_1,\ldots, \Lh_n, \Xh_1,\ldots, \Xh_r$ are $\ZygSpace{s+1}$ sections of $\LVS$ over $U_2$.

By the definition of a $\ZygSpace{s+2}$ manifold (see also \cref{Rmk::FuncMflds::CoordChartsAreDiffeo}) there exists a $\ZygSpacediff{s+2}$ diffeomorphism
$\Psi_1:B_{\R^{2n+r}}(\epsilon_1)\rightarrow V_1$, where $V_1\subseteq U_2$ is a neighborhood of $\zeta_0$, $\Psi_1(0)=\zeta_0$.  Since $\Xh_1(\zeta_0)=y_1,\ldots, \Xh_r(\zeta_0)=y_r$ are real,
 since $\LVS[\zeta_0]+\LVSb[\zeta_0]=\C T_{\zeta_0}M$, and since $\Xh_1(\zeta_0),\ldots, \Xh_r(\zeta_0), \Lh_1(\zeta_0),\ldots, \Lh_n(\zeta_0)$ forms a basis for $\LVS[\zeta_0]$,
 we have 
 \begin{equation}\label{Eqn::Proof::TheySpanRealTangent}
 \Span_{\R} \{2 \Real(\Lh_1)(\zeta_0), \ldots, 2\Real(\Lh_n)(\zeta_0), 2\Imag(\Lh_1)(\zeta_0),\ldots, 2\Imag(\Lh_n)(\zeta_0),\Xh_1(\zeta_0),\ldots, \Xh_n(\zeta_0) \} = T_{\zeta_0} M.
 \end{equation}
 Pulling \cref{Eqn::Proof::TheySpanRealTangent} back via $\Psi_1$ we have 
 $$2 \Real(\Psi_1^{*} \Lh_1)(0), \ldots, 2\Real(\Psi_1^{*}\Lh_n)(0), 2\Imag(\Psi_1^{*}\Lh_1)(0),\ldots, 2\Imag(\Psi_1^{*}\Lh_n)(0),\Psi_1^{*}\Xh_1(0),\ldots, \Psi_1^{*}\Xh_n(0)$$
 forms a basis for $T_0 \R^{2n+r}$.
 
 We give $\R^{2n+r}\cong \R^{r}\times \R^{2n}$ coordinates $(t_1,\ldots, t_r, x_1,\ldots, x_{2n})$.  Let $C\in \M^{(r+2n)\times (r+2n)}(\R)$ denote the (constant) invertible matrix such that
 \begin{equation*}
 C\begin{bmatrix}
 \diff{t_1}\\
 \vdots\\
 \diff{t_r}\\
 \diff{x_1}\\
 \vdots\\
 \diff{x_{2n}}
 \end{bmatrix}
 =
 \begin{bmatrix}
 \Psi_1^{*} \Xh_1(0) \\
 \vdots\\
 \Psi_1^{*} \Xh_r(0)\\
 2\Real(\Psi_1^{*} \Lh_1)(0)\\
 \vdots\\
 2\Real(\Psi_1^{*} \Lh_n)(0)\\
 2\Imag(\Psi_1^{*} \Lh_1)(0)\\
 \vdots\\
  2\Imag(\Psi_1^{*} \Lh_n)(0)
 \end{bmatrix}.
 \end{equation*}
 Set $A=C^{\transpose}$ and we identify $A$ with the corresponding invertible linear transformation $\R^{r+2n}\rightarrow \R^{r+2n}$.  Then for $\epsilon_2>0$ sufficiently small, we set
 $\Psi_2:=\Psi_1\circ A : B_{\R^{r+2n}}(\epsilon_2)\rightarrow V_1$.  Then, $\Psi_2(0)=\Psi_1(0)=\zeta_0$, $\Psi_2$ is a $\ZygSpacediff{s+2}$ diffeomorphism onto its image (which is a neighborhood of $\zeta_0$),
 and if we identify $\R^{r+2n}\cong \R^r\times \C^n$ via the map $(t_1,\ldots, t_r, x_1,\ldots, x_{2n})\mapsto (t_1,\ldots, t_r, x_1+i x_n,\ldots, x_n+ix_{2n})$, then,
 \begin{equation*}
 \Psi_2^{*} \Lh_j(0) = \diff{\zb_j}, \quad \Psi_2^{*} \Xh_k(0) = \diff{t_k}, \quad 1\leq j\leq n, 1\leq k\leq r,
 \end{equation*}
 \begin{equation*}
 \Psi_2^{*}\Lh_j, \Psi_2^{*} \Xh_k \in \ZygSpacemap{s+1}[B_{\R^r\times \C^n}(\epsilon_2)][\C^{2n+r}].
 \end{equation*}
 
 Take $\epsilon_3\in (0,\epsilon_2)$ such that, $\forall 1\leq j\leq n, 1\leq k\leq r$,
 \begin{itemize}
 \item If $s_0\in (0,\infty]$, $\Psi_2^{*} \Lh_j, \Psi_2^{*}\Xh_k \in \ZygSpace{s+1}[B_{\R^{r}\times \C^n}(\epsilon_3)][\C^{2n+r}]$.
 \item If $s_0=\omega$, $\Psi_2^{*} \Lh_j, \Psi_2^{*}\Xh_k \in \ASpace{2n+r}{\epsilon_3}[\C^{2n+r}]$.
 \end{itemize}
 Define $D_{\epsilon_3}:\R^{r}\times \C^n \rightarrow \R^{r}\times \C^n$ by $D_{\epsilon_3} (t,z) = (\epsilon_3 t, \epsilon_3 z)$, and define
 $\Psi_0:B_{\R^r\times \C^n}(1)\rightarrow V_1$ by $\Psi_0 := \Psi_2\circ D_{\epsilon_3}$.  Letting $V_0=\Psi_0(B_{\R^r\times \C^n}(1))\subseteq V_1$ we
 have $\Psi_0:B_{\R^r\times \C^n}(1)\rightarrow V_0$ is a $\ZygSpacediff{s+2}$ diffeomorphism.  Set $L_j:=\epsilon_3 \Lh_j$ and $X_k:=\epsilon_3 \Xh_k$.
 With these choices, all of the conclusions of the lemma follow from the above remarks.
 
 We include a few additional comments regarding the proof of \cref{Item::Proof::CommutatorFirst}.  Since $\forall \zeta\in V_0\subseteq V_1\subseteq U_2$, we have $L_1(\zeta),\ldots, L_n(\zeta), X_1(\zeta),\ldots, X_r(\zeta)$
 form a basis for $\LVS[\zeta]$, we have $\forall \zeta\in V_0$, $\forall j_1, j_2, k_1, k_2, j,k$,
 \begin{equation*}
 [L_{j_1}, L_{j_2}](\zeta), [X_k, L_j](\zeta), [X_{k_1},X_{k_2}](\zeta)\in \LVS[\zeta]=\Span_{\C} \{ L_1(\zeta),\ldots, L_n(\zeta), X_1(\zeta),\ldots, X_r(\zeta)\}. 
 \end{equation*}
 Pulling this back via $\Psi_0$ yields \cref{Item::Proof::CommutatorFirst} and completes the proof.
\end{proof}

\begin{lemma}\label{Lemma::Proof::ChartAtPoint}
Let $\LVS$ be a $\ZygSpace{s+1}$ elliptic structure  on $M$ of dimension $(r,n)$.  Then, $\forall \zeta_0\in M$, there exists a neighborhood $V$ of $\zeta_0$,
$\ZygSpace{s+1}$ sections $L_1,\ldots, L_n, X_1,\ldots, X_r$ of $\LVS$ over $V$, and a $\ZygSpacediff{s+2}$ diffeomorphism $\Psi:B_{\R^r\times \C^n}(1)\rightarrow V$
such that
\begin{itemize}
\item $\Psi(0)=\zeta_0$.
\item $\forall \zeta\in V$, $L_1(\zeta),\ldots, L_n(\zeta), X_1(\zeta),\ldots, X_r(\zeta)$ is a basis for $\LVS[\zeta]$.
\item $\forall \xi\in B_{\R^r\times \C^n}(1)$,
\begin{equation*}
	\Span_{\C} \{ \Psi^{*}L_1(\xi),\ldots, \Psi^{*} L_n(\xi), \Psi^{*} X_1(\xi),\ldots, \Psi^{*} X_r(\xi)\} = \Span_{\C} \mleft\{ \diff{\zb[1]},\ldots, \diff{\zb[n]}, \diff{t_1},\ldots, \diff{t_n}\mright\}.
\end{equation*}
\end{itemize}
\end{lemma}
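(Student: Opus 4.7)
The plan is to combine Lemma \ref{Lemma::Proof::GoodAt0} with the main technical result Theorem \ref{Thm::Nirenberg::MainThm}, and then compose the two diffeomorphisms thereby produced. First, apply Lemma \ref{Lemma::Proof::GoodAt0} to $\zeta_0$ to obtain a neighborhood $V_0 \ni \zeta_0$, $\ZygSpace{s+1}$ sections $L_1,\ldots,L_n,X_1,\ldots,X_r$ of $\LVS$ over $V_0$, and a $\ZygSpacediff{s+2}$ diffeomorphism $\Psi_0 : B_{\R^r \times \C^n}(1) \to V_0$ with $\Psi_0(0)=\zeta_0$. Set $\Xt_k := \Psi_0^{*} X_k$ and $\Lt_j := \Psi_0^{*} L_j$. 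The conclusions of Lemma \ref{Lemma::Proof::GoodAt0} state precisely that these pullbacks have the regularity ($\ZygSpace{s+1}$ on $B_{\R^r\times\C^n}(1)$, or $\ASpace{2n+r}{1}$ when $s=\omega$) required by Theorem \ref{Thm::Nirenberg::MainThm}, that $\Xt_k(0)=\diff{t_k}$ and $\Lt_j(0)=\diff{\zb[j]}$, and that the pairwise brackets of the $\Xt_k$ and $\Lt_j$ lie in their joint $\C$-span at each point.

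Next, apply Theorem \ref{Thm::Nirenberg::MainThm} to $\Xt_1,\ldots,\Xt_r,\Lt_1,\ldots,\Lt_n$; take $s_0 = s$ when $s \in (0,\infty)$, take $s_0 = \omega$ when $s = \omega$, and take any fixed finite $s_0$ when $s = \infty$ (the estimates "$\ZygNorm{\Phi_4}{s'+2}[B_{\R^r\times\C^n}(1)]\lesssim_{\Zygad{s'}} 1, \ \forall s'>0$" then upgrade $\Phi_4$ to a smooth map, since smoothness of $\Xt_k,\Lt_j$ makes the relevant admissible constants finite for every $s'$). In every case we obtain a map $\Phi_4 : B_{\R^r \times \C^n}(1) \to B_{\R^r \times \C^n}(1)$ which is $\ZygSpacediff{s+2}$ onto its (open) image, with $\Phi_4(0)=0$, and
\[
\begin{bmatrix} \diff{u} \\ \diff{\wb} \end{bmatrix}
= K_2^{-1}(I+\AMatrix)\begin{bmatrix} \Phi_4^{*}\Xt \\ \Phi_4^{*}\Lt \end{bmatrix},
\]
where $I+\AMatrix$ is everywhere invertible on the domain.

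Finally, define $V := \Psi_0\bigl(\Phi_4(B_{\R^r \times \C^n}(1))\bigr) \subseteq V_0$ and $\Psi := \Psi_0 \circ \Phi_4 : B_{\R^r \times \C^n}(1) \to V$. By Lemmas \ref{Lemma::FuncMfld::CompositionOfMaps} and \ref{Lemma::FuncManfiold::InverseMap} (together with \cref{Rmk::FuncMflds::CoordChartsAreDiffeo}), $\Psi$ is a $\ZygSpacediff{s+2}$ diffeomorphism onto the open set $V$, and $\Psi(0) = \Psi_0(\Phi_4(0)) = \zeta_0$. The sections $L_j, X_k$ restricted to $V \subseteq V_0$ continue to form a basis of $\LVS$ at each point by Lemma \ref{Lemma::Proof::GoodAt0}. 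Since $\Psi^{*}L_j = \Phi_4^{*}\Lt_j$ and $\Psi^{*}X_k = \Phi_4^{*}\Xt_k$, the invertibility of $K_2^{-1}(I+\AMatrix)$ turns the displayed matrix identity into
\[
\Span_{\C}\mleft\{\Psi^{*}L_1(\xi),\ldots,\Psi^{*}L_n(\xi),\Psi^{*}X_1(\xi),\ldots,\Psi^{*}X_r(\xi)\mright\}
= \Span_{\C}\mleft\{\diff{\wb[1]},\ldots,\diff{\wb[n]},\diff{u_1},\ldots,\diff{u_r}\mright\}
\]
for every $\xi \in B_{\R^r \times \C^n}(1)$; renaming the coordinates $(u,w)$ on the domain as $(t,z)$ yields the conclusion. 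Because all the genuine work has already been done inside Theorem \ref{Thm::Nirenberg::MainThm}, there is no real obstacle in this lemma—the argument is essentially bookkeeping that transfers the main technical result to the manifold setting via the chart constructed in Lemma \ref{Lemma::Proof::GoodAt0}.
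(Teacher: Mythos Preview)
Your proposal is correct and follows essentially the same approach as the paper: apply Lemma~\ref{Lemma::Proof::GoodAt0} to get $\Psi_0$ and the local frame, feed the pulled-back vector fields into Theorem~\ref{Thm::Nirenberg::MainThm} (choosing a finite $s_0$ when $s=\infty$, as you do), and compose $\Psi:=\Psi_0\circ\Phi_4$. The paper's proof is identical in structure; the only cosmetic difference is that it fixes $s_0=1$ in the $s=\infty$ case rather than ``any fixed finite $s_0$,'' which is immaterial.
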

\begin{proof}
Let $L_1,\ldots, L_n, X_1,\ldots, X_r$ and $\Psi_0:B_{\R^r\times \C^n}(1)\rightarrow V_0$ be as in \cref{Lemma::Proof::GoodAt0}.
If $s\in (0,\infty)\cup \{\omega\}$, set $s_0:=s$.  If $s=\infty$, set $s_0:=1$.
The conclusions of \cref{Lemma::Proof::GoodAt0} show that \cref{Thm::Nirenberg::MainThm} applies (with this choice of $s_0$) to the vector fields
$\Psi_0^{*}L_1,\ldots, \Psi_0^{*} L_n, \Psi_0^{*}X_1,\ldots, \Psi_0^{*} X_r$ and yields $\Phi_4\in \ZygSpace{s+2}[B_{\R^r\times \C^n}(1)][\R^r\times \C^n]$ as in that theorem.
In particular, $\Phi_4$ is a diffeomorphism onto its image, $\Phi_4(0)=0$, and since $I+\AMatrix(\xi)$ from \cref{Thm::Nirenberg::MainThm} \cref{Item::Nirenberg::AMatrix} is invertible,
$\forall \xi\in B_{\R^r\times \C^n}(1)$,  \cref{Thm::Nirenberg::MainThm} \cref{Item::Nirenberg::AMatrix} shows $\forall \xi\in B_{\R^r\times \C^n}(1)$,
\begin{equation*}
\Span_{\C} \{ \Phi_4^{*} \Psi_0^{*} L_1(\xi),\ldots, \Phi_4^{*} \Psi_0^{*} L_n(\xi), \Phi_4^{*} \Psi_0^{*} X_1(\xi),\ldots, \Phi_4^{*} \Psi_0^{*} X_n(\xi)  \}
=\Span_{\C}\mleft\{ \diff{\zb[1]},\ldots, \diff{\zb[n]}, \diff{t_1},\ldots, \diff{t_r} \mright\}.
\end{equation*}
Setting $\Psi:=\Psi_0\circ \Phi_4$, the result follows with $V:=\Psi(B_{\R^r\times \C^n}(1))\subseteq V_0$, by using the above mentioned properties of $\Phi_4$ combined
with the conclusions of \cref{Lemma::Proof::GoodAt0}.
\end{proof}

\begin{proof}[Proof of \cref{Thm::EMfld::MainThm}]
\cref{Item::MainThm::EMfld}$\Rightarrow$\cref{Item::MainThm::Bundle}:  This is obvious.

\cref{Item::MainThm::Bundle}$\Rightarrow$\cref{Item::MainThm::EMfld}: 
Let $\LVS$ be a $\ZygSpace{s+1}$ elliptic structure on $M$ of dimension $(r,n)$.
We wish to construct a $\ZygSpace{s+2}$ E-atlas on $M$ of dimension $(r,n)$, compatible with its $\ZygSpace{s+2}$ structure, such that $\LVS$ is the $\ZygSpace{s+1}$
elliptic structure associated to this E-manifold structure.  For each $\zeta_0\in M$,  let $\Psi_{\zeta_0}:B_{\R^{r}\times \C^n}(1)\rightarrow V_{\zeta_0}$ be the function
$\Psi$ from \cref{Lemma::Proof::ChartAtPoint} with this choice of $\zeta_0$; so that $V_{\zeta_0}$ is a neighborhood of $\zeta_0$ and $\Psi_{\zeta_0}$ is a $\ZygSpacediff{s+2}$ diffeomorphism
satisfying the conclusions of that lemma.  In particular, it follows from that lemma that $\forall \zeta\in V_{\zeta_0}$,
\begin{equation}\label{Eqn::MainThm::BundleEquals}
  \LVS[\zeta] = \Span_{\C} \mleft\{ d\Psi_{\zeta_0}(\Psi_{\zeta_0}^{-1}(\zeta)) \diff{t_1},\ldots, d\Psi_{\zeta_0}(\Psi_{\zeta_0}^{-1}(\zeta)) \diff{t_r}, d\Psi_{\zeta_0}(\Psi_{\zeta_0}^{-1}(\zeta)) \diff{\zb[1]},\ldots, d\Psi_{\zeta_0}(\Psi_{\zeta_0}^{-1}(\zeta)) \diff{\zb[n]} \mright\}.
\end{equation}
We claim $\{ (\Psi_{\zeta_0}^{-1}, V_{\zeta_0}) : \zeta_0\in M\}$ is the desired atlas.  Indeed, that $\Psi_{\zeta_1}^{-1} \circ \Psi_{\zeta_2}$ is a $\ZygSpacemap{s+2}$ map follows from \cref{Lemma::FuncMfld::CompositionOfMaps,Lemma::FuncManfiold::InverseMap}.  To see that $\Psi_{\zeta_1}^{-1} \circ \Psi_{\zeta_2}$ is an E-map, note that, for $1\leq k\leq r$,
$$d\mleft(\Psi_{\zeta_1}^{-1} \circ \Psi_{\zeta_2} \mright)(\xi) \diff{t_k} = d\Psi_{\zeta_1}^{-1}( \Psi_{\zeta_2}(\xi)) d \Psi_{\zeta_2}(\xi) \diff{t_k}.$$
\Cref{Eqn::MainThm::BundleEquals} shows $d \Psi_{\zeta_2}(\xi) \diff{t_k} \in \LVS[\Psi_{\zeta_2}(\xi)]$, and applying \cref{Eqn::MainThm::BundleEquals} again
shows
$$d\Psi_{\zeta_1}^{-1}( \Psi_{\zeta_2}(\xi)) d \Psi_{\zeta_2}(\xi) \diff{t_k} \in \Span_{\C} \mleft\{\diff{t_1},\ldots, \diff{t_r}, \diff{\zb[1]},\ldots, \diff{\zb[n]}\mright\}.$$
Similarly, for $1\leq j\leq n$,
\begin{equation*}
d\mleft(\Psi_{\zeta_1}^{-1} \circ \Psi_{\zeta_2}\mright)(\xi) \diff{\zb[j]}\in \Span_{\C} \mleft\{\diff{t_1},\ldots, \diff{t_r}, \diff{\zb[1]},\ldots, \diff{\zb[n]}\mright\}.
\end{equation*}
It follows that $\Psi_{\zeta_1}^{-1} \circ \Psi_{\zeta_2}$ is an E-map.  Thus, since $\{ V_{\zeta_0} :\zeta_0\in M\}$ is an open cover for $M$
we have that $\{ (\Psi_{\zeta_0}^{-1}, V_{\zeta_0}) : \zeta_0\in M\}$ is a $\ZygSpace{s+2}$ E-atlas on $M$.  Since each $\Psi_{\zeta_0}:B_{\R^{r}\times \C^n}(1)\rightarrow V_{\zeta_0}$
is a $\ZygSpacediff{s+2}$ diffeomorphism (by \cref{Lemma::Proof::ChartAtPoint}, where $V_{\zeta_0}\subseteq M$ is given the original $\ZygSpace{s+2}$ manifold structure),
we see that the $\ZygSpace{s+2}$ E-manifold structure induced by the above atlas is compatible with the original $\ZygSpace{s+2}$ manifold structure on $M$.
That $\LVS$ is the $\ZygSpace{s+1}$ elliptic structure associated to this E-manifold structure follows from \cref{Eqn::MainThm::BundleEquals}.

Finally, we turn to the uniqueness of this E-manifold structure.  Suppose $M$ is given two $\ZygSpace{s+2}$ E-manifold structures, compatible with the $\ZygSpace{s+2}$ manifold structure, such that $\LVS$
is the $\ZygSpace{s+1}$ elliptic structure associated to both of these E-manifold structures.  That the identity map $M\rightarrow M$ is a $\ZygSpacediff{s+2}$ diffeomorphism follows immediately because
both copies of $M$ have the same underlying $\ZygSpace{s+2}$ manifold structure.  That the identity map is an E-map follows from \cref{Lemma::EMfld::RecongnizeEMap}.  This shows that
the identity map is 
a $\ZygSpacediff{s+2}$ E-diffeomorphism, which completes the proof.
\end{proof}


\appendix
\section{Linear Algebra}\label{Appendix::LinearAlgebra}
Let $\VVS$ be a real vector space and let $\VVS^{\C}=\VVS\otimes_{\R} \C$ be its complexification.  We consider $\VVS\hookrightarrow \VVS^{\C}$ as a real subspace
by identifying $v$ with $v\otimes 1$.  There are natural maps:
\begin{equation*}
\Real:\VVS^{\C}\rightarrow \VVS, \quad \Imag:\VVS^{\C}\rightarrow \VVS,\quad \text{complex conjugation}:\VVS^{\C}\rightarrow \VVS^{\C},
\end{equation*}
defined as follows.  Every $v\in \VVS^{\C}$ can be written uniquely as $v=v_1\otimes 1+ v_2\otimes i$, with $v_1,v_2\in V$.  Then,
$\Real(v):=v_1$, $\Imag(v):=v_2$, and $\overline{v}:=v_1\otimes 1 - v_2\otimes i$.

\begin{lemma}\label{Lemma::AppendCR::dimFormula}
Let $\LVS\subseteq \VVS^{\C}$ be a finite dimensional complex subspace.  Then,
$\dim (\LVS+\overline{\LVS}) + \dim (\LVS\bigcap \overline{\LVS}) = 2\dim(\LVS).$
\end{lemma}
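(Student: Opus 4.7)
The plan is to combine the standard inclusion-exclusion dimension formula with the observation that complex conjugation, although only conjugate-linear over $\C$, nevertheless preserves complex dimension.

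First I would invoke the standard fact from linear algebra that for any two finite-dimensional complex subspaces $A, B \subseteq \VVS^{\C}$,
\begin{equation*}
\dim_{\C}(A + B) + \dim_{\C}(A \cap B) = \dim_{\C}(A) + \dim_{\C}(B).
\end{equation*}
Applying this with $A = \LVS$ and $B = \overline{\LVS}$ (both finite dimensional, since $\dim_{\C}\overline{\LVS}$ will be shown to equal $\dim_{\C}\LVS$), it remains only to show that $\dim_{\C}(\overline{\LVS}) = \dim_{\C}(\LVS)$.

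To verify this, I would fix a $\C$-basis $v_1, \ldots, v_n$ of $\LVS$ and show that $\overline{v_1}, \ldots, \overline{v_n}$ is a $\C$-basis of $\overline{\LVS}$. For spanning: any element of $\overline{\LVS}$ has the form $\overline{w}$ for some $w = \sum c_j v_j \in \LVS$, and since conjugation on $\VVS^{\C}$ satisfies $\overline{cv} = \overline{c}\,\overline{v}$, we get $\overline{w} = \sum \overline{c_j}\, \overline{v_j}$, which is a $\C$-linear combination of the $\overline{v_j}$'s. For linear independence: if $\sum c_j \overline{v_j} = 0$, applying conjugation to both sides yields $\sum \overline{c_j}\, v_j = 0$, so independence of the $v_j$ forces $\overline{c_j} = 0$ and hence $c_j = 0$.

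There is no real obstacle here; the only mild subtlety is remembering that conjugation is conjugate-linear rather than $\C$-linear, so one must track the conjugates on the scalars when passing between bases of $\LVS$ and $\overline{\LVS}$. Once $\dim_{\C}(\overline{\LVS}) = \dim_{\C}(\LVS)$ is established, the inclusion-exclusion formula immediately gives $\dim(\LVS + \overline{\LVS}) + \dim(\LVS \cap \overline{\LVS}) = 2\dim(\LVS)$, as desired.
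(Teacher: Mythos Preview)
Your proof is correct and follows essentially the same approach as the paper: invoke the inclusion-exclusion dimension formula and then argue that $\dim_{\C}\overline{\LVS}=\dim_{\C}\LVS$ via the conjugate-linear bijection $w\mapsto\overline{w}$. You simply spell out the basis argument where the paper cites the anti-linear isomorphism in one line.
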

\begin{proof}
It is a standard fact that $\dim (\LVS+\overline{\LVS}) + \dim (\LVS\bigcap \overline{\LVS}) = \dim(\LVS)+\dim(\overline{\LVS})$.
Using that $w\mapsto \overline{w}$, $\LVS\rightarrow \overline{\LVS}$ is an anti-linear isomorphism, the result follows.
\end{proof}

\begin{lemma}\label{Lemma::AppendVS::RealBasis}
Let $\XVS\subseteq \VVS^{\C}$ be a finite dimensional subspace of dimension $r$, and suppose $\XVSb=\XVS$.  Then there exist $x_1,\ldots, x_r\in \XVS \cap V$
such that $x_1,\ldots, x_r$ is a basis for $\XVS$.
\end{lemma}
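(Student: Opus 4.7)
The plan is to build the desired basis from the real subspace $\XVS_{\R} := \XVS \cap \VVS$, after first showing that every element of $\XVS$ decomposes into a real and imaginary part that individually lie in $\XVS_{\R}$.

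First I would observe that, because $\XVS$ is a complex subspace and $\overline{\XVS} = \XVS$, for any $v \in \XVS$ both $v$ and $\bar v$ lie in $\XVS$, hence so do the combinations
\begin{equation*}
\tfrac{1}{2}(v+\bar v) \;=\; \Real(v), \qquad \tfrac{1}{2i}(v-\bar v) \;=\; \Imag(v),
\end{equation*}
which are the real and imaginary parts appearing in the decomposition $v = \Real(v) + i\,\Imag(v)$ with $\Real(v),\Imag(v) \in \VVS$. Consequently $\Real(v), \Imag(v) \in \XVS \cap \VVS = \XVS_{\R}$, so every element of $\XVS$ is a $\C$-linear combination of elements of $\XVS_{\R}$.

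Next I would pick an $\R$-basis $x_1,\ldots,x_k$ of the real vector space $\XVS_{\R}$ and check that it is in fact a $\C$-basis of $\XVS$. Spanning is immediate from the preceding paragraph: for any $v \in \XVS$, write $\Real(v) = \sum a_j x_j$ and $\Imag(v) = \sum b_j x_j$ with $a_j, b_j \in \R$, so that $v = \sum (a_j + i b_j) x_j$. For $\C$-linear independence, suppose $\sum (a_j + i b_j) x_j = 0$ in $\VVS^{\C}$; projecting onto the real and imaginary components (which make sense since $x_j \in \VVS$) gives $\sum a_j x_j = 0$ and $\sum b_j x_j = 0$ in $\VVS$, and $\R$-linear independence of the $x_j$ forces $a_j = b_j = 0$.

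Combining the two, $k = \dim_{\C} \XVS = r$, and $x_1,\ldots,x_r \in \XVS \cap \VVS$ is a basis of $\XVS$. There is no real obstacle here; the only thing to be careful about is the bookkeeping that the map $\Imag$ sends conjugation-invariant elements of $\XVS$ back into $\XVS$, which is why the hypothesis $\overline{\XVS}=\XVS$ is used.
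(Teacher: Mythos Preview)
Your proof is correct and follows essentially the same approach as the paper: both use that $\overline{\XVS}=\XVS$ forces $\Real(v),\Imag(v)\in\XVS\cap\VVS$ for every $v\in\XVS$, so real vectors span $\XVS$ over $\C$, and a basis of real vectors can then be chosen. The paper's version is terser---it starts from a complex basis $l_1,\dots,l_r$, notes that the $2r$ real vectors $\Real(l_j),\Imag(l_j)$ span, and extracts a basis---whereas you work directly with an $\R$-basis of $\XVS\cap\VVS$ and verify $\C$-linear independence explicitly; the content is the same.
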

\begin{proof}
Let $l_1,\ldots, l_r$ be a basis for $\XVS$.  Since $\XVS=\XVSb$, $\Real(l_j), \Imag(l_j)\in \XVS$, and clearly
$\Real(l_1),\ldots, \Real(l_r), \Imag(l_1),\ldots, \Imag(l_r)$ form a spanning set for $\XVS$.  Extracting a basis from this spanning set yields the result.
\end{proof}

\section{Elliptic PDEs}
In this section, we state quantitative versions of some standard results regarding nonlinear ellipic PDEs.
All of the results in this section are well-known, and we make no effort to state these results in the greatest possible generality:
we content ourselves with the simplest settings which are sufficient for our purposes.

	\subsection{Real Analyticity for a Nonlinear Elliptic Equation}
It is a classical result that the solutions to real analytic, nonlinear elliptic PDEs are themselves real analytic; see, e.g., \cite{MorreyOnTheAnalyticityOfTheSolutionsOfAnalyticNonLinearEllipticSystems}.
We require a quantitative version of (a special case of) this fact, which follows from standard proofs.

Let $\sE$ be a constant coefficient, first order, linear partial differential operator
\begin{equation*}
\sE:\CjSpace{\infty}[\R^n][\C^{m_1}]\rightarrow \CjSpace{\infty}[\R^n][\C^{m_2}],
\end{equation*}
where $m_2\geq m_1$.  We may think of $\sE$ as an $m_2\times m_1$ matrix of constant coefficient partial differential operators of order $\leq 1$.

Let $\Gamma:\C^{m_1}\times \C^{nm_1}\rightarrow \C^{m_2}$ be a bilinear map.  Fix $R>0$ and we consider
the equation for $b:B_{\R^n}(R)\rightarrow \C^{m_1}$ given by
\begin{equation}\label{Eqn::AppendRealAnal::MainEqn}
\sE b = \Gamma(b, \grad b).
\end{equation}

\begin{prop}\label{Prop::AppendRealAnal::MainProp}
Fix $s_0>1$ and suppose $\sE$ is elliptic.  Then, $\exists \gamma=\gamma(\sE, \Gamma, R, s_0)>0$, $\eta_0=\eta_0(\sE,\Gamma,R,s_0)>0$
such that the following holds.
If $b\in \ZygSpace{s_0}[B_{\R^n}(R)][\C^{m_1}]$ is a solution to \cref{Eqn::AppendRealAnal::MainEqn} and
$\ZygNorm{b}{s_0}[B_{\R^n}(R)]\leq \gamma$, then $b\in \sBSpace{n}{m_1}{\eta_0}$ and
$\sBNorm{b}{n}{m_1}{\eta_0}\leq C$, where $C=C(\sE, \Gamma, R, s_0)$.  See \cref{Section::FuncRev::RealAnal} for the definition of
$\sBSpace{n}{m_1}{\eta_0}$.
\end{prop}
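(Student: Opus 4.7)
The plan is to proceed in two stages: first bootstrap $b$ to be $C^\infty$ on slightly shrunken balls, then upgrade to real analyticity by establishing factorial-type bounds on all derivatives.

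First I would set up an interior elliptic estimate tailored to the setting. Because $\sE$ is constant-coefficient, first-order, and (over-determined) elliptic with $m_2\geq m_1$, the Fourier multiplier $(\sE(\xi)^\ast\sE(\xi))^{-1}\sE(\xi)^\ast$ provides a left inverse of order $-1$. Via the standard cutoff-and-parametrix argument this gives, for any $0<r'<r\leq R$ and $s>0$,
\begin{equation*}
\ZygNorm{u}{s+1}[B_{\R^n}(r')] \lesssim \ZygNorm{\sE u}{s}[B_{\R^n}(r)] + \CNorm{u}{B_{\R^n}(r)},
\end{equation*}
with constants depending only on $\sE$, $s$, $r$, $r'$, $n$.

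Second, I would run the bootstrap. Differentiating \cref{Eqn::AppendRealAnal::MainEqn} yields, for $c=\partial_j b$, the equation $\sE c = \Gamma(c,\nabla b)+\Gamma(b,\nabla c)$. Using the algebra property (\cref{Prop::FuncSpaceRev::Algebra}) together with the interior estimate on a nested pair of balls, one obtains a bound of the form $\ZygNorm{c}{s}[B(r')] \lesssim \ZygNorm{b}{s_0}[B(R)]\,\ZygNorm{c}{s}[B(r)] + \ZygNorm{b}{s_0}[B(R)]\,\ZygNorm{\nabla c}{s-1}[B(r)] + \CNorm{c}{B(r)}$; here the factor $\ZygNorm{b}{s_0}[B(R)]\leq\gamma$ multiplies the top-order term, so for $\gamma$ sufficiently small it can be absorbed into the left-hand side after re-applying the elliptic estimate with a further slightly shrunken ball. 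This closes an induction on $s$ that yields $b\in\ZygSpaceloc{\infty}[B_{\R^n}(R)]$ with $\ZygNorm{b}{s}[B(R')]\leq C_{s,R'}$ for every $s$ and every $R'<R$; all constants depend only on $\sE$, $\Gamma$, $R$, $R'$, $s$.

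Third, and this is the main obstacle, I would prove geometric growth of derivatives: there exist $r_0,M,\rho>0$ depending only on $\sE$, $\Gamma$, $R$, $s_0$ such that
\begin{equation*}
\sup_{x\in B_{\R^n}(r_0)}|\partial^\alpha b(x)|\leq M\,\rho^{-|\alpha|}\,|\alpha|!,\qquad \forall\,\alpha\in\N^n.
\end{equation*}
The induction is on $|\alpha|$. Applying $\partial^\alpha$ to \cref{Eqn::AppendRealAnal::MainEqn} and Leibniz gives $\sE\partial^\alpha b = \sum_{\beta\leq\alpha}\binom{\alpha}{\beta}\Gamma(\partial^\beta b,\partial^{\alpha-\beta}\nabla b)$. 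Applying the interior elliptic estimate on a shrinking family of concentric balls $B_k=B(r_0(1+(N-k)/N))$ (with $N=|\alpha|$) and invoking the inductive hypothesis on each factor, each term of the sum is dominated by $M^2\binom{\alpha}{\beta}\rho^{-|\alpha|-1}|\beta|!(|\alpha|-|\beta|+1)!$. The combinatorial identity $\sum_{\beta\leq\alpha}\binom{\alpha}{\beta}|\beta|!(|\alpha|-|\beta|+1)!\leq C_n(|\alpha|+1)!$ closes the induction after choosing $\rho$ small and $M$ large in terms of the admissible constants; the delicate point is balancing the finite ball-shrinkage budget across all $N$ steps against the derivative losses from Cauchy's estimate applied to pass from $\ZygSpace{s+1}$-norms to pointwise derivative bounds.

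Finally, the factorial bound is exactly the statement that the Taylor series of $b$ at $0$ converges absolutely on $B_{\C^n}(\eta_0)$ for any $\eta_0<\rho/n$, with $\sBNorm{b}{n}{m_1}{\eta_0}\leq M\sum_\alpha (\eta_0/\rho)^{|\alpha|}\leq C$. Choosing $\eta_0$ admissibly small yields both $b\in\sBSpace{n}{m_1}{\eta_0}$ and the stated norm bound, completing the proof.
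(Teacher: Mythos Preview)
Your overall strategy---bootstrap to $C^\infty$, then prove factorial derivative bounds by induction---is a classical and valid route to real analyticity for nonlinear elliptic equations, but it is \emph{not} the route the paper takes, and your execution of the third step has a genuine gap.

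\textbf{The gap.} The combinatorial inequality you invoke,
\[
\sum_{\beta\leq\alpha}\binom{\alpha}{\beta}|\beta|!\,(|\alpha|-|\beta|+1)!\leq C_n(|\alpha|+1)!,
\]
is false. Already in dimension one with $|\alpha|=N$ the left side equals $\tfrac{1}{2}(N+2)!$, so the ratio to $(N+1)!$ grows like $N/2$. This is exactly the well-known obstruction: the sequence $a_k=M\rho^{-k}k!$ is not closed under the Leibniz convolution without an extra polynomial loss. Your induction therefore does not close as written. The standard repairs are either to strengthen the inductive hypothesis to something like $|\partial^\alpha b|\leq M\rho^{-|\alpha|}|\alpha|!/(|\alpha|+1)^2$ (the extra decay absorbs the polynomial loss), or to abandon pointwise derivative bounds altogether in favor of a majorant-series argument, or to run the induction in weighted H\"older norms on the full family of shrinking balls simultaneously (as in Friedman or Morrey--Nirenberg). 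You also need to separate out the top-order terms $\beta=0$ and $\beta=\alpha$ from the Leibniz sum and absorb them using the smallness of $\gamma$, which your sketch does not do explicitly.

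\textbf{Comparison with the paper.} The paper avoids the derivative-counting entirely. Following Morrey, it introduces a Banach space $\DSpace{n}{m_1}{R/2}{h}{2+\mu}$ of functions admitting a holomorphic extension to a wedge domain, constructs a parametrix $\sP$ for $\sE^*\sE$ that is bounded on \emph{both} the ordinary Zygmund space and this holomorphic space, and writes $b=H+V_0$ where $\sE^*\sE H=0$ (so $H$ is automatically real analytic with controlled holomorphic norm). The map $\sT(V)=\sP\sE^*\Gamma(H+V,\nabla(H+V))$ is then shown to be a strict contraction on small balls in \emph{both} spaces, with contraction constant $O(\gamma)$; uniqueness of the fixed point forces $V_0$ to lie in the holomorphic space. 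This buys the analytic extension directly, with no induction on derivative order and no combinatorics. Your approach, once repaired, has the advantage of being more self-contained (no need to invoke Morrey's Theorems A--C as a black box), but the paper's fixed-point argument is considerably cleaner for extracting the quantitative bound $\sBNorm{b}{n}{m_1}{\eta_0}\leq C$ with admissible dependence.
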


We outline a proof of \cref{Prop::AppendRealAnal::MainProp} by following the proof from \cite{MorreyOnTheAnalyticityOfTheSolutionsOfAnalyticNonLinearEllipticSystems},
which becomes somewhat simpler in this special case and is therefore easier to extract the needed quantitative estimates.
In what follows, we write $A\lesssim B$ to mean $A\leq CB$, where $C$ can be chosen to depend only on $\sE$, $\Gamma$, $R$, and $s_0$.
Throughout the rest of this section, we take the setting of \cref{Prop::AppendRealAnal::MainProp}; in particular, we are given a solution
$b\in \ZygSpace{s_0}[B_{\R^n}(R)][\C^{m_1}]$ to \cref{Eqn::AppendRealAnal::MainEqn} as in that proposition.  Our goal is to pick $\gamma$ and $\eta_0$
so that the conclusions of the proposition hold.

Without loss of generality, by possibly shrinking $s_0$, we may assume $s_0=1+\mu$, where $\mu\in (0,1)$.  Thus, the space
$\ZygSpace{s_0}[B]$ coincides with the H\"older space $\HSpace{1}{\mu}[B]$ for any ball $B$,\footnote{That $\ZygSpace{1+\mu}(\Omega)=\HSpace{1}{\mu}(\Omega)$ for a bounded, Lipschitz domain $\Omega$ (and $\mu\ne 0,1$) is classical
and follows easily from \cite[Theorem 1.118 (i)]{TriebelTheoryOfFunctionSpacesIII}.}
 which allows us to use the results from \cite{MorreyOnTheAnalyticityOfTheSolutionsOfAnalyticNonLinearEllipticSystems}
which deal with H\"older spaces.  For the rest of the section, we continue to use the notation $\ZygSpace{j+\mu}$ for $j\in \N$,
but (just in this section) the reader is free to interpret it either as $\ZygSpace{j+\mu}$ or $\HSpace{j}{\mu}$; indeed in this section we only deal with
$\mu\in (0,1)$ fixed and $\ZygSpace{j+\mu}[\Omega]$, $\HSpace{j}{\mu}[\Omega]$ for bounded Lipschitz domains $\Omega$ in which case
these two spaces have equivalent norms.

First we need a quantitative version of the classical fact that the solution $b$ is smooth.  This is discussed in an appendix to \cite{StovallStreetII}.
There it is shown that $\exists \gamma_1=\gamma_1(\sE,\Gamma)>0$ such that if $\ZygNorm{b}{1+\mu}[B_{\R^n}(R)]\leq \gamma_1$,
then $b\in \ZygSpace{2+\mu}[B_{\R^n}(R/2)][\C^{m_1}]$ with $\ZygNorm{b}{2+\mu}[B_{\R^n}(R/2)]\lesssim \ZygNorm{b}{1+\mu}[B_{\R^n}(R)]$.
We will choose $\gamma\leq \gamma_1$, so we may henceforth assume $b\in \ZygSpace{2+\mu}[B_{\R^n}(R/2)][\C^{m_1}]$ with $\ZygNorm{b}{2+\mu}[B_{\R^n}(R/2)]\lesssim \gamma$.

For $\eta,h>0$, set $D^n(\eta;h):=\{x+iy : x,y\in \R^n, |x|<\eta, |y|<h(\eta-|x|)\}$ and set, for $s>0$,
\begin{equation*}
\DSpace{n}{m_1}{\eta}{h}{s}:=
\left\{
f:B_{\R^n}(\eta)\rightarrow \C^{m_1}\: \big|\:
f\text{ is real analytic and extends to a holomorphic function }E(f)\in \ZygSpace{s}[D^n(\eta;h)][\C^{m_1}]
\right\}.
\end{equation*}
With the norm
\begin{equation*}
\DNorm{f}{n}{m_1}{\eta}{h}{s}:=\ZygNorm{E(f)}{s}[D^n(\eta;h)][\C^{m_1}],
\end{equation*}
$\DSpace{n}{m_1}{\eta}{h}{s}$ is a Banach space.

\begin{lemma}\label{Lemma::AppendReal::ExistsC1}
There exists a bounded linear map
\begin{equation*}
\sP:\ZygSpace{\mu}[B_{\R^n}(R/2)][\C^{m_1}]\rightarrow \ZygSpace{2+\mu}[B_{\R^n}(R/2)][\C^{m_1}]
\end{equation*}
such that $\sE^{*}\sE \sP=I$ and $\exists h=h(\sE,R)>0$ such that $\sP$ restricts to a bounded map
\begin{equation*}
\sP:\DSpace{n}{m_1}{R/2}{h}{\mu}\rightarrow \DSpace{n}{m_1}{R/2}{h}{2+\mu}
\end{equation*}
and such that if we set $V_0:=\sP \sE^{*} \Gamma(b,\grad b)$ and $H:=b-V_0$,
then $\ZygNorm{V_0}{2+\mu}[B_{\R^n}(R/2)][\C^{m_1}]\leq C_1 \gamma$,
$\ZygNorm{H}{2+\mu}[B_{\R^n}(R/2)][\C^{m_1}]\leq C_1 \gamma$,
and $H\in \DSpace{n}{m_1}{R/2}{h}{2+\mu}$ with $\DNorm{H}{n}{m_1}{R/2}{h}{2+\mu}\leq C_1 \gamma$.
Here, $C_1=C_1(\sE, \Gamma, R, s_0)>0$.
\end{lemma}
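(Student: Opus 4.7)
The plan is to construct $\sP$ as a fundamental-solution parametrix for the constant-coefficient second-order elliptic operator $\sE^{*}\sE$, and then to exploit that $H = b - \sP\sE^{*}\Gamma(b,\grad b)$ satisfies the homogeneous equation $\sE^{*}\sE H = 0$, so that $H$ is the piece that must be shown to extend holomorphically.

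First I construct $\sP$. Since $\sE^{*}\sE$ is a constant-coefficient elliptic operator of order two acting on $\C^{m_1}$-valued functions, it admits a matrix-valued fundamental solution $K$ which is real-analytic on $\R^{n}\setminus\{0\}$ and extends holomorphically off its complex characteristic variety. Fix a bounded extension operator $\Extend:\ZygSpace{\mu}[B_{\R^n}(R/2)]\to \ZygSpace{\mu}[\R^n]$ whose range is supported in $B_{\R^n}(R)$, and set $\sP f := K\ast \Extend(f)|_{B_{\R^n}(R/2)}$. Then $\sE^{*}\sE\sP f = f$ on $B_{\R^n}(R/2)$ by construction, and classical Calder\'on--Zygmund/Schauder estimates give the bounded mapping $\ZygSpace{\mu}\to\ZygSpace{2+\mu}$. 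For the second mapping property, I would choose $\Extend$ of Stein/Whitney type that is compatible with holomorphic extensions to complex strips, and then shift the contour of convolution against the holomorphic continuation of $K$ into $\C^{n}$; the parameter $h=h(\sE,R)$ is chosen small enough that this contour avoids the complex characteristic variety of $K$.

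Next I estimate $V_0$. By the running hypothesis $\ZygNorm{b}{2+\mu}[B_{\R^n}(R/2)]\lesssim \gamma$, and since $\ZygSpace{1+\mu}$ is a Banach algebra by \cref{Prop::FuncSpaceRev::Algebra}, the bilinear expression $\Gamma(b,\grad b)$ lies in $\ZygSpace{1+\mu}[B_{\R^n}(R/2)]$ with norm $\lesssim \gamma^{2}$. Applying the first-order operator $\sE^{*}$ loses one derivative, giving $\sE^{*}\Gamma(b,\grad b)\in \ZygSpace{\mu}$ with norm $\lesssim \gamma^{2}$, and the boundedness of $\sP$ then yields $\ZygNorm{V_0}{2+\mu}[B_{\R^n}(R/2)]\lesssim \gamma^{2}\leq C_{1}\gamma$ after requiring $\gamma\leq 1$. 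The bound on $H=b-V_0$ in $\ZygSpace{2+\mu}[B_{\R^n}(R/2)]$ then follows from the triangle inequality.

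Finally, $\sE^{*}\sE\sP = I$ combined with the PDE $\sE b=\Gamma(b,\grad b)$ gives
\[
\sE^{*}\sE H \;=\; \sE^{*}\sE b - \sE^{*}\sE V_0 \;=\; \sE^{*}\Gamma(b,\grad b) - \sE^{*}\Gamma(b,\grad b) \;=\; 0
\]
on $B_{\R^n}(R/2)$, so $H$ is a null solution of the constant-coefficient elliptic operator $\sE^{*}\sE$. Standard interior Cauchy estimates for such operators give
\[
|\partial^{\alpha}H(x)|\;\leq\; C^{|\alpha|+1}|\alpha|!\,(R/2-|x|)^{-|\alpha|}\CNorm{H}{B_{\R^n}(R/2)}
\]
for $x\in B_{\R^n}(R/2)$, with $C$ depending only on $\sE$. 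This implies convergence of the Taylor series of $H$ at each such $x$ on a complex ball of radius $(R/2-|x|)/(Ce)$; taking $h:=(Ce)^{-1}$ yields the desired holomorphic extension of $H$ to $D^{n}(R/2;h)$, and the $\ZygSpace{2+\mu}$ bound on the extension follows from the same Cauchy estimates combined with the $\ZygSpace{2+\mu}$ bound on $H$ already established. The main technical hurdle is ensuring that the single operator $\sP$ is simultaneously bounded on the real Zygmund scale and from $\DSpace{n}{m_1}{R/2}{h}{\mu}$ to $\DSpace{n}{m_1}{R/2}{h}{2+\mu}$; this reduces to choosing $\Extend$ and $K$ to cooperate on data that already extends holomorphically to $D^{n}(R/2;h)$, which is delicate but classical for constant-coefficient elliptic operators.
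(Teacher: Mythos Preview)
Your proposal is correct and matches the approach the paper invokes: the paper gives only brief ``Comments on the proof'' referring to Theorems A, B, and C of Morrey, noting that $\sE^{*}\sE H = 0$ and that in the case of interest $\sE^{*}\sE$ is essentially the Laplacian. Your sketch (fundamental-solution parametrix for $\sE^{*}\sE$, Schauder bounds for the real Zygmund mapping property, contour deformation for the $\DSpace{n}{m_1}{R/2}{h}{\mu}$ mapping property, and interior Cauchy estimates for the null solution $H$) is precisely what those cited theorems provide, so you have in fact supplied more detail than the paper itself.
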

\begin{proof}[Comments on the proof]
This is essentially a special case of Theorems A, B, and C of \cite{MorreyOnTheAnalyticityOfTheSolutionsOfAnalyticNonLinearEllipticSystems}; here we are applying these theorems
to the elliptic operator $\sE^{*}\sE$ and using that $\sE^{*}\sE H=0$ by the definitions.  In \cite{MorreyOnTheAnalyticityOfTheSolutionsOfAnalyticNonLinearEllipticSystems}, these theorems
were stated on the subspace of functions which vanish at $0$, though this is not an essential point.  Moreover, in the special case we are interested in, $\sE^{*}\sE$ is essentially the Laplacian (see \cref{Eqn::AppendElliptic::ComputeSquare} for $\sE^{*} \sE$ in the case we are interested in).  In this case, the above result follows from standard methods.
\end{proof}

Define $\sT(V):=\sP\sE^{*} \Gamma(H+V,\grad(H+V))$; by the definition of $V_0$, $\sT(V_0)=V_0$.

\begin{lemma}\label{Lemma::AppendRealAnal::ChooseGamma}
Let $C_1>0$ be as in \cref{Lemma::AppendReal::ExistsC1}.  If $\gamma=\gamma(\sE,\Gamma,R,s_0)>0$ is sufficiently small
and $\ZygNorm{V_1}{2+\mu}[B_{\R^n}(R/2)][\C^{m_1}], \ZygNorm{V_2}{2+\mu}[B_{\R^n}(R/2)][\C^{m_1}]\leq C_1\gamma$
then,
\begin{equation*}
\ZygNorm{\sT(V_1)}{2+\mu}[B_{\R^{n}}(R/2)][\C^{m_1}]\leq C_1 \gamma,\quad
\ZygNorm{\sT(V_1)-\sT(V_2)}{2+\mu}[B_{\R^n}(R/2)][\C^{m_1}] \leq \frac{1}{2} \ZygNorm{V_1-V_2}{2+\mu}[B_{\R^n}(R/2)][\C^{m_1}].
\end{equation*}
The same results holds for $\ZygSpace{2+\mu}[B_{\R^n}(R/2)][\C^{m_1}]$ replaced by $\DSpace{n}{m_1}{R/2}{h}{2+\mu}$, throughout.
\end{lemma}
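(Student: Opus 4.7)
The plan is to read $\sT$ as $V \mapsto \sP \sE^{*} \Gamma(H+V, \grad(H+V))$ and bound the three composed operations separately: $\sP$ gains two derivatives (from $\ZygSpace{\mu}$ to $\ZygSpace{2+\mu}$) by \cref{Lemma::AppendReal::ExistsC1}, the first-order operator $\sE^{*}$ loses one derivative (from $\ZygSpace{1+\mu}$ to $\ZygSpace{\mu}$), and the bilinear nonlinearity $\Gamma(\cdot,\grad\cdot)$ maps $\ZygSpace{2+\mu}\times \ZygSpace{2+\mu}$ into $\ZygSpace{1+\mu}$ because $\ZygSpace{1+\mu}$ is an algebra (\cref{Prop::FuncSpaceRev::Algebra}) and $\grad$ costs one derivative. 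Composing these with the operator norm of $\sP$ from \cref{Lemma::AppendReal::ExistsC1} yields a constant $K=K(\sE,\Gamma,R,s_0)$ with
\begin{equation*}
\ZygNorm{\sT(V)}{2+\mu}[B_{\R^n}(R/2)] \leq K \ZygNorm{H+V}{2+\mu}[B_{\R^n}(R/2)]^{2}.
\end{equation*}

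For the first bound, I would use $\ZygNorm{H}{2+\mu}\leq C_1\gamma$ (from \cref{Lemma::AppendReal::ExistsC1}) and the hypothesis $\ZygNorm{V_1}{2+\mu}\leq C_1\gamma$ to get $\ZygNorm{\sT(V_1)}{2+\mu}\leq K(2C_1\gamma)^{2} = 4KC_1^{2}\gamma^{2}$, and demand $4KC_1\gamma \leq 1$, i.e. $\gamma \leq (4KC_1)^{-1}$. For the Lipschitz bound, I would exploit bilinearity of $\Gamma$ to split
\begin{equation*}
\Gamma(H+V_1,\grad(H+V_1)) - \Gamma(H+V_2,\grad(H+V_2)) = \Gamma(H+V_1,\grad(V_1-V_2)) + \Gamma(V_1-V_2,\grad(H+V_2)),
\end{equation*}
apply $\sP\sE^{*}$, and estimate via the algebra property:
\begin{equation*}
\ZygNorm{\sT(V_1)-\sT(V_2)}{2+\mu} \leq K\bigl(\ZygNorm{H+V_1}{2+\mu}+\ZygNorm{H+V_2}{2+\mu}\bigr)\ZygNorm{V_1-V_2}{2+\mu} \leq 4KC_1\gamma\,\ZygNorm{V_1-V_2}{2+\mu}.
\end{equation*}
Requiring additionally $4KC_1\gamma\leq 1/2$, that is $\gamma\leq (8KC_1)^{-1}$, gives the contraction factor $1/2$. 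Taking $\gamma$ to be the minimum of the two thresholds (and no larger than $\gamma_1$ from the earlier regularity step) yields both conclusions.

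For the second assertion, concerning the holomorphic-extension space $\DSpace{n}{m_1}{R/2}{h}{2+\mu}$, I would repeat the same argument verbatim: \cref{Lemma::AppendReal::ExistsC1} already provides boundedness of $\sP:\DSpace{n}{m_1}{R/2}{h}{\mu}\to\DSpace{n}{m_1}{R/2}{h}{2+\mu}$, the derivative $\sE^{*}$ acts on holomorphic extensions just as it does on their real restrictions (commuting with the complex structure), and $\DSpace{n}{m_1}{R/2}{h}{1+\mu}$ inherits the algebra property since $D^n(R/2;h)$ is a bounded Lipschitz domain and products of holomorphic extensions are holomorphic. The same constants $K,C_1,\gamma$ (possibly enlarged in a way still depending only on $\sE,\Gamma,R,s_0$) work simultaneously.

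The hard part will really just be bookkeeping: fixing the correct algebra/composition constant $K$ from \cref{Prop::FuncSpaceRev::Algebra} together with the operator norm of $\sP$ delivered by \cref{Lemma::AppendReal::ExistsC1}, and ensuring that the same threshold for $\gamma$ serves both the invariance of the ball of radius $C_1\gamma$ and the $\tfrac12$-Lipschitz condition. There is no genuine analytic difficulty beyond the linearity of $\sP,\sE^{*}$ and the bilinearity of $\Gamma$; the estimate is essentially forced once one writes out the telescoping decomposition for $\sT(V_1)-\sT(V_2)$.
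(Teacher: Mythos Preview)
Your proposal is correct and follows essentially the same approach as the paper: bound $\sT(V_1)$ by chaining the algebra estimate for $\Gamma(\cdot,\grad\cdot)$ (\cref{Prop::FuncSpaceRev::Algebra}) with the mapping property of $\sP$ from \cref{Lemma::AppendReal::ExistsC1}, then use the bilinear telescoping identity to handle $\sT(V_1)-\sT(V_2)$, and repeat verbatim in $\DSpace{n}{m_1}{R/2}{h}{2+\mu}$. The paper is slightly terser about the constant bookkeeping and uses the alternate (equivalent) telescoping $\Gamma(V_1-V_2,\grad(H+V_1))+\Gamma(H+V_2,\grad(V_1-V_2))$, but there is no substantive difference.
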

\begin{proof}
Since $\ZygNorm{V_1}{2+\mu}[B_{\R^{n}}(R/2)][\C^{m_1}]\leq C_1 \gamma$ and $\ZygNorm{H}{2+\mu}[B_{\R^n}(R/2)][\C^{m_1}]\leq C_1 \gamma$, it
follows from \cref{Prop::FuncSpaceRev::Algebra} that
$\ZygNorm{\Gamma(H+V_1,\grad(H+V_1))}{1+\mu}[B_{\R^n}(R/2)][\C^{m_1}]\lesssim (C_1 \gamma)^2$.  \Cref{Lemma::AppendReal::ExistsC1} implies
$\ZygNorm{\sT(V_1)}{2+\mu}[B_{\R^n}(R/2)][\C^{m_1}]\lesssim (C_1 \gamma)^2$; and so if $\gamma$ is sufficiently small it follows that
$\ZygNorm{\sT(V_1)}{2+\mu}[B_{\R^n}(R/2)][\C^{m_1}]\leq C_1 \gamma$.
Similarly, again using  \cref{Prop::FuncSpaceRev::Algebra}, we have
\begin{equation*}
\ZygNorm{\Gamma(V_1-V_2, \grad(H+V_1))}{1+\mu}[B_{\R^n}(R/2)][\C^{m_1}], \ZygNorm{\Gamma(H+V_2, \grad(V_1-V_2))}{1+\mu}[B_{\R^n}(R/2)][\C^{m_1}]\lesssim \gamma \ZygNorm{V_1-V_2}{2+\mu}[B_{\R^n}(R/2)][\C^{m_1}].
\end{equation*}
Since $\sT(V_1)-\sT(V_2)=\sP \sE^{*} ( \Gamma(V_1-V_2, \grad(H+V_1)) - \Gamma(H+V_2, \grad(V_1-V_2)) )$ it follows from \cref{Lemma::AppendReal::ExistsC1} that
\begin{equation*}
\ZygNorm{\sT(V_1)-\sT(V_2)}{2+\mu}[B_{\R^n}(R/2)][\C^{m_1}] \lesssim \gamma  \ZygNorm{V_1-V_2}{2+\mu}[B_{\R^n}(R/2)][\C^{m_1}].
\end{equation*}
Taking $\gamma$ sufficiently small, we have
\begin{equation*}
\ZygNorm{\sT(V_1)-\sT(V_2)}{2+\mu}[B_{\R^n}(R/2)][\C^{m_1}] \leq \frac{1}{2}  \ZygNorm{V_1-V_2}{2+\mu}[B_{\R^n}(R/2)][\C^{m_1}],
\end{equation*}
as desired.
The same proof works with $\ZygSpace{j+\mu}[B_{\R^n}(R/2)][\C^{m_1}]$ replaced by $\DSpace{n}{m_1}{R/2}{h}{j+\mu}$, throughout.
\end{proof}

\begin{proof}[Proof of \cref{Prop::AppendRealAnal::MainProp}]
By taking $\gamma>0$ sufficiently small, as in \cref{Lemma::AppendRealAnal::ChooseGamma}, we see that $V_0$ is the unique fixed point of the strict
contraction $\sT$, acting on the complete metric space $\{ V: \ZygNorm{V}{2+\mu}[B_{\R^{n}}(R/2)][\C^{m_1}]\leq C_1 \gamma\}$.
This fixed point agrees with the fixed point of $\sT$ when acting on the complete metric space
$\{ V : \DNorm{V}{n}{m_1}{R/2}{h}{2+\mu}\leq C_1 \gamma\}$ (on which is it a strict contraction by \cref{Lemma::AppendRealAnal::ChooseGamma}).
We conclude $\DNorm{V_0}{n}{m_1}{R/2}{h}{2+\mu}\leq C_1 \gamma\lesssim 1$.
Since $\DNorm{H}{n}{m_1}{R/2}{h}{2+\mu}\leq C_1 \gamma\lesssim 1$, by
\cref{Lemma::AppendReal::ExistsC1}, and since $b=H+V_0$, we have
$\DNorm{b}{n}{m_1}{R/2}{h}{2+\mu}\leq 2C_1 \gamma\lesssim 1$.  Taking $\eta_0=\eta_0(R/2,h)>0$ sufficiently small we have $B_{\R^n}(\eta_0)\subseteq D^n(R/2;h)$ and therefore,
\begin{equation*}
\sBNorm{b}{n}{m_1}{\eta_0}\leq  \DNorm{b}{n}{m_1}{R/2}{h}{2+\mu}\lesssim 1,
\end{equation*}
completing the proof.
\end{proof}

	
	\subsection{Existence for a Nonlinear Elliptic Equation}
Fix $D>0$, $m_1,m_2\in \N$.  For functions $A:B_{\R^n}(D)\rightarrow \C^{m_1}$ and $B:B_{\R^n}(D)\rightarrow \C^{m_2}$ we write
\begin{equation*}
\Deriv^1 A=(\partial_x^{\alpha} A)_{|\alpha|\leq 1}, \quad \Deriv^2 B=(\partial_x^{\alpha} B)_{|\alpha|\leq 2}, \quad \Deriv_2 B=(\partial_x^{\alpha} B)_{|\alpha|=2},
\end{equation*}
so that, for example, $\Deriv^2 B$ is the vector of all partial derivatives of $B$ up to order $2$, and $\Deriv_2 B$ is the vector of all
partial derivatives of order exactly $2$.

Fix a $C^\infty$ function $g$.  We consider the equation
\begin{equation}\label{Eqn::AppendExist::MainEqn}
g(\Deriv^1 A(x), \Deriv^2 B(x))=0.
\end{equation}
Here, $g$ is a $C^\infty$ function defined on a neighborhood of the origin, takes values in $\C^{m_2}$, and satisfies $g(0,0)=0$.
Our goal is to give conditions on $g$ so that given $A$ (sufficiently small), we can find $B=B(A)$ so  that \cref{Eqn::AppendExist::MainEqn} holds;
and we wish to further understand how the regularity of $B$ depends on the regularity of $A$, in a quantitative way.

Though it is not necessary for the results which follow, we assume \cref{Eqn::AppendExist::MainEqn} is quasilinear in $B$, which is
sufficient for our purposes and simplifies the proof.  That is, we assume
\begin{equation}\label{Eqn::AppendExist::Quasi}
g(\Deriv^1 A(x), \Deriv^2 B(x)) = g_1(A(x), \Deriv^1 B(x)) \Deriv_2 B(x) + g_2(\Deriv^1 A(x), \Deriv^1 B(x)),
\end{equation}
where $g_1$ and $g_2$ are smooth on a neighborhood of the origin, $g_1$ takes values in matrices of an appropriate size, and $g_2(0,0)=0$.

Finally, let $\sE_2$ denote the second order partial differential operator
\begin{equation*}
\sE_2 B := g_1(0,0) \Deriv_2 B,
\end{equation*}
so that $\sE_2$ is an $m_2\times m_2$ matrix of constant coefficient partial differential operators of order $\leq 2$.

\begin{prop}\label{Prop::AppendExist::MainProp}
Suppose $\sE_2$ is elliptic.  Fix $s_0>0$ and a neighborhood $N\subseteq \ZygSpace{2+s_0}[B_{\R^n}(D)][\C^{m_2}]$ of $0$.
Then, there exists a neighborhood $W\subseteq \ZygSpace{1+s_0}[B_{\R^n}(D)][\C^{m_2}]$ of $0$ and a map
$\BofA:W\rightarrow N$ such that
$g(\Deriv^1 A(x), \Deriv^2 \BofA(A)(x)) =0$ for $x\in B^n(D)$, $A\in W$.
This map satisfies $\Deriv^1 \BofA(A)(0)=0$, $\forall A\in W$, and
$\ZygNorm{\BofA(A)}{2+s_0}[B_{\R^n}(D)][\C^{m_2}]\leq C \ZygNorm{A}{1+s_0}[B_{\R^n}(D)][\C^{m_1}],$
where $C$ does not depend on $A\in W$.  Finally, for $\eta\in (0,D)$, let $R_\eta$ denote the restriction map
$R_\eta: f\mapsto f\big|_{B_{\R^n}(\eta)}$.  Then, for $s\geq s_0$, $\eta\in (0,D)$,
$R_\eta \circ \BofA : \ZygSpace{1+s}[B_{\R^n}(D)][\C^{m_1}]\cap W \rightarrow \ZygSpace{2+s}[B_{\R^n}(\eta)][\C^{m_2}],$
and 
$\ZygNorm{R_\eta\circ \BofA(A)}{2+s}[B_{\R^n}(\eta)][\C^{m_2}]\leq C_{s,\eta},$
where $C_{s,\eta}$ can be chosen to depend on an upper bound for $\ZygNorm{A}{1+s}[B_{\R^n}(D)][\C^{m_1}]$ and does not
depend on $A\in W$ in any other way.  It can depend on any of the other ingredients in the problem.
\end{prop}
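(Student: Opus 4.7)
The proof plan is to recast \cref{Eqn::AppendExist::MainEqn} as a semilinear elliptic problem solved by the contraction mapping principle, followed by an elliptic bootstrap to obtain the higher regularity. Using the quasilinear structure \cref{Eqn::AppendExist::Quasi}, the equation is equivalent to
\begin{equation*}
\sE_2 B = F(A,B), \quad F(A,B) := -[g_1(A,\Deriv^1 B) - g_1(0,0)]\Deriv_2 B - g_2(\Deriv^1 A, \Deriv^1 B).
\end{equation*}
The first step is to invoke standard elliptic theory for the constant-coefficient elliptic system $\sE_2$ on $B_{\R^n}(D)$ (via a parametrix, a Dirichlet-type boundary value problem, or convolution with a fundamental solution followed by localization) to produce a bounded right inverse $\sP : \ZygSpace{s_0}[B_{\R^n}(D)][\C^{m_2}] \to \ZygSpace{2+s_0}[B_{\R^n}(D)][\C^{m_2}]$ satisfying $\sE_2 \sP = I$, and normalized so that $(\sP f)(0) = 0$ and $d(\sP f)(0) = 0$ (this is permissible because affine functions lie in the kernel of $\sE_2$ and can be subtracted off).

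The second step is the fixed point. Define $\Phi_A(B) := \sP F(A,B)$. Since $\ZygSpace{s_0}[B_{\R^n}(D)]$ is an algebra (\cref{Prop::FuncSpaceRev::Algebra}) and compositions with smooth functions vanishing at the origin are controlled (using $g_1$ smooth and $g_2$ smooth with $g_2(0,0) = 0$), one checks
\begin{equation*}
\|F(A,B)\|_{\ZygSpace{s_0}} \lesssim \bigl(\|A\|_{\ZygSpace{1+s_0}} + \|B\|_{\ZygSpace{2+s_0}}\bigr)^2,
\end{equation*}
with an analogous Lipschitz estimate in $B$ whose constant tends to $0$ as $(A,B) \to 0$. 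Therefore, for $A$ in a sufficiently small neighborhood $W \subseteq \ZygSpace{1+s_0}[B_{\R^n}(D)][\C^{m_2}]$ of $0$, the map $\Phi_A$ is a strict contraction on the closed ball $\{B : \|B\|_{\ZygSpace{2+s_0}} \leq C_0 \|A\|_{\ZygSpace{1+s_0}}\}$ for an appropriate constant $C_0$, and its unique fixed point defines $\BofA(A)$. By the normalization of $\sP$ the relation $\Deriv^1 \BofA(A)(0) = 0$ holds, and by shrinking $W$ we can ensure $\BofA(A) \in N$.

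The third step is the bootstrap. For $s \geq s_0$ and $\eta \in (0,D)$, write the equation as a variable-coefficient linear elliptic equation for $B = \BofA(A)$:
\begin{equation*}
g_1(A,\Deriv^1 B)\,\Deriv_2 B = -g_2(\Deriv^1 A, \Deriv^1 B),
\end{equation*}
whose leading coefficient $g_1(A,\Deriv^1 B)$ is a small $\ZygSpace{1+s_0}$ perturbation of the elliptic matrix $g_1(0,0)$, hence still elliptic. Apply an interior Schauder-type estimate for elliptic systems in Zygmund spaces on nested balls $B_{\R^n}(\eta') \Subset B_{\R^n}(\eta'')$: if the coefficients lie in $\ZygSpace{\alpha}$ and the right-hand side in $\ZygSpace{\beta}$ with $\alpha \geq \beta \geq 0$, then $B \in \ZygSpace{2+\beta}$ on the smaller ball. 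Iterating, the regularity of $\Deriv^1 B$ and of $\Deriv^1 A$ feeds back into both the coefficient and the right-hand side, and after finitely many steps (each gaining at least $\min(1, s - s_0)$ derivatives on a slightly smaller ball) one reaches $B \in \ZygSpace{2+s}[B_{\R^n}(\eta)]$ with the stated quantitative bounds.

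The main obstacle is the bootstrap. A naive iteration of $\sE_2 B = F(A,B)$ fails because the term $[g_1(A, \Deriv^1 B) - g_1(0,0)]\Deriv_2 B$ has one factor of low regularity ($\Deriv_2 B$), so Zygmund multiplication only produces a right-hand side in the lower space and one does not gain. The cure is to absorb this term into the left-hand side as a variable coefficient operator and apply the Schauder estimate for elliptic systems with non-smooth coefficients; this gives a genuine gain of two derivatives per iteration, after which the induction closes cleanly. Choosing a precise formulation of the Schauder estimate that is uniform in the shrinking of the ball (so that finitely many iterations cover the range $[s_0, s]$ while still producing an estimate on $B_{\R^n}(\eta)$ for any prescribed $\eta < D$) is the only technical point that requires care.
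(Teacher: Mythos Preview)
The paper does not prove this proposition; it simply cites the companion paper \cite{StovallStreetII}. So there is no argument here to compare your outline against, and your plan is the natural one. The bootstrap step is fine as written. However, there is a genuine gap in the fixed-point step.

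Your displayed estimate $\|F(A,B)\|_{\ZygSpace{s_0}} \lesssim \bigl(\|A\|_{\ZygSpace{1+s_0}} + \|B\|_{\ZygSpace{2+s_0}}\bigr)^2$ and the claim that the $B$-Lipschitz constant of $\Phi_A$ tends to zero are both false in the stated generality. The hypothesis is only $g_2(0,0)=0$; nothing prevents $g_2$ from having a nonzero linear part $L_1 B := (\partial_{\Deriv^1 B} g_2)(0,0)\,\Deriv^1 B$. That term contributes $\sP L_1$ to the Lipschitz constant of $\Phi_A$, and $\|\sP L_1\|_{\ZygSpace{2+s_0}\to\ZygSpace{2+s_0}}$ has no reason to be small. (A nonzero linear part of $g_2$ in $\Deriv^1 A$ also kills the quadratic estimate, but that is harmless for the contraction: it only forces you to run the argument on a ball of radius $C_0\|A\|_{1+s_0}$ rather than $C_0\|A\|_{1+s_0}^2$, and you still get $\|\BofA(A)\|\lesssim\|A\|$.)

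The fix is standard: build $\sP$ as a right inverse of the \emph{full} $B$-linearization $\sE_2 + L_1$ rather than of $\sE_2$ alone; this operator has the same principal symbol, so it is still elliptic and admits a bounded right inverse $\ZygSpace{s_0}\to\ZygSpace{2+s_0}$. Subtracting $L_1 B$ from $F$ then leaves a remainder whose $B$-Lipschitz constant really does vanish as $(A,B)\to 0$, and your contraction closes. The normalization $\Deriv^1\BofA(A)(0)=0$ needs a word once $L_1\neq 0$, since affine functions are no longer in the kernel; but a constant-coefficient second-order elliptic system on a ball has enough kernel (e.g.\ polynomial solutions, built by solving the obvious recursion on Taylor coefficients) to match any prescribed $1$-jet at the origin, so the normalization can still be imposed. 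For what it is worth, in the paper's only use of this proposition (the proof of \cref{Lemma::NirenbergPerturb::MainLemma}) the linearization is explicitly computed to be $\sum_k \partial_{t_k}^2 + \sum_j \partial_{z_j}\partial_{\zb[j]}$ with no lower-order terms, so $L_1=0$ there and your argument would go through unmodified in that case.
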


See \cite{StovallStreetII} for a discussion of this proposition.
	
	\subsection{An Elliptic Operator}
In this section, we discuss a particular first order, overdetermined, constant coefficient, linear, elliptic operator which is needed
in this paper. 
For $t\in \R^r$ and $z\in \C^n$, we consider functions
$A(t,z):\R^r\times \C^n\rightarrow \C^r$ and
$B(t,z):\R^r\times \C^n\rightarrow \C^n$.
We define
\begin{equation*}
\begin{split}
\sE(A,B) =
\Bigg(&
\left(\frac{\partial A_{k_1}}{\partial t_{k_2}} - \frac{\partial A_{k_2}}{\partial t_{k_1}}\right)_{1\leq k_1<k_2\leq r},
\left(\frac{\partial A_{k}}{\partial \zb[j]} - \frac{\partial B_j}{\partial t_k}\right)_{\substack{1\leq k\leq r \\ 1\leq j\leq n}},\\
&\left(\frac{\partial B_{j_1}}{\partial \zb[j_2]} - \frac{\partial B_{j_2}}{\partial \zb[j_1]}
\right)_{1\leq j_1<j_2\leq n},
\sum_{k=1}^r \frac{\partial A_k}{\partial t_k} + \sum_{j=1}^n \frac{\partial B_j}{\partial z_j}
\Bigg).
\end{split}
\end{equation*}

\begin{lemma}\label{Lemma::AppendElliptic::OneOperator}
$\sE$ is elliptic.
\end{lemma}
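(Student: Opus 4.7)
The plan is to verify ellipticity directly from the definition: since $\sE$ is an overdetermined first-order constant-coefficient operator, I need to show that its principal symbol is injective at every nonzero real cotangent vector. At a real covector $(\tau,\xi)\in \R^r\times \R^{2n}$, I parametrize the $\xi$-part using the complex dual variable $\zeta_j:=\xi_j+i\xi_{j+n}$; a direct computation shows $\partial/\partial t_k$, $\partial/\partial\bar z_j$, and $\partial/\partial z_j$ have symbols proportional (with a common factor of $i$) to $\tau_k$, $\zeta_j/2$, and $\bar\zeta_j/2$ respectively. Non-triviality of the underlying real covector is equivalent to $(\tau,\zeta)\neq 0$, and I will use the identity $|\zeta|^2=|\xi|^2$ at the end.

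Pulling out the common factor, the vanishing of $\sigma(\sE)(\tau,\zeta)(A,B)$ on $(A,B)\in\C^r\times\C^n$ becomes the system
\begin{equation*}
\tau_{k_2}A_{k_1}=\tau_{k_1}A_{k_2},\quad \zeta_j A_k=2\tau_k B_j,\quad \zeta_{j_2}B_{j_1}=\zeta_{j_1}B_{j_2},\quad 2\sum_k\tau_k A_k+\sum_j\bar\zeta_j B_j=0.
\end{equation*}
The first equation forces $A=a\tau$ for some scalar $a\in\C$ (automatic if $\tau=0$), and the third forces $B=b\zeta$ (automatic if $\zeta=0$). In the generic subcase $\tau\ne 0$ and $\zeta\ne 0$, the second equation gives $a=2b$, and substitution into the fourth yields $b(4|\tau|^2+|\zeta|^2)=0$. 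Since $4|\tau|^2+|\zeta|^2=4|\tau|^2+|\xi|^2>0$, we conclude $b=0$ and thus $a=0$, so $(A,B)=0$. The degenerate subcases where exactly one of $\tau$, $\zeta$ vanishes follow the same pattern: the second equation alone kills whichever of $A$ or $B$ pairs with the nonzero factor, and the fourth kills the other via $|\tau|^2>0$ or $|\zeta|^2>0$.

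The entire proof is a short symbol computation followed by elementary linear algebra, so there is no serious obstacle. The only point requiring care is the bookkeeping between the complex derivatives $\partial/\partial z_j$, $\partial/\partial\bar z_j$ and their symbols, arranged so that the divergence-type last component of $\sE$ contributes the positive-definite quadratic form in $(\tau,\zeta)$ that kills the remaining scalar. Indeed this is the structural reason the last component is included in $\sE$ at all: without it, the proportionality-preserving directions $A\parallel\tau$, $B\parallel\zeta$ would furnish a nontrivial kernel for the remaining three ``alternating'' blocks.
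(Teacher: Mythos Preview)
Your proof is correct: the symbol computation is accurate, the case split is handled cleanly, and the key observation that the divergence-type last component contributes the positive-definite form $b(4|\tau|^2+|\zeta|^2)$ is exactly what makes the argument work.

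The paper takes a different route. Rather than checking injectivity of the symbol directly, it computes
\[
\sE^{*}\sE(A,B)=-\Bigl(\sum_{k=1}^{r}\frac{\partial^{2}}{\partial t_k^{2}}+\sum_{j=1}^{n}\frac{\partial^{2}}{\partial z_j\,\partial\bar z_j}\Bigr)(A,B),
\]
which is a scalar multiple of the Laplacian acting componentwise; ellipticity of $\sE$ is then immediate. Your approach is arguably more elementary (no adjoint needed) and makes the structural role of each block of $\sE$ transparent, as you note in your last paragraph. The paper's approach is shorter and has the advantage that the explicit formula for $\sE^{*}\sE$ is itself used later: in the construction of the parametrix $\sP$ in \cref{Lemma::AppendReal::ExistsC1}, the fact that $\sE^{*}\sE$ is essentially the Laplacian is what allows the appeal to ``standard methods.'' So while both arguments prove the lemma, the paper's computation is doing double duty.
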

\begin{proof}
It is straightforward to directly compute $\sE^{*}\sE$
to see
\begin{equation}\label{Eqn::AppendElliptic::ComputeSquare}
\sE^{*} \sE (A,B) = -\left(\sum_{k=1}^{r} \frac{\partial^2}{\partial t_k^2} + \sum_{j=1}^n \frac{\partial^2}{\partial z_j \partial \zb[j]} \right)(A,B),
\end{equation}
and the result follows.
\end{proof}

There is another way to interpret \cref{Eqn::AppendElliptic::ComputeSquare}.  Indeed, we identify
$(A,B)$ with the one form $\Psi:=A_1 dt_1+\cdots+A_r dt_r + B_1d\zb[1]+\cdots+B_n d\zb[n]$.
We let $d$ denote the usual de Rham complex acting in the $t$ variable and
$\dbar$ denote the usual $\dbar$-complex acting in the $z$ variable.
We let $d^{*}$ and $\dbar^{*}$ denote the adjoints of these two complexes.
Then $\sE(A,B)$ can be identified with $(d+\dbar, -(d+\dbar)^{*}) \Psi$.   And so $\sE^{*}\sE$ can be identified
with $(d+\dbar)^{*}(d+\dbar) + (d+\dbar)(d+\dbar)^{*}=d^{*}d+dd^{*} + \dbar^{*}\dbar + \dbar\dbar^{*} + (\dbar^{*} d +d \dbar^{*})  + (d^{*} \dbar + \dbar d^{*}) = d^{*}d+dd^{*} + \dbar^{*}\dbar + \dbar\dbar^{*}$.


\bibliographystyle{amsalpha}

\bibliography{nirenberg}

\center{\it{University of Wisconsin-Madison, Department of Mathematics, 480 Lincoln Dr., Madison, WI, 53706}}

\center{\it{street@math.wisc.edu}}

\center{MSC 2010:  58A30 (Primary), 53C15 (Secondary)}


\end{document}